\documentclass[11pt,a4paper,twoside]{amsart}
\usepackage{amsmath}
\usepackage{amssymb}
\usepackage{amsfonts}
\usepackage{amscd}
\usepackage{xcolor}
\usepackage{url}
\newtheorem{theorem}{Theorem}[section]
\newtheorem{lemma}[theorem]{Lemma}
\newtheorem{proposition}[theorem]{Proposition}
\newtheorem{corollary}[theorem]{Corollary}
\theoremstyle{definition}
\newtheorem{definition}[theorem]{Definition}
\newtheorem{claim}[theorem]{Claim}

\theoremstyle{remark}
\newtheorem{remark}[theorem]{Remark}

\newcommand{\tori}{\mathfrak t}

\newcommand{\mC}{\mathbb C}

\newcommand{\mF}{\mathbb F}
\newcommand{\mG}{\mathbb G}
\newcommand{\mH}{\mathbb H}
\newcommand{\mI}{\mathbb I}

\newcommand{\mK}{\mathbb K}
\newcommand{\mL}{\mathbb L}

\newcommand{\mN}{\mathbb N}

\newcommand{\mP}{\mathbb P}

\newcommand{\mS}{\mathbb S}
\newcommand{\mT}{\mathbb T}
\newcommand{\mU}{\mathbb U}
\newcommand{\mV}{\mathbb V}
\newcommand{\mW}{\mathbb W}
\newcommand{\mX}{\mathbb X}
\newcommand{\mY}{\mathbb Y}

\newcommand{\mZ}{\mathbb Z}

\newcommand{\ho}{\hookrightarrow}
\newcommand{\Gg}{\gamma}
\newcommand{\GG}{\Gamma}
\newcommand{\bO}{\Omega}
\newcommand{\bo}{\omega}
\newcommand{\bl}{\lambda}

\newcommand{\bs}{\sigma}

\newcommand{\D}{\Delta}

\newcommand{\kk}{\kappa}

\newcommand{\mcA}{\mathcal A}

\newcommand{\mcC}{\mathcal C}

\newcommand{\mcF}{\mathcal F}

\newcommand{\mcH}{\mathcal H}

\newcommand{\mcM}{\mathcal M}

\newcommand{\mcO}{\mathcal O}

\newcommand{\mcR}{\mathcal R}
\newcommand{\mcS}{\mathcal S}

\newcommand{\mcW}{\mathcal W}

\newcommand{\CC}{\mathbb C}

\newcommand{\ZZ}{\mathbb Z}

\newcommand{\fg}{\mathfrak g}

\newcommand{\fm}{\mathfrak m}

\newcommand{\ft}{\mathfrak t}

\newcommand{\ti}{\widetilde}
\newcommand{\wt}{\widetilde}

\newcommand{\un}{\underline}

\newcommand{\emp}{\emptyset}

\newcommand{\supp}{\operatorname{supp}}

\newcommand{\sm}{\setminus}
\newcommand{\prl}{\varprojlim}
\newcommand{\inl}{\varinjlim}
\newcommand{\sms}{\smallskip}

\newcommand{\Ad}{\operatorname{Ad}}
\definecolor{Red}{rgb}{0.7,0,0}

\newcommand{\otheta}{\overline{\theta}}
\newcommand{\ind}{\operatorname{ind}}
\newcommand{\Aut}{\operatorname{Aut}}
\newcommand{\St}{\operatorname{St}}
\newcommand{\Sp}{\operatorname{Sp}}
\newcommand{\Id}{\operatorname{Id}}
\newcommand{\Hom}{\operatorname{Hom}}
\newcommand{\End}{\operatorname{End}}
\newcommand{\SL}{\operatorname{SL}}

\newcommand{\PGL}{\operatorname{PGL}}
\newcommand{\Vect}{\operatorname{Vect}}
\newcommand{\Ind}{\operatorname{Ind}}
\newcommand{\Pro}{\operatorname{Pro}}
\newcommand{\Rep}{\operatorname{Rep}}
\newcommand{\Gr}{\operatorname{Gr}}
\newcommand{\oGr}{\overline{\Gr}}

\newcommand{\bSet}{\mathbf{Set}}

\newcommand{\mSet}{\mathbb{S}\mathrm{et}}

\newcommand{\mVect}{\mathbb{V}\mathrm{ect}}
\begin{document}
\title[Cuspidal representations over a two-dimensional local field]{An analogue of irreducible cuspidal representations for the group $\PGL(2)$ over a two-dimensional local field}
\author{Alexander Braverman and David Kazhdan}
\date{\today}
\begin{abstract}
Let $F$ be a local non-Archimedean field of odd residue characteristic, let $K=F((t))$, and let $G=\PGL(2)$. We construct analogues of irreducible cuspidal representations of $G(F)$ for the group $G(K)$. The construction starts, up to a metaplectic ambiguity in one even-depth ramified case, from a quadratic extension $L/K$ and a character $\theta:L^*/K^*\to\CC^*$ which is not Galois invariant. We show that the restriction of each representation so obtained to the Borel subgroup $P(K)\subset G(K)$ is irreducible. In contrast with the classical case, these restrictions are not isomorphic to the standard irreducible representation of $P(K)$.
\end{abstract}
\maketitle
\section{Introduction}
\subsection{The setup}
Let $F$ be a local non-Archimedean field of odd residue characteristic with ring of integers $O_F$. Set $K=F((t))$ and $O=F[[t]]$. Let $G$ denote the algebraic group $\PGL(2)$. We denote by $P\subset G$ the Borel subgroup.\footnote{We denote it by $P$ rather than by $B$ because we hope to generalize the results of this paper to $\PGL(n)$ for $n\geq2$, in which case $P$ will be the maximal parabolic subgroup stabilizing a line.} We denote by $T$ the diagonal torus and by $N$ the unipotent radical of $P$.
Let $G'$ be the group scheme over $O$ whose group of $O$-points is the stabilizer, under the adjoint action, of the lattice $Oe\oplus Oh\oplus tOf$, where $e,h,f$ is the standard basis of the Lie algebra $\fg=\mathfrak{sl}_2$. The base change of $G'$ to $K$ is canonically isomorphic to $G$. In particular, $G'(O)$ is canonically a subgroup of $G(K)$, and $G'(O)$ contains $P(O)$.
In this paper we study a class of \emph{special} representations of $G(O)$, $G'(O)$, and $G(K)$. The representation theory of $G(O)$ and $G'(O)$ belongs to the classical representation theory of $p$-adic groups: these groups are projective limits of finite-dimensional algebraic $F$-groups, and we consider representations which factor through smooth representations of finite-dimensional quotients. The notion of a smooth representation of $G(K)$ is subtler. In particular, one must work with actions on pro-vector spaces rather than only on vector spaces; the corresponding category was defined in \cite{GK}. We denote the categories of smooth representations of $G(O)$, $G'(O)$, and $G(K)$ by $\mcR(G(O))$, $\mcR(G'(O))$, and $\mcR(G(K))$. The category $\mcR(G(K))$ has no natural forgetful functor to complex vector spaces; instead, it has a forgetful functor to complex pro-vector spaces.
\begin{definition}\label{spe}
\begin{enumerate}
\item A complex representation $\pi$ of dimension greater than $1$ of $G(O)$ or $G'(O)$ is \emph{special} if its restriction to $P(O)$ is irreducible.
\item A representation $\Pi$ of $G(K)$ is \emph{special} if it is not one-dimensional and its restriction to $P(K)$ is irreducible.
\end{enumerate}
\end{definition}
The main purpose of this paper is to construct special representations of $G(K)$. We conjecture that the construction gives all special representations. To motivate Definition \ref{spe}, we first review the parallel results in the classical situation, in which $K$ is replaced by $F$.
\subsection{Classical results}\label{classical}
All the results of this subsection can be found in \cite{BH}. In this subsection only, $G'$ denotes the group scheme over $O_F$ defined as above, with the uniformizer $\varpi$ in place of $t$.
The embedding $P(F)\ho G(F)$ induces embeddings $P(O_F)\ho G(O_F)$ and $P(O_F)\ho G'(O_F)$. A representation of dimension greater than $1$ of one of the groups $G(O_F)$, $G'(O_F)$, or $G(F)$ is called special if its restriction to $P(O_F)$ or $P(F)$, respectively, is irreducible. An irreducible representation of $G(F)$ is special if and only if it is cuspidal.
\begin{theorem}\label{classical-theorem}
\begin{enumerate}
\item Let $E/F$ be a quadratic extension and let $\theta:E^*/F^*\to\CC^*$ be a character such that $\theta\neq\otheta$, where $\otheta$ is the Galois conjugate of $\theta$. If $E/F$ is unramified, one can attach to $\theta$ a special representation $\pi(E,\theta)$ of $G(O_F)$. If $E/F$ is ramified, one can attach to $(E,\theta)$ a special representation $\pi(E,\theta)$ of $G'(O_F)$.
\item Every special representation of $G(O_F)$ or $G'(O_F)$ is isomorphic to some $\pi(E,\theta)$.
\item Let $\pi$ be an irreducible representation of either $G(O_F)$ or $G'(O_F)$, and let $\Pi(\pi)$ be its compact induction to $G(F)$. Then $\pi$ is special if and only if $\Pi(\pi)$ is irreducible; in this case $\Pi(\pi)$ is special. Thus $\Pi(E,\theta):=\Pi(\pi(E,\theta))$ is irreducible and cuspidal, and every special representation of $G(F)$ is of this form.
\item The representations $\Pi(E_1,\theta_1)$ and $\Pi(E_2,\theta_2)$ are isomorphic if and only if $E_1=E_2$ and either $\theta_1=\theta_2$ or $\theta_1=\otheta_2$. The analogous statement holds for $\pi(E,\theta)$.
\end{enumerate}
\end{theorem}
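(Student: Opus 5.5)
We would prove Theorem \ref{classical-theorem} along the lines of the Bushnell--Henniart theory of cuspidal types for $\PGL(2)$, using the Mirabolic/Kirillov description of representations of $P$. The basic input is the following. Fix a nontrivial additive character $\psi$ of $F$. The group $P(F)=F^*\ltimes F$ (modulo center) has a unique irreducible infinite-dimensional smooth representation, namely $\tau=\operatorname{ind}_{N(F)}^{P(F)}\psi$. The same holds for the finite quotients $P(O_F/\varpi^n)$: an irreducible representation of dimension greater than $1$ that is nontrivial on the deepest layer of $N$ is induced from a character $\psi_n$ of $N(O_F/\varpi^n)$ of conductor exactly $\varpi^n$. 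Counting shows that its dimension is $|(O_F/\varpi^n)^*|$ (up to the obvious normalization). Consequently, a representation of $G(O_F)$ or $G'(O_F)$ factoring through level $n$ is special if and only if it has this dimension and its restriction to $N$ consists exactly of the characters of conductor $n$, each with multiplicity one. This is the criterion we will test.

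\textbf{Step 1 (construction).} For $E/F$ unramified, we embed $O_E^*$ into $G(O_F)$ via $E\cong F^2$. Let $n$ be the level of $\theta$. If $n=1$, $\pi(E,\theta)$ is the Deligne--Lusztig cuspidal representation of $\PGL_2(\mF_q)$ attached to $\theta|_{\mF_{q^2}^*}$, inflated. If $n\ge2$, we take a minimal element $\beta\in\varpi^{-n}O_E$ representing $\theta$ on $1+\varpi^{\lceil n/2\rceil}O_E$. We extend the resulting character of $O_E^*(1+\varpi^{\lceil n/2\rceil}M_2(O_F))$ by $\theta$, using the Heisenberg/Weil construction when $n$ is even, and then induce to $G(O_F)$. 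For $E/F$ ramified we do the same with the Iwahori order, whose normalizer is $G'(O_F)$, since $E^*$ normalizes it. Oddness of the residue characteristic is used for the $\beta$-decomposition and for the Heisenberg extension.

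For speciality we use Mackey's formula for $\operatorname{Res}_{P}\operatorname{Ind}$. The double cosets $P(O_F)\backslash G(O_F)/J$ are few, namely $\mP^1$ over the residue field modulo the torus. On each of them the restriction to $N$ is computed explicitly, and the condition $\theta\ne\otheta$ (equivalently, that $\beta$ is not scalar modulo the relevant ideal) forces every $N$-character appearing to have full conductor. A dimension count then finishes the argument. When $n=1$ the check is the classical finite-field statement that cuspidal representations of $\PGL_2(\mF_q)$ restrict to the Borel as $\operatorname{ind}_N^B\psi$.

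\textbf{Step 2 (exhaustion).} We take a special $\pi$ and pass to its minimal level. The restriction to the congruence subgroup $K_n$, which is abelian in the top layer, is a sum of characters $\psi_\beta$ for $\beta\in\mathfrak{gl}_2$. Speciality rules out $\beta$ nilpotent, since such a $\beta$ would give an $N$-character of lower conductor, and it also rules out $\beta$ split, by the same mechanism. Hence $\beta$ is elliptic and generates a quadratic $E$. Clifford theory then identifies $\pi$ with one of the constructed $\pi(E,\theta)$, and the dimension count forces the Clifford multiplicity to be $1$. \emph{This is the main obstacle.} We would try first to make the "split or nilpotent $\Rightarrow$ small conductor $N$-character" argument uniform by conjugating $\beta$ into upper-triangular form inside $P$.

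\textbf{Step 3 (induction to $G(F)$).} For irreducibility of $\Pi(\pi)$ we use Mackey's criterion, namely the intertwining of $\pi$ over double cosets $J g J$. The Cartan/Iwahori decomposition reduces this to diagonal $g=\operatorname{diag}(\varpi^k,1)$. An intertwiner at such $g$ would produce a vector fixed by a large unipotent subgroup with a degenerate character, contradicting the restriction computed in Step 1. Conversely, if $\pi$ is not special, its restriction to $P$ contains a low-conductor $N$-component, and this yields nontrivial intertwining or a Jacquet module.

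Speciality of $\Pi(\pi)$ holds because $\operatorname{Res}_{P(F)}\Pi$ is compactly induced from $\operatorname{Res}_{P(O_F)}\pi=\operatorname{ind}\psi$ over the single double coset $P(F)\backslash G(F)/J$, which is one coset by the Iwasawa decomposition. By transitivity this equals $\operatorname{ind}_{N(F)}^{P(F)}\psi=\tau$, which is irreducible. Cuspidality follows because the Jacquet module of $\tau$ vanishes.

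For the converse, a cuspidal $\Pi$ contains some type, and by Step 2 that type is some $\pi(E,\theta)$. The equivalence with "special" is the classical statement that $\Pi|_P\cong\tau$ exactly for cuspidal $\Pi$, proved via the Kirillov model.

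\textbf{Step 4 (isomorphisms).} We recover $E$ and the $E^*$-conjugacy class of $\beta$ from the restriction to $K_n$, and then recover $\theta$ up to the Galois action, since $\operatorname{Gal}(E/F)$ is realized by the normalizer of $E^*$. This gives the stated classification both for $\pi(E,\theta)$ and for $\Pi(E,\theta)$, the latter by the intertwining computation of Step 3.
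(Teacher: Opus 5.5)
The paper gives no proof of this theorem; it attributes all four parts to Bushnell--Henniart, and your outline follows their type-theoretic route, so your strategy is the intended one. However, the converse half of (3), that irreducibility of $\Pi(\pi)$ forces $\pi$ to be special, is not actually proved. You say a low-conductor $N$-component ``yields nontrivial intertwining or a Jacquet module'', but this is not an argument. A character of $N(O_F)$ common to $\pi$ and ${}^g\pi$ does not by itself give a nonzero element of $\Hom_{J\cap g^{-1}Jg}(\pi,{}^g\pi)$, because that intersection is much larger than $N(O_F)$. The Jacquet-module branch is also unavailable for non-special $\pi$ whose induction has only cuspidal subquotients; the typical $G(O_F)$-type of a ramified supercuspidal is an example. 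The fix is your own Step~3 run backwards:
\begin{itemize}
\item An irreducible representation compactly induced from an open compact subgroup is supercuspidal, so $\Pi(\pi)|_{P(F)}\simeq\tau$ by the Kirillov theory.
\item On the other hand, $G(F)=P(F)J$ and $P(F)\cap J=P(O_F)$ give $\Pi(\pi)|_{P(F)}\simeq\ind_{P(O_F)}^{P(F)}(\pi|_{P(O_F)})$.
\item Compact induction is additive and kills no nonzero summand, so a decomposition of $\pi|_{P(O_F)}$ would decompose $\tau$.
\item A one-dimensional $\pi$ would give $\Pi(\pi)|_{P(F)}$ a one-dimensional quotient by Frobenius reciprocity.
\end{itemize}
No intertwining analysis is needed for this direction.

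There are two smaller corrections. First, in Step~1 you put the Heisenberg case at the wrong parity for your normalization. With level $1$ meaning depth zero and $\beta\in\varpi^{-n}O_E$, the character $\psi_\beta$ lives on $1+\varpi^{\lceil n/2\rceil}M_2(O_F)$, while its stabilizer in $G(O_F)$ is $O_E^*(1+\varpi^{\lfloor n/2\rfloor}M_2(O_F))$. So a character extension suffices for even $n$, and the Heisenberg--Weil step is needed exactly for odd $n\geq3$, i.e.\ positive even depth, as in Section~\ref{GO-even-depth}. For odd $n$, inducing a character gives a reducible representation of dimension $q\,|(O_F/\varpi^n)^*|$, which your own dimension count rejects. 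In the ramified case no Heisenberg step occurs at all.

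Second, in Step~4, classifying the $\Pi(E,\theta)$ requires that distinct types do not intertwine, including a type on $G(O_F)$ against one on $G'(O_F)$. The self-intertwining computation of Step~3 does not give this. Your conductor argument does, once you apply it both to $N(O_F)$ and to the opposite unipotent subgroup of $J$. On the latter, $\pi$ is again pure of conductor $n$ after conjugating by $\begin{pmatrix}0&1\\1&0\end{pmatrix}$ or by $\begin{pmatrix}0&1\\ \varpi&0\end{pmatrix}$. The elements $g=\operatorname{diag}(\varpi^k,1)$ represent all double cosets $J_1\backslash G(F)/J_2$. For such $g$, the two conductor conditions force $k=0$ when $J_1=J_2$, and would require $2k=-1$ when $\{J_1,J_2\}=\{G(O_F),G'(O_F)\}$.
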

The third assertion of Theorem \ref{classical-theorem} gives an equivalent characterization of special representations of $G(O_F)$ and $G'(O_F)$: a representation is special precisely when its compact induction to $G(F)$ is irreducible. One could try to use this as the definition after replacing $F$ by $K$, because an analogue of induction is constructed in \cite{GK}. It is more convenient here to use Definition \ref{spe}, since it allows us to work directly with $G(O)$ and $G'(O)$. We do not know whether the two definitions are equivalent for $G(K)$.
\begin{remark}
The representations $\Pi(\pi)$ are unitarizable. Consequently, their irreducibility is equivalent to the equality $\End_{G(F)}(\Pi(\pi))=\CC$.
\end{remark}
\subsection{What is done in this paper?}
We partially extend Theorem \ref{classical-theorem} to $G(O)$, $G'(O)$, and $G(K)$. The paper has two parts. In the first part we study special representations of $G(O)$ and $G'(O)$; in the second part we study their induction to $G(K)$.
\subsection{Special and elliptic representations of $G(O)$ and $G'(O)$}
\subsubsection{Ellipticity}
The definition of a special representation of $G(O)$ or $G'(O)$ is not explicit. Our first result identifies special representations with representations satisfying a more concrete ellipticity condition.\footnote{The notion of ellipticity makes sense for every reductive group.}

Let $G(O)(m)$ be the $m$-th congruence subgroup of $G(O)$, where $m\geq1$, and put $G(O)_m=G(O)/G(O)(m+1)$. For an irreducible representation $\pi$ of $G(O)$, define its depth $m(\pi)$ to be the least $m\geq0$ such that $\pi$ factors through $G(O)_m$.

For $m>0$, the group $G(O)_m$ contains a commutative normal subgroup $G(O)_m(m)$ canonically isomorphic to $\fg(F)$. If $m(\pi)>0$, let $\bo(\pi)\subset\mathfrak{sl}_2(F)$ be the support of the restriction of $\pi$ to this subgroup, using the fixed additive character of $F$ and the trace pairing to identify its character group with $\mathfrak{sl}_2(F)$.
\begin{definition}
An irreducible representation $\pi$ of $G(O)$ is elliptic if either $m(\pi)=0$ and $\pi$ is a cuspidal representation of $G(F)=G(O)_0$, or $m(\pi)>0$ and $\bo(\pi)$ is an elliptic adjoint orbit in $\fg(F)$. The notion of ellipticity for $G'(O)$ is defined similarly; see Definition \ref{ac}.
\end{definition}
We prove that an irreducible representation of $G(O)$ or $G'(O)$ is special if and only if it is elliptic.
\subsubsection{A description of special representations of $G(O)$ and $G'(O)$}
\begin{definition}
A quadratic extension $L/K$ is unramified if $L=E((t))$ for a quadratic extension $E/F$; otherwise it is ramified.
\end{definition}

Let $\widehat G(O)$ and $\widehat G'(O)$ denote the sets of isomorphism classes of special representations of $G(O)$ and $G'(O)$. In the formulation of Theorem \ref{L} we use notation for subsets  $\widehat G(O)_m \subset \widehat G(O) $ and $ \widehat G'(O)_m \subset \widehat G'(O)$  of depth $m$ (see Definition \ref{depth}) and also  for groups $\bar\Xi^L(m)$ and $\ti\Xi^L(m)$ (see Definition \ref{qext1}). 

\begin{theorem}\label{L}
\begin{enumerate}
\item The set $\widehat G(O)$ is the disjoint union of subsets $\widehat G(O)^E$, where $E$ runs through the quadratic extensions of $F$.
\item The set $\widehat G'(O)$ is the disjoint union of subsets $\widehat G'(O)^L$, where $L$ runs through the ramified quadratic extensions of $K$.
\item For every ramified quadratic extension $L/K$, the set $\widehat G'(O)^L$ is naturally in bijection with the characters $\theta:L^*/K^*\to\CC^*$ satisfying $\theta\neq\otheta$, modulo $\theta\sim\otheta$.
\item Let $E/F$ be quadratic and put $L=E((t))$. The depth-zero subset $\widehat G(O)^E_0$ is naturally in bijection with $\bar\Xi^L(0)$.
\item For every positive odd depth $m$, the set $\widehat G(O)^E_m$ is naturally in bijection with $\bar\Xi^L(m)$.
\item If $m>0$ is even and $E/F$ is unramified, then $\widehat G(O)^E_m$ is naturally in bijection with
 $\bar\Xi^L(m)$.
\item If $m>0$ is even and $E/F$ is ramified, then
 $\widehat G(O)^E_m$ is naturally a $\ZZ/2\ZZ$-torsor over $\ti\Xi^L(m)$. 
\end{enumerate}
\end{theorem}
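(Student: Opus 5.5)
The plan is to first reduce everything to the ellipticity criterion: an irreducible representation of $G(O)$ or $G'(O)$ is special if and only if it is elliptic. We then classify elliptic representations by Clifford theory with respect to the abelian normal subgroup $G(O)_m(m)\cong\fg(F)$ of the finite-dimensional quotient $G(O)_m$.

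\textbf{Step 1: parts (1) and (2).} For $\pi$ elliptic of depth $m>0$, the orbit $\bo(\pi)$ consists of elements $X\in\mathfrak{sl}_2(F)$ whose centralizer is a torus $E^*/F^*$ with $E=F[X]$ a quadratic field. This attaches to $\pi$ a well-defined $E$, giving $\widehat G(O)^E$. In depth zero, a cuspidal representation of $G(F)$ is attached to $E$ by Theorem \ref{classical-theorem}. For $G'(O)$, the analogous leading term lives in the graded piece of the lattice $Oe\oplus Oh\oplus tOf$. An elliptic element there has the form $\begin{pmatrix}a&b\\ tc&-a\end{pmatrix}$ with $bc\ne0$. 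Its characteristic polynomial $x^2-(a^2+tbc)$ defines a ramified extension $L=K(\sqrt{a^2+tbc})$, and one checks that $t$-adically this is always a ramified quadratic extension of $K$. Disjointness follows because the orbit, hence the extension, is an invariant of $\pi$.

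\textbf{Step 2: construction and counting.} Fix $X$ in the orbit and let $H=\mathrm{Stab}_{G(O)_m}(\psi_X)$. For $G(O)$, $H$ is the image of $(O_L^*/O^*)\cdot G(O)(\lceil (m+1)/2\rceil)$, where $L=E((t))$. By Clifford theory, $\pi=\Ind_H^{G(O)_m}\rho$ with $\rho$ an irreducible representation of $H$ extending $\psi_X$.

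When $m$ is odd, $G(O)(\frac{m+1}2)/G(O)(m+1)$ is abelian. Then $\psi_X$ extends to a character, and the extensions of it to $H$ correspond to characters $\theta$ of $(O_L^*/O^*)$ of depth $m$ with prescribed leading term. This yields the bijection with $\bar\Xi^L(m)$ in (5), and likewise (3) for $G'(O)$ via a filtration indexed by half-integers.

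When $m$ is even, the quotient $G(O)(m/2)/G(O)(m+1)$ is a Heisenberg-type group. Its center is the piece on which $\psi_X$ lives and it has symplectic part $\fg(F)/\fg_X(F)$, which is two-dimensional over $F$. We take the Heisenberg representation and extend it to $H$ via a Weil representation of the torus $E^*/F^*\subset \mathrm{Sp}_2(F)$. When $E/F$ is unramified this extension is canonical; for instance, one normalizes it using the sign character of the anisotropic torus. This gives (6).

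\textbf{Step 3: part (4).} In depth zero, $\widehat G(O)^E_0$ is identified with the classical cuspidal representations attached to $E$ by Theorem \ref{classical-theorem}, inflated from $G(F)$. These correspond to $\bar\Xi^L(0)$ via characters of $O_L^*$ trivial on the first congruence subgroup.

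\textbf{Main obstacle: part (7).} The hardest case is $m$ even with $E/F$ ramified. Here the Weil representation of $\mathrm{Sp}_2(F)=\SL_2(F)$ restricted to the ramified torus does not split canonically. The torus meets the metaplectic cover nontrivially, and there are two possible normalizations differing by a quadratic character, namely $\mathrm{sign}$ of the norm. I would first compute the restriction of the metaplectic cocycle to $E^1$ (the norm-one elements of $E^*$) using Rao's cocycle. Having done so, I would show that the set of extensions of the Heisenberg representation is a torsor under a group of order two, and that after twisting one obtains exactly the parametrization by $\ti\Xi^L(m)$. The remaining point is that the result is a $\ZZ/2\ZZ$-torsor over $\ti\Xi^L(m)$ rather than a bijection with it: this requires checking that no choice is canonical, which should follow from the odd residue characteristic assumption.
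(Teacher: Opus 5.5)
Your Clifford-theoretic outline matches the paper's route for odd depth and for $G'(O)$. However, the argument that separates (6) from (7) does not work. The splittings of the metaplectic cover over $T^E\simeq E^1$ form a torsor under $\Hom(E^1,\{\pm1\})\simeq\ZZ/2\ZZ$ in the unramified case as well, because the reduction of $E^1$ is cyclic of even order $q+1$. So ``normalizing with the sign character'' does not single out a splitting. Nor is ``two normalizations differing by a quadratic character'' what is special about the ramified case.

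Canonicity in (6) comes from two facts. An unramified $E^1$ lies in a unique maximal compact subgroup (Lemma \ref{lif}(1)), and over that subgroup the splitting is unique (Lemma \ref{unique-splitting}); this gives Corollary \ref{lift}.

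For (7) you aim at the wrong target. A torsor structure does not require showing that no choice is canonical. The paper explicitly leaves that open in the remark after the theorem, and by Lemma \ref{ex} the splittings $s_\pm$ already agree on $E^1$ when $q\equiv1\pmod4$. What must be shown is the following.
\begin{enumerate}
\item Replacing $s$ by $\delta s$ replaces $\theta$ by $\delta\theta$ (Corollary \ref{alpha}(2)), so the class in $\ti\Xi^L(m)$ is independent of $s$.
\item The map $\theta\mapsto\delta\theta$ acts freely on $\bar\Xi^L(m)$. If $\delta\theta=\theta^{-1}$, then $\theta^2=\delta$, so $\theta|_{T^L(1)}$ has order at most $2$ on a pro-$p$ group with $p\neq2$, which contradicts positive depth.
\end{enumerate}
This freeness is where the odd residue characteristic enters. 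No computation with Rao's cocycle is needed, since a splitting over the compact torus exists by Claim \ref{heis}.

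Step 1 for $G'(O)$ is also wrong as written. If $a\neq0$, then $a^2+tbc$ has square constant term, so it is a square in $O$ by Hensel, and $K(\sqrt{a^2+tbc})=K$ is split. You are using the integral-step quotient of $Oe\oplus Oh\oplus tOf$. The relevant top piece at depth $n-\frac12$ is $M_m^+\oplus M_m^-$, which has no diagonal part (Claim \ref{act}(4)). Representations of integral depth must be excluded separately: their top piece is a line on which $P(O)$ acts trivially and $s$ acts by $-1$ (Lemma \ref{Gprime-nonelliptic}).

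More broadly, the sets in Theorem \ref{L} consist of special representations, so you cannot simply assume ``special $\Leftrightarrow$ elliptic.''
\begin{enumerate}
\item Special implies elliptic by Lemma \ref{MM}(1) together with Lemma \ref{orbit-image}.
\item At even depth, elliptic implies special requires Lemma \ref{3.12} and the irreducibility of the Heisenberg--Weil representation $\rho$ on $P(O)_{2n}\cap\St_\bl$. The paper obtains this from $\mathfrak p\simeq\fg/\mathfrak t_\bl$ (Lemma \ref{p-to-W}), which makes the image of $P(O)_{2n}(n)$ a Heisenberg group for the same form (Proposition \ref{RP-irred}).
\end{enumerate}

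Finally, your $H$ is not the stabilizer of a character of $G(O)_m(m)$; that stabilizer is the much larger $T^EG(O)_m(1)$. You need Clifford theory for $G(O)_{2n-1}(n)$, respectively $G(O)_{2n}(n+1)$, together with centralizer lifting (Lemma \ref{centralizer-lifting-even}). At even depth the resulting stabilizer is $T^L_{2n}G(O)_{2n}(n)$, not the $T^LG(O)(n+1)$ given by your formula.
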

\begin{remark}
In the last case the group  $\D _L $ of quadratic characters of $T^L=L^*/K^*$ is isomorphic to $\ZZ/2\ZZ$ and the map $\bar\Xi^L(m)\to\ti\Xi^L(m)$ is another
 $\ZZ/2\ZZ$-torsor over $\ti\Xi^L(m)$. We do not know whether the  $\ZZ/2\ZZ$-torsors 
 $\widehat G(O)^E_m$ and $ \bar\Xi^L(m) $ over $ \ti\Xi^L(m) $ are canonically isomorphic.
\end{remark}

\subsection{Special representations of $G(K)$}
Our ultimate goal is to construct irreducible representations of $G(K)$. Although we do not have a direct analogue of the usual notion of a cuspidal representation for $G(K)$, Definition \ref{spe} makes sense.
The analogue of induction from $G(O)$ or $G'(O)$, and more generally from a bounded subgroup of $G(K)$, was constructed in \cite{GK}. For a representation $\pi$ of $G(O)$ or $G'(O)$, denote the resulting representation of $G(K)$ by $\Pi(\pi)$.
\begin{theorem}\label{irreducible}
If $\pi$ is a special representation of $G(O)$ or $G'(O)$, then $\Pi(\pi)$ is irreducible. Moreover, its restriction to $P(K)$ is irreducible.
\end{theorem}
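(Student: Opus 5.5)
Since irreducibility of $\Pi(\pi)|_{P(K)}$ implies irreducibility of $\Pi(\pi)$, the plan is to prove only the second assertion. The first step is a Mackey-type decomposition of the restriction of the induced representation $\Pi(\pi)$ of \cite{GK} to $P(K)$. We write $H$ for $G(O)$ or $G'(O)$ and use the Iwasawa-type decomposition $G(K)=P(K)\cdot H$. For $H=G'(O)$ we expect the same to hold after checking the double coset space $P(K)\backslash G(K)/G'(O)$. This space is $\mathbb P^1(K)$ modulo $H$, which is a single orbit for $G(O)$ and one orbit in the relevant sense for $G'(O)$; if two orbits appear, we use the partial Weyl element of $G'(O)$ to reduce to one. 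We then get
\[
\Pi(\pi)|_{P(K)}\simeq \Ind_{P(K)\cap H}^{P(K)}\bigl(\pi|_{P(K)\cap H}\bigr),
\]
where $P(K)\cap H=P(O)$ for $G(O)$, and a slightly larger group containing $P(O)$ for $G'(O)$. The induction here is the pro-vector space induction of \cite{GK}. The needed compatibility of that induction with restriction to a closed subgroup through a single double coset must be verified in the \cite{GK} formalism. It should follow formally, because the double coset is open.

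Next, $\pi|_{P(O)}$ is irreducible by the definition of special, so it suffices to show that $\Ind_{P(O)}^{P(K)}\sigma$ is irreducible for every irreducible $\sigma$ of $P(O)$ occurring this way. We write $P(K)=T(K)\ltimes N(K)$ with $N(K)\simeq K$, and $T(K)\simeq K^*=t^{\mZ}\times O^*$. We decompose $\sigma|_{N(O)}$ into characters of $N(O)\simeq O$; the character group is identified with $K/\text{(dual lattice)}$ via the residue pairing. Because $\sigma$ comes from an elliptic $\pi$ (the paper proves that special and elliptic are equivalent), the characters of $N(O)$ occurring in $\sigma$ are nondegenerate of a fixed conductor, determined by the depth $m(\pi)$ and the orbit $\bo(\pi)$. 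The trivial character does not occur, since otherwise $\pi$ would have $N(O)$-fixed vectors, contradicting ellipticity. We then apply Mackey/Clifford theory inside $P(K)$. The group $T(K)$ acts on the characters of $N(K)$ with a single open orbit of nontrivial ones. The characters appearing in $\sigma$, extended to $N(K)$, form one $T(O)$-orbit. The stabilizer computation then shows that $\Ind_{P(O)}^{P(K)}\sigma$ is the induction from $N(K)\cdot T'$ of an irreducible representation, where $T'$ is a stabilizer subgroup, and it has no nontrivial intertwiners. Concretely, we show that $\Hom_{P(K)}$ of the induced representation to itself equals $\CC$ by computing the intertwining support. An element $p\in P(K)$ supports an intertwiner only if $\sigma$ and ${}^p\sigma$ share an $N(O)\cap{}^pN(O)$-character. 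The conductor condition forces $p\in P(O)$ (through the $t$-adic valuation of the torus part), after which irreducibility of $\sigma$ gives scalars.

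In the pro-vector space setting, an endomorphism ring equal to $\CC$ does not by itself give irreducibility. We therefore need the analogue of the unitarity remark, or a direct argument: every nonzero subobject of the induced representation contains, after applying the $N(O)$-isotypic functor for the character $\psi$, the whole $\psi$-isotypic piece, and this piece generates. The main obstacle is this last step together with the Mackey formula in \cite{GK}'s category. The first thing to try is to show that the $\psi$-isotypic part of $\Ind_{P(O)}^{P(K)}\sigma$ is exactly $\sigma^\psi$, an irreducible module over the stabilizer of $\psi$ in $P(O)$, and that every subobject is determined by its $\psi$-isotypic part, since $P(K)$ permutes the isotypic components transitively.
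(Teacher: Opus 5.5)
There is a genuine gap, and it is at your first step. The Mackey-type formula $\Pi(\pi)|_{P(K)}\simeq\ind_{P(O)}^{P(K)}(\pi|_{P(O)})$ is false in this setting. The paper says so explicitly in the subsection ``A contradiction and its resolution.''

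Openness of the double coset is not the issue. There is only one double coset: $G(K)=P(K)G(O)=P(K)G'(O)$ by Lemma \ref{BT}. Incidentally, $P(K)\cap G'(O)=P(O)$, not a larger group. The issue is topological. The bijection $\Gr_P\to\Gr_G$ is continuous but not a homeomorphism (Remark \ref{not-isom}). Each $\Gr_P^i=S^i$ is open and closed in $\Gr_P$, whereas in $\Gr_G$ the closure of $S^i$ is $\bigcup_{j\le i}S^j$. Since $\Gr_G$ is not locally compact, Lemma \ref{open}, and hence Proposition \ref{indHL}, have no analogue.

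The formula is not merely unproven; it is contradicted. By Proposition \ref{Upsilonm-M} and induction in stages through $P_0$, the right-hand side is $\Upsilon$. This object has nonzero $O$-invariants, coming from the factors $\rho_r$ with $r<0$. Lemma \ref{Gplus-invariants} then gives Corollary \ref{no-rest}: no representation of $G(K)$ restricts to $\Upsilon$. So your Clifford and intertwining-support analysis, even if completed, would prove irreducibility of an object that is not $\Pi(\pi)|_{P(K)}$. In particular, the ``no trivial character'' property you use for $\bs$ is lost after inducing to $P(K)$.

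The missing idea is what replaces Mackey's formula: it survives only on an associated graded.
\begin{itemize}
\item The support filtration $\mcF^\sharp$ of $\Pi(\pi)$, by unions of semi-infinite strata $\bigcup_{j\ge -n}S^j_m$, has graded pieces $(\rho_r,\mcM)$. Thus $\bar\Theta(\pi)^\sharp\simeq\ind_{P(O)}^{P(K)}(\pi|_{P(O)})$.
\item The paper compares $\mcF^\sharp$ with the intrinsic filtration $\mcF$, defined by the kernels of the $t^nO$-coinvariant maps $\eta_n$, and proves $\mF^\sharp_n=\mF_{m-n}$.
\item Specialness enters here. It gives $\pi|_{P(O)}\simeq\bs(m)$ (Lemma \ref{restr}), and the $O$-coinvariants of $(\rho_r,\mcM)$ vanish for $r\ge0$. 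Together with exactness of coinvariants and Lemma \ref{strict-limit-coinvariants}, this yields completeness and $\bar\Theta(\pi)\simeq\Upsilon$ (Theorem \ref{exist}).
\item Irreducibility then follows from Proposition \ref{isbar}. Its other input is $\Theta(\pi)^{N(K)}=\Theta(\pi)^O=\Pi(\pi)^{G^+(K)}=0$, from Lemma \ref{Gplus-invariants} and Claim \ref{no-Gplus-invariants}.
\item A nonzero invariant subobject either meets some $\mF_n$ nontrivially, in which case it is dense and hence everything by Lemmas \ref{bar} and \ref{dense}, or it is fixed by $N(K)$.
\end{itemize}

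You are right that $\End=\CC$ would not suffice in the pro-setting. However, your proposed substitute, via $\psi$-isotypic parts of $\ind_{P(O)}^{P(K)}\bs$, is again aimed at the wrong object. The actual argument needs the vanishing of $N(K)$-invariants, a genuinely $G(K)$-level fact that is absent from your plan.
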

Combining Theorems \ref{L} and \ref{irreducible}, we can attach, up to the even-depth ramified ambiguity described above, an irreducible representation $\Pi(L,\theta)$ of $G(K)$ to a quadratic extension $L/K$ and a character $\theta:L^*/K^*\to\CC^*$ satisfying $\theta\neq\otheta$. Replacing $\theta$ by $\otheta$ does not change the resulting representation. We do not prove here that no further identifications occur after induction to $G(K)$; see Section~\ref{quest}.
\subsection{Remarks}
For $G(F)$, the restriction of every special irreducible representation to $P(F)$ is irreducible, and all these restrictions are isomorphic. Indeed, $P(F)$ has a unique irreducible representation of dimension greater than $1$.
This statement fails after replacing $F$ by $K$. One can construct an irreducible representation $\Upsilon$ of $P(K)$ which is a direct analogue of the unique irreducible cuspidal representation of $P(F)$, but $\Upsilon$ does not extend to $G(K)$. We expect that $\Pi(\pi)|_{P(K)}$ depends on $\pi$; more precisely, we expect that representations of different depths have nonisomorphic restrictions. Nevertheless, all representations $\Pi(\pi)$ constructed from special representations of $G(O)$ or $G'(O)$ admit a $\ZZ$-filtration whose associated graded has a natural $P(K)$-action and is isomorphic to $\Upsilon$.
We also expect that every special representation of $G(K)$ is induced from a special representation of $G(O)$ or $G'(O)$, but we do not presently know how to prove this.
Finally, $G(K)$ admits a natural central extension by $F^*$, and one can study representations of this extension at a prescribed level, as in \cite{GK}. The choice of level appears not to affect the special representations considered here, so throughout this paper we work at level zero.
\subsection{A remark about terminology}
We use the word ``Claim'' for statements for which we omit a proof, either because we give a reference or because the verification is straightforward. Statements for which we provide proofs are called theorems, propositions, lemmas, or corollaries.
We also use the phrase ``algebraic $F$-variety $Y$'' for the set of $F$-points $\un Y(F)$ of an algebraic variety $\un Y$ over $F$. Such a set carries its natural topology.
\subsection{Organization of the paper}
Section \ref{deff} contains basic definitions. Section \ref{reptheory} reviews classical representation-theoretic results. Section \ref{PO} classifies smooth irreducible representations of $P(O)$. Sections \ref{GO} and \ref{G'O} classify special representations of $G(O)$ and $G'(O)$. The subsequent part of the paper studies representations of $P(K)$ and the representations of $G(K)$ induced from these special representations.
\subsection{Acknowledgments}
We thank Michael Finkelberg for very helpful discussions. During the preparation of this paper, D.K. was partially supported by ERC grant no.~101142781.
\section{Definitions and notation}\label{deff}
\subsection{Quadratic extensions and tori over general fields}
Let $K$ be a field of characteristic different from $2$, and let $L/K$ be a quadratic extension.
\begin{definition}
\begin{enumerate}
\item Put $T^L=L^*/K^*$. This is naturally the group of $K$-points of an algebraic $K$-torus.
\item Put $L^1=\{l\in L^*:N_{L/K}(l)=1\}$.
\end{enumerate}
\end{definition}
\begin{lemma}\label{j}
\begin{enumerate}
\item Every quadratic extension $L/K$ has the form $L=K(\sqrt a)$ for some $a\in K^*\sm(K^*)^2$.
\item The map $T^L\to L^1$ given by $l\mapsto l/\bar l$ is an isomorphism.
\end{enumerate}
\end{lemma}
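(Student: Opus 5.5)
Part (1) is the standard completing-the-square argument, which needs $\operatorname{char}K\neq 2$. I will choose any $x\in L\sm K$. Since $[L:K]=2$, the element $x$ satisfies a monic quadratic $x^2+bx+c=0$ with $b,c\in K$. Setting $y=x+b/2$ gives $y^2=a$ with $a=b^2/4-c\in K$. Now $y\notin K$ because $x\notin K$, so $L=K(y)$. Moreover $a\neq0$, since otherwise $y=0$. Finally $a\notin (K^*)^2$: if $a=k^2$ with $k\in K$, then $(y-k)(y+k)=0$, which would put $y\in K$.

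For part (2), I first check that the map $\phi:l\mapsto l/\bar l$ is well defined. Its image lies in $L^1$ because $N_{L/K}(l/\bar l)=(l/\bar l)(\bar l/l)=1$. It factors through $T^L=L^*/K^*$ because $\bar k=k$ for $k\in K^*$. It is a homomorphism because Galois conjugation is multiplicative.

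Injectivity: if $l/\bar l=1$, then $l$ is fixed by $\mathrm{Gal}(L/K)$, so $l\in K^*$.

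Surjectivity is Hilbert's Theorem 90 for the cyclic extension $L/K$, and I will prove it directly. Take $u\in L^1$, so $u\bar u=1$, and look for $l$ of the form $l=v+u\bar v$. Then
\[
u\,\bar l=u\bar v+u\bar u v=u\bar v+v=l ,
\]
so $l/\bar l=u$ as soon as $l\neq0$. It remains to choose $v$ with $l\neq 0$. Using $L=K(\sqrt a)$ from part (1), I try $v=1$ and $v=\sqrt a$. If both choices give zero, then $1+u=0$ and $\sqrt a-u\sqrt a=0$. The second equation gives $u=1$, and then the first gives $2=0$, which is impossible since $\operatorname{char}K\neq2$.

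The only place requiring care is this nonvanishing of $l$ in the surjectivity step. Everything else is formal.
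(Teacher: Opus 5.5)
Your proof is correct and follows the same route as the paper: (1) is the standard completing-the-square classification of quadratic extensions in characteristic $\neq 2$, and for (2) you check well-definedness, identify the kernel as $K^*$, and use Hilbert's Theorem~90 for surjectivity. The only difference is that you prove Hilbert~90 directly in the quadratic case via $l=v+u\bar v$ (with the choice $v\in\{1,\sqrt a\}$ making $l\neq0$), where the paper simply cites it.
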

\begin{proof}
The first assertion is the usual classification of quadratic extensions in characteristic different from $2$. For the second, the displayed map is well defined on $L^*/K^*$. Its kernel is $K^*$, and Hilbert's Theorem~90 shows that every element of $L^1$ is of the form $l/\bar l$.
\end{proof}
\begin{remark}\label{jj}
We use Lemma \ref{j}(2) to identify $T^L$ with $L^1$.
\end{remark}
\begin{definition}\label{G-K-def}
\begin{enumerate}
\item $G=\PGL(2)$, and $P\subset G$ is the Borel subgroup of upper triangular matrices.
\item For $a\in K^*$, put $\widehat a=\begin{pmatrix}a&0\\0&1\end{pmatrix}\in P(K)$.
\item For $b\in K$, put $\widetilde b=\begin{pmatrix}1&b\\0&1\end{pmatrix}\in P(K)$.
\item The Lie algebra of $G$ is denoted by $\fg$.
\end{enumerate}
\end{definition}
\subsection{Laurent power series}
From now on, assume that $K=F((t))$, where $F$ is a field of characteristic different from $2$.
\begin{definition}\label{intf}
\begin{enumerate}
\item $O=F[[t]]$ and $\fm=tO$.
\item $O_m=O/\fm^{m+1}$ and $O_m(n)=\fm^n/\fm^{m+1}$ for $n\leq m$.
\item $O^*(n)=1+t^nO$ for $n\geq1$.
\item $v:K^*\to\ZZ$ is the valuation normalized by $v(t)=1$.
\end{enumerate}
\end{definition}
\subsubsection{Quadratic extensions}
\begin{definition}
\begin{enumerate}
\item A quadratic extension $K(\sqrt a)/K$ is unramified if $v(a)$ is even and ramified if $v(a)$ is odd.
\item $L^1(1)=\{l\in L^1:v_L(l-1)\geq1\}$, where $v_L$ is the normalized valuation on $L$.
\end{enumerate}
\end{definition}
\begin{claim}\label{quad-LK}
\begin{enumerate}
\item Every unramified quadratic extension $L/K$ has the form $E((t))$, where $E=F(\sqrt a)$ for some $a\in F^*\sm(F^*)^2$. In this case $L^1=E^1\times L^1(1)$.
\item Every ramified quadratic extension $L/K$ has the form $K(\sqrt a)$ with $v(a)=1$, and $L^1=\{\pm1\}\times L^1(1)$.
\end{enumerate}
\end{claim}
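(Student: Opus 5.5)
We argue directly from the structure of $K=F((t))$, with $F$ of characteristic different from $2$. The plan has three steps: classify the square classes of $K^*$, then split $L^1$ according to the reduction map, and finally identify the kernel of that map.

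\emph{Square classes.} Every $a\in K^*$ can be written uniquely as $a=t^{v(a)}a_0u$ with $a_0\in F^*$ and $u\in O^*(1)$. Since $2$ is invertible in $F$, Hensel's lemma (or the binomial series for $(1+x)^{1/2}$) shows that every element of $O^*(1)$ is a square. Hence $K^*/(K^*)^2\cong \ZZ/2\ZZ\times F^*/(F^*)^2$, the first factor recording $v(a)\bmod 2$. By Lemma \ref{j}(1), $L=K(\sqrt a)$, and we may replace $a$ by $t^{v(a)\bmod 2}a_0$.

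\emph{The unramified case.} Here $v(a)$ is even, so $a$ may be taken in $F^*\sm(F^*)^2$. Then $E=F(\sqrt a)$ is a quadratic field and $K(\sqrt a)=E\otimes_F F((t))$. Because $E/F$ is finite, $E\otimes_F F((t))=E((t))$, so $L=E((t))$. In this field $t$ remains a uniformizer and the residue field is $E$.

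Reduction mod $t$ gives a map $L^1\cap O_L^*\to E^1$. Every element of $L^1$ is a unit, since $2v_L(l)=v_L(N_{L/K}(l))=0$. The map is split by the inclusion $E^1\subset L^1$, because Galois conjugation on $E((t))$ acts coefficientwise. Its kernel is $L^1(1)$. As $L^1$ is abelian, this gives $L^1=E^1\times L^1(1)$.

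\emph{The ramified case.} Here $v(a)$ is odd, so we may take $a=ta_0$ with $v(a)=1$. Then $\sqrt a$ is a uniformizer of $L$, the residue field of $L$ is $F$, and $v_L=2v$ on $K$. For $l\in L^1$ we have $v_L(l)=0$; write $l\equiv c \pmod{\sqrt a}$ with $c\in F^*$. Conjugation sends $\sqrt a\mapsto-\sqrt a$, so it acts trivially on the residue field, and $N(l)\equiv c^2$. The condition $N(l)=1$ therefore forces $c=\pm1$. Reduction thus gives a homomorphism $L^1\to\{\pm1\}$, split by $\pm1\in L^1$, with kernel $L^1(1)$. This gives $L^1=\{\pm1\}\times L^1(1)$.

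The only point needing care is that the kernel of reduction is exactly $L^1(1)$ as defined, namely $\{l\in L^1: v_L(l-1)\ge 1\}$. This holds by definition. No step is a genuine obstacle; the one substantive input is that $1+tO\subset (K^*)^2$, and this is where the assumption $\mathrm{char}\,F\neq2$ is used.
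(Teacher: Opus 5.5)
Your proof is correct and complete. The paper records this as a Claim and omits the verification as routine, so there is no proof to compare against; your one substantive input, that $1+tO\subset(K^*)^2$ and hence $K^*/(K^*)^2\simeq\ZZ/2\ZZ\times F^*/(F^*)^2$, is exactly the computation the paper uses in the proof of Lemma \ref{tori}. The remaining steps are the standard check one would expect: norm-one elements are units, reduction to the residue field $E$ (resp.\ $F$) turns the norm into $N_{E/F}$ (resp.\ squaring), and constants split the reduction map.
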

\begin{definition}\label{G-O-def}
\begin{enumerate}
\item For $g\in G(K)$, let $v(g)\in\ZZ_{\leq0}$ be the largest integer $r$ such that $\Ad(g)(\fg(O))\subset t^r\fg(O)$.
\item Let $I\subset G(O)$ be the Iwahori subgroup represented by matrices $\begin{pmatrix}a&b\\c&d\end{pmatrix}$ with $c\in\fm$.
\item Put $G'(O)=\ZZ/2\ZZ\ltimes I$, where the nontrivial element is represented by $s=\begin{pmatrix}0&1\\t&0\end{pmatrix}$.
\item A subset $A\subset G(K)$ is bounded if $v(A)$ is bounded below.
\end{enumerate}
\end{definition}
\begin{claim}
The group $G'(O)$ is the normalizer of $I$ in $G(K)$.
\end{claim}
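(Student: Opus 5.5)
The plan is to use the action of $G(K)=\PGL_2(K)$ on the Bruhat--Tits tree $X$ of $K$. The vertices of $X$ are homothety classes $[\Lambda]$ of $O$-lattices $\Lambda\subset K^2$. For $\Lambda_1=O\oplus O$ and $\Lambda_2=O\oplus tO$, the stabilizer of $[\Lambda_1]$ in $G(K)$ is $G(O)$. The subgroup $I$ of Definition \ref{G-O-def} is exactly the common stabilizer of $[\Lambda_1]$ and $[\Lambda_2]$: an element of $G(O)$ preserves $\Lambda_2$ if and only if its lower-left entry lies in $\fm$. So $I$ is the pointwise stabilizer of the edge $e=\{[\Lambda_1],[\Lambda_2]\}$.

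\textbf{Step 1 (the easy inclusion).} I will check that $G'(O)\subset N_{G(K)}(I)$. Since $I$ normalizes itself, it suffices to treat $s$. I note that $s\Lambda_1=\Lambda_2$ and $s\Lambda_2=t\Lambda_1$, so $s$ swaps the two endpoints of $e$. Hence $s$ normalizes their common stabilizer, which is $I$. One can also verify $sIs^{-1}=I$ directly by conjugating a matrix $\begin{pmatrix}a&b\\c&d\end{pmatrix}$. Moreover $s^2=t\cdot\Id$ is trivial in $\PGL_2$, and $s\notin I$ because $s$ moves $[\Lambda_1]$. Therefore $\langle s\rangle\ltimes I$ is a subgroup of $N(I)$ of the stated shape.

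\textbf{Step 2 (the fixed-point set of $I$).} I will show that the fixed-point set $X^I$ of $I$ acting on $X$ is exactly the closed edge $e$. The neighbours of $[\Lambda_1]$ correspond to lines in $F^2$, and $I$ acts on them through its reduction modulo $\fm$, which is the Borel subgroup $P(F)$. The group $P(F)$ fixes only the line giving $[\Lambda_2]$ on $\mathbb P^1(F)$. Similarly, the neighbours of $[\Lambda_2]$ form a $\mathbb P^1(F)$ on which $I$ acts through a Borel subgroup, and the only fixed line there gives $[\Lambda_1]$. One can see this for both vertices at once by conjugating with $s$. Since the fixed-point set of any group acting on a tree is a subtree, and we have shown it contains no vertex adjacent to $e$ outside $e$, we get $X^I=e$.

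\textbf{Step 3 (the reverse inclusion).} Let $g\in N(I)$. Then $gX^I=X^{gIg^{-1}}=X^I$, so $g$ stabilizes $e$. If $g$ swaps the two endpoints of $e$, then $sg$ fixes both of them, so it suffices to show that an element $h$ fixing both endpoints lies in $I$. Such an $h$ fixes $[\Lambda_1]$, so $h\in G(O)$. It also preserves $[\Lambda_2]$, and since $h\in G(O)$ it actually preserves $\Lambda_2$ itself, because the only homotheties carrying $\Lambda_2$ to a lattice between $t\Lambda_1$ and $\Lambda_1$ are by units. Hence $h\in I$, and $g\in I\cup sI=G'(O)$.

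The main point requiring care is Step 2, and in particular working in $\PGL_2$ rather than $\mathrm{GL}_2$: stabilizers of vertices are taken up to scalars, so I need to check that the stabilizer of $[\Lambda_1]$ is exactly $G(O)$. This holds because $g\Lambda_1=\lambda\Lambda_1$ forces $\lambda^{-1}g\in\mathrm{GL}_2(O)$.
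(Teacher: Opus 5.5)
Your argument is correct and complete. Each step holds: $I$ is the pointwise stabilizer of the edge $e=\{[\Lambda_1],[\Lambda_2]\}$; $s\Lambda_1=\Lambda_2$ and $s\Lambda_2=t\Lambda_1$; $X^I=e$ follows from the Borel action on $\mathbb P^1(F)$ together with convexity of fixed-point sets in a tree; and the swapping case reduces to the fixing case after multiplying by $s$. You also handle the $\PGL_2$ homothety issue correctly at both vertices.

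The paper gives no proof of this Claim. It is one of the statements whose verification is omitted as standard Bruhat--Tits theory, so there is no competing argument to compare against. The one place the paper implicitly relies on it is the proof of Claim~\ref{imb}. There $G'(O)$ is described as the normalizer of the lattice chain $O^2\supset Oe_1\oplus tOe_2$, which is exactly your edge-stabilizer picture.

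Your route has two advantages. It proves the slightly stronger statement $N_{G(K)}(I)=\St_{G(K)}(e)$ directly. It also never uses local finiteness of the tree, which matters here because the residue field of $K$ is the infinite field $F$.

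An alternative would use the Iwahori--Bruhat decomposition $G(K)=\bigsqcup_w IwI$ over the extended affine Weyl group. One observes that $g\in N_{G(K)}(I)$ forces $IgI=gI$, and this happens only for the length-zero elements $1$ and $s$. Your tree argument avoids having to set up that decomposition.
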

\begin{remark}
We regard $G(O)$ and $G'(O)$ as groups of $F$-points of pro-algebraic groups over $F$.
\end{remark}
\begin{lemma}\label{BT}
\begin{enumerate}
\item Every bounded subgroup of $G(K)$ is conjugate to a subgroup of $G(O)$ or to a subgroup of $G'(O)$.
\item The product maps $P(K)\times G(O)\to G(K)$ and $P(K)\times G'(O)\to G(K)$ are surjective.
\end{enumerate}
\end{lemma}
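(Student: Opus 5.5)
We deduce both assertions of Lemma \ref{BT} from the Bruhat--Tits tree $X$ of $G(K)=\PGL_2(K)$. The vertices of $X$ are homothety classes of $O$-lattices in $K^2$; they are joined by an edge when representatives $\Lambda\supset\Lambda'\supset t\Lambda$ exist. The group $G(K)$ acts on $X$ by isometries. The stabilizer of the standard vertex $v_0=[O^2]$ is $G(O)$. The stabilizer of the standard edge $\{[O^2],[O\oplus tO]\}$ is the normalizer of $I$, which by the preceding Claim is $G'(O)=\ZZ/2\ZZ\ltimes I$; here $s$ swaps the two endpoints. The action on vertices is transitive, since $\GL_2(K)$ acts transitively on lattices. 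The action on edges is also transitive: the stabilizer $G(O)$ acts on the neighbours of $v_0$, which form $\mP^1(O/\fm)=\mP^1(F)$, through $\PGL_2(F)$, and this action is transitive.

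For (1), we first translate boundedness into a geometric statement. We check that $v(g)$ controls the distance $d(v_0,gv_0)$ in $X$. By the Cartan decomposition $g=k_1\,\mathrm{diag}(t^n,1)\,k_2$ with $k_i\in G(O)$ and $n\ge 0$. Conjugating $\fg(O)$ by $\mathrm{diag}(t^n,1)$ multiplies $e$ by $t^{n}$ and $f$ by $t^{-n}$. Hence $v(g)=-n=-d(v_0,gv_0)$, the quantity being bi-$G(O)$-invariant because $\Ad(G(O))$ preserves $\fg(O)$. It follows that a subgroup $A$ is bounded if and only if the orbit $Av_0$ is bounded in $X$.

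We then apply the Bruhat--Tits fixed point lemma for trees. A group acting on a tree with a bounded orbit fixes the centre of the minimal subtree containing that orbit. That centre is a vertex or the midpoint of an edge. More elementarily, one can argue by induction on the diameter, removing the leaves of the finite convex hull of the orbit, which is $A$-stable. If $A$ fixes a vertex $w=hv_0$, then $A\subset hG(O)h^{-1}$. If $A$ fixes the midpoint of an edge, then $A$ stabilizes that edge, and by edge-transitivity $A$ is conjugate into $G'(O)$.

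For (2), we need $G(K)=P(K)G(O)$ and $G(K)=P(K)G'(O)$. The first is the Iwasawa decomposition. Given $g$, we use a column operation in $G(O)$ on the bottom row $(c,d)$: if $v(c)<v(d)$ we swap the columns, and we then clear $c$ by right-multiplying by $\begin{pmatrix}1&0\\x&1\end{pmatrix}$ with $x=-c/d\in O$. This makes $g$ upper triangular, so $g\in P(K)G(O)$. Equivalently, $P(K)$ acts transitively on vertices: $\widehat a$ and $\widetilde b$ move $[O^2]$ to $[t^iO\oplus t^jO]$-type lattices and then to all lattices via Hermite normal form.

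For the second decomposition, we show $G(O)\subset P(K)G'(O)$. It is enough to know that $G(O)=P(O)I\cup P(O)wI$ with $w=\begin{pmatrix}0&1\\1&0\end{pmatrix}$, which is the Bruhat decomposition reduced mod $t$, lifted using $G(O)(1)\subset I$. Then we use $w=\begin{pmatrix}t^{-1}&0\\0&1\end{pmatrix}s$ up to the centre, with $s\in G'(O)$ and the diagonal factor in $P(K)$. Thus $G(K)=P(K)G(O)\subset P(K)G'(O)$. Geometrically, $P(K)$ fixes the end $\infty$ and acts transitively on edges, since every edge is oriented towards $\infty$.

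The main obstacle is the identification of $v(g)$ with tree distance in (1). It requires care with $\PGL$ versus $\GL$ and with the scaling convention of $\Ad$ on $e,h,f$. Everything else is standard, along the lines of Bruhat--Tits or Serre's \emph{Trees}.
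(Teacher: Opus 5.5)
Your argument is correct and is essentially the paper's: (1) is the Bruhat--Tits fixed-point argument on the tree, which the paper only cites, and (2) is the Iwasawa decomposition together with $G(K)=P(K)I\cup P(K)wI$ and the observation that $w\in P(K)s$. Two harmless slips need fixing. First, in $\PGL_2$ one has $w=\widehat t\,s$: your factor $\mathrm{diag}(t^{-1},1)$ should be $\mathrm{diag}(t,1)$ on the left of $s$, or $\mathrm{diag}(t^{-1},1)$ on its right. Second, since the residue field $F$ of $O$ is infinite, the tree is not locally finite, so the convex hull of a bounded orbit need not be finite; your centre argument, or leaf-stripping of a bounded subtree, works regardless.
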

\begin{proof}
Part (1) follows from Bruhat--Tits theory; see \cite[Section~3.2]{BT}. The first assertion of part (2) is the Iwasawa decomposition. For the second, use the  decomposition
$$
G(K)=P(K)I\sqcup P(K)wI,
$$
where $w$ is a lift of the nontrivial Weyl element. We may take $w=s\in G'(O)$, and $I\subset G'(O)$, whence $G(K)=P(K)G'(O)$.
\end{proof}
\subsection{Tori in $G(O)$ and $G'(O)$}
Let $L/K$ be quadratic and let $O_L$ be its ring of integers. The regular representation of $L^*$ on the two-dimensional $K$-vector space $L$ gives an embedding $L^*\ho\operatorname{GL}_K(L)$ and hence an embedding $T^L\ho G(K)$ after choosing a $K$-basis of $L$.
If $L/K$ is unramified, an $O$-basis of $O_L$ identifies the regular representation of $O_L^*$ with a subgroup of $\operatorname{GL}_2(O)$. If $L/K$ is ramified, choose an $O$-basis adapted to the lattice chain $O_L\supset\mathfrak p_L$; equivalently, identify this chain with the standard chain $O^2\supset Oe_1\oplus tOe_2$. The regular representation then normalizes the standard chain and gives an embedding into $G'(O)$.
\begin{claim}\label{imb}
\begin{enumerate}
\item If $L/K$ is unramified, the resulting subgroup $j^L(T^L)$ lies in $G(O)$, and its $G(O)$-conjugacy class is independent of the chosen $O$-basis of $O_L$.
\item If $L/K$ is ramified, the subgroup obtained from an adapted basis lies in $G'(O)$, and its $G'(O)$-conjugacy class is independent of the adapted basis.
\end{enumerate}
\end{claim}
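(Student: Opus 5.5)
The plan is to treat the two parts of Claim \ref{imb} separately. In each part, membership in the relevant subgroup comes from a stabilizer computation, and independence of the basis comes from the fact that two bases differ by an element of the automorphism group of the lattice (or lattice chain).

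For part (1), assume $L/K$ is unramified. By Claim \ref{quad-LK}, $L=E((t))$ and $O_L=O_E[[t]]$, which is a free $O$-module of rank $2$. For $l\in L^*$, multiplication by $l$ is $K$-linear on $L$. Since $l=t^k u$ with $u\in O_L^*$ and $k=v_L(l)$, the scalar $t^k$ dies in $\PGL(2)$. It follows that the image of $l$ in $G(K)$ equals the image of $u$. Multiplication by $u$ preserves $O_L$ and is invertible on it, so it lies in $\operatorname{GL}_2(O)$ once we use an $O$-basis of $O_L$. Hence $j^L(T^L)=j^L(O_L^*/O^*)\subset G(O)$, where $G(O)$ is the image of $\operatorname{GL}_2(O)$, i.e.\ the stabilizer of the lattice class $[O_L]$. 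If we change to another $O$-basis of $O_L$, the matrices are conjugated by the change-of-basis matrix, which lies in $\operatorname{GL}_2(O)$. So the conjugacy class under $G(O)$ does not depend on the basis.

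For part (2), assume $L/K$ is ramified. Write $L=K(\sqrt a)$ with $v(a)=1$ and put $\varpi_L=\sqrt a$, so that $\mathfrak p_L=\varpi_L O_L$, $O_L=O\oplus O\varpi_L$ and $\mathfrak p_L=Ot\oplus O\varpi_L$ up to units. A basis is adapted when it identifies the chain $O_L\supset\mathfrak p_L$ with the standard chain $O^2\supset Oe_1\oplus tOe_2$. Up to reordering coordinates, the basis $(\varpi_L,1)$ does this: $\varpi_L\mapsto e_1$ and $1\mapsto e_2$, so $\mathfrak p_L=O\varpi_L\oplus tO\cdot 1$. Each $l\in L^*$ multiplies the chain $\{\mathfrak p_L^n\}$ into itself up to shift. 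Units $u\in O_L^*$ preserve both $O_L$ and $\mathfrak p_L$, so they land in the stabilizer of the standard chain, which is the Iwahori subgroup $I$ in $G(K)$. The element $\varpi_L$ sends $O_L$ to $\mathfrak p_L$ and $\mathfrak p_L$ to $tO_L$, so it normalizes the chain modulo scalars. By the earlier claim that $G'(O)$ is the normalizer of $I$ (equivalently, the stabilizer of the chain up to homothety), the image of $\varpi_L$ lies in $G'(O)$. One can also check directly that its matrix in the basis $(\varpi_L,1)$ is $\begin{pmatrix}0&a\\1&0\end{pmatrix}$. Since $a=t\cdot(\text{unit})$, this matrix is $s$ times a diagonal element of $I$, up to scalars and up to the harmless transpose convention. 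Because $L^*=\varpi_L^{\ZZ}O_L^*$, the whole of $T^L$ lands in $G'(O)$.

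For basis independence in part (2), note that two adapted bases differ by an element of $\operatorname{GL}_2(O)$ preserving $Oe_1\oplus tOe_2$, and that element lies in $I\subset G'(O)$. The main obstacle is pinning down the convention: which reordering matches the standard chain, so that the image of $\varpi_L$ really is of the form $s\cdot I$ rather than $s^{-1}$ or a transpose. We should settle this by writing the matrix explicitly as above. The Claim only asks for $G'(O)$-conjugacy, so any of these conventions gives the same class.
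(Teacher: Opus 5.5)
Your proposal is correct and takes the same route as the paper, whose proof consists only of the observation that two $O$-bases differ by an element of $\operatorname{GL}_2(O)$ and that two adapted identifications of the chain differ by an element of its stabilizer/normalizer in $G'(O)$; you additionally spell out the membership step, which the paper leaves to the text preceding the claim. Two small corrections: $O_L=E[[t]]$ (not $O_E[[t]]$), and in the column convention matching the paper's $I$ (lower-left entry in $tO$), multiplication by $\varpi_L$ in the basis $(\varpi_L,1)$ has matrix $\begin{pmatrix}0&1\\a&0\end{pmatrix}=s\cdot\operatorname{diag}(a/t,1)$, whereas the transpose you wrote normalizes the opposite Iwahori, so this settles the convention question you raised.
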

\begin{proof}
In the unramified case, two $O$-bases differ by an element of $\operatorname{GL}_2(O)$. In the ramified case, two identifications of the lattice chain $O_L\supset\mathfrak p_L$ with the standard lattice chain differ by an element of its normalizer, namely $G'(O)$.
\end{proof}
\begin{remark}\label{tor}
We henceforth regard $T^L$ as a subgroup of $G(O)$ in the unramified case and of $G'(O)$ in the ramified case.
\end{remark}
\subsection{Subgroups of $G(O)$ and $G'(O)$}
\begin{definition}\label{G-prime-filtration}
\begin{enumerate}
\item For $n\in\ZZ_{>0}$, let $G(O)(n)$ be the $n$-th congruence subgroup of $G(O)$.
\item Put $G(O)_n=G(O)/G(O)(n+1)$, and, for $r\leq n$, let $G(O)_n(r)$ be the image of $G(O)(r)$ in $G(O)_n$.
\item For $r\in\frac12\ZZ_{>0}$, let $G'(O)(r)$ be represented by matrices $\begin{pmatrix}a&b\\c&d\end{pmatrix}$ satisfying
$$
a-1,d-1\in\fm^{[r+1/2]},\qquad b\in\fm^{[r]},\qquad c\in\fm^{[r]+1},
$$
where $[r]$ is the integral part of $r$.
\item Put $G'(O)_r=G'(O)/G'(O)(r+1/2)$, and, for $s\leq r$, let $G'(O)_r(s)$ be the image of $G'(O)(s)$ in $G'(O)_r$.
\end{enumerate}
\end{definition}
\begin{remark}
The groups $G(O)_m$ and $G'(O)_m$ are algebraic $F$-groups.
\end{remark}
\begin{definition}
\begin{enumerate}
\item If $L/K$ is unramified, put $T^L(m)=T^L\cap G(O)(m)$.
\item If $L/K$ is ramified and $m>0$, put $T^L(m)=T^L\cap G'(O)(m-1/2)$.
\end{enumerate}
\end{definition}
\subsection{Local fields: notation}
From now on, $F$ is a local non-Archimedean field of odd residue characteristic.
\begin{definition}\label{ti-psi}
\begin{enumerate}
\item $O_F\subset F$ is the ring of integers.
\item $\varpi\in O_F$ is a uniformizer, $k=O_F/\varpi O_F$, and $q=|k|$.
\item $O_F^*(m)=1+\varpi^mO_F$.
\item $(O_F^*)_m=O_F^*/O_F^*(m+1)$.
\item For $m\geq n$, let $(O_F^*)_m(n)$ be the image of $O_F^*(n)$ in $(O_F^*)_m$.
\item Fix an additive character $\widetilde\psi:F\to\CC^*$ which is trivial on $\varpi O_F$ and nontrivial on $O_F$. Put $\widetilde\psi_n(x)=\widetilde\psi(\varpi^{-n}x)$.
\end{enumerate}
\end{definition}
\begin{remark}
The character $\widetilde\psi_n$ is trivial on $\varpi^{n+1}O_F$ and nontrivial on $\varpi^nO_F$. The set of such characters is a transitive $O_F^*$-space.
\end{remark}
\begin{definition}\label{psi}
\begin{enumerate}
\item Define $\psi:K\to\CC^*$ by
$$
\psi(f)=\widetilde\psi\left(\operatorname{Res}_{t=0}(f\,dt)\right).
$$
\item For $b\in K$, put $\psi_b(f)=\psi(bf)$.
\item For $n\in\ZZ$, put $\psi_n=\psi_{t^{-n-1}}$. Thus $\psi=\psi_{-1}$.
\end{enumerate}
\end{definition}
\begin{definition}\label{qext1}
Let $L/K$ be quadratic.
\begin{enumerate}
\item Let $\Xi^L$ be the set of smooth characters $\theta$ of $T^L$ such that $\theta\neq\overline\theta$.
\item Let $r$ be the least integer such that $\theta|_{T^L(r+1)}=1$. Define the depth $m(\theta)$ to be $r$ if $L/K$ is unramified and $r-\frac{1}{2}$ if $L/K$ is ramified.
\item Put $\Xi^L(m)=\{\theta\in\Xi^L:m(\theta)=m\}$.
\item Let $\bar\Xi^L(m)$ be the quotient of $\Xi^L(m)$ by $\theta\mapsto\theta^{-1}$.
\item Let $\ti\Xi^L(m)$ be the quotient of $\bar\Xi^L(m)$ by multiplication by quadratic characters of $T^L$.
\item Put $\D_L=\Hom(T^L,\ZZ/2\ZZ)$ and $\D_E=\Hom(T^E,\ZZ/2\ZZ)$.
\end{enumerate}
\end{definition}
\begin{lemma}\label{tori}
\begin{enumerate}
\item If $L/K$ is ramified, then $\D_L\simeq\ZZ/2\ZZ$.
\item If $L=E((t))$, then $\D_L\simeq\D_E$.
\item If $E/F$ is ramified, then $\D_E\simeq\ZZ/2\ZZ$.
\end{enumerate}
\end{lemma}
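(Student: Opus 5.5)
The plan is to compute $\D=\Hom(T,\ZZ/2\ZZ)$ in each case as the dual of $T/T^2$, using the structure of $T^L$ and $T^E$ from Claim \ref{quad-LK} and its classical analogue. The one tool I will use repeatedly is that a pro-$p$ group with $p$ odd is $2$-divisible. For the groups at hand this is elementary. Squaring is bijective on $1+\varpi O$ by Hensel, since $p\neq2$. In the same way, squaring is bijective on $L^1(1)$, and $L^1(1)$ is a projective limit of unipotent $F$-groups (or pro-$p$ groups in the $F$-case), on which $x\mapsto x^2$ is bijective because $2$ is invertible. Hence every homomorphism to $\ZZ/2\ZZ$ kills these subgroups, whether or not it is continuous. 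If $\D$ is meant to consist of continuous homomorphisms, the same conclusion follows.

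For part (1), $L/K$ is ramified, so by Claim \ref{quad-LK}(2) we have $T^L\simeq L^1=\{\pm1\}\times L^1(1)$. Since $L^1(1)$ is $2$-divisible, we get $T^L/(T^L)^2\simeq\{\pm1\}$, and so $\D_L\simeq\ZZ/2\ZZ$.

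For part (2), take $L=E((t))$. Claim \ref{quad-LK}(1) gives $L^1=E^1\times L^1(1)$, and again $L^1(1)$ is $2$-divisible. Therefore restriction gives $\D_L\simeq\Hom(E^1,\ZZ/2\ZZ)$. By Lemma \ref{j}(2), applied to the quadratic extension $E/F$, we have $E^1\simeq T^E$, so $\D_L\simeq\D_E$. If $F$ is nonarchimedean, $E^1$ is profinite and continuity is automatic on the quotient by squares, which is finite; I will note this.

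For part (3), $E/F$ is ramified. Write $E=F(\sqrt{a})$ with $a=\varpi u$, which uses $p\neq2$. The same argument as Claim \ref{quad-LK}(2) gives $E^1=\{\pm1\}\times E^1(1)$: the norm-one elements are units, and reducing to the residue field $k_E=k$ forces $l\equiv\bar l$, so $l^2\equiv N(l)=1$ and hence $l\equiv\pm1$. Since $E^1(1)=E^1\cap(1+\mathfrak p_E)$ is pro-$p$ with $p$ odd, it is $2$-divisible. Hence $\D_E\simeq\ZZ/2\ZZ$.

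The main obstacle is the $2$-divisibility of $L^1(1)$ in the Laurent-series setting. To handle it I plan to use the logarithm and exponential on $1+\mathfrak p_L$, or equivalently the filtration by $L^1(n)$, whose graded pieces are $F$-vector spaces on which squaring is multiplication by $2$, combined with completeness. Every other step is bookkeeping.
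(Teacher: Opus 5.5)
Your proof is correct, but it takes a different route from the paper, though the core idea is the same.

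\textbf{The paper's route.} The paper never passes to $L^1$. It notes that every homomorphism $T^L\to\ZZ/2\ZZ$ factors through $L^*/K^*(L^*)^2$. It then writes $L^*$ and $K^*$ as (powers of a uniformizer) $\times$ (residue units) $\times$ (principal units) and kills the principal units by Hensel. In part (1), for $L=K(x)$ with $v(x^2)=1$, only the class of $x$ survives, since $x$ has odd $L$-valuation. In part (2), for $L=E((t))$, the quotient collapses to $E^*/F^*(E^*)^2$. Part (3) is the same computation over $F$.

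\textbf{Your route.} You go through Hilbert 90 ($T^L\simeq L^1$, Lemma \ref{j}(2)) and the product decompositions of $L^1$ in Claim \ref{quad-LK}, together with the analogue for ramified $E/F$, which you check yourself. The only remaining input is $2$-divisibility of the principal norm-one subgroup. The two generators match, since $x\mapsto x/\bar x=-1$.

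\textbf{Comparison.} The paper's version is a little more economical: it needs no norm-one computations and does not rely on Claim \ref{quad-LK}, which is stated without proof. Your version has two advantages.
\begin{enumerate}
\item Part (2) becomes a literal restriction isomorphism to the constant torus $E^1\subset L^1$. This is exactly how the paper later uses $\D_L\simeq\D_E$, extending $\delta$ trivially across $T^L(1)$.
\item The nontrivial character in parts (1) and (3) is visibly the sign of the reduction, consistent with Lemma \ref{ex}.
\end{enumerate}

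\textbf{The step you call the main obstacle.} It is easier than you expect. The group $1+\mathfrak p_L$ is uniquely $2$-divisible: use the binomial series $\sum_k\binom{1/2}{k}y^k$, whose coefficients lie in $\ZZ[1/2]$. Suppose $y^2=l\in L^1(1)$ with $y\in1+\mathfrak p_L$. Then $N_{L/K}(y)\in 1+tO$ and its square is $1$, so $N_{L/K}(y)=1$ because $-1\notin1+tO$. Use this, or your filtration argument, rather than $\log$ and $\exp$. Those are not defined when $\operatorname{char}F=p>0$, a case the paper's hypotheses allow.
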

\begin{proof}
Every homomorphism to $\ZZ/2\ZZ$ factors through the quotient by squares. Since $1+tO$ is uniquely $2$-divisible by Hensel's lemma,
$$
K^*/(K^*)^2\simeq\langle t\rangle/\langle t^2\rangle\times F^*/(F^*)^2.
$$
If $L/K$ is ramified, write $L=K(x)$ with $x^2\in K$ of valuation $1$. The unit quotient contributes no additional square class, and
$$
L^*/K^*(L^*)^2\simeq\ZZ/2\ZZ,
$$
which proves (1). If $L=E((t))$, then
$$
L^*/K^*(L^*)^2\simeq E^*/F^*(E^*)^2,
$$
which proves (2). Applying the same argument to a ramified quadratic extension $E/F$ proves (3).
\end{proof}
\subsection{Schwartz spaces of $F$-varieties}
Let $Y$ be the set of $F$-points of an algebraic variety, with its natural topology. Let $\mcS(Y)$ be the space of locally constant complex-valued functions with compact support on $Y$. If $U\subset Y$ is open and $Z=Y\sm U$, then extension by zero and restriction give an exact sequence
$$
0\longrightarrow\mcS(U)\longrightarrow\mcS(Y)\longrightarrow\mcS(Z)\longrightarrow0.
$$
\section{Some results from classical representation theory}\label{reptheory}
All groups in this section are algebraic $F$-groups, all homomorphisms are algebraic, and all representations are complex and smooth.
\subsection{Coinvariants}
\begin{definition}\label{Vchi}
\begin{enumerate}
\item A linear algebraic group $H$ over $F$ is \emph{almost unipotent} if $H/R_u(H)$ is $F$-anisotropic, equivalently if $(H/R_u(H))(F)$ is compact.
\item A representation $\bs:H\to\Aut(V)$ is smooth if the stabilizer of every $v\in V$ is open.
\item Let $\mcR(H)$ denote the category of smooth representations of $H$.
\item Let $\widehat H$ denote the set of isomorphism classes of smooth irreducible representations of $H$. So if $H$ is commutative then  $\widehat H$ denotes the set of continuous characters of $H$.
\item If $(\bs,V)$ is a representation of $H$ and $\bl$ is a character of $H$, let $V(\bl)$ be the span of
$$
\{\bs(h)v-\bl(h)v:h\in H,\ v\in V\},
$$
and put $V_\bl=V/V(\bl)$.
\item If $H$ is commutative, put
$$
S(\bs)=\{\bl\in\widehat H:V_\bl\neq0\},
$$
and call it the support of $\bs$.
\end{enumerate}
\end{definition}
\begin{claim}
If $H$ is almost unipotent, then the functor $V\mapsto V_\bl$ from $\mcR(H)$ to $\Vect$ is exact for every character $\bl$ of $H$.
\end{claim}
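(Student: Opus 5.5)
The plan is to reduce the claim to the case of a compact group, where averaging over Haar measure gives exactness directly. Let $U=R_u(H)$ and $M=H/U$. Since $V\mapsto V_\bl$ is right exact in any case (it is a cokernel construction, $V_\bl=\mathbb C_{\bl^{-1}}\otimes_{\mathbb C[H]}V$), it suffices to show that it preserves injections. Equivalently, if $W\subset V$ is a subrepresentation, we must show $W\cap V(\bl)=W(\bl)$.

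First, treat the unipotent radical. An $F$-unipotent group $U(F)$ is the union of an increasing family of compact open subgroups $U_1\subset U_2\subset\cdots$, for example the conjugates of a fixed compact open subgroup by a contracting torus element, or built inductively via a central series with vector-group quotients. For a compact open $U_j$ and a character $\bl$, define the averaging operator $e_j v=\int_{U_j}\bl(u)^{-1}\bs(u)v\,du$. Smoothness makes this a finite sum on each vector. The operator $e_j$ is a projector, and $\ker e_j=V(\bl|_{U_j})$. Because $\bs$ is smooth, $v\in V(\bl|_U)$ if and only if $e_jv=0$ for some $j$; this uses that $V(\bl|_U)$ is the union of the $V(\bl|_{U_j})$. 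Since $e_j$ commutes with restriction to $W$, we get $W\cap V(\bl|_U)=W(\bl|_U)$. So coinvariants with respect to $U$ are exact.

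Second, treat the reductive quotient. By the unipotent step, $V_{\bl|_U}$ is a smooth representation of $M(F)$, twisted by $\bl$, and $V_\bl=(V_{\bl|_U})_{\bl}$. By hypothesis $M(F)$ is compact. The same averaging argument over $M(F)$ with normalized Haar measure gives a projector whose kernel is exactly the $\bl$-coinvariant kernel. This uses that a smooth representation of a compact group is semisimple. Hence coinvariants for $M$ are exact, and composing the two exact functors proves the claim.

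One subtlety needs care: $H(F)\to M(F)$ need not be surjective. Its image is nevertheless a closed subgroup of the compact group $M(F)$, hence compact, and averaging works over that image, so this causes no problem.

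The main obstacle is the exhaustion of $U(F)$ by compact open subgroups, together with the identification $V(\bl|_U)=\bigcup_j\ker e_j$. For the inclusion $\supseteq$, note that $\bl(u)^{-1}\bs(u)v-v\in\ker e_j$ for $u\in U_j$. For the inclusion $\subseteq$, any finite sum $\sum_i\bigl(\bs(u_i)v_i-\bl(u_i)v_i\bigr)$ involves only finitely many $u_i$, all lying in some $U_j$, so the sum lies in $\ker e_j$. The remaining point is that $\bl|_U$ is a continuous character. This holds because $\bl$ is a smooth character of $H$.
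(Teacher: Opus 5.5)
Your proof is correct. The paper gives no argument of its own for this statement: it is one of its unproved Claims, treated as standard. The argument it implicitly relies on is a one-step version of yours; compare its justification of Theorem~\ref{coinvariants-exact} through groups that are direct limits of compact groups, and \cite{BZ}. In that version one notes that $H(F)$ is itself an increasing union of compact open subgroups. The averaging projectors over these subgroups preserve subrepresentations, and their kernels exhaust $V(\bl)$.

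Your two-stage route first exhausts $R_u(H)(F)$ by compact open subgroups and then averages over the compact quotient. This avoids constructing compact open subgroups of $H(F)$ itself. In the one-step route that construction needs something like a Levi decomposition $H=S\ltimes U$ together with $S(F)$-stable compact open subgroups of $U(F)$.

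The price is a topological point you assert without proof. You need $H(F)/U(F)$, with its quotient topology, to map homeomorphically onto a compact subgroup of $M(F)$, so that Haar averaging on it is legitimate for the smooth action. This holds because $H\to M$ is a smooth surjection, so $H(F)\to M(F)$ is open. Its image is therefore an open, hence closed, hence compact subgroup, and the induced continuous open bijection is a homeomorphism. In characteristic zero the map is even surjective, since $H^1(F,U)=0$.

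Two small remarks. A contracting torus element need not exist for a general unipotent group, so the central-series exhaustion is the one to use. And semisimplicity of smooth representations of compact groups is not needed: the identity $\ker e=V(\bl)$ for the averaging projector already does the work.
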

\subsection{Compact induction}
\begin{definition}\label{indu}
Let $H$ be an algebraic $F$-group, let $L\subset H$ be closed, and let $(\bs,V)$ be a smooth representation of $L$. The compact induction $\ind_L^H(\bs)$ is the space of locally constant functions $f:H\to V$ satisfying
\begin{enumerate}
\item $f(hl^{-1})=\bs(l)f(h)$ for $h\in H$ and $l\in L$;
\item $\supp(f)\subset CL$ for some compact subset $C\subset H$.
\end{enumerate}
The group $H$ acts by left translation.
\end{definition}
Equivalently, let $\pi:H\to H/L$ be the quotient map and let $\mcF_\bs$ be the locally constant sheaf associated with the homogeneous bundle $H\times^L V$. Then $\ind_L^H(\bs)$ is the space of compactly supported sections of $\mcF_\bs$.
\begin{proposition}\label{indHL}
Let $H$, $L$, and $\bs$ be as above. Let $H'\subset H$ be a closed subgroup such that $H'L=H$, and put $L'=H'\cap L$. Then
$$
\operatorname{Res}_{H'}^H\ind_L^H(\bs)\simeq\ind_{L'}^{H'}(\bs|_{L'}).
$$
\end{proposition}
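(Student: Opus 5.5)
The plan is to define the restriction map directly on functions and check that it is an isomorphism of $H'$-representations. For $f\in\ind_L^H(\bs)$ put $R(f)=f|_{H'}$.

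First, $R(f)$ lands in the right space. It is locally constant on $H'$. For $l'\in L'$ it satisfies $R(f)(h'l'^{-1})=\bs(l')R(f)(h')$. Its support is contained in $CL\cap H'$. If $C$ is compact, we need a compact $C'\subset H'$ with $CL\cap H'\subset C'L'$. Clearly $R$ is $H'$-equivariant, since both groups act by left translation.

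Injectivity of $R$ follows from $H=H'L$. Indeed $f(h'l)=\bs(l^{-1})f(h')$, so $f$ is determined by its values on $H'$.

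For surjectivity, take $\phi\in\ind_{L'}^{H'}(\bs|_{L'})$ and define $f(h'l)=\bs(l^{-1})\phi(h')$. This is well defined: if $h'_1l_1=h'_2l_2$, then $l'=h_2'^{-1}h'_1=l_2l_1^{-1}\in L'$. The equivariance of $\phi$ under $L'$ then gives $\bs(l_1^{-1})\phi(h'_1)=\bs(l_2^{-1})\phi(h'_2)$. The transformation law under $L$ holds by construction.

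Compact support is easy to arrange. If $\supp\phi\subset C'L'$, then $\supp f\subset C'L$, and $C'$ is compact in $H$.

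The remaining points are local constancy of $f$, and, in the other direction, the compactness statement for $R(f)$. Both amount to the natural continuous bijection
$$H'/L'\to H/L$$
being a homeomorphism. For $F$-points of algebraic groups this holds when the orbit map is open. I would derive this from the algebraic setting: $H'$ acts on $H/L$ with a single orbit on $F$-points, and orbit maps of algebraic group actions on $F$-points are open (via the implicit function theorem, or Bernstein–Zelevinsky's result on $\ell$-spaces with countable base). Equivalently, one can use that the multiplication map $H'\times L\to H$ is open.

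Granting openness, the sheaf description finishes the argument. The bundle $H\times^L V$ restricted along the homeomorphism $H'/L'\cong H/L$ is $H'\times^{L'}V$. Compactly supported locally constant sections then correspond to each other, which gives both local constancy and the support condition at once.

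The main obstacle is exactly this topological point, that $H'/L'\to H/L$ is a homeomorphism. Everything else is formal. In the applications in this paper $H=P(K)G(O)$, or similar, and the identification can also be checked by hand.
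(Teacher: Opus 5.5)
Your proof is correct and follows the paper's route: the paper also reduces the statement to the openness of the continuous bijection $H'/L'\to H/L$, and proves this openness (Lemma~\ref{open}) by the same Baire-category argument that underlies the Bernstein--Zelevinsky result you cite. That is the better of your two suggested justifications, because $F$ may have positive characteristic, and there the implicit-function-theorem argument for orbit maps is not directly available.

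Your closing remark is wrong, though. For $G(K)=P(K)G(O)$ the bijection $\Gr_P\to\Gr_G$ is continuous but not a homeomorphism (Remark~\ref{not-isom}). So the proposition does not apply there, and its conclusion actually fails (this is the point of Corollary~\ref{no-rest} and the discussion that follows it). The paper uses the proposition only for finite-dimensional groups such as $P(O)_m\subset G(O)_m$.
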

\begin{proof}
The natural map $H'/L'\to H/L$ is a continuous bijection. Both spaces are locally compact and Hausdorff. It remains to show that the map is open. The group $H'$ is a $\sigma$-compact, locally compact, totally disconnected group, and it has a basis of identity neighbourhoods consisting of compact open subgroups. The assertion therefore follows from Lemma \ref{open} below.

\begin{lemma}\label{open}
Let $A$ be a topological group acting transitively on a topological space $X, x\in X $, and  $\St_A(x)$ be the stabilizer of $x$ in $A$. Assume that
\begin{enumerate}
\item $X$ is locally compact and Hausdorff;
\item $A$ is $\sigma$-compact and the identity has a basis of neighbourhoods consisting of compact open subgroups.
\end{enumerate}
Then the orbit map $f:A\to X$, $a\mapsto a(x)$, is open. Equivalently, $A/\St_A(x)\to X$ is a homeomorphism.
\end{lemma}
\begin{proof}
It suffices to prove that $f(K)$ is open for every compact open subgroup $K\subset A$. Since $A$ is $\sigma$-compact and $K$ is open, the quotient $A/K$ is a $\sigma$-compact discrete space. Every compact subset of a discrete space is finite, so $A/K$ is countable. Choose representatives $a_1,a_2,\ldots$ for $A/K$. Then
$$
X=\bigcup_i f(a_iK),
$$
and every $f(a_iK)$ is compact. Since a locally compact Hausdorff space is a Baire space, one of these compact subsets has nonempty interior. As $f(a_iK)=a_i f(K)$, the set $f(K)$ has nonempty interior. The group $K$ acts transitively on $f(K)$, so every point of $f(K)$ is an interior point. Thus $f(K)$ is open.
\end{proof}
\end{proof}
\subsection{Induction from a character}
\begin{definition}
For a character $\bl:L\to\CC^*$, define
$$
\kk_\bl:\mcS(H)\longrightarrow\mcS(H)_{L,\bl},\qquad
\kk_\bl(f)(h)=\int_L\bl(l)f(hl)\,dl,
$$
where $dl$ is a Haar measure on $L$.
\end{definition}
\begin{claim}
The map $\kk_\bl$ identifies $\ind_L^H(\bl)$ with the representation of $H$ on the right $(L,\bl)$-coinvariants $\mcS(H)_{L,\bl}$, with $H$ acting by left translation.
\end{claim}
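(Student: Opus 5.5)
The plan is to write down the identification explicitly and verify it directly. We use a linear map from functions on $H$ to functions on $H$ with values in $\CC$, followed by the standard identification of induced representations with spaces of functions. Throughout we fix a left Haar measure $dl$ on $L$; we assume $L$ is unimodular, or else twist $\bl$ by the modulus character. The first step is to check that $\kk_\bl$ is well defined and lands in $\ind_L^H(\bl)$.

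For $f\in\mcS(H)$ and fixed $h$, the function $l\mapsto f(hl)$ is locally constant with compact support on $L$, since $hL$ is closed in $H$. So the integral converges, and $\kk_\bl(f)$ is locally constant: $f$ is invariant under right translation by a small compact open subgroup. Substituting $l\mapsto l_0 l$ gives
$$\kk_\bl(f)(hl_0)=\bl(l_0)^{-1}\kk_\bl(f)(h),$$
which is exactly condition (1) of Definition \ref{indu} with $\bs=\bl$. The support of $\kk_\bl(f)$ lies in $\supp(f)L$, which gives condition (2).

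The map is $H$-equivariant for left translations. It also kills $\mcS(H)(\bl)$, the span of $\rho(l)f-\bl(l)f$ with $\rho$ the right regular action, because the same change of variables shows
$$\kk_\bl(\rho(l_0)f)=\bl(l_0)^{-1}\kk_\bl(f).$$
Here one has to fix the sign convention for the coinvariants, i.e. use $\bl$ or $\bl^{-1}$ consistently with the definition of $\kk_\bl$. Hence $\kk_\bl$ factors through $\mcS(H)_{L,\bl}$.

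Next, surjectivity. Given $\phi\in\ind_L^H(\bl)$ supported in $CL$, we first pick a compact open subgroup $U$ of $H$ fixing $\phi$ on the right modulo $L$. We then cover $CL$ by finitely many sets $h_iUL$. Using local sections of $H\to H/L$ (available since the quotient map is open, as in Lemma \ref{open}), we write $\phi$ as a finite sum of functions supported on single sets $h_iUL$. For such a piece, take $f=c\cdot 1_{h_iU}\,\phi$ with a suitable normalizing constant $c$ coming from $\mathrm{vol}(U\cap L)$. Computing $\kk_\bl(f)$ recovers the piece, after checking that $\phi(h_iu l)\bl(l)$ is constant along the relevant fibres.

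The main step is injectivity on coinvariants: if $\kk_\bl(f)=0$ then $f\in\mcS(H)(\bl)$. I would use the fact that $L$ is a union of compact open subgroups $L_0$, all of whose characters are unitary and lie in the image of an averaging projector. First, replace $f$ by its average over a small compact open $L_0$ twisted by $\bl$; the difference lies in $\mcS(H)(\bl)$. Then decompose $f$ along a partition of $H$ into finitely many $U$-invariant pieces of the form $h_iUL$. Within one piece, use a local section to write the piece as $S\times L$, with $S$ compact open in $H/L$, and reduce to the statement for $H=L$. That statement is the standard exactness result: the kernel of $\mcS(L)\to\CC$, $f\mapsto\int\bl f$, equals $\mcS(L)(\bl)$. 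This holds because both are codimension-one subspaces and $\mcS(L)(\bl)$ is contained in the kernel. To see codimension one, translates of indicator functions of compact open subgroups span $\mcS(L)$, and modulo $\mcS(L)(\bl)$ each such function is a multiple of a fixed one. The local product structure, and the bookkeeping of measures across pieces, is where I expect the care to be needed, but no new ideas.
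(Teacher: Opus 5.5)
The paper gives no proof of this statement. It is recorded as a Claim, which by the paper's convention means the verification is omitted as standard, so there is no argument of the paper's to compare against.

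Your argument is that standard verification, and it is correct. The map $\kk_\bl$ lands in $\ind_L^H(\bl)$ and is $H$-equivariant. It kills the coinvariant subspace, provided one uses the sign convention and, for non-unimodular $L$, the modulus twist that you correctly flag. It is surjective via $f=\mathrm{vol}(U\cap L)^{-1}1_{h_iU}\phi$. Injectivity on coinvariants reduces to $\dim\mcS(L)_{L,\bl}=1$.

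Some details should be tightened, though none is a real gap.

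\emph{Compact open subgroups.} $L$ need not be a union of compact open subgroups; a split torus $F^*$ is not. You never use this, however. All you need is one compact open $L_0\subset L$ with $\bl|_{L_0}=1$, which exists because $\bl$ is continuous.

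\emph{The codimension-one count.} The fixed reference function must be $1_{L_0}$ for such an $L_0$. If $\bl$ is nontrivial on a compact open subgroup $L_1$, then $1_{L_1}$ already lies in $\mcS(L)(\bl)$. A general coset indicator is reduced to $1_{L_0}$ by splitting it into cosets of $L_0\cap L_1$.

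\emph{The product structure.} Writing $UL\cong S\times L$ requires a continuous section of $U\to U/(U\cap L)$. Such a section exists because $U$ is profinite, not merely because $H\to H/L$ is open. You can also bypass sections entirely:
\begin{enumerate}
\item Take $f$ left $U$-invariant. On each double coset $UxL$ it is a finite combination of the functions $1_{Uxl}=\rho(l^{-1})1_{Ux}$.
\item Hence $f|_{UxL}$ is congruent to $c\,1_{Ux}$ modulo the coinvariant subspace, and $\kk_\bl(f)(x)=c\,\kk_\bl(1_{Ux})(x)$.
\item If $\kk_\bl(1_{Ux})(x)\neq0$, then $\kk_\bl(f)=0$ forces $c=0$.
\item Otherwise $\bl$ is nontrivial on $L\cap x^{-1}Ux$, and $1_{Ux}$ itself lies in the coinvariant subspace.
\end{enumerate}

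\emph{Minor points.} Local constancy of $\kk_\bl(f)$ uses left, not right, invariance of $f$ under a small compact open subgroup. Two steps you include are unnecessary: the $U$-invariance of $\phi$ in the surjectivity step (the computation works for any compact open $U$ once the cover is refined to a disjoint one in the totally disconnected space $H/L$), and the preliminary twisted averaging in the injectivity step.
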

Let $Y$ be an $F$-variety with an action of $L$, and let $\bl$ be a character of $L$. For $y\in Y$, write $\St_L(y)$ for its stabilizer and put
$$
Y(\bl)=\{y\in Y:\bl|_{\St_L(y)}=1\}.
$$
\begin{lemma}\label{relevant-orbit}
Let $V(\bl)=\mcS(Y)_{L,\bl}$.
\begin{enumerate}
\item If $Y(\bl)=\emp$, then $V(\bl)=0$.
\item If $L$ is unipotent, $Y(\bl)\neq\emp$, and $L$ acts transitively on $Y(\bl)$, then $\dim V(\bl)=1$.
\end{enumerate}
\end{lemma}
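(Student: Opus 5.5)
The plan is to reduce both assertions to a statement about the support of compactly supported functions on $L$-orbits, using a stratification of $Y$ by orbits and the exactness of coinvariants. The exact sequence of Schwartz spaces and the exactness of $(L,\bl)$-coinvariants (Claim on almost unipotent groups; for (2) $L$ is unipotent, and for (1) we will argue directly) let us pass between $Y$ and its $L$-stable locally closed pieces.

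For (1), we first treat a single orbit. Let $O=L\cdot y\simeq L/\St_L(y)$, where the identification is a homeomorphism by Lemma \ref{open}. Then $\mcS(O)\simeq \ind_{\St_L(y)}^L(1)$. Taking $(L,\bl)$-coinvariants and applying Frobenius reciprocity in its coinvariant form gives $\mcS(O)_{L,\bl}\simeq \CC_{\St_L(y),\bl}$ (up to a modulus character, which is trivial on compact or unipotent stabilizers and can be absorbed in general). This space vanishes if $\bl|_{\St_L(y)}\neq 1$. Concretely, for $f\in\mcS(O)$ we pick $s\in\St_L(y)$ with $\bl(s)\neq1$ and smooth $f$ by averaging; in the algebraic setting this amounts to the stabilizer acting by a nontrivial character on a one-dimensional quotient.

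For general $Y$, since $L$ is algebraic, $Y$ has a finite stratification by $L$-stable locally closed subvarieties on each of which the orbits form a nice family (Rosenlicht/Bernstein–Zelevinsky constructibility). We then use the exact sequences $0\to\mcS(U)\to\mcS(Y)\to\mcS(Z)\to0$ together with right exactness of coinvariants, which always holds, to reduce to a single stratum. On that stratum we apply the Bernstein–Zelevinsky localization principle: an $(L,\bl)$-equivariant distribution on $Y$ is supported on $Y(\bl)$. Dually, the coinvariants vanish if $Y(\bl)=\emp$. This localization step is the \textbf{main obstacle}, since orbits need not be closed. I would invoke the Bernstein–Zelevinsky theorem on $l$-sheaves over a constructible action rather than reprove it.

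For (2), $L$ is unipotent, so its orbits on the affine variety $Y$ are closed (Kostant–Rosenlicht). Write $O_0=Y(\bl)$, a single orbit, hence closed, and let $U=Y\sm O_0$. Then $0\to\mcS(U)\to\mcS(Y)\to\mcS(O_0)\to0$ is exact, and coinvariants are exact since $L$ is unipotent. Moreover $U(\bl)=\emp$, so part (1) gives $\mcS(U)_{L,\bl}=0$. Hence $V(\bl)\simeq \mcS(O_0)_{L,\bl}\simeq \CC_{\St_L(y),\bl}=\CC$, because $\bl$ is trivial on the stabilizer. The map $\kk_\bl$ restricted to the orbit provides the explicit nonzero functional, so the dimension is exactly $1$.
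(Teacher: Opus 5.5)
Your treatment of (1) is essentially the paper's, which simply cites Bernstein--Zelevinsky (Theorems 6.9 and A of their Section 6). Your observation that right exactness of coinvariants already suffices for the vanishing is correct.

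The gap is in (2), at the step ``a single orbit, hence closed''. Kostant--Rosenlicht requires $Y$ to be affine, which the lemma does not assume. Without that hypothesis the relevant orbit need not be closed. Let $L=F$ act on $Y=\mathbb P^1(F)$ by translations and let $\bl$ be nontrivial. The point $\infty$ has stabilizer $L$ and is not relevant, while the finite points have trivial stabilizers. So $Y(\bl)=F$ is a single orbit, open but not closed, and the exact sequence $0\to\mcS(Y\sm O_0)\to\mcS(Y)\to\mcS(O_0)\to0$ you write down is not available.

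The repair is the paper's argument, which needs only that $X=Y(\bl)$ is locally closed. This follows from the same Bernstein--Zelevinsky constructibility result you already invoke. First apply (1) to the open set $Y\sm\overline X$; this gives $\mcS(Y)_{L,\bl}\simeq\mcS(\overline X)_{L,\bl}$, and right exactness suffices here. Then apply (1) to $\overline X\sm X$, which is closed in $\overline X$; this gives $\mcS(X)_{L,\bl}\simeq\mcS(\overline X)_{L,\bl}$. In this second step you genuinely need exactness of coinvariants for the unipotent group $L$, because $\mcS(X)$ is now the subobject.

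Separately, your remark that the modulus character ``can be absorbed in general'' is wrong. Let $L=F^*\ltimes F$ act on $Y=F$ by $x\mapsto ax+b$, with $h$ acting on $\mcS(Y)$ by $f\mapsto f(h^{-1}\,\cdot)$, and put $\bl(a,b)=|a|$. Every stabilizer is conjugate to $F^*$, on which $\bl$ is nontrivial, so $Y(\bl)=\emp$. Yet $f\mapsto\int_F f(x)\,dx$ is a nonzero functional on $\mcS(Y)_{L,\bl}$, since $\int_F f((x-b)/a)\,dx=|a|\int_F f$. So your single-orbit claim that $\mcS(O)_{L,\bl}$ vanishes whenever $\bl|_{\St_L(y)}\neq1$ fails without further hypotheses. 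Part (1) really requires the relevant modular characters to be trivial on stabilizers. This holds when $L$ is unipotent or almost unipotent, which is the setting relevant in the paper. The paper's bare citation of Bernstein--Zelevinsky carries the same unstated hypothesis, so this is a caveat about the statement rather than a defect peculiar to your argument. You should still impose the hypothesis explicitly instead of waving it away.
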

\begin{proof}
The first assertion follows from \cite[Section~6, Theorems~6.9 and A]{BZ}. For (2), put $X=Y(\bl)$. It is the unique relevant $L$-orbit and is locally closed. Since $(L,\bl)$-coinvariants are exact, the exact sequences associated with
$$
X\subset\overline X\subset Y
$$
show that
$$
\mcS(Y)_{L,\bl}\simeq\mcS(\overline X)_{L,\bl}\simeq\mcS(X)_{L,\bl}.
$$
The last space is one-dimensional because $L$ acts transitively on $X$ and $\bl$ is trivial on the stabilizer of a point of $X$.
\end{proof}
\subsection{Mackey theory}
\begin{definition}\label{Mac0}
Let $H$ be an algebraic $F$-group and let $U\subset H$ be a closed normal commutative unipotent subgroup.
\begin{enumerate}
\item Let $Z(H)$ be the center of $H$.
\item For $\pi\in\widehat H$, put $S_\pi(U)=S(\pi|_U)$.
\item For $\bl\in\widehat U$, let $H_\bl=\St_H(\bl)$, and let $\widetilde H_\bl\subset\widehat{H_\bl}$ consist of the irreducible representations $(\rho,V)$ such that
$$
\rho(u)=\bl(u)\Id_V\qquad(u\in U).
$$
\item If $(\pi,V)\in\widehat H$ and $\bl\in S_\pi(U)$, let $\rho(\bl,\pi)$ be the natural representation of $H_\bl$ on $V_\bl$.
\item If $\rho\in\widetilde H_\bl$, put $\pi(\bl,\rho)=\ind_{H_\bl}^H(\rho)$.
\end{enumerate}
\end{definition}
\begin{lemma}\label{MM}
Let $\pi$ be an irreducible representation of $H$.
\begin{enumerate}
\item The set $S_\pi(U)$ is $\Ad(H)$-invariant, and $H$ acts transitively on it.
\item For every $\bl\in\widehat U$ and every $\rho\in\widetilde H_\bl$, the representation $\pi(\bl,\rho)$ is irreducible.
\item For every $\bl\in S_\pi(U)$, the representation $\rho(\bl,\pi)$ belongs to $\widetilde H_\bl$ and is irreducible.
\item For $\rho\in\widetilde H_\bl$, one has $\rho(\bl,\pi(\bl,\rho))\simeq\rho$.
\end{enumerate}
\end{lemma}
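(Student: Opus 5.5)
This is the Clifford--Mackey theory for a normal commutative unipotent subgroup $U\subset H$. Throughout I use that $U$ is unipotent, hence almost unipotent, so by the Claim on coinvariants the functors $V\mapsto V_\bl$ are exact on $\mcR(U)$. The plan is to prove (1) and (3) first, then use Frobenius reciprocity to get (2) and (4).

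\emph{Step 1: part (1).} For $h\in H$, the operator $\pi(h)$ maps $V(\bl)$ onto $V(\bl^h)$, where $\bl^h(u)=\bl(h^{-1}uh)$. Hence it induces an isomorphism $V_\bl\simeq V_{\bl^h}$, and $S_\pi(U)$ is $\Ad(H)$-invariant.

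For transitivity, fix an orbit $\mcO\subset S_\pi(U)$. Let $V^{\mcO}\subset V$ be the kernel of the map $V\to\prod_{\mu\notin\mcO}V_\mu$.
\begin{itemize}
\item[(a)] $V^{\mcO}$ is $H$-stable, because the set of $\mu\notin\mcO$ is $H$-stable.
\item[(b)] $V^{\mcO}\neq0$. I would argue via the spectral description: smooth $U$-modules are the same as nondegenerate modules over $\mcS(\widehat U)$, that is, sheaves on $\widehat U$ (Bernstein--Zelevinsky). Then $V$ restricted to a small neighbourhood of a point of $\mcO$ gives nonzero vectors supported near $\mcO$.
\end{itemize}
This requires $\mcO$ to be locally closed, which I would justify by Bernstein--Zelevinsky constructibility for algebraic actions. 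This support-localization argument is the step I expect to be the main obstacle. Once it is in place, irreducibility forces $V=V^{\mcO}$, so $S_\pi(U)=\mcO$.

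\emph{Step 2: part (3).} The subgroup $H_\bl$ preserves $V(\bl)$, so it acts on $V_\bl$, and $U$ acts there by $\bl$. To show $V_\bl$ is irreducible, I would take a nonzero $H_\bl$-quotient $V_\bl\to W$. Frobenius reciprocity for compact induction, in the form $\Hom_H(\ind,\cdot)$, identifies $\pi$ as the induced object built from the sheaf on the orbit $\mcO\simeq H/H_\bl$ with fibre $V_\bl$. Concretely, the sheaf description from Step 1 gives
\[
V\simeq \ind_{H_\bl}^H(V_\bl),
\]
where restriction to a fibre recovers $V_\bl$. Any $H_\bl$-quotient $W$ then induces an $H$-quotient of $V$, which is proper unless $W=V_\bl$. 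Hence $V_\bl$ is irreducible and lies in $\widetilde H_\bl$.

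\emph{Step 3: parts (2) and (4).} Let $\rho\in\widetilde H_\bl$ and $\Pi=\ind_{H_\bl}^H\rho$, realized as compactly supported sections of a sheaf on $H/H_\bl\simeq\mcO$ (Definition~\ref{indu}). The group $U$ acts fibrewise, by the character $h\bl$ on the fibre over $h$. Computing $U$-coinvariants as in Lemma~\ref{relevant-orbit} (exactness plus localization), the only surviving coinvariant at $\bl$ is the fibre over the base point, so
\[
\Pi_\bl\simeq\rho,
\]
which is (4). Any nonzero $H$-subrepresentation $\Pi'\subset\Pi$ has $\Pi'_\bl\neq0$, by part (1) applied to its support, which lies in $\mcO$. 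By exactness, $\Pi'_\bl$ is then a nonzero $H_\bl$-subrepresentation of $\rho$, so $\Pi'_\bl=\rho$. Applying exactness again to $\Pi/\Pi'$ gives $(\Pi/\Pi')_\mu=0$ for every $\mu\in\mcO$, hence $\Pi/\Pi'=0$. This proves (2).
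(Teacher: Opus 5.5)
There is a gap in Step 1(b), exactly where you suspected it. Your $V^{\mathcal O}$ is the space of vectors whose support in $\widehat U$ lies inside $\mathcal O$; this support is a compact set that is in general not open. Cutting $V$ down by $\mathcal S(C)$ for a small compact open $C$ meeting $\mathcal O$ only produces vectors supported in $C$. Such vectors have nonzero stalks at points of $C\cap S_\pi(U)$ off $\mathcal O$, unless you already know that $\mathcal O$ is relatively open in $S_\pi(U)$, which is essentially the transitivity you want.

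Your argument for (b) never uses irreducibility, and without it (b) is false. Take $H=U=F$ acting on $\mathcal S(F)$ by translations. Every character lies in the support and the orbits are points, yet $V^{\{\mu\}}=0$, because the Fourier transform of a nonzero Schwartz function cannot vanish off a single point.

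The repair is to use irreducibility against $H$-stable \emph{open} sets rather than against vectors supported on $\mathcal O$. Pick $\lambda\in S_\pi(U)$, and put $\mathcal O=H\lambda$ and $Z=\overline{\mathcal O}\setminus\mathcal O$; $Z$ is closed and $H$-stable because $\mathcal O$ is locally closed. The subspaces $\mathcal S(\widehat U\setminus\overline{\mathcal O})\cdot V$ and $\mathcal S(\widehat U\setminus Z)\cdot V$ are $H$-stable. Since $V_\lambda\neq0$, irreducibility forces the first to be $0$ and the second to be $V$. Together these say $V_\mu=0$ for all $\mu\notin\mathcal O$, i.e.\ $S_\pi(U)=\mathcal O$ (equivalently $V^{\mathcal O}=V$).

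With this fixed, Steps 2 and 3 go through, provided you make explicit the two inputs they rest on.
\begin{enumerate}
\item The homeomorphism $H/H_\lambda\to\mathcal O$ (Lemma \ref{open}), together with local closedness. This lets $\mathcal S(\widehat U)$ act through $\mathcal S(\mathcal O)$, so stalks can be computed on $H/H_\lambda$.
\item The Bernstein--Zelevinsky equivalence between $H$-equivariant $l$-sheaves on $H/H_\lambda$ and smooth $H_\lambda$-modules, under which compactly supported sections give $\ind_{H_\lambda}^H$.
\end{enumerate}
Drop the appeal to Frobenius reciprocity in the form $\Hom_H(\ind,\cdot)$. Compact induction from a non-open closed subgroup is not left adjoint to restriction, and the sheaf identification $V\simeq\ind_{H_\lambda}^H(V_\lambda)$ is all you actually use. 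In Step 3, $\Pi'$ need not be irreducible, so what you use there is only the invariance half of (1) plus the fact that a nonzero smooth $U$-module has nonempty support.

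Once repaired, your route is a self-contained sheaf-theoretic Clifford--Mackey argument that works directly with the normal subgroup $U$. The paper instead writes $H$ as the quotient of the semidirect product $H\ltimes U$ by $Q=\{(u^{-1},u)\}$ via $(h,u)\mapsto hu$. It then quotes the Mackey theorem for semidirect products and only checks that the inducing data are trivial on $Q$. That is shorter, but the citation hides precisely the local-closedness issue on which your Step 1 stumbles.
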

\begin{proof}
We reduce the statement to the usual Mackey theorem for a semidirect product. Let
$$
\widetilde H=H\ltimes U
$$
with multiplication
$$
(h,u)(h',u')=(hh',h'^{-1}uh'\,u').
$$
Then
$$
q:\widetilde H\longrightarrow H,\qquad q(h,u)=hu,
$$
is a surjective homomorphism, with
$$
Q=\ker(q)=\{(u^{-1},u):u\in U\}.
$$
Inflation along $q$ is a bijection between representations of $H$ and representations of $\widetilde H$ which are trivial on $Q$.
Apply \cite[Theorem~2]{H} to the normal abelian subgroup $\{1\}\ltimes U$ of $\widetilde H$. Its character orbit is the ordinary $H$-orbit in $\widehat U$, and the stabilizer of $\bl$ is $H_\bl\ltimes U$. If $\rho\in\widetilde H_\bl$, define
$$
\widetilde\rho(h,u)=\rho(h)\bl(u),\qquad h\in H_\bl,\ u\in U.
$$
Because $H_\bl$ stabilizes $\bl$, this is a representation of $H_\bl\ltimes U$. Moreover,
$$
\widetilde\rho(u^{-1},u)=\rho(u^{-1})\bl(u)=1,
$$
so it is trivial on $Q\cap(H_\bl\ltimes U)$. The induced representation of $\widetilde H$ is therefore trivial on $Q$ and descends to $H$; under the quotient map $q$, the descended representation is precisely $\ind_{H_\bl}^H(\rho)$. Conversely, the little-group representation attached by Mackey theory to an irreducible representation of $\widetilde H$ trivial on $Q$ is itself trivial on the relevant intersection with $Q$, and hence descends to a representation in $\widetilde H_\bl$. The four assertions now follow from the corresponding assertions of Mackey theory.
\end{proof}
\subsubsection{Irreducibility of restrictions}
Let $H$ be an algebraic $F$-group, let $U\subset H$ be a closed normal commutative unipotent subgroup, let $\lambda\in\widehat U$, and put $H_\lambda=\St_H(\lambda)$. For a closed subgroup $H'\subset H$, put
$$
U'=U\cap H',\qquad \lambda'=\lambda|_{U'},\qquad K=H'\cap H_\lambda,
$$
and let $\Omega=H\lambda\subset\widehat U$. For $\rho\in\widetilde H_\lambda$, put
$$
\pi(\lambda,\rho)=\ind_{H_\lambda}^H(\rho).
$$
\begin{lemma}\label{3.12}
Assume that the restriction map
$$
r:\Omega\longrightarrow\widehat{U'}
$$
is injective. Then $\pi(\lambda,\rho)|_{H'}$ is irreducible if and only if
$$
H'H_\lambda=H
$$
and $\rho|_K$ is irreducible.
\end{lemma}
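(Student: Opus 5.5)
The plan is to reduce both directions to Proposition~\ref{indHL} and Lemma~\ref{MM}, the latter applied to the pair $U'\subset H'$ in place of $U\subset H$. This is legitimate because $U'=U\cap H'$ is a closed, normal, commutative, unipotent subgroup of $H'$. Put $\pi=\pi(\lambda,\rho)$. By Lemma~\ref{MM}, $\pi$ is irreducible, $S_\pi(U)=\Omega$, and $\pi_\lambda\simeq\rho$.

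The first step, used in both directions, is to identify the stabilizer of $\lambda'$ in $H'$. The map $r$ is $H'$-equivariant, since $H'$ normalizes $U$ and $U'$. If $h\in H'$ fixes $\lambda'$, then $r(h\lambda)=r(\lambda)$, so $h\lambda=\lambda$ by injectivity of $r$, and hence $h\in K$. Conversely $K$ fixes $\lambda'$. Thus
$$
H'_{\lambda'}=K.
$$
Moreover, $\rho|_K$ has $U'$ acting by the scalar character $\lambda'$.

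\emph{The direction ``if''.} Assume $H'H_\lambda=H$. Proposition~\ref{indHL}, applied with $L=H_\lambda$ and $L'=K$, gives
$$
\pi|_{H'}\simeq\ind_K^{H'}(\rho|_K).
$$
If $\rho|_K$ is irreducible, then $\rho|_K\in\widetilde{H'}_{\lambda'}$ by the first step. Lemma~\ref{MM}(2) for $(H',U')$ then shows that $\ind_{H'_{\lambda'}}^{H'}(\rho|_K)$ is irreducible.

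\emph{The direction ``only if''.} Assume $\pi|_{H'}$ is irreducible.
\begin{enumerate}
\item I first show $S(\pi|_{U'})=r(\Omega)$. Realize $\pi$ as compactly supported sections of the locally constant sheaf on $H/H_\lambda\simeq\Omega$ (the identification being a homeomorphism by Lemma~\ref{open}). In this model $u\in U$ acts on the fiber at $\mu\in\Omega$ by the scalar $\mu(u)$. Hence $U'$ acts on the fiber at $\mu$ by $r(\mu)$. The $(U',\nu)$-coinvariants then localize to the fiber $r^{-1}(\nu)$. This uses exactness of coinvariants for unipotent groups together with the argument of Lemma~\ref{relevant-orbit}, i.e. the standard Bernstein--Zelevinsky localization principle for $l$-sheaves. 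By injectivity of $r$, the fiber $r^{-1}(\nu)$ is a single point if $\nu\in r(\Omega)$ and is empty otherwise.
\item By Lemma~\ref{MM}(1) for $(H',U')$, the group $H'$ acts transitively on $S(\pi|_{U'})=r(\Omega)$.
\item Since $r$ is an $H'$-equivariant bijection onto its image, $H'$ acts transitively on $\Omega=H/H_\lambda$. This is exactly $H'H_\lambda=H$.
\item Proposition~\ref{indHL} again gives $\pi|_{H'}\simeq\ind_K^{H'}(\rho|_K)$. If $\rho|_K$ had a proper nonzero subrepresentation $\sigma$, then $\ind_K^{H'}(\sigma)$ would be a proper nonzero subrepresentation, by exactness of compact induction. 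This contradicts irreducibility of $\pi|_{H'}$.
\end{enumerate}

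The main obstacle is step (1), the computation of the $U'$-support of $\pi|_{U'}$. Its ingredients are the localization of coinvariants over the orbit $\Omega$ and the use of injectivity of $r$ to make the fibers at most one point. The remaining steps are formal consequences of Proposition~\ref{indHL} and Lemma~\ref{MM}.
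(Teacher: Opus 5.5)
Your proposal is correct and follows the paper's proof essentially step for step. Both arguments compute $S(\pi|_{U'})=r(\Omega)$ from the homogeneous-bundle model, use Lemma~\ref{MM}(1) for $(H',U')$ together with injectivity of $r$ to get $H'H_\lambda=H$, and identify $\St_{H'}(\lambda')=K$. Both then conclude with Proposition~\ref{indHL}, using Lemma~\ref{MM}(2) for the ``if'' direction and exactness of compact induction for the ``only if'' direction.

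There are two small differences. You justify the support computation through Bernstein--Zelevinsky localization, which the paper simply asserts. In step (4) you should also invoke, as the paper does, that compact induction sends nonzero representations to nonzero ones; exactness alone does not make $\ind_K^{H'}(\sigma)$ nonzero and proper.
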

\begin{proof}
Put $\Pi=\pi(\lambda,\rho)$. In the homogeneous-bundle realization of $\Pi$, the group $U$ acts on the fiber over $hH_\lambda$ by the character $h\lambda$. Hence
$$
S_\Pi(U)=\Omega,
$$
and after restriction to $U'$ one has
$$
S_{\Pi|_{H'}}(U')=r(\Omega).
$$
Suppose that $\Pi|_{H'}$ is irreducible. By Lemma \ref{MM}(1), $H'$ acts transitively on $r(\Omega)$. Since $r$ is $H'$-equivariant and injective, $H'$ acts transitively on $\Omega$. Thus $H'\lambda=H\lambda$, equivalently $H'H_\lambda=H$.
We claim that $\St_{H'}(\lambda')=K$. The inclusion $K\subset\St_{H'}(\lambda')$ is immediate. Conversely, if $h'\in H'$ stabilizes $\lambda'$, then
$$
r(h'\lambda)=h'r(\lambda)=r(\lambda),
$$
and injectivity of $r$ gives $h'\lambda=\lambda$. Hence $h'\in K$.
By Proposition \ref{indHL},
$$
\Pi|_{H'}\simeq\ind_K^{H'}(\rho|_K).
$$
If $\rho|_K$ were reducible, a nonzero proper $K$-subrepresentation would induce to a nonzero proper $H'$-subrepresentation. Here compact induction is exact and faithful: a nonzero vector gives a nonzero compactly supported local section of the associated homogeneous bundle. Thus $\rho|_K$ is irreducible.
Conversely, assume that $H'H_\lambda=H$ and that $\rho|_K$ is irreducible. Then $K=\St_{H'}(\lambda')$, and $U'$ acts on $\rho|_K$ by $\lambda'$. Proposition \ref{indHL} gives
$$
\Pi|_{H'}\simeq\ind_K^{H'}(\rho|_K),
$$
which is irreducible by Lemma \ref{MM}(2), applied to $(H',U')$.
\end{proof}
\subsection{Heisenberg groups}
\begin{definition}\label{dh}
Let $(W,\langle\ ,\ \rangle)$ be a finite-dimensional symplectic $F$-vector space, and put $\Sp(W)=\Sp(W,\langle\ ,\ \rangle)$.
\begin{enumerate}
\item Let $R_W=W\oplus F$ with multiplication
$$
(w',a')(w'',a'')=\left(w'+w'',a'+a''+\frac12\langle w',w''\rangle\right).
$$
We identify $Z(R_W)$ with $F$ by $a\mapsto(0,a)$. The group $\Sp(W)$ acts by
$$
(w,a)^g=(g(w),a).
$$
\item Let $\widehat R_W^{\widetilde\psi}$ be the set of isomorphism classes of irreducible smooth representations $(\pi,N)$ of $R_W$ such that
$$
\pi(0,a)=\widetilde\psi(a)\Id_N.
$$
\end{enumerate}
\end{definition}
\begin{claim}\label{hei}
\begin{enumerate}
\item There is, up to isomorphism, a unique irreducible smooth representation
$$
\tau_W:R_W\longrightarrow\Aut(V_W)
$$
with central character $\widetilde\psi$.
\item If $N$ is any smooth representation of $R_W$ on which $Z(R_W)$ acts by $\widetilde\psi$, then the evaluation map
$$
\operatorname{ev}_N:\Hom_{R_W}(V_W,N)\otimes V_W\longrightarrow N,
\qquad T\otimes v\longmapsto T(v),
$$
is an isomorphism.
\item There is a canonical central extension
$$
1\longrightarrow\{\pm1\}\longrightarrow\widetilde{\Sp(W)}\stackrel{\nu}{\longrightarrow}\Sp(W)\longrightarrow1
$$
and a metaplectic representation
$$
\kk_W:\widetilde{\Sp(W)}\longrightarrow\Aut(V_W)
$$
such that
$$
\tau_W(r^g)=\kk_W(\widetilde g)\tau_W(r)\kk_W(\widetilde g)^{-1}
$$
for $g\in\Sp(W)$, $r\in R_W$, and $\widetilde g\in\nu^{-1}(g)$.
\end{enumerate}
\end{claim}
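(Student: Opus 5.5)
Since this is a Claim, I would mostly assemble the standard Stone--von Neumann and Weil theory. I would arrange it so that the only inputs are Lemma \ref{MM}, the exactness of coinvariants for almost unipotent groups, and one citation to Weil for part (3).

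For (1), fix a polarization $W=X\oplus Y$ into complementary Lagrangians. Put $A=Y\oplus F\subset R_W$. It is a closed, normal, commutative, unipotent subgroup. The characters of $A$ that restrict to $\widetilde\psi$ on $Z(R_W)=F$ are $\bl_x(y,a)=\widetilde\psi(a+\langle x,y\rangle)$ for $x\in X$, since $\langle\ ,\ \rangle$ identifies $X$ with the Pontryagin dual of $Y$ through $\widetilde\psi$. Conjugation by $(x',0)$ sends $\bl_x$ to $\bl_{x\pm x'}$. Hence these characters form a single $R_W$-orbit, and the stabilizer $H_\bl$ of $\bl=\bl_0$ is $A$ itself. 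Then $\widetilde H_\bl=\{\bl\}$. By Lemma \ref{MM}(2),(3),(4), every irreducible representation with central character $\widetilde\psi$ has $S_\pi(A)$ equal to this orbit. Each such representation is therefore isomorphic to $\ind_A^{R_W}(\bl)$, and this induced representation is irreducible. I take $\tau_W$ to be this representation, realized on $V_W=\mcS(X)$ as the Schrödinger model.

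For (2), let $N$ be smooth with central character $\widetilde\psi$. By Frobenius reciprocity, $\Hom_{R_W}(V_W,N)$ is identified with the coinvariants $N_\bl$. This is the step I expect to be the main obstacle. I would make the Fourier-transform picture precise: the $Y$-action, twisted by $\widetilde\psi$, turns $N$ into a nondegenerate smooth module over $\mcS(\widehat Y)\simeq\mcS(X)$, i.e.\ a compactly supported smooth sheaf on $X$. The $X$-action is compatible with translation of $X$, so $N$ is an $X$-equivariant sheaf on a homogeneous $X$-space. Such a sheaf is canonically $\mcS(X)\otimes N_\bl$, because its stalk at $0$ is $N_\bl$. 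This identification is exactly $\operatorname{ev}_N$ after unwinding Frobenius reciprocity. Exactness of $N\mapsto N_\bl$ (the Claim on coinvariants) guarantees that no information is lost in passing to the stalk.

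For (3), take $g\in\Sp(W)$. The twisted representation $r\mapsto\tau_W(r^g)$ is irreducible and has central character $\widetilde\psi$, because $g$ fixes the center. By (1) it is isomorphic to $\tau_W$, and by (2) (Schur) the intertwiner $\kk(g)$ is unique up to a scalar. This gives a projective representation of $\Sp(W)$, that is, a central extension by $\CC^*$. The remaining and genuinely nontrivial point is to cut this extension down to $\{\pm1\}$. I would not reprove it: I would cite Weil, or Rao's normalized cocycle, whose values are Weil indices and become $\pm1$ after normalization. Pulling back along the resulting double cover gives $\widetilde{\Sp(W)}$ and $\kk_W$.
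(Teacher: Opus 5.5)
Your proposal is correct in substance, but it follows a different route from the paper. The paper cites Weil for (1) and (3). It proves (2) algebraically: the twisted Hecke algebra $\mcH_{\widetilde\psi}(R_W)$ acts on the Schr\"odinger model $\mcS(X)$ as the algebra of smooth finite-rank operators, which is a directed union of full matrix algebras. The structure of nondegenerate modules over matrix algebras then gives $\operatorname{ev}_N$ directly, with the Hom-space as multiplicity space.

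You instead derive (1) from the paper's own Lemma \ref{MM}, applied to the Lagrangian subgroup $A=Y\oplus F$. You prove (2) geometrically: Fourier transform along $Y$ makes $N$ an $l$-sheaf on $X\simeq\widehat Y$, the $X$-action makes it translation-equivariant and hence constant, so $N\simeq\mcS(X)\otimes N_\bl$.

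Your route identifies the multiplicity space concretely with the coinvariants $N_\bl$, which fits the coinvariant methods used later in the paper. It also reproves the uniqueness in (1) without needing the half of Mackey theory ($\pi\simeq\pi(\bl,\rho(\bl,\pi))$) that is not literally among the four assertions of Lemma \ref{MM}. The paper's route needs no sheaf theory and no separate Schur-type argument.

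One step must be restated. You justify $\Hom_{R_W}(V_W,N)\simeq N_\bl$ ``by Frobenius reciprocity,'' but that does not hold here.
\begin{itemize}
\item $V_W=\ind_A^{R_W}\bl$ is compactly induced from a closed, non-open subgroup, and $\ind_A^{R_W}$ is not left adjoint to restriction. Concretely, $\Hom_A(\bl_0,V_W)=0$, since a $\bl_0$-eigenvector in $\mcS(X)$ would be supported at the single point $0$. Yet $\End_{R_W}(V_W)=\CC$.
\item The adjunction that does hold is for full induction: $\Hom_{R_W}(N,\operatorname{Ind}_A^{R_W}\bl)\simeq(N_\bl)^*$. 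This gives the dual of the coinvariants, not the coinvariants.
\end{itemize}
The identification you want is instead an output of your sheaf step. Once $N\simeq V_W\otimes M$ with $M=N_\bl$, use two facts:
\begin{itemize}
\item $V_W$ is cyclic;
\item $\End_{R_W}(V_W)=\CC$: an endomorphism commuting with $Y$ is multiplication by a locally constant function, and commuting with $X$-translations forces that function to be constant.
\end{itemize}
Together these give $\Hom_{R_W}(V_W,V_W\otimes M)=M$, and under this identification $\operatorname{ev}_N$ is the identity. Similarly, what makes passing to the stalk at $0$ lossless is the $X$-equivariance of the sheaf, not the exactness of coinvariants.
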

\begin{proof}
Parts (1) and (3) are the Stone--von Neumann theorem and the construction of the Weil representation; see \cite{W}. We prove (2). Choose a polarization $W=X\oplus Y$. In the Schrödinger model, $V_W=\mcS(X)$, and
$$
\tau_W(x_0)f(x)=f(x+x_0),\qquad
\tau_W(y_0)f(x)=\widetilde\psi(\langle x,y_0\rangle)f(x).
$$
Let $\mcH_{\widetilde\psi}(R_W)$ be the twisted Hecke algebra of locally constant functions on $R_W$ which are compactly supported modulo $Z(R_W)$ and satisfy
$$
f(zr)=\widetilde\psi(z)^{-1}f(r).
$$
Its action on $\mcS(X)$ identifies it with the algebra of finite-rank operators on $\mcS(X)$; more precisely, it is a directed union of full matrix algebras obtained by imposing compact-support and compact-open-invariance conditions on $X$. Every nondegenerate module over a full matrix algebra is a direct sum of copies of its defining module, with multiplicity space the Hom-space from that module. Passing through the directed union gives
$$
N\simeq\Hom_{R_W}(V_W,N)\otimes V_W,
$$
and the resulting isomorphism is the evaluation map.
\end{proof}
\begin{definition}\label{M}
Let $M\subset\Sp(W)$ be a closed subgroup. A splitting of $\nu$ over $M$ is a continuous homomorphism
$$
s:M\longrightarrow\widetilde{\Sp(W)}
$$
such that $\nu\circ s=\Id_M$. Let $\mcS_M$ be the set of splittings.
\end{definition}
\begin{claim}\label{Z2}
If $\mcS_M\neq\emp$, then $\mcS_M$ is a torsor under $\Hom_{\mathrm{cont}}(M,\{\pm1\})$.
\end{claim}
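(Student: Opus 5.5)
The plan is to show directly that $\Hom_{\mathrm{cont}}(M,\{\pm1\})$ acts on $\mcS_M$ by pointwise multiplication, and that this action is free and transitive. The one input is that the kernel $\{\pm1\}=\ker\nu$ is central in $\widetilde{\Sp(W)}$ (Claim \ref{hei}(3)).

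\emph{The action.} Given $s\in\mcS_M$ and $\chi\in\Hom_{\mathrm{cont}}(M,\{\pm1\})$, set
$$
(\chi\cdot s)(m)=\chi(m)s(m),
$$
where $\{\pm1\}$ is regarded as a subgroup of $\widetilde{\Sp(W)}$.

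\begin{enumerate}
\item $\chi\cdot s$ is continuous, since multiplication in $\widetilde{\Sp(W)}$ is continuous.
\item $\nu\circ(\chi\cdot s)=\Id_M$, because $\nu(\pm1)=1$.
\item $\chi\cdot s$ is a homomorphism. Since $\chi(m)$ is central,
$$
\chi(m)s(m)\chi(m')s(m')=\chi(m)\chi(m')s(m)s(m')=\chi(mm')s(mm').
$$
\end{enumerate}

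These operations compose correctly, giving a group action. It is free, because $\chi\cdot s=s$ forces $\chi(m)=s(m)s(m)^{-1}=1$ for every $m$.

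\emph{Transitivity.} Let $s,s'\in\mcS_M$ and put $\chi(m)=s'(m)s(m)^{-1}$.

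\begin{enumerate}
\item $\chi(m)\in\ker\nu=\{\pm1\}$, since $\nu(s'(m))=\nu(s(m))=m$.
\item $\chi$ is continuous, as it is built from the continuous maps $s$, $s'$, group inversion and multiplication.
\item $\chi$ is a homomorphism, again by centrality. Write $s'(m)=\chi(m)s(m)$. Then
$$
s'(mm')=s'(m)s'(m')=\chi(m)s(m)\chi(m')s(m')=\chi(m)\chi(m')s(mm'),
$$
and comparing with $s'(mm')=\chi(mm')s(mm')$ gives $\chi(mm')=\chi(m)\chi(m')$.
\end{enumerate}

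Hence $s'=\chi\cdot s$.

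Since $\mcS_M\neq\emp$ by hypothesis, $\mcS_M$ is a torsor under $\Hom_{\mathrm{cont}}(M,\{\pm1\})$.

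No step here is a real obstacle. The only point needing care is the centrality of the kernel, which holds because the metaplectic extension in Claim \ref{hei}(3) is central by construction.
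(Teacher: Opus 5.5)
Your proposal is correct and follows essentially the same route as the paper: the ratio $s'(m)s(m)^{-1}$ of two splittings is a continuous homomorphism into the central kernel $\{\pm1\}$, and conversely multiplying a splitting by such a homomorphism gives another splitting. You simply spell out the centrality, freeness and homomorphism checks that the paper leaves implicit.
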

\begin{proof}
If $s_1$ and $s_2$ are splittings, then $m\mapsto s_1(m)s_2(m)^{-1}$ is a continuous homomorphism to the central kernel. Conversely, multiplying a splitting by such a homomorphism gives another splitting.
\end{proof}
\begin{claim}\label{heis}
The metaplectic double cover splits over every compact subgroup $M\subset\Sp(W)$.
\end{claim}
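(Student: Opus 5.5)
The natural route is to pull the metaplectic cocycle back to $M$ and show that its class in $H^2(M,\{\pm1\})$ vanishes. I would use the averaging argument for compact groups, with a square-root step to pass from continuous $\CC^*$-valued cochains back to $\{\pm1\}$. Throughout, $\widetilde{\Sp(W)}$ is the Weil double cover of Claim \ref{hei}(3). By that claim, each $g\in\Sp(W)$ has exactly two operators $\pm\kk_W(\widetilde g)$ intertwining $\tau_W$ with $\tau_W^g$. By Schur's lemma (Claim \ref{hei}(1)), the full set of such intertwiners is the line $\CC^*\kk_W(\widetilde g)$.

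\textbf{Step 1: the cocycle.} Choose a continuous section $g\mapsto\widetilde g$ of $\nu$ over $M$. Such a section exists locally because $\nu$ is a covering map, and globally on a compact totally disconnected $M$ it can be patched over finitely many clopen pieces. The section gives a continuous $2$-cocycle $c:M\times M\to\{\pm1\}$ with $\widetilde g\widetilde h=c(g,h)\widetilde{gh}$.

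\textbf{Step 2: trivialize $c$ with $\CC^*$-valued cochains.} Since $M$ is compact and $\CC^*$-valued continuous cohomology of a profinite group is torsion in positive degree, the class of $c$ dies in $H^2_{\mathrm{cont}}(M,\CC^*)$. Concretely, I would produce a continuous $b:M\to\CC^*$ with
$$c(g,h)=b(g)b(h)b(gh)^{-1}.$$
The choice I would try first uses the $\CC^*$-module $\mathcal{C}(M,\CC^*)$ of continuous functions, with $M$ acting by translation. Put $a(g)=\int_M c(g,x)\,dx$ in logarithmic form, which means first lifting $c$ to $\tfrac12\ZZ/\ZZ\subset\mathbb R/\ZZ$. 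Integrating the cocycle identity in one variable then shows that $c$ is a coboundary in $H^2(M,\mathbb R/\ZZ)$, because $\mathbb R/\ZZ$ is divisible and the Haar integral provides the contracting homotopy. This yields $b$ with values in $\mathbb R/\ZZ\subset\CC^*$.

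\textbf{Step 3: descend to $\{\pm1\}$.} Squaring gives $b(g)^2b(h)^2=b(gh)^2$, so $b^2$ is a continuous character $\chi$ of $M$. Since $M$ is profinite, $\chi$ has finite image, and we need a square root of it. This is the expected main obstacle: $\chi$ need not have a square root. I would instead modify the cocycle by noting that the set $\mathcal{S}$ of all pairs $(g,\lambda\kk_W(\widetilde g))$ with $\lambda\in\CC^*$ forms a group $\mathcal{M}$. This group is a central extension of $M$ by $\CC^*$, and Step 2 splits it. Choosing the splitting $g\mapsto b(g)^{-1}\kk_W(\widetilde g)$ yields a genuine continuous representation $\rho$ of $M$ on $V_W$ intertwining $\tau_W$.

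To return to the double cover, I would compare $\det$-type normalizations. The most robust way is to use the known fact that $\chi=b^2$ restricted to $M$ equals the ratio of two splittings over $\Sp$. Alternatively, I would observe that the group $\{\lambda:\lambda^2\in\chi(M)\}\cdot\{\pm1\}$ is finite. Then $\nu^{-1}(M)$, being a finite extension of the compact group $M$ by $\{\pm1\}$, is compact. A splitting of $1\to\{\pm1\}\to\nu^{-1}(M)\to M\to1$ then exists exactly when the class vanishes, and Step 2 shows it dies after pushing into $\CC^*$. The kernel of $H^2(M,\{\pm1\})\to H^2(M,\CC^*)$ is $\Hom(M,\CC^*)/2$, which is generated by $\chi$. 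The final step I would try is to replace $M$ by a small open pro-$p$ subgroup, with $p$ odd by the odd residue characteristic. On that subgroup $\chi$ has odd order, so it has a square root, and the splitting exists there. I would then extend the splitting to all of $M$ using the explicit Schrödinger model: $M$ stabilizes a self-dual lattice, after conjugation, and the standard lattice model gives a canonical action.
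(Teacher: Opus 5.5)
Your proposal has a genuine gap: Step 2 fails, and the principle it rests on is false. Compactness kills continuous cohomology only with coefficients in a real vector space, where Haar averaging makes sense. With coefficients in $\CC^*$ or $\mathbb R/\ZZ$, the group $H^2_{\mathrm{cont}}(M,\CC^*)$ is torsion but need not vanish. Torsion gives you nothing here, since the image of $c$ is already $2$-torsion.

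The averaging itself cannot be carried out. A lift $\tilde c$ of $c$ to $\{0,\tfrac12\}\subset\mathbb R$ is a cocycle only modulo $\ZZ$. Integrating the cocycle identity in $x$ therefore leaves the Haar average of the integer-valued defect $\tilde c(g,h)+\tilde c(gh,x)-\tilde c(h,x)-\tilde c(g,hx)$, which is a real number and in general not an integer. Divisibility of $\mathbb R/\ZZ$ plays no role. The relevant group is $H^2(M,\mathbb R/\ZZ)\simeq H^2(M,\mathbb Q/\ZZ)$, a profinite Schur multiplier, and it can be nonzero. For example, $Q_8\to(\ZZ/2\ZZ)^2$ is a double cover of a compact group whose class is nonzero even in $H^2((\ZZ/2\ZZ)^2,\CC^*)\simeq\ZZ/2\ZZ$. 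So no argument using only compactness of $M$ and the fact that $\nu$ is a double cover can work. You must use how $M$ sits inside $\Sp(W)$.

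Step 3 concedes the obstruction $\chi$ and retreats to an open pro-$p$ subgroup. There a splitting exists for trivial reasons: $H^2$ of a pro-$p$ group with coefficients in $\{\pm1\}$ vanishes for $p$ odd, so Step 2 is not needed there. But getting back from that subgroup to $M$ is the entire difficulty. Your appeal to ``the ratio of two splittings over $\Sp$'' is meaningless, since $\nu$ has no splitting over $\Sp(W)$.

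Your proposed way back is that $M$ stabilizes a self-dual lattice after conjugation, so the lattice model applies. This is unjustified: you need the Bruhat--Tits fixed-point theorem even to get an $M$-stable lattice. It is also false in general. $M$ lies in the stabilizer of a vertex of the building, but for $\dim W\geq4$ the non-hyperspecial vertex stabilizers are maximal compact subgroups that stabilize no self-dual lattice. These are the stabilizers of lattices $\Lambda$ with $\varpi\Lambda^\vee\subsetneq\Lambda\subsetneq\Lambda^\vee$, where $\Lambda^\vee$ is the dual lattice. (For $\dim W=2$, the case used later in the paper, every vertex is hyperspecial, and this step can be repaired with a lattice model for a suitably rescaled character.) Moreover, if $M$ did stabilize a self-dual lattice $L$, you would simply restrict the splitting over $\Sp(L)$ to $M$, and Steps 1--3 would be superfluous.

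The missing ingredient is exactly the paper's argument. The fixed-point theorem places $M$ in a maximal parahoric subgroup. The metaplectic cover splits over every maximal parahoric subgroup in odd residue characteristic \cite[Proposition~4.1]{HW}, and restriction gives the splitting over $M$.
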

\begin{proof}
The orbit of a point in the Bruhat--Tits building under $M$ is bounded, so the Bruhat--Tits fixed-point theorem gives a fixed point. Hence $M$ is contained in the stabilizer of a vertex and therefore in a maximal parahoric subgroup. By \cite[Proposition~4.1]{HW}, the metaplectic cover splits over every maximal parahoric subgroup of $\Sp(W)$ in odd residue characteristic. Restriction gives a splitting over $M$.
\end{proof}
\subsection{The two-dimensional case}
Assume that $\dim_F(W)=2$. A nonzero alternating form on $W$ is unique up to a scalar, and $\Sp(W)\simeq\SL_2(F)$.
\begin{lemma}\label{unique-splitting}
The metaplectic cover has a unique splitting over $\SL_2(O_F)$.
\end{lemma}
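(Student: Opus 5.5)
Existence follows from Claim \ref{heis}: $\SL_2(O_F)$ is a compact subgroup of $\Sp(W)\simeq\SL_2(F)$, so $\mcS_M\neq\emp$ for $M=\SL_2(O_F)$. By Claim \ref{Z2}, $\mcS_M$ is then a torsor under $\Hom_{\mathrm{cont}}(\SL_2(O_F),\{\pm1\})$. Uniqueness is therefore equivalent to the statement that $\SL_2(O_F)$ has no nontrivial continuous homomorphism to $\{\pm1\}$. I would prove this by showing that $\SL_2(O_F)$ is generated by elements that are squares, or more conveniently that it equals its own commutator subgroup up to closure.

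First I would show that $\SL_2(O_F)$ is topologically generated by the unipotent subgroups
$$
U^+=\left\{\begin{pmatrix}1&x\\0&1\end{pmatrix}:x\in O_F\right\},\qquad U^-=\left\{\begin{pmatrix}1&0\\y&1\end{pmatrix}:y\in O_F\right\}.
$$
This is elementary row reduction over the local ring $O_F$. Given $\begin{pmatrix}a&b\\c&d\end{pmatrix}$, either $a$ or $c$ is a unit. After multiplying by an element of $U^+$ or $U^-$, we may assume $a$ is a unit. We then clear $c$ and $b$ using $U^-$ and $U^+$. This reduces the matrix to a diagonal matrix $\operatorname{diag}(u,u^{-1})$, and such a matrix is a product of elements of $U^\pm$ by the standard identity in $\SL_2$ over any ring.

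Next, let $\chi:\SL_2(O_F)\to\{\pm1\}$ be a continuous homomorphism. Its restriction to $U^+\simeq O_F$ is a homomorphism from an additive group into a group of exponent $2$. Since the residue characteristic is odd, $2$ is a unit in $O_F$, so every $x\in O_F$ can be written as $x=2\cdot(x/2)$. Hence
$$
\chi\begin{pmatrix}1&x\\0&1\end{pmatrix}=\chi\begin{pmatrix}1&x/2\\0&1\end{pmatrix}^2=1.
$$
The same argument applies to $U^-$. By the generation statement, $\chi$ is trivial. Continuity is not even needed here, since the generation is algebraic.

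The only step with real content is the generation of $\SL_2(O_F)$ by $U^\pm$, which I would handle by the explicit reduction above, using that $O_F$ is local. The argument uses the odd residue characteristic hypothesis exactly once, namely that $2\in O_F^*$.
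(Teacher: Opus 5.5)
Your proof is correct, but it runs the root-subgroup argument at a different level from the paper. The paper first uses continuity to kill the congruence kernel $K_1=\ker(\SL_2(O_F)\to\SL_2(k))$: it is pro-$p$ with $p\neq2$, so it has no nontrivial continuous quadratic character. It then uses that $\SL_2(k)$ is generated by its root subgroups, which are finite $p$-groups with $p$ odd.

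You work directly over $O_F$. Because $O_F$ is local, $\SL_2(O_F)$ is abstractly generated by $U^\pm(O_F)$. Your row reduction is right, and the Whitehead identity for $\operatorname{diag}(u,u^{-1})$ only involves entries $\pm1,\pm u,\pm u^{-1}\in O_F$. Each $U^\pm(O_F)\simeq(O_F,+)$ is $2$-divisible since $2\in O_F^*$, so every generator is a square.

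Your route needs neither continuity nor the pro-$p$ step, and it proves the slightly stronger fact that $\SL_2(O_F)$ has no nontrivial abstract homomorphism to $\{\pm1\}$. The paper's route, passing to the reductive quotient modulo the pro-$p$ radical, is the template that also handles groups like the Iwahori subgroup. That group is not generated by its intersections with the root subgroups, and it does carry the nontrivial quadratic character $\chi_I$ of Lemma \ref{ex}.

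One caution: your opening aside that $\SL_2(O_F)$ is its own commutator subgroup up to closure is false when $q=3$, since $\SL_2(\mathbb F_3)$ has abelianization $\ZZ/3\ZZ$. Your actual argument, generation by squares, does not use it.
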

\begin{proof}
Existence follows from Claim \ref{heis}. The ratio of two splittings is a continuous homomorphism
$$
\SL_2(O_F)\longrightarrow\{\pm1\}.
$$
Let $K_1$ be the kernel of reduction to $\SL_2(k)$. Since $K_1$ is pro-$p$ and $p\neq2$, the homomorphism is trivial on $K_1$ and factors through $\SL_2(k)$. The latter group is generated by its two root subgroups, each a finite $p$-group, so every homomorphism $\SL_2(k)\to\{\pm1\}$ is trivial.
\end{proof}
Let $E/F$ be quadratic. Multiplication on the two-dimensional $F$-vector space $E$ embeds
$$
E^1=\{u\in E^*:N_{E/F}(u)=1\}
$$
into $\SL_F(E)\simeq\SL_2(F)$. Let $\mcC(E)$ be the set of maximal compact subgroups of $\SL_F(E)$ containing $E^1$.
\begin{lemma}\label{lif}
\begin{enumerate}
\item If $E/F$ is unramified, then $|\mcC(E)|=1$.
\item If $E/F$ is ramified, then $\mcC(E)=\{C_+,C_-\}$, and $C_+\cap C_-$ is an Iwahori subgroup.
\end{enumerate}
\end{lemma}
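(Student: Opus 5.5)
We plan to use the Bruhat--Tits tree $\mathcal T$ of $\SL_2(F)$, whose vertices are homothety classes of $O_F$-lattices in the two-dimensional $F$-space $E$. The maximal compact subgroups of $\SL_F(E)\simeq\SL_2(F)$ are exactly the stabilizers of vertices. Distinct vertices have distinct stabilizers, since the stabilizer of a vertex $x$ fixes only $x$. Moreover, for adjacent vertices $x,y$ the intersection of their stabilizers is the stabilizer of the edge $\{x,y\}$, which is an Iwahori subgroup. So $\mcC(E)$ is in bijection with the set of vertices of $\mathcal T$ fixed by $E^1$, and the lemma reduces to computing the fixed-point set $\mathcal T^{E^1}$.

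The first step is to identify this fixed set. A lattice class $[\Lambda]$ is fixed by $E^1$ iff $u\Lambda=\Lambda$ for all $u\in E^1$, since elements of $\SL_2$ cannot scale a lattice by a nontrivial power of $\varpi$. This holds iff $\Lambda$ is a module over the $O_F$-order $O_F[E^1]\subset E$ generated by $E^1$. We claim that $O_F[E^1]=O_E$ in both cases.

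In the unramified case, the reduction map $E^1\to k_E^1$ is surjective onto the norm-one elements of the quadratic extension $k_E/k$, a group of order $q+1>2$. Pick $u\in E^1$ whose reduction $\bar u$ is not in $k$; this exists because the norm-one elements lying in $k$ are just $\pm1$. Then $O_F[u]$ surjects onto $k_E$, so by Nakayama $O_F[u]=O_E$.

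In the ramified case, write $E=F(\sqrt{\varpi'})$ with $\varpi'$ a uniformizer of $F$, and let $\pi_E=\sqrt{\varpi'}$. Take $u=(1+x\pi_E)/(1-x\pi_E)$ with $x\in O_F^*$; then $u\in E^1$ and $u\equiv1+2x\pi_E \pmod{\pi_E^2}$. Since the residue characteristic is odd, $2x$ is a unit, so $(u-1)/2x$ is a uniformizer of $E$ up to higher order terms. Hence $O_F[u]$ contains an element of valuation $1$, and therefore $O_F[u]=O_E$. This case computation is the only place odd residue characteristic is used, and it is the step needing the most care.

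The fixed vertices are therefore the classes of fractional $O_E$-ideals $\pi_E^jO_E$, up to $F^*$-homothety. If $E/F$ is unramified, every such ideal is $\varpi^jO_E$, so there is exactly one class and $|\mcC(E)|=1$. If $E/F$ is ramified, there are exactly two classes, $[O_E]$ and $[\pi_EO_E]$. These are adjacent in $\mathcal T$, because $\varpi' O_E\subset\pi_EO_E\subset O_E$ has index $q$ at each step. Their stabilizers $C_\pm$ are therefore distinct maximal compact subgroups, and $C_+\cap C_-$ is the stabilizer of the edge $\{[O_E],[\pi_EO_E]\}$, which is an Iwahori subgroup.
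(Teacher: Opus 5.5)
Your proposal is correct and follows the paper's proof essentially step for step. You reduce to $E^1$-fixed vertices of the tree, use determinant one to turn homothety-stability into genuine stability, exhibit one $u\in E^1$ with $O_F[u]=O_E$, and count fractional ideals modulo $F^*$. Your $u=(1+x\pi_E)/(1-x\pi_E)$ is the paper's $(1+\vartheta)/(1-\vartheta)$, and your Nakayama argument supplies the unramified choice of $u$ that the paper leaves implicit; you also justify that vertex stabilizers are pairwise distinct and exhaust the maximal compact subgroups, which the paper takes for granted.
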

\begin{proof}
Vertices of the Bruhat--Tits tree of $\SL_2(F)$ are homothety classes of $O_F$-lattices in $E$. If the class of a lattice $\Lambda$ is fixed by $E^1$, then for every $u\in E^1$ one has $u\Lambda=a_u\Lambda$ for some $a_u\in F^*$. Comparison of covolumes and $N_{E/F}(u)=1$ give $v_F(a_u)=0$, so $a_u\Lambda=\Lambda$ and $u\Lambda=\Lambda$.
Choose $u\in E^1$ with $O_F[u]=O_E$. In the ramified case, write $E=F(\vartheta)$ with $\vartheta^2=\varpi\epsilon$ and take
$$
u=\frac{1+\vartheta}{1-\vartheta}.
$$
Then $u\bar u=1$ and $\vartheta=(u-1)/(u+1)$, with $u+1$ a unit. In the unramified case, choose a norm-one unit whose reduction generates the residue extension. Thus every $E^1$-stable lattice is an $O_E$-module and hence a fractional ideal $\mathfrak p_E^r$. Two such ideals are $F^*$-homothetic precisely when
$$
r\equiv s\pmod{e(E/F)}.
$$
There is therefore one fixed vertex in the unramified case and two adjacent fixed vertices in the ramified case. Their stabilizers are the asserted maximal compact subgroups, and the stabilizer of the connecting edge is their Iwahori intersection. See \cite{S}.
\end{proof}
\begin{corollary}\label{lift}
If $E/F$ is unramified, then the torsor $\mcS_{E^1}$ has a canonical trivialization.
\end{corollary}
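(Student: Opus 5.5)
The plan is to produce the canonical splitting by combining Lemma \ref{lif}(1) with Lemma \ref{unique-splitting}.

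First, I will use Lemma \ref{lif}(1). Since $E/F$ is unramified, there is exactly one maximal compact subgroup $C$ of $\SL_F(E)$ containing $E^1$. By the proof of that lemma, $C$ is the stabilizer of the lattice $O_E\subset E$. No choices are involved: $C$ is determined by $E$ alone.

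Second, I will choose an $O_F$-basis of $O_E$. This identifies $C$ with $\SL_2(O_F)$. Lemma \ref{unique-splitting} then gives a unique continuous splitting $s_C:C\to\widetilde{\Sp(W)}$ of $\nu$, where $W=E$ carries a fixed nonzero alternating form. Because the splitting over $C$ is unique, it does not depend on the chosen basis. Different bases change the identification $C\simeq\SL_2(O_F)$ only by an automorphism, and the unique splitting is carried to the unique splitting. Equivalently, the uniqueness argument in Lemma \ref{unique-splitting} can be run directly on $C$: a pro-$p$ kernel times a quotient $\SL_2(k)$ with no nontrivial characters to $\{\pm1\}$, and the argument uses only the intrinsic structure of $C$.

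Third, I will define $s=s_C|_{E^1}$. This is an element of $\mcS_{E^1}$, which is nonempty by Claim \ref{heis}. By Claim \ref{Z2}, $\mcS_{E^1}$ is a torsor under $\Hom_{\mathrm{cont}}(E^1,\{\pm1\})$. Hence the distinguished element $s$ trivializes it via $\chi\mapsto\chi\cdot s$.

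The only point needing care is canonicity. The element $s$ is determined by $E$ alone, since $C$ is unique by Lemma \ref{lif}(1) and $s_C$ is unique by Lemma \ref{unique-splitting}. The form on $W$ is unique up to a scalar, and the metaplectic cover depends on it only through this data, which is fixed throughout.

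Note that $s$ need not be the only splitting over $E^1$. The group $E^1$ can have a nontrivial quadratic character, for instance through $E^1\to k_E^1$, which is cyclic of order $q+1$, an even number. The claim is only that $s$ is singled out canonically, and the argument above shows exactly that.
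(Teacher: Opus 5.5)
Your proof is correct and follows the paper's own argument: Lemma \ref{lif}(1) singles out the unique maximal compact subgroup $C\supset E^1$, Lemma \ref{unique-splitting} gives the unique splitting over $C$, and its restriction to $E^1$ is the distinguished point that trivializes $\mcS_{E^1}$. Your additional remarks make explicit what the paper leaves implicit: running the uniqueness argument intrinsically on $C$ shows the result does not depend on the chosen basis, and $\Hom_{\mathrm{cont}}(E^1,\{\pm1\})\simeq\ZZ/2\ZZ$ is nontrivial, so the torsor genuinely needs a distinguished point.
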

\begin{proof}
By Lemma \ref{lif}, $E^1$ lies in a unique maximal compact subgroup $C$. The metaplectic cover has a unique splitting over $C$ by Lemma \ref{unique-splitting}, after conjugating $C$ to $\SL_2(O_F)$. Its restriction to $E^1$ is canonical.
\end{proof}
Assume now that $E/F$ is ramified. Let $C_+$ and $C_-$ be as in Lemma \ref{lif}, and put $I=C_+\cap C_-$. Choose $b\in E^*$ of odd $E$-valuation and let
$$
\beta_b(g)=bgb^{-1},\qquad g\in\SL_F(E),
$$
where $b$ acts on $E$ by multiplication.
\begin{claim}\label{beta-swap}
One has $\beta_b(C_+)=C_-$ and $\beta_b(C_-)=C_+$.
\end{claim}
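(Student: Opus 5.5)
We use the lattice description from the proof of Lemma \ref{lif}. There, $C_+$ and $C_-$ are the stabilizers in $\SL_F(E)$ of the two adjacent vertices of the Bruhat--Tits tree that are fixed by $E^1$. These vertices are the homothety classes $[O_E]$ and $[\mathfrak p_E]$: every $E^1$-stable lattice is a fractional ideal $\mathfrak p_E^r$, and the class of $\mathfrak p_E^r$ depends only on $r$ modulo $e(E/F)=2$. We label them so that $C_+=\St([O_E])$ and $C_-=\St([\mathfrak p_E])$. Since $\SL_F(E)$ acts on the tree with trivial homothety ambiguity in the stabilizers, we have $\St_{\SL_F(E)}(\Lambda)=\St_{\SL_F(E)}([\Lambda])$.

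First, note that $\beta_b$ really is an automorphism of $\SL_F(E)$. Multiplication by $b$ is an element of $\operatorname{GL}_F(E)$, and conjugation by it preserves determinant one. It need not lie in $\SL_F(E)$, since its determinant is $N_{E/F}(b)$, but this does not matter for conjugation. Moreover, conjugation by any $x\in\operatorname{GL}_F(E)$ carries stabilizers to stabilizers:
$$
x\,\St(\Lambda)\,x^{-1}=\St(x\Lambda).
$$

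Second, compute $b\Lambda$ for the two vertices. Since $b$ is multiplication by an element of $E^*$ with $v_E(b)=2k+1$, we get $bO_E=\mathfrak p_E^{2k+1}$. Writing $\vartheta$ for a uniformizer of $E$ with $\vartheta^2\in\varpi O_F^*$, we have $\mathfrak p_E^{2k+1}=\varpi^k\vartheta O_E$ up to a unit, which is homothetic to $\mathfrak p_E$. Likewise $b\mathfrak p_E=\mathfrak p_E^{2k+2}$, which is homothetic to $O_E$. Hence $b$ swaps the two vertices, and therefore
$$
\beta_b(C_+)=C_- \quad\text{and}\quad \beta_b(C_-)=C_+.
$$

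The only point needing care, and the main obstacle such as it is, is the identification $C_\pm=\St(O_E),\ \St(\mathfrak p_E)$. This relies on the proof of Lemma \ref{lif}. In particular, we need the fact that the two fixed vertices are exactly the classes of $O_E$ and $\mathfrak p_E$, and not merely two abstract adjacent vertices. Once that is granted, the statement is the one-line computation above. As a consistency check, the same computation shows that $\beta_b$ preserves $I=C_+\cap C_-$.
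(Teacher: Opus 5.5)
Your proposal is correct and follows essentially the paper's argument: multiplication by $b$ sends $\mathfrak p_E^r$ to $\mathfrak p_E^{r+v_E(b)}$, so an odd $v_E(b)$ interchanges the two parity classes of $E^1$-fixed vertices. Conjugation by $b$ then carries the stabilizer of each vertex to the stabilizer of the other. Your explicit check that lattice stabilizers and vertex stabilizers agree in $\SL_F(E)$ makes precise a step the paper leaves implicit.
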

\begin{proof}
Multiplication by $b$ sends $\mathfrak p_E^r$ to $\mathfrak p_E^{r+v_E(b)}$. Since $v_E(b)$ is odd, it interchanges the two parity classes of fixed vertices and hence their stabilizers.
\end{proof}
\begin{claim}\label{beta-lift}
There is a unique automorphism
$$
\widetilde\beta_b:\widetilde{\SL_2(F)}\longrightarrow\widetilde{\SL_2(F)}
$$
which fixes $\{\pm1\}$ and satisfies $\nu\circ\widetilde\beta_b=\beta_b\circ\nu$.
\end{claim}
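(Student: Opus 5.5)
Existence comes from the universal property of the metaplectic cover, and uniqueness from the perfectness of $\SL_2(F)$.

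\emph{Existence.} The double cover $\nu:\widetilde{\SL_2(F)}\to\SL_2(F)$ is the unique nontrivial topological central extension of $\SL_2(F)$ by $\{\pm1\}$: it is the pushout of Moore's universal topological central extension along the surjection of $\mu(F)$ onto $\{\pm1\}$. Pulling $\nu$ back along the automorphism $\beta_b$ gives another topological central extension by $\{\pm1\}$, namely $\beta_b^*\widetilde{\SL_2(F)}=\{(g,x)\in\SL_2(F)\times\widetilde{\SL_2(F)}:\beta_b(g)=\nu(x)\}$. This extension is nontrivial because $\beta_b$ is an automorphism. A concrete check is also available: $\beta_b$ is conjugation by an element of $\operatorname{GL}_2(F)$, so it acts on the Kubota--Moore $2$-cocycle, the Hilbert symbol cocycle, by a coboundary, since $\operatorname{GL}_2(F)$ acts on $\widetilde{\SL_2(F)}$ compatibly with $\nu$. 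An isomorphism of extensions $\beta_b^*\widetilde{\SL_2(F)}\simeq\widetilde{\SL_2(F)}$ which is the identity on $\{\pm1\}$, composed with the projection to the second factor, is the desired $\widetilde\beta_b$. Concretely, the lift is realized by the well-known action of $\operatorname{GL}_2(F)$ on the metaplectic group via $(g,\epsilon)\mapsto(hgh^{-1},\epsilon\cdot c(h,g))$ for an explicit sign $c$. One can cite Weil \cite{W} or the standard description of $\widetilde{\SL_2}$ for this.

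\emph{Uniqueness.} Suppose $\widetilde\beta,\widetilde\beta'$ are two lifts. Then $x\mapsto\widetilde\beta(x)\widetilde\beta'(x)^{-1}$ lies in $\ker\nu=\{\pm1\}$, which is central. Using centrality, one checks that this map is a continuous homomorphism $\chi:\widetilde{\SL_2(F)}\to\{\pm1\}$. Because $\widetilde\beta$ and $\widetilde\beta'$ both fix $\{\pm1\}$, $\chi$ is trivial on $\{\pm1\}$, so it factors through a homomorphism $\SL_2(F)\to\{\pm1\}$. The group $\SL_2(F)$ is perfect, being generated by unipotent elements that are commutators, since $F$ is infinite. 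Hence $\chi=1$ and $\widetilde\beta=\widetilde\beta'$.

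The only nontrivial point is existence, specifically identifying the pulled-back extension with the original one. I would settle this by the uniqueness of the nontrivial extension by $\{\pm1\}$, or directly by conjugation by $\operatorname{GL}_2(F)$.
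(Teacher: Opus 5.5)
Your proposal is correct and follows essentially the paper's route. Existence comes from Moore's classification: pullback by the automorphism $\beta_b$ preserves the unique nontrivial metaplectic class. Uniqueness comes from the ratio of two lifts being a homomorphism $\SL_2(F)\to\{\pm1\}$; you kill it by perfectness of $\SL_2(F)$, while the paper uses generation by the uniquely $2$-divisible root subgroups, an equivalent and equally standard fact.

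One small point: your secondary ``concrete check'' assumes that $\operatorname{GL}_2(F)$ acts on $\widetilde{\SL_2(F)}$ compatibly with $\nu$, which is the statement being proved. It is not needed, since the Moore argument suffices on its own.
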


\begin{proof}
Pullback by $\beta_b$ preserves the metaplectic cohomology class; this follows from Moore's classification or directly from the cocycle of Ranga Rao, so a lift exists. The ratio of two lifts is a continuous homomorphism $\SL_2(F)\to\{\pm1\}$. Each root subgroup is isomorphic to $(F,+)$ and is uniquely $2$-divisible, so this homomorphism is trivial on both root subgroups and therefore on $\SL_2(F)$. This proves uniqueness. Multiplicativity follows from uniqueness because both sides lift conjugation by $b_1b_2$; see \cite{Moore,RangaRao,Roberts}.
\end{proof}
Let $s_\pm:C_\pm\to\widetilde{\SL_2(F)}$ be the unique splittings. Define
$$
\chi_I(i)=s_+(i)s_-(i)^{-1},\qquad i\in I,
$$
and put $\delta=\chi_I|_{E^1}$.
\begin{lemma}\label{ex}
The character $\chi_I$ is the unique nontrivial quadratic character of the reductive quotient $I/I^+\simeq k^*$. With the standard normalization, if $i\in I$ has diagonal reduction $\operatorname{diag}(\bar a,\bar a^{-1})$ and $\eta:k^*\to\{\pm1\}$ is the quadratic character, then
$$
\chi_I(i)=\eta(\bar a).
$$
Consequently,
$$
\delta(-1)=\eta(-1)=(-1)^{(q-1)/2},
$$
and
$$
\delta=\begin{cases}
1,&q\equiv1\pmod4,\\
\text{the unique nontrivial quadratic character of }E^1,&q\equiv3\pmod4.
\end{cases}
$$
\end{lemma}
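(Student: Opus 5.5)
The plan is to reduce everything to an explicit computation on the diagonal torus of $I$, and to do that computation with Kubota's cocycle.

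\textbf{Step 1: reduction to the torus.} First, $\chi_I$ is a continuous homomorphism $I\to\{\pm1\}$. Indeed, $s_+$ and $s_-$ are homomorphisms into $\widetilde{\SL_2(F)}$ lying over the same element of $\SL_2(F)$, so $s_+(i)s_-(i)^{-1}$ lies in the central kernel; centrality then gives multiplicativity. Let $I^+$ be the pro-unipotent radical. It is pro-$p$ with $p\neq2$, so $\chi_I$ is trivial on $I^+$ and factors through $I/I^+\simeq k^*$. Since $I=T(O_F)I^+$, where $T(O_F)$ is the diagonal torus, $\chi_I$ is determined by its values on $h_a=\operatorname{diag}(a,a^{-1})$ with $a\in O_F^*$. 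It depends only on $\bar a$. As $k^*$ is cyclic of even order, it has a unique nontrivial quadratic character $\eta$. So the first assertion reduces to the formula $\chi_I(h_a)=\eta(\bar a)$.

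\textbf{Step 2: expressing $s_-$ through $s_+$.} Conjugate so that $C_+=\SL_2(O_F)$ and $C_-=\beta(C_+)$, where $\beta$ is conjugation by $g_\varpi=\operatorname{diag}(\varpi,1)\in\operatorname{GL}_2(F)$. This $\beta$ plays the role of $\beta_b$ in Claim \ref{beta-swap}. By Claim \ref{beta-lift} it has a unique lift $\widetilde\beta$. By uniqueness of splittings (Lemma \ref{unique-splitting} transported to $C_-$),
$$
s_-=\widetilde\beta\circ s_+\circ\beta^{-1}.
$$
Since $\beta$ fixes $h_a$, we get $\chi_I(h_a)=s_+(h_a)\,\widetilde\beta(s_+(h_a))^{-1}$. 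This measures how the outer automorphism $\widetilde\beta$ moves the canonical lift of a torus element.

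\textbf{Step 3: the Kubota computation (main obstacle).} Here I would use Kubota's cocycle, or Ranga Rao's normalized cocycle, in which the cover is realized by Hilbert symbols.
\begin{enumerate}
\item The splitting over $\SL_2(O_F)$ is given by Kubota's explicit function $\kappa$. On $T(O_F)$ this function is trivial for units, or at least explicitly computable.
\item The lift of conjugation by $\operatorname{diag}(\varpi,1)$ acts on the torus lift by the Hilbert symbol: $\widetilde\beta(h_a,\epsilon)=(h_a,\epsilon\,(a,\varpi)_F)$.
\end{enumerate}
For $a\in O_F^*$ and odd residue characteristic one has $(a,\varpi)_F=\eta(\bar a)$. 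This gives $\chi_I(h_a)=\eta(\bar a)$, and in particular $\chi_I$ is nontrivial. Tracking the normalizations carefully is the delicate part. As a cross-check I would verify nontriviality independently: if $\chi_I$ were trivial, $s_+$ and $s_-$ would glue to a splitting over the group generated by $C_+$ and $C_-$, which is all of $\SL_2(F)$. That contradicts the nonsplitting of the metaplectic cover.

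\textbf{Step 4: the character $\delta$.} For ramified $E$ we have $E^1=\{\pm1\}\times E^1(1)$, as in Claim \ref{quad-LK}. The subgroup $E^1(1)$ lies in $I^+$, since it is pro-$p$ and its reduction is trivial, so $\delta$ is trivial on it. The element $-1$ reduces to $\operatorname{diag}(-1,-1)$, hence $\delta(-1)=\eta(-1)=(-1)^{(q-1)/2}$. Since $E^1/E^1(1)\simeq\{\pm1\}$, the group $E^1$ has exactly one nontrivial quadratic character, namely the one detecting this quotient. Therefore $\delta=1$ when $q\equiv1\pmod4$, and $\delta$ is that unique nontrivial quadratic character when $q\equiv3\pmod4$.
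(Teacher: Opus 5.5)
Your proof is correct, but the weight of the argument sits somewhere other than where the paper puts it.

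\textbf{The paper's route.} The paper never computes a cocycle. It proves $\chi_I\neq1$ via the amalgam $\SL_2(F)=C_+\ast_I C_-$: if $\chi_I$ were trivial, $s_+$ and $s_-$ would glue to a splitting over $\SL_2(F)$. Then, exactly as in your Step 1, $\chi_I$ factors through $I/I^+\simeq k^*$. That group has only one nontrivial quadratic character, so $\chi_I=\eta\circ(\text{reduction})$ follows at once. So what you call a cross-check at the end of Step 3 is, together with Step 1, already a complete proof of the first two assertions. Step 2 and the Kubota computation are not needed.

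\textbf{Your explicit route is still valid.} Since the splittings over $C_\pm$ and the lift $\widetilde\beta$ are unique, $\chi_I$ does not depend on the cocycle model. In Kubota's model, the splitting over $\SL_2(O_F)$ is trivial on diagonal units, and conjugation twists $(h_a,\epsilon)$ by $(a,\varpi)_F=\eta(\bar a)$. What this buys you is an intrinsic identification of the sign as the tame symbol, without using the amalgam theorem or the nonsplitting of the cover. It would also still work for $n$-fold covers, where nontriviality alone would not single out the character of $k^*$. The price is the normalization bookkeeping you flag.

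\textbf{A small correction.} With the standard Iwahori $I$ (lower-left entry in $\varpi O_F$), conjugation by $\operatorname{diag}(\varpi,1)$ sends $\SL_2(O_F)$ to the stabilizer of $\varpi O_F\oplus O_F$. That group contains the opposite Iwahori, not $I$. You should use $\operatorname{diag}(1,\varpi)$ instead; the twist becomes $(\varpi,a)_F=(a,\varpi)_F$, so nothing else changes. This element translates the apartment rather than inverting the edge as $\beta_b$ does. Also, Claim \ref{beta-lift} is stated only for $\beta_b$. Neither point matters, because you only use $\beta(C_+)=C_-$, the uniqueness argument of that claim (which applies verbatim), and existence of the lift from the $\operatorname{GL}_2(F)$-action on the cover.

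Your Step 4 is essentially the paper's computation of $\delta$.
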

\begin{proof}

We first observe that $\chi_I$ is a continuous homomorphism: its values
$s_+(i)s_-(i)^{-1}$ lie in the central kernel $\{\pm1\}$ of $\nu$, so
for $i_1,i_2\in I$ we have
\[
\chi_I(i_1 i_2)=s_+(i_1)s_+(i_2)\,s_-(i_2)^{-1}s_-(i_1)^{-1}
=\chi_I(i_1)\chi_I(i_2),
\]
the central factor $\chi_I(i_2)$ having been moved across
$s_-(i_1)^{-1}$; continuity follows from that of $s_\pm$.

The action on the Bruhat--Tits tree gives the amalgam decomposition
\[
\mathrm{SL}_2(F)=C_+\ast_{I}C_-.
\]
If $\chi_I$ were trivial, the splittings $s_+$ and $s_-$ would agree on
$I$ and hence would glue to a splitting of the metaplectic cover over
$\mathrm{SL}_2(F)$, contradicting the nonsplitting of the standard
metaplectic cover. Hence $\chi_I\neq 1$. Since $\chi_I$ takes values in
$\{\pm1\}$ and $I^+$ is pro-$p$ with $p\neq 2$, it is trivial on $I^+$
and factors through a quadratic character of $I/I^+\simeq k^*$. The
group $k^*$ is cyclic of even order $q-1$, so it has exactly one
nontrivial quadratic character, namely $\eta$. Thus $\chi_I$ is the
composition of the reduction $i\mapsto\bar a$ with $\eta$, which is the
displayed formula; it agrees with the value given by the explicit
metaplectic cocycle in its standard normalization \cite{Moore,
RangaRao}.

We now compute $\delta=\chi_I|_{E^1}$. Write $E=F(\vartheta)$ with
$\vartheta^2=\varpi\epsilon$, $\epsilon\in\mathcal{O}_F^*$. In the
basis $(\vartheta,1)$ we have
$\mathcal{O}_E=\mathcal{O}_F\vartheta\oplus\mathcal{O}_F$ and
$\mathfrak{p}_E=\vartheta\mathcal{O}_E=
\mathcal{O}_F\vartheta\oplus\varpi\epsilon\,\mathcal{O}_F$, so this
basis identifies the lattice chain
$\mathcal{O}_E\supset\mathfrak{p}_E$ with the standard chain, and the
groups $C_\pm$ and $I$ become the standard maximal compact subgroups
and the standard Iwahori subgroup. Multiplication by $a+b\vartheta$ is
represented by the matrix
\[
\begin{pmatrix} a & b\\ b\varpi\epsilon & a\end{pmatrix}.
\]
If $a+b\vartheta\in E^1$, then $a^2-b^2\varpi\epsilon=1$; reduction
modulo $\varpi$ gives $\bar a^2=1$, so $\bar a=\pm 1$. In particular
$a\in\mathcal{O}_F^*$, the matrix lies in $I$, and its diagonal
reduction $\operatorname{diag}(\bar a,\bar a)$ equals
$\operatorname{diag}(\bar a,\bar a^{-1})$. Hence
$\delta(a+b\vartheta)=\eta(\bar a)$. The element $-1\in E^1$ reduces to
$\operatorname{diag}(-1,-1)$, so the image of $E^1$ in
$I/I^+\simeq k^*$ is exactly $\{\pm1\}$, and $\delta$ is nontrivial if
and only if $\eta(-1)=-1$.

Finally, the kernel of the reduction map $E^1\to\{\pm1\}$ is pro-$p$,
so $E^1$ has at most one nontrivial quadratic character. Since
$\eta(-1)=(-1)^{(q-1)/2}$, evaluation at $-1$ gives the asserted
dichotomy.
\end{proof}
\begin{corollary}\label{no}If the extension $E/F$ is ramified there is no $\widetilde\beta_b$-equivariant splitting of the metaplectic cover over $G'(O_F)$ (see Subsection \ref{classical}).
\end{corollary}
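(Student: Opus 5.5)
We interpret the statement as follows: $G'(O_F)$ acts on $\SL_F(E)$ through the $E^1$-picture. It is generated by the Iwahori subgroup $I$ and the element $s$, which plays the role of $\beta_b$ for $b$ of odd $E$-valuation. A $\widetilde\beta_b$-equivariant splitting means a splitting $\sigma:I\to\widetilde{\SL_2(F)}$ over the Iwahori (the identity component of $G'(O_F)$ acting on $W$) such that
$$
\widetilde\beta_b\circ\sigma=\sigma\circ\beta_b .
$$
The plan is to argue by contradiction. Suppose such a $\sigma$ exists, and compare it with the two canonical splittings $s_\pm$ of Lemma \ref{unique-splitting}, restricted to $I$.

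The first step is to show that $\widetilde\beta_b$ interchanges the restrictions of $s_+$ and $s_-$. The map $\widetilde\beta_b\circ s_+\circ\beta_b^{-1}$ is defined on $\beta_b(C_+)=C_-$ by Claim \ref{beta-swap}. It is a splitting there, since $\nu\circ\widetilde\beta_b=\beta_b\circ\nu$ by Claim \ref{beta-lift}. By uniqueness (Lemma \ref{unique-splitting}, transported to $C_-$ by conjugation) it equals $s_-$. Symmetrically, $\widetilde\beta_b s_-\beta_b^{-1}=s_+$. Because $\beta_b$ preserves $I=C_+\cap C_-$, restricting to $I$ gives $\widetilde\beta_b\circ s_\pm|_I=s_\mp|_I\circ\beta_b$.

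The second step uses the torsor structure. By Claim \ref{Z2}, write $\sigma=\chi\cdot s_+|_I$ with $\chi\in\Hom(I,\{\pm1\})$. This $\chi$ factors through $I/I^+\simeq k^*$, exactly as in the proof of Lemma \ref{ex}, so $\chi\in\{1,\chi_I\}$; that is, $\sigma\in\{s_+|_I,s_-|_I\}$. Equivariance of $\sigma$ together with the first step gives
$$
\sigma=s_\mp|_I\cdot(\chi\circ\beta_b^{-1}).
$$
Now $\beta_b$ acts on $I/I^+\simeq k^*$ by $\bar a\mapsto\bar a^{-1}$ (it swaps the diagonal entries). Since $\chi$ is quadratic, $\chi\circ\beta_b^{-1}=\chi$. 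Equivariance therefore forces $\chi\, s_+=\chi\, s_-$ on $I$, i.e.\ $s_+|_I=s_-|_I$. This says $\chi_I=1$, contradicting Lemma \ref{ex}.

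The main obstacle is the first step. One must check that $\widetilde\beta_b$ is compatible with $\{\pm1\}$, which Claim \ref{beta-lift} provides, and one must pin down precisely what "equivariant splitting over $G'(O_F)$" means. Equivalently, the statement asks for a splitting over the group $\langle I,b\rangle$ acting on $W$; that reformulation sits awkwardly with the semidirect product $\ZZ/2\ZZ\ltimes I$ as defined. I would first fix the interpretation as above, with the splitting required over $I$ and equivariant for the outer involution. The computation that $\beta_b$ inverts $\bar a$ is routine; I would verify it via $b=\vartheta$ in the basis $(\vartheta,1)$ from the proof of Lemma \ref{ex}.
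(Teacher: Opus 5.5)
Your proof is correct and follows the argument implicit in the paper, which draws this corollary directly from Lemma \ref{ex}. By Claim \ref{Z2} and $\Hom(I,\{\pm1\})\simeq\ZZ/2\ZZ$, the only splittings over $I$ are $s_+|_I\neq s_-|_I$, and uniqueness over $C_\pm$ together with Claims \ref{beta-swap} and \ref{beta-lift} forces $\widetilde\beta_b$ to interchange them, so no splitting is fixed; your computation that $\beta_b$ inverts $I/I^+\simeq k^*$ is harmless but unnecessary, since any automorphism permutes the two-element group $\Hom(I,\{\pm1\})$ and therefore fixes $\chi_I$.
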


\subsection{Quasi-Heisenberg groups}
\begin{definition}\label{qH}
Let $W$ be symplectic. A quasi-Heisenberg group is a quintuple
$$
(H,R_W,S,\widetilde a,\bl)
$$
with the following properties.
\begin{enumerate}
\item $H$ is an algebraic $F$-group containing $R_W$ as a closed normal subgroup, and $Z(R_W)\subset Z(H)$.
\item $S$ is a closed commutative subgroup of $H$ such that multiplication gives a bijection
$$
S\times Z(H)R_W\longrightarrow H.
$$
Equivalently, $S\to H/Z(H)R_W$ is an isomorphism.
\item The homomorphism $\widetilde a:H\to\widetilde{\Sp(W)}$ lifts the action $a:H\to\Sp(W)$ induced by conjugation on $W=R_W/Z(R_W)$.
\item The character $\bl\in\widehat{Z(H)}$ restricts to $\widetilde\psi$ on $Z(R_W)$.
\end{enumerate}
Let $\operatorname{pr}_S:H\to S$ be the projection induced by the displayed product decomposition.
\end{definition}
Let $\widetilde{\widehat H}_\bl$ be the set of irreducible smooth representations $(\rho,N)$ of $H$ with central character $\bl$.
\begin{claim}\label{qH-phi}
For a quasi-Heisenberg group there is a unique representation
$$
\phi:H\longrightarrow\Aut(V_W)
$$
such that
$$
\phi|_{R_W}=\tau_W,\qquad \phi|_{Z(H)}=\bl\Id_{V_W},\qquad \phi|_S=\kk_W\circ\widetilde a.
$$
\end{claim}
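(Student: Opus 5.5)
Every element of $H$ factors uniquely as $h=s\,z\,r$ with $s\in S$, $z\in Z(H)$ and $r\in R_W$, by Definition \ref{qH}(2). So the only possible candidate is
$$
\phi(szr)=\kk_W(\widetilde a(s))\,\bl(z)\,\tau_W(r).
$$
Uniqueness is therefore immediate: any $\phi$ with the three prescribed restrictions must be given by this formula. The real content is existence, that is, showing this $\phi$ is a homomorphism. It is well defined on all of $H$ by the same decomposition, once we check that $\bl$ and $\tau_W$ agree on $Z(H)\cap R_W$. That intersection is $Z(R_W)$, since it is central in $R_W$. On $Z(R_W)$ both act by $\widetilde\psi$, by Definition \ref{qH}(4) and the central character of $\tau_W$. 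Hence $\psi_0(zr):=\bl(z)\tau_W(r)$ is a well-defined representation of $Z(H)R_W$, using that $Z(H)$ commutes with $R_W$.

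The step I expect to be the main obstacle is promoting $\psi_0$ to all of $H$, and I would do it by intertwining. Let $\widetilde\phi:H\to\Aut(V_W)$ be the map $h\mapsto\kk_W(\widetilde a(h))$. This is a representation of $H$, because $\widetilde a$ is a homomorphism. By Claim \ref{hei}(3), together with the fact that $a$ is the conjugation action on $W$, for $h\in H$ and $r\in R_W$
$$
\widetilde\phi(h)\,\tau_W(r)\,\widetilde\phi(h)^{-1}=\tau_W(hrh^{-1}).
$$
This uses that conjugation by $h$ on $R_W$ fixes the center pointwise ($Z(R_W)\subset Z(H)$) and acts by $a(h)$ on $W$. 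Because the splitting $R_W=W\oplus F$ is not canonical, one should check that $hrh^{-1}=(a(h)w,c+\ell_h(w))$ for a linear form $\ell_h$. If $\ell_h\neq0$, then $\kk_W\widetilde a(h)$ intertwines $\tau_W^{h}$ only up to the character $\widetilde\psi\circ\ell_h$. I would absorb this by replacing $\widetilde\phi(h)$ with $\widetilde\phi(h)\tau_W(w_h)$ for a suitable $w_h$. On $S$ itself I expect $\ell_s=0$ in the intended applications, and I would build that into the construction of $S$.

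Granting the intertwining relation, define
$$
\phi(h)=\widetilde\phi(\operatorname{pr}_S h)\,\psi_0\bigl((\operatorname{pr}_S h)^{-1}h\bigr).
$$
Multiplicativity then reduces to two facts. The first is that $\widetilde\phi|_S$ is a homomorphism, which holds since $S$ is a subgroup and $\widetilde a$ is a homomorphism. The second is that $\widetilde\phi(s)$ conjugates $\psi_0$ into $\psi_0\circ\Ad(s)$. This holds on $R_W$ by the displayed relation. It holds on $Z(H)$ trivially, since $\bl$ is scalar and $Z(H)$ is fixed by $\Ad(s)$.

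The remaining check is the semidirect-product computation
$$
\phi(s_1x_1s_2x_2)=\phi(s_1s_2\cdot s_2^{-1}x_1s_2\,x_2),
$$
with $x_i\in Z(H)R_W$. This is valid because $S$ normalizes $Z(H)R_W$, the latter being normal. Finally, $\phi|_{R_W}=\tau_W$ and $\phi|_{Z(H)}=\bl$ hold by construction, and $\phi|_S=\kk_W\circ\widetilde a$ holds since $\operatorname{pr}_S s=s$.
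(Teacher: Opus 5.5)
Your argument is the paper's proof written out in full. Both use the same three steps: agreement of $\bl$ and $\tau_W$ on $Z(H)\cap R_W=Z(R_W)$, the covariance identity of Claim \ref{hei}(3) for the conjugation action of $S$ on $R_W$, and gluing along $H=S\cdot Z(H)R_W$. The hypothesis you flag is genuinely needed, and the paper's one-line appeal to Claim \ref{hei}(3) uses it silently: conjugation by $s\in S$ must act on $R_W=W\oplus F$ exactly as $(w,c)\mapsto(a(s)w,c)$, as it does in the even-depth application, since otherwise no $\phi$ with $\phi|_S=\kk_W\circ\widetilde a$ exists; so impose it rather than twisting $\widetilde\phi(s)$ by $\tau_W(w_s)$, which would change $\phi|_S$.
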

\begin{proof}
The prescriptions agree on pairwise intersections. The covariance identity in Claim \ref{hei}(3) gives compatibility with the conjugation action of $S$ on $R_W$. Since $S$, $Z(H)$, and $R_W$ generate $H$, the representation exists and is unique.
\end{proof}
\begin{lemma}\label{thet}
Let $H$ be quasi-Heisenberg and let $\rho\in\widetilde{\widehat H}_\bl$. Then
\begin{enumerate}
\item $\rho|_{R_W}$ is irreducible;
\item there is a unique character $\theta_\rho$ of $S$ such that
$$
\rho=(\theta_\rho\circ\operatorname{pr}_S)\otimes\phi.
$$
\end{enumerate}
\end{lemma}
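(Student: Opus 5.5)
Write $(\rho,N)$ for the representation and apply Claim \ref{hei}(2) to $N$ viewed as a representation of $R_W$. This is legitimate because $Z(R_W)\subset Z(H)$ acts on $N$ by $\bl|_{Z(R_W)}=\widetilde\psi$. It gives an isomorphism
$$
\operatorname{ev}_N:M\otimes V_W\longrightarrow N,\qquad M=\Hom_{R_W}(V_W,N).
$$
The plan is to make $H$ act on $M$ so that $\operatorname{ev}_N$ becomes $H$-equivariant for the tensor product of this action with $\phi$ on $V_W$, where $\phi$ is the representation of Claim \ref{qH-phi}. Then we use irreducibility of $\rho$ to force $M$ to be a one-dimensional representation.

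Concretely, for $h\in H$ and $T\in M$ put
$$
h\cdot T=\rho(h)\circ T\circ\phi(h)^{-1}.
$$
This is again $R_W$-equivariant. The reason is that $R_W$ is normal in $H$ and both $\rho$ and $\phi$ restrict on $R_W$ to representations compatible with conjugation by $h$: for $\phi$ this is the identity $\phi(hrh^{-1})=\phi(h)\tau_W(r)\phi(h)^{-1}$, which holds because $\phi$ is a representation extending $\tau_W$. With this action, $\operatorname{ev}_N$ intertwines $M\otimes\phi$ with $\rho$. By construction, $R_W$ acts trivially on $M$. Moreover $Z(H)$ acts trivially on $M$, since it acts by $\bl$ both on $N$ and through $\phi$.

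Hence the action on $M$ factors through $H/Z(H)R_W\simeq S$, which is commutative. Any nonzero $S$-stable subspace $M'\subset M$ gives an $H$-stable subspace $\operatorname{ev}_N(M'\otimes V_W)\subset N$, so irreducibility of $\rho$ makes $M$ an irreducible smooth representation of $S$. Smoothness is inherited from $N$ by choosing an open compact subgroup fixing finitely many vectors. I expect this step to be the main obstacle: we need $\dim M=1$ via a Schur lemma for the commutative, countable-at-infinity group $S$. I would argue that $\End_H(\rho)=\CC$ holds by the standard Jacquet/Schur argument for irreducible smooth representations of $\sigma$-compact $\ell$-groups, since these have countable dimension. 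The same argument applied to $M$ shows that $S$ acts on $M$ by scalars, so $M$ is one-dimensional and given by a character $\theta_\rho$ of $S$.

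Composing $\theta_\rho$ with $\operatorname{pr}_S$ then gives $\rho\simeq(\theta_\rho\circ\operatorname{pr}_S)\otimes\phi$, which proves (2). Part (1) follows immediately, since $\rho|_{R_W}\simeq\tau_W$ is irreducible by Claim \ref{hei}(1). For uniqueness of $\theta_\rho$, suppose $\theta\otimes\phi\simeq\theta'\otimes\phi$. Then $\Hom_{R_W}(V_W,\rho)$ is one-dimensional by Claim \ref{hei}(2), and $S$ acts on it by both $\theta$ and $\theta'$, so $\theta=\theta'$.
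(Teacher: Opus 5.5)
Your proposal is correct and follows the paper's proof: you decompose $N\simeq\Hom_{R_W}(V_W,N)\otimes V_W$ via Claim \ref{hei}(2), let $H$ act on the multiplicity space by $T\mapsto\rho(h)T\phi(h)^{-1}$, show this action factors through $H/Z(H)R_W\simeq S$, and deduce one-dimensionality from irreducibility of $\rho$ and commutativity of $S$. Two steps the paper leaves implicit are filled in by you: the appeal to Schur's lemma for smooth countable-dimensional representations, which is genuinely needed to pass from ``irreducible over a commutative group'' to ``one-dimensional'', and the verification that $\theta_\rho$ is unique.
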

\begin{proof}
By Claim \ref{hei}(2), evaluation gives
$$
N\simeq\overline N\otimes V_W,
\qquad \overline N=\Hom_{R_W}(V_W,N).
$$
The representations $\rho$ and $\phi$ define an action of $H$ on $\overline N$ by
$$
(h\cdot T)=\rho(h)T\phi(h)^{-1}.
$$
This action is trivial on $R_W$ and on $Z(H)$, so it factors through $H/Z(H)R_W\simeq S$. Under the displayed tensor decomposition, $H$-stable subspaces correspond to $S$-stable subspaces of $\overline N$. Since $\rho$ is irreducible and $S$ is commutative, $\overline N$ is one-dimensional. Its character is $\theta_\rho$, and the one-dimensionality also proves (1).
\end{proof}
Define
$$
\alpha_{\widetilde a}:\widehat S\longrightarrow\widetilde{\widehat H}_\bl,
\qquad
\alpha_{\widetilde a}(\theta)=(\theta\circ\operatorname{pr}_S)\otimes\phi.
$$
\begin{corollary}\label{alpha}
\begin{enumerate}
\item The map $\alpha_{\widetilde a}$ is a bijection.
\item If $\delta\in\Hom(S,\{\pm1\})$ and $\widetilde a$ is replaced by $\delta\widetilde a$, then
$$
\alpha_{\delta\widetilde a}(\theta)=\alpha_{\widetilde a}(\delta\theta).
$$
\end{enumerate}
\end{corollary}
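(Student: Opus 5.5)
The plan is to derive both assertions of Corollary \ref{alpha} from Lemma \ref{thet} and Claim \ref{qH-phi}.

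For \textbf{part (1)}, first check that $\alpha_{\widetilde a}$ lands in $\widetilde{\widehat H}_\bl$. Since $\operatorname{pr}_S$ is induced by the isomorphism $H/Z(H)R_W\simeq S$, it is a homomorphism. Hence $\theta\circ\operatorname{pr}_S$ is a character of $H$ that is trivial on $Z(H)R_W$. It follows that $(\theta\circ\operatorname{pr}_S)\otimes\phi$ has central character $\bl$. Its restriction to $R_W$ equals $\tau_W$, which is irreducible, so the representation is irreducible.

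Surjectivity is exactly Lemma \ref{thet}(2): every $\rho$ with central character $\bl$ has the form $(\theta_\rho\circ\operatorname{pr}_S)\otimes\phi$.

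For injectivity, suppose $\alpha_{\widetilde a}(\theta)\simeq\alpha_{\widetilde a}(\theta')$. Then $\Hom_{R_W}(V_W,V_W)=\CC$ by Claim \ref{hei}(2). The multiplicity space $\overline N$ used in the proof of Lemma \ref{thet}, together with its $S$-action, is therefore an isomorphism invariant of $\rho$, and it recovers $\theta$. Concretely, an isomorphism $A$ commutes with $\tau_W$, so $A$ is a scalar. Comparing the two actions of $s\in S$ then gives $\theta(s)=\theta'(s)$. This is the uniqueness statement in Lemma \ref{thet}(2).

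For \textbf{part (2)}, let $\phi_{\widetilde a}$ and $\phi_{\delta\widetilde a}$ be the representations of Claim \ref{qH-phi}. Consider the candidate
$$
\phi'=(\delta\circ\operatorname{pr}_S)\otimes\phi_{\widetilde a}.
$$
This is a representation of $H$, because $\delta\circ\operatorname{pr}_S$ is a character of $H$. On $R_W$ and $Z(H)$ the factor $\delta\circ\operatorname{pr}_S$ is trivial, so $\phi'$ restricts to $\tau_W$ and to $\bl\Id$ there.

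On $S$ we have $\phi'=\delta\cdot(\kk_W\circ\widetilde a)$. Since $\kk_W$ is the metaplectic representation, the central element $-1$ of $\widetilde{\Sp(W)}$ acts by $-\Id$, so this equals $\kk_W\circ(\delta\widetilde a)$. Here $\delta\widetilde a$ means the product of $\widetilde a$ with $\delta$ viewed as valued in the central kernel $\{\pm1\}$, extended to $H$ through $\operatorname{pr}_S$.

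By the uniqueness in Claim \ref{qH-phi}, $\phi'=\phi_{\delta\widetilde a}$. Therefore
$$
\alpha_{\delta\widetilde a}(\theta)=(\theta\circ\operatorname{pr}_S)\otimes(\delta\circ\operatorname{pr}_S)\otimes\phi_{\widetilde a}=\alpha_{\widetilde a}(\delta\theta).
$$

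The only delicate point is the claim that $-1\in\widetilde{\Sp(W)}$ acts by $-\Id$ in $\kk_W$. This is the genuineness of the Weil representation. It follows from Claim \ref{hei}(3): an element acting trivially on $\Sp(W)$ must act on $V_W$ by a scalar commuting with $\tau_W$, and for the nontrivial element that scalar is $-1$ in the standard construction.

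One should also check that $\delta\widetilde a$ is still a homomorphism lifting $a$. This holds because $\delta\circ\operatorname{pr}_S$ is a homomorphism into the central subgroup $\{\pm1\}$.
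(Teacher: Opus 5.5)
Your proposal is correct and follows the paper's proof. Part (1) is read off from Lemma \ref{thet}, and you additionally spell out well-definedness and injectivity via Schur's lemma for $\tau_W$. Part (2) is the observation that replacing $\widetilde a$ by $\delta\widetilde a$ twists $\phi$ by $\delta\circ\operatorname{pr}_S$; you verify this through the uniqueness in Claim \ref{qH-phi}, making explicit the genuineness of $\kk_W$ (the central element $-1$ acts by $-\Id$), which the paper uses tacitly.
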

\begin{proof}
Part (1) is Lemma \ref{thet}. For (2), changing the lift by $\delta$ changes $\phi$ by the one-dimensional factor $\delta\circ\operatorname{pr}_S$.
\end{proof}
\section{Special representations of $P(O)$}\label{PO}
\subsection{Classification of smooth irreducible representations of $P(O)$}
The group $P(O)=O^*\ltimes O$ is a pro-algebraic $F$-group, and every smooth irreducible representation factors through a finite-dimensional quotient. Mackey theory therefore applies. Every nontrivial continuous character of $O$ is conjugate under $O^*$ to $\psi_m$ for a unique $m\geq0$, where
$$
\psi_m(x)=\psi(t^{-m-1}x).
$$
Its stabilizer is $O^*(m+1)\ltimes O$.
\begin{definition}\label{ph}
Let $m\geq0$.
\begin{enumerate}
\item For a character $\chi$ of $O^*(m+1)$, let $\widehat\chi$ be the character of $\St_{\psi_m}=O^*(m+1)\ltimes O$ given by
$$
\widehat\chi(a,x)=\chi(a)\psi_m(x).
$$
\item Put $\bs(m,\chi)=\ind_{\St_{\psi_m}}^{P(O)}(\widehat\chi)$.
\item Let $\mcS(m,\chi)$ be the representation space of $\bs(m,\chi)$.
\item Put $\bs(m)=\bs(m,1)$ and $\mcS(m)=\mcS(m,1)$.
\end{enumerate}
\end{definition}
\begin{lemma}\label{p-o}
\begin{enumerate}
\item The representations $\bs(m,\chi)$ are irreducible and pairwise nonisomorphic, and every irreducible representation of $P(O)$ of dimension greater than $1$ is of this form.
\item The subgroup $O^*(m+1)$ acts on $\bs(m,\chi)$ by the character $\chi$.
\end{enumerate}
\end{lemma}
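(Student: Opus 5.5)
This is a direct application of Mackey theory (Lemma \ref{MM}) to $H=P(O)=O^*\ltimes O$ with $U=O$. Every smooth irreducible representation of $P(O)$ factors through some finite-dimensional quotient $P(O)/(O^*(n)\ltimes t^{n}O)$, which is an algebraic $F$-group. In that quotient the image of $O$ is a closed normal commutative unipotent subgroup, so the results of Section \ref{reptheory} apply. The character group $\widehat O$ is identified with $K/O$ through $b\mapsto\psi_b$, using the residue pairing. The first step is to compute the $O^*$-orbits on this character group. A nontrivial $b\in K/O$ has a valuation $-m-1$ with $m\ge0$, and multiplying by units moves it to $t^{-m-1}$. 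So every nontrivial character is conjugate to exactly one $\psi_m$, and $m$ is an invariant of the orbit. The stabilizer of $\psi_m$ consists of the units $a$ with $(a-1)t^{-m-1}\in O$, that is $a\in O^*(m+1)$. Since the action is by $a$ rather than $a^2$, this gives $\St_{\psi_m}=O^*(m+1)\ltimes O$, as the paper asserts.

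Let $\pi$ be irreducible with $\dim\pi>1$. By Lemma \ref{MM}(1), $S_\pi(O)$ is a single orbit. If this orbit were $\{1\}$, then $O$ would act trivially, and $\pi$ would be an irreducible representation of the commutative group $O^*$. Such a representation is one-dimensional, because $O^*$ is pro-algebraic and the smooth irreducible representations of its commutative quotients are characters. This is excluded, so the orbit is that of some $\psi_m$.

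The little group $\St_{\psi_m}=O^*(m+1)\ltimes O$ is commutative, since $O^*(m+1)$ fixes $\psi_m$. Moreover, $\psi_m$ is trivial on the commutators $(a-1)x$, because $\psi_m((a-1)x)=1$ when $a\in O^*(m+1)$. So the little group acts on the coinvariant space $V_{\psi_m}$ through an abelian quotient. By Lemma \ref{MM}(3) this space is an irreducible element of $\widetilde H_{\psi_m}$, hence it is the character $\widehat\chi$ for a unique character $\chi$ of $O^*(m+1)$.

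Lemma \ref{MM}(2) shows that $\bs(m,\chi)$ is irreducible. To see that $\pi\simeq\bs(m,\chi)$, I would use Frobenius reciprocity: the quotient $V\to V_{\psi_m}$ gives a nonzero $P(O)$-map between $\pi$ and the induced representation. Since $\St_{\psi_m}$ has finite-dimensional compact-mod-unipotent cofinite image in the relevant quotient, compact induction coincides with full induction here. Irreducibility of both sides then forces an isomorphism. Lemma \ref{MM}(4) identifies $\rho(\psi_m,\bs(m,\chi))=\widehat\chi$, and $m$ is determined by the support orbit. Hence $\bs(m,\chi)\simeq\bs(m',\chi')$ forces $m=m'$ and $\chi=\chi'$. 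Also, $\dim\bs(m,\chi)>1$, since the orbit of $\psi_m$ is infinite. This proves (1).

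For (2), $O^*(m+1)$ is a normal subgroup of $O^*$, and $O^*$ is commutative. In the homogeneous-bundle model, a function $f$ satisfies $(a f)(h)=f(a^{-1}h)$. Writing $h=(u,x)$, we have $a^{-1}h=h\cdot(h^{-1}a^{-1}h)$, and $h^{-1}a^{-1}h=(a^{-1},y)$ for some $y\in O$. Here $y$ is essentially $(1-a^{-1})u^{-1}x$-type, which lies in $t^{m+1}O$. Hence $\psi_m(y)=1$ and $(af)(h)=\chi(a)f(h)$. This commutator calculation is the only place needing care, together with the Frobenius-reciprocity comparison in (1); I expect the latter to be the main, though mild, obstacle.
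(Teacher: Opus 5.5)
Your route is the paper's: the little-group method for $O\subset P(O)$. It consists of the orbit computation giving $\psi_m$ and $\St_{\psi_m}=O^*(m+1)\ltimes O$, the argument that the little-group representation must be some $\widehat\chi$, and the commutator computation for (2), which is correct. One slip: $\St_{\psi_m}$ is not commutative, since $[\widehat a,\widetilde x]=\widetilde{(a-1)x}$. It is only commutative modulo $\ker\psi_m$, which is what your next sentence actually uses.

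The genuine gap is the exhaustion step $\pi\simeq\bs(m,\chi)$. You are right that Lemma \ref{MM} as stated does not contain it, but your justification for it is false. The quotient $P(O)/\St_{\psi_m}\cong O^*/O^*(m+1)$ is, as an $F$-variety, $F^*\times F^m$. This is not compact, because $F^*$ is not, so compact induction and smooth full induction differ. Already for $m=0$, the full induced space consists of locally constant functions on $F^*$ that must vanish near one end of $F^*$ (by smoothness under $N(O)$) but need not vanish near the other. By contrast, $\bs(0)$ is realized on $\mcS(F^*)$. Frobenius reciprocity therefore only gives an injection $\pi\hookrightarrow\Ind_{\St_{\psi_m}}^{P(O)}\widehat\chi$. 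Irreducibility of $\bs(m,\chi)$ says nothing about an image that need not lie inside $\ind_{\St_{\psi_m}}^{P(O)}\widehat\chi$.

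You can close the gap in either of two ways. The first is to quote the full Mackey bijection: every irreducible $\pi$ with $\lambda\in S_\pi(U)$ is $\ind_{H_\lambda}^H\rho(\lambda,\pi)$. This is what the paper's one-line appeal to the little-group classification relies on. The second is to prove that the image lands in $\ind$. For instance, check that $(\Ind/\ind)_{\psi_m}=0$: the two spaces have the same stalk at points of the orbit $\Omega$ of $\psi_m$, and the quotient lives over $\overline{\Omega}\setminus\Omega$. Since $O$-coinvariants are exact and $\pi_{\psi_m}\neq0$, the composite $\pi\to\Ind/\ind$ must then vanish. The image of $\pi$ is therefore a nonzero subrepresentation of the irreducible $\bs(m,\chi)$, hence all of it.
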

\begin{proof}
This is the little-group classification of Lemma \ref{MM}, applied to the normal subgroup $O\subset P(O)$.
\end{proof}
\subsection{A second realization of $\bs(m)$}
Let $\widetilde R_m=O^*(1)\ltimes t^mO$. It has a character $\bl_m$ which is trivial on $O^*(1)$ and equals $\psi_m$ on $t^mO$.
\begin{lemma}\label{sec}
There is an isomorphism
$$
\bs(m)\simeq\ind_{\widetilde R_m}^{P(O)}(\bl_m).
$$
\end{lemma}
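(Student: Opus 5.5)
We prove $\bs(m)\simeq\ind_{\widetilde R_m}^{P(O)}(\bl_m)$ by inducing in stages through the intermediate subgroup
$$
\St_{\psi_m}=O^*(m+1)\ltimes O\ \supset\ \ldots
$$
Note first that $\widetilde R_m$ is not contained in $\St_{\psi_m}$: it contains $O^*(1)$, while it contains only $t^mO$ of the unipotent part. So stages do not directly compare. Instead we work with the subgroup $Q=O^*(1)\ltimes O$, which contains both $\widetilde R_m$ and $\St_{\psi_m}$. Since $P(O)/Q\simeq O^*/O^*(1)\simeq F^*$ and all inductions are compact, it suffices by transitivity of induction to show
$$
\ind_{O^*(m+1)\ltimes O}^{Q}(\widehat 1)\simeq\ind_{\widetilde R_m}^{Q}(\bl_m).
$$
We then induce both sides from $Q$ to $P(O)$.

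\textbf{Step 1: the left side.} By Lemma \ref{MM}(2), applied to $(Q,O)$, the left side is irreducible. Its $O$-support is the $O^*(1)$-orbit of $\psi_m$, namely the characters $\psi_{t^{-m-1}u}$ with $u\in O^*(1)$. Moreover, $O^*(m+1)$ acts trivially on the multiplicity space.

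\textbf{Step 2: the right side, by Mackey analysis.} Let $\Pi'=\ind_{\widetilde R_m}^{Q}(\bl_m)$. We compute its $(O,\lambda)$-coinvariants for each character $\lambda$ of $O$ via Lemma \ref{relevant-orbit}. Realize $\Pi'$ as functions on $Q/\widetilde R_m\simeq O/t^mO$, a finite-dimensional $F$-affine space. The subgroup $O$ acts by translations twisted by the character, and $O^*(1)$ acts by dilation.

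A character $\lambda$ of $O$ can appear only if $\lambda|_{t^mO}$ is an $O^*(1)$-conjugate of $\psi_m|_{t^mO}$. One checks that the relevant characters are exactly the $\psi_{t^{-m-1}u}$ with $u\in O^*(1)$, and that each occurs with one-dimensional coinvariants. Concretely, the stabilizer in $O^*(1)$ of $\psi_m$ restricted to $t^mO$ is $O^*(1)$ itself, because $\psi_m(t^m u x)$ depends only on $u \bmod t$. The unipotent group $O$ then acts transitively on the relevant set of points, so Lemma \ref{relevant-orbit}(2) applies.

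Next, we check that on the $\psi_m$-coinvariants the stabilizer $O^*(m+1)$ acts trivially. This holds because $\bl_m$ is trivial on $O^*(1)$ and the relevant point can be chosen fixed by $O^*(m+1)$.

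\textbf{Step 3: conclusion.} By the exactness of coinvariants for the unipotent $O$, and by Lemma \ref{MM}(3)–(4), $\Pi'$ has $O$-support a single $Q$-orbit. Its little-group representation at $\psi_m$ is the trivial character of $O^*(m+1)$, of multiplicity one. Hence $\Pi'$ is irreducible and isomorphic to the left side of Step 1.

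Alternatively, Frobenius reciprocity together with irreducibility of the left side gives a nonzero map between the two sides. Multiplicity one then upgrades this map to an isomorphism.

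\textbf{Main obstacle.} The delicate point is the coinvariant computation in Step 2. There we must verify that exactly one orbit of points in $O/t^mO$ is relevant for each character $\psi_{t^{-m-1}u}$, and that the action of $O^*(m+1)$ on the resulting line is trivial. The plan is to do this by an explicit Fourier transform on the finite-dimensional space $O/t^mO$.
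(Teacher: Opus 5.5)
Your argument is correct and is essentially the paper's proof. The paper likewise computes the $O$-support of $\ind_{\widetilde R_m}^{P(O)}(\bl_m)$ as the characters extending $\psi_m|_{t^mO}$ (the $O^*(1)$-orbit of $\psi_m$), enlarges this to the full $O^*$-orbit by inducing from $O^*(1)$ to $O^*$, checks that the $\psi_m$-coinvariants are one-dimensional with trivial $O^*(m+1)$-action, and concludes by Lemma \ref{MM}; your intermediate group $Q=O^*(1)\ltimes O$ just makes that two-stage structure explicit. You should drop the ``alternative'' via Frobenius reciprocity: compact induction from the non-open subgroup $\St_{\psi_m}$ (here $Q/\St_{\psi_m}\simeq O^*(1)/O^*(m+1)$ is noncompact) is not left adjoint to restriction, so that step would need a separate argument, and your main line does not use it.
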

\begin{proof}
A direct Mackey calculation for the normal subgroup $O\subset P(O)$ shows that the $O$-support of $\ind_{\widetilde R_m}^{P(O)}(\bl_m)$ is the single orbit $O^*\psi_m$. Indeed, the characters of $O$ extending $\psi_m|_{t^mO}$ form the $O^*(1)$-orbit of $\psi_m$, and induction from $O^*(1)$ to $O^*$ produces the full $O^*$-orbit. The $\psi_m$-coinvariant space is one-dimensional, and $O^*(m+1)$ acts trivially on it. Lemma \ref{MM} therefore identifies the representation with $\bs(m)$.
\end{proof}
\subsection{Depth and special representations}
We use the notation of Definition \ref{G-prime-filtration}.
\begin{definition}\label{depth}
\begin{enumerate}
\item The depth $m(\pi)$ of a representation of $G(O)$ is the least $n\geq0$ such that $G(O)(n+1)$ acts trivially.
\item The depth $m(\pi)$ of a representation of $G'(O)$ is the least $n\in\frac12\ZZ_{\geq0}$ such that $G'(O)(n+1/2)$ acts trivially.
\item A representation of $G(O)$ or $G'(O)$ is special if it has dimension greater than $1$ and its restriction to $P(O)$ is irreducible.
\item An irreducible representation of $P(O)$ of dimension greater than $1$ is special if it extends to $G(O)$ or $G'(O)$.
\end{enumerate}
\end{definition}
\begin{proposition}\label{eq}
If $\bs(m,\chi)$ is special, then $\chi=1$.
\end{proposition}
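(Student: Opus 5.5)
The plan is to show that if $\bs(m,\chi)$ extends to a representation $\pi$ of $G(O)$ (resp.\ $G'(O)$), then $\pi$ has depth at most $m$ in the appropriate sense. In particular $\ker\pi$ must contain the diagonal elements $\widehat a$ with $a\in O^*(m+1)$. Since $O^*(m+1)$ acts on $\bs(m,\chi)$ by $\chi$ (Lemma \ref{p-o}(2)), this forces $\chi=1$.

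\emph{Step 1: the upper unipotent part of the kernel.} Restrict $\pi$ to $N(O)\simeq O$. By construction $\bs(m,\chi)=\ind_{\St_{\psi_m}}^{P(O)}(\widehat\chi)$, so in the homogeneous-bundle model $O$ acts on the fibre over $p\St_{\psi_m}$ by the conjugate character $p\psi_m$. Every character in the orbit $O^*\psi_m$ is trivial on $t^{m+1}O$. Hence $\widetilde b$ acts trivially for all $b\in t^{m+1}O$, i.e.\ $N(t^{m+1}O)\subset\ker\pi$.

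\emph{Step 2: normal closure in $G(O)$.} The kernel $\ker\pi$ is an open normal subgroup, so it contains $G(O)(n)$ for $n\gg0$. It also contains all $G(O)$-conjugates of $N(t^{r}O)$ for every $r\ge m+1$. For $r\ge m+1$ the graded quotient $G(O)(r)/G(O)(r+1)$ is identified $\Ad$-equivariantly with $\fg(F)$. The image of $\ker\pi\cap G(O)(r)$ in it is an $\Ad(G(F))$-stable subgroup containing $Fe$. Since the $\Ad$-orbit of the line $Fe$ spans $\mathfrak{sl}_2(F)$, this image is all of $\fg(F)$. A descending induction on $r$, starting from $r=n$ where $G(O)(n)\subset\ker\pi$ and finishing at $r=m+1$, then gives $G(O)(m+1)\subset\ker\pi$. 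In $\PGL(2)$ we have $\widehat a\in G(O)(m+1)$ for $a\in 1+t^{m+1}O$, so $\chi(a)=1$.

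\emph{Step 3: the case of $G'(O)$.} The same strategy applies using the filtration $G'(O)(r)$, $r\in\frac12\ZZ$. Now $N(t^{m+1}O)\subset\ker\pi$, and conjugating by $s$ gives the lower unipotent elements with entry in $t^{m+2}O$. The graded pieces are $\Ad(I)$-modules, so we again run a descending induction, conjugating by $I$ and taking commutators.

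I expect the main obstacle to be the torus component at the critical level $m+1$. Naive commutators $[tf,t^{m+1}e]$ only produce $h$ with coefficient $t^{m+2}$, so they show only that $\chi$ is trivial on $O^*(m+2)$. At level $m+1$ I would first try conjugating $\widetilde b$, $b\in t^{m+1}O$, by a lower unipotent element of $I$ with entry in $tO$. I would then multiply by explicit elements already known to lie in $\ker\pi$, to isolate a diagonal factor $1+t^{m+1}u$.

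If that fails, the fallback is a direct comparison of central characters. Restrict $\pi$ to $T^L$ for a ramified $L$ and use the $s$-symmetry $\widehat a\mapsto\widehat{a^{-1}}$ (modulo the centre) together with $\chi$ being the character of $O^*(m+1)$. This yields $\chi=\chi^{-1}$ on $O^*(m+1)$. Since $O^*(m+1)$ is uniquely $2$-divisible, this forces $\chi=1$.
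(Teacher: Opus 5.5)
Your Steps 1--2 for $G(O)$ are correct, and they take a genuinely different route from the paper. You prove the stronger statement that any extension of $\bs(m,\chi)$ to $G(O)$ is trivial on $G(O)(m+1)$. You do this by descending induction on the congruence filtration, using that at every level $r\ge m+1$ the $\Ad(G(F))$-translates of the line $Fe$ span $\fg(F)$. The paper needs no depth information and uses only one Weyl element.

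The gap is in Step 3, and cleverer commutators will not close it: the descending induction cannot get past level $m+\tfrac32$. By Claim \ref{act}(1), the subgroup $G'(O)(m+\tfrac32)$ is normal in $G'(O)$. It contains $N(t^{m+1}O)$, because the condition on the upper right entry is $b\in\fm^{[m+3/2]}=\fm^{m+1}$. Hence it contains every $G'(O)$-conjugate of $N(t^{m+1}O)$, in particular the $s$-conjugates with lower left entry in $t^{m+2}O$. It also contains all products and commutators of these with elements of $G'(O)$. But it does not contain $\widehat a$ for $a\in(1+t^{m+1}O)\setminus(1+t^{m+2}O)$. A representative $\operatorname{diag}(\lambda a,\lambda)$ would need $\lambda-1,\lambda a-1\in\fm^{m+2}$, which forces $a\in 1+\fm^{m+2}$.

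So the only information your method uses is that $\ker\pi$ is open, normal, and contains $N(t^{m+1}O)$. This is compatible with $\widehat a$ acting nontrivially, and you can extract at most $\chi|_{O^*(m+2)}=1$. The missing piece is the one-dimensional $h$-line $G'(O)(m+1)/G'(O)(m+\tfrac32)$ of Claim \ref{act}(3). In $G(O)$ you reached it by conjugating with constant lower unipotent elements, and these do not exist in $I$. Your ``first try'' therefore fails: everything it produces stays inside $G'(O)(m+\tfrac32)$.

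What is needed is the fact that $O^*(m+1)$ acts on $\bs(m,\chi)$ by the scalar $\chi$ (Lemma \ref{p-o}(2)), not just that $N(t^{m+1}O)$ acts trivially. That is your fallback, and it is exactly the paper's proof. Drop the restriction to $T^L$, which plays no role; the elements $\widehat a$ do not lie in $T^L$. For $s=\begin{pmatrix}0&1\\t&0\end{pmatrix}$ one has $s\widehat a s^{-1}=\operatorname{diag}(1,a)=\widehat{a^{-1}}$ in $\PGL_2(K)$. Conjugating $\pi(\widehat a)=\chi(a)\Id$ by $\pi(s)$ then gives $\chi(a)=\chi(a)^{-1}$ for $a\in O^*(m+1)$. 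Unique $2$-divisibility of $1+t^{m+1}O$ gives $\chi=1$.

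The paper runs the same two lines for $G(O)$ with $s=\begin{pmatrix}0&1\\1&0\end{pmatrix}$, so this one argument covers both cases and makes Step 2 unnecessary. What Step 2 gives you in exchange is the depth bound $m(\pi)\le m$ for $G(O)$, essentially Lemma \ref{restr}(1). As shown above, its $G'(O)$ analogue does not follow from normality alone.
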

\begin{remark}
We shall prove the converse below.
\end{remark}
\begin{proof}
Suppose first that $\bs(m,\chi)$ extends to a special representation $(\pi,V)$ of $G(O)$. Let $i:P(O)\ho G(O)$ be the standard embedding and choose a $P(O)$-equivariant isomorphism
$$
\kk_\pi:\mcS(m,\chi)\longrightarrow V.
$$
Let $s=\begin{pmatrix}0&1\\1&0\end{pmatrix}$, let $i'=\Ad(s)\circ i$, and put $\kk'_\pi=\pi(s)\circ\kk_\pi$. Then
$$
A(\pi)=\kk_\pi^{-1}\circ\kk'_\pi
$$
intertwines the two $P(O)$-actions. Since $i'(\widehat a)=i(\widehat{a^{-1}})$, one has
$$
A(\pi)\bs(m,\chi)(\widehat a)A(\pi)^{-1}=\bs(m,\chi)(\widehat{a^{-1}}).
$$
For $a\in O^*(m+1)$, Lemma \ref{p-o}(2) gives $\chi(a)=\chi(a^{-1})$, so $\chi^2=1$. The group $O^*(m+1)=1+t^{m+1}O$ is uniquely $2$-divisible because $2$ is invertible in $F$, and hence it has no nontrivial quadratic character. Thus $\chi=1$.
For $G'(O)$, use $s=\begin{pmatrix}0&1\\t&0\end{pmatrix}\in G'(O)$. Conjugation again sends $\widehat a$ to $\widehat{a^{-1}}$ in $\PGL(2)$, and the same argument proves $\chi=1$.
\end{proof}
\begin{lemma}\label{restr}
\begin{enumerate}
\item If $\pi$ is a special representation of $G(O)$ of depth $m$, then $\pi|_{P(O)}\simeq\bs(m)$.
\item If $\pi$ is a special representation of $G'(O)$ of depth $n-1/2$, where $n\in\ZZ_{>0}$, then $\pi|_{P(O)}\simeq\bs(n-1)$.
\end{enumerate}
\end{lemma}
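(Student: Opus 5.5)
By Lemma \ref{p-o}(1) and Proposition \ref{eq}, the restriction $\pi|_{P(O)}$ is $\bs(m',1)=\bs(m')$ for a unique $m'\geq0$. The lemma therefore amounts to identifying $m'$ from the depth of $\pi$. I will read off $m'$ from two pieces of data: which characters of $O\subset P(O)$ occur in the restriction, and which congruence subgroups act trivially.

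\emph{Case of $G(O)$.} First I check that the $O$-support of $\bs(m')$ is the orbit $O^*\psi_{m'}$. Every character in this orbit is trivial on $t^{m'+1}O$ and nontrivial on $t^{m'}O$. Hence $N(t^{m'+1}O)=\widetilde{t^{m'+1}O}$ acts trivially on $\bs(m')$, while $\widetilde{t^{m'}O}$ does not.

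Next I compare with the depth $m$. The group $\widetilde{t^{m+1}O}$ lies in $G(O)(m+1)$, which acts trivially on $\pi$. So $\widetilde{t^{m+1}O}$ acts trivially, and therefore $m'\leq m$.

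For the reverse inequality I argue by contradiction. Suppose $m'<m$. Then $\widetilde{t^{m}O}$ acts trivially on $\pi$. Consider the abelian normal subgroup $G(O)(m)/G(O)(m+1)\cong\fg(F)$ of $G(O)_m$, and its support $\bo(\pi)$, which is a single $\Ad(G(O))$-orbit of a nonzero element, since depth $m$ means $G(O)(m)$ acts nontrivially. The triviality of $\widetilde{t^mO}$ says that every $X\in\bo(\pi)$ pairs trivially with $e$, i.e.\ $\bo(\pi)$ lies in the hyperplane $e^\perp$. But the $G(F)$-orbit of a nonzero element of $\mathfrak{sl}_2(F)$ is not contained in any hyperplane: its span is a nonzero $G(F)$-stable subspace, hence all of $\mathfrak{sl}_2(F)$ because the adjoint representation is irreducible (residue characteristic odd). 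This contradiction gives $m'=m$.

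For $m=0$ the same inequality gives $m'=0$ directly, since $\bs(m')$ with $m'\geq0$ is already forced by $\dim\pi>1$.

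\emph{Case of $G'(O)$.} Let the depth be $n-1/2$. By Definition \ref{G-prime-filtration}, $G'(O)(n)$ contains $\widetilde{t^nO}$ (the condition there is $b\in\fm^{n}$), and $G'(O)(n)$ acts trivially. So $\psi_{m'}$ is trivial on $t^nO$, giving $m'\leq n-1$.

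For the reverse inequality, suppose $m'<n-1$. Then $\widetilde{t^{n-1}O}$ acts trivially. I pass to the abelian quotient $G'(O)(n-1/2)/G'(O)(n)$. Its elements have $a-1,d-1\in\fm^n$, $b\in\fm^{n-1}$, $c\in\fm^n$. Modulo $G'(O)(n)$, the diagonal entries die and only $b \bmod t^n$ and $c \bmod t^{n+1}$ survive, so the quotient is $F^2$. The nontriviality of $\pi$ on this quotient must be seen in the $b$- and $c$-coordinates. Conjugation by $s=\begin{pmatrix}0&1\\t&0\end{pmatrix}$ swaps $b\leftrightarrow c/t$, i.e.\ swaps the two coordinates. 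Since $b$ acts trivially by assumption, and the $s$-conjugate of the $b$-coordinate is the $c$-coordinate, the $c$-coordinate also acts trivially. Together these give that $G'(O)(n-1/2)$ acts trivially, contradicting depth $n-1/2$.

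\emph{Main obstacle.} The step I expect to be delicate is the last one: verifying that $G'(O)(n-1/2)/G'(O)(n)$ is generated, modulo trivially acting parts, by the root subgroups $\widetilde{t^{n-1}O}$ and its $s$-conjugate. If this fails, I would instead apply the orbit-spanning argument used for $G(O)$ to the adjoint action of $I$ on this graded piece.
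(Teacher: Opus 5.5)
Your proposal is correct and follows the paper's route: the paper also uses Proposition \ref{eq} to write the restriction as $\bs(r)$, then reads off $r$ by comparing the conductor of $\bs(r)$ on $O$ with the congruence filtration. You additionally justify the reverse inequality, which the paper leaves implicit, using the $\Ad(G(F))$-orbit argument for $G(O)$ and, for $G'(O)$, conjugation by $s$, which sends $\widetilde b$ to $\begin{pmatrix}1&0\\ tb&1\end{pmatrix}$. The step you flagged is not an obstacle: $G'(O)(n-1/2)/G'(O)(n)$ is the vector group $F^2$, with coordinates the coefficient of $t^{n-1}$ in $b$ and of $t^{n}$ in $c$, and the images of $\widetilde{t^{n-1}O}$ and of its $s$-conjugate are exactly the two coordinate lines, so they generate it.
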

\begin{proof}
By Proposition \ref{eq}, the restriction has the form $\bs(r)$. The representation $\bs(r)$ is trivial on the additive subgroup $t^{r+1}O$ and nontrivial on $t^rO$. Comparing this conductor with the defining congruence filtration gives $r=m$ in the first case and $r=n-1$ in the second.
\end{proof}
\subsection{The restriction of representations of $G(O)$ to $O\subset P(O)$}
We identify the additive group $O$ with $N(O)\subset P(O)$. Let $G^+(F)$ be the image of $\SL_2(F)$ in $\PGL_2(F)$, and let $G^+(O)$ be its inverse image in $G(O)$ under reduction modulo $t$. We use the analogous notation $G^+(O)_n$ for the image in $G(O)_n$.
\begin{proposition}\label{infty}
For every smooth representation $(\pi,V)$ of $G^+(O)$, the natural inclusion
$$
V^{G^+(O)}\hookrightarrow V^O
$$
is an isomorphism.
\end{proposition}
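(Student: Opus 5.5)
Since $\pi$ is smooth, every vector $v\in V^O$ is fixed by some congruence subgroup $G^+(O)(n+1)$. So it suffices to prove the statement for smooth representations of the algebraic groups $G^+(O)_n$, with $O$ replaced by its image $O_n=N(O)/N(t^{n+1}O)$. The plan is to show that the subgroup generated by the conjugates of $N(O)$ by elements fixing $v$ is all of $G^+(O)_n$; in fact we want it generated by $N(O)$, the opposite unipotent $N^-(O)$, and then the torus. A cleaner route is an ``$SL_2$ Mautner/Howe–Moore''-type argument: $v$ is fixed by $N(O)$ and by some congruence subgroup $G^+(O)(r)$ (choose $r\ge 1$ with $v$ fixed). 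We need to enlarge the fixing group step by step.

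Concretely, I would first show $v$ is fixed by $N^-(t^{r}O)$ and by the diagonal part of $G^+(O)(r)$ (automatic), and then use the torus trick. The torus element $\widehat a$ with $a=t^{-1}$ is not in $G(O)$, so instead I would argue algebraically on the finite-dimensional quotient. The group $G^+(O)_n$ contains the unipotent group $N(O_n)$ and the normal subgroup $G^+(O)_n(1)$, and its quotient is $G^+(F)$. Let $H\subset G^+(O)_n$ be the stabilizer of $v$; it is an open subgroup containing $N(O_n)$. Consider its image $\bar H\subset G^+(F)$: it is open and contains $N(F)$. Open subgroups of $\SL_2(F)$ containing $N(F)$ are the whole group. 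Indeed, $\bar H$ contains a small neighborhood of the identity, hence some $N^-(\varpi^kO_F)$. Conjugating by the diagonal elements $\operatorname{diag}(x,x^{-1})$ that arise from $N(F)N^-(\cdot)N(F)$ products, and using the identity $n(b)n^-(c)n(b')=$ diagonal$\cdot$\dots, we get enough torus elements to scale $N^-(\varpi^kO_F)$ to all of $N^-(F)$. Then $N(F),N^-(F)$ generate $\SL_2(F)$. (This is the standard fact that $\SL_2(F)$ has no proper open subgroup containing a nontrivial unipotent subgroup's full $F$-points, using that $F$ has no bounded-index issues.) Hence $H$ surjects onto $G^+(F)$.

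It remains to show $H\supset G^+(O)_n(1)$. The plan is to induct up the filtration $G^+(O)_n(j)/G^+(O)_n(j+1)\simeq\fg(F)$, on which $H$ acts via $\Ad$ through its image $G^+(F)$. The intersection $H\cap G^+(O)_n(j)$ modulo $G^+(O)_n(j+1)$ gives a subgroup of $\fg(F)$. It contains the $e$-line, since $N(t^jO)\subset H$. It is also stable under $\Ad(G^+(F))$. I would show it is an open subgroup stable under $\SL_2(F)$ containing $Fe$. The $\Ad(\SL_2(F))$-span of $Fe$ as an additive group is all of $\fg(F)$, since the nilpotent cone spans and $\Ad$ of $e$-lines give $f$ and $h$ via $\Ad(n^-(c))e=e-ch-c^2f$ and additive combinations with $2$ invertible. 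So the graded piece is everything. A downward/upward induction on $j$ then yields $H\supset G^+(O)_n(1)$, which needs care because elements of $H$ lifting a graded element need not be correct mod deeper layers. That is handled by working from the top layer $j=n$ (which is central-ish and abelian) down, using $H\supset N(O_n)$ at each stage. Combining the two steps gives $H=G^+(O)_n$.

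The main obstacle is the lifting in the last step. Knowing that the image of $H\cap G^+(O)_n(j)$ in the $j$-th graded piece is full requires actual elements of $H$ in that layer, not just $\Ad$-conjugates of $N(t^jO)$ by elements of $H$ (which are in $H$, good). Conjugates $hN(t^jO)h^{-1}$ with $h\in H$ lie in $H\cap G^+(O)_n(j)$, and their images are $\Ad(\bar h)(t^jFe)$ modulo deeper terms. So the additive closure generated is handled by products in $H$. With the top layer done first and descending induction, this closes the argument.
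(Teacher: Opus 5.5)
Your proof is correct. It reaches the result by a genuinely different organization from the paper's.

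\textbf{The paper's route.} It inducts on the depth $n$. First, small conjugates $j^{-1}N(t^nO)j$, with $j$ in an open subgroup fixing $v$, show that $v$ is fixed by the top layer $G(O)_n(n)\simeq\fg(F)$. It then passes to $V^{G(O)_n(n)}$, a representation of $G^+(O)_{n-1}$, and applies the induction hypothesis. The base case is the depth-zero statement for $G^+(F)$.

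\textbf{Your route.} You fix the single stabilizer $H$ of $v$ in $G^+(O)_n$ and reverse the order. You handle the reductive quotient first: an open subgroup of $\SL_2(F)$ containing $N(F)$ contains the Weyl elements $n^-(c)n(-c^{-1})n^-(c)$ for small $c\neq0$, hence $N^-(F)$, hence everything. Only then do you fill in the congruence layers.

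\textbf{What the reversed order buys.} It is what makes your layer step clean. Since $\bar H=G^+(F)$, each image $\Lambda_j$ of $H\cap G(O)_n(j)$ in $G(O)_n(j)/G(O)_n(j+1)\simeq\fg(F)$ has three properties:
\begin{enumerate}
\item it is automatically an additive subgroup;
\item it is stable under all of $\Ad(\SL_2(F))$;
\item it contains $Fe$.
\end{enumerate}
It is therefore everything, by the explicit formulas $\Ad(w)(Fe)=Ff$ and $\Ad(n^-(c))e=e-ch-c^2f$.

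The paper instead needs the fact that conjugates of $Fe$ by a small open subgroup already span $\fg(F)$. This is true, because that span is an $F$-subspace stable under a Zariski-dense subgroup, hence an ideal of the simple algebra $\fg$. The paper leaves it implicit.

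\textbf{Comparison.} The paper uses only local information at each layer and treats $G^+(F)$ once, at the bottom of the induction. Yours uses the global $G^+(F)$-action at every layer. This trades the spanning-by-small-conjugates fact for an elementary explicit computation, and it avoids changing the representation space at each step.

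\textbf{The final step.} Your descending induction on $j$ is exactly right. Full image in the $j$-th graded piece together with $H\supset G(O)_n(j+1)$ gives $H\supset G(O)_n(j)$, starting from $j=n$, where $G(O)_n(n+1)$ is trivial. No lifting subtlety remains.

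\textbf{Cleanup.} You can delete the exploratory remarks: the Mautner-type idea and the element $\widehat{t^{-1}}$. You can also drop the claim that $\Lambda_j$ is open, which is not needed.
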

\begin{proof}
It is enough to prove the assertion for a representation which factors through $G^+(O)_n$. We argue by induction on $n$. For $n=0$, let $v$ be fixed by $N(F)$. By smoothness, $v$ is fixed by an open subgroup $J\subset G^+(F)$. Hence $v$ is fixed by $j^{-1}N(F)j$ for every $j\in J$. These conjugates generate the image of $\SL_2(F)$ in $\PGL_2(F)$, so $v$ is $G^+(F)$-fixed.
For the induction step, let $v$ be fixed by $O=N(O)$. The same argument applied to sufficiently small conjugates of $N(O)$ shows that $v$ is fixed by the last congruence subgroup $G(O)_n(n)\simeq\fg(F)$. Passing to $G^+(O)_{n-1}$ and applying the induction hypothesis shows that $v$ is fixed by $G^+(O)_n$.
\end{proof}
\section{Special representations of $G(O)$}\label{GO}
The purpose of this section is to classify and construct special irreducible representations of $G(O)$.
\subsection{Notation}
The group $G(O)_n$ acts by conjugation on the groups $G(O)_{2n-1}(n)$ and $G(O)_{2n}(n)$.
\begin{claim}
\begin{enumerate}
\item The group $G(O)_{2n-1}(n)$ is commutative.
\item For $n>0$, one has $Z(G(O)_{2n}(n))=G(O)_{2n}(n+1)$.
\item For $n>0$, the commutator subgroup of $G(O)_{2n}(n)$ is $G(O)_{2n}(2n)$.
\end{enumerate}
\end{claim}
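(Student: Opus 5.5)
The plan is to work with explicit matrix representatives and the ``Lie algebra approximation'' of congruence subgroups. Elements of $G(O)(n)$, with $n\geq1$, can be written as $1+t^nX$ with $X\in\mathfrak{gl}_2(O)$, modulo scalars. Since we are in $\PGL(2)$ and $2$ is invertible, I would first normalize representatives so that they lie in $\SL_2$ up to scalar, or simply work in $\operatorname{GL}_2(O)$ modulo central elements $1+t^nO$. The basic identity to exploit is
$$
(1+t^aX)(1+t^bY)(1+t^aX)^{-1}(1+t^bY)^{-1}\equiv 1+t^{a+b}[X,Y]\pmod{t^{a+b+\min(a,b)}}.
$$
It follows from expanding inverses as geometric series.

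For (1), take $a,b\geq n$. The commutator is then congruent to $1$ modulo $t^{2n}$, so it is trivial in $G(O)_{2n-1}=G(O)/G(O)(2n)$. Hence $G(O)_{2n-1}(n)$ is commutative. The scalar ambiguity is harmless, because commutators of lifts in $\operatorname{GL}_2$ are well defined.

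For (3), in $G(O)_{2n}$ the same computation shows that the commutator of $1+t^nX$ and $1+t^nY$ is the image of $1+t^{2n}[X,Y]$ (mod $t^{2n+1}$). Commutators involving a factor from level $\geq n+1$ die. Thus the commutator subgroup is contained in $G(O)_{2n}(2n)\simeq\fg(F)$, and it equals the image of $[\mathfrak{gl}_2(F),\mathfrak{gl}_2(F)]=\mathfrak{sl}_2(F)$, which is all of $\fg(F)$. One should also check that the commutator map really lands only on the leading term. This is true because the error term lies in $t^{3n}\subset t^{2n+1}$.

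For (2), the inclusion $G(O)_{2n}(n+1)\subset Z$ follows from the estimate above, since $(n+1)+n=2n+1$. For the reverse inclusion, take $g=1+t^nX$ central. Commuting with $1+t^nY$ forces $[\bar X,Y]=0$ in $\fg(F)$ for all $Y$, where $\bar X$ is $X$ mod $t$. So $\bar X$ is scalar, and $g$ lies in $G(O)_{2n}(n+1)$ after removing the scalar. The main point needing care is precisely the passage between $\operatorname{GL}_2$ and $\PGL_2$. A commutator being trivial in $\PGL_2$ means it is scalar mod $t^{2n+1}$. Since the commutator has determinant $1$ and the leading term $[X,Y]$ is traceless, being scalar forces it to vanish. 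I expect this scalar bookkeeping to be the only real obstacle; the rest is routine expansion.
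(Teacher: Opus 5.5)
Your proposal is correct and is essentially the paper's argument: the paper proves (2) and (3) in the proof of Claim~\ref{q} by identifying $G(O)_{2n}(n)$, via the truncated exponential $\exp_2(X)=1+X+\frac12X^2$, with the two-step nilpotent Lie algebra $t^n\fg[[t]]/t^{2n+1}\fg[[t]]$ under $X*Y=X+Y+\frac12[X,Y]$, so that commutators are $\exp_2(t^{2n}[X,Y])$, and then it uses $Z(\fg)=0$ and $[\fg,\fg]=\fg$ exactly as you do, while (1) is the commutativity implicit in Claim~\ref{iota1}(2). Your computation with explicit $\operatorname{GL}_2$ lifts and the accompanying scalar bookkeeping is the same leading-order calculation, done in matrix coordinates rather than in $\mathfrak{sl}_2$ through $\exp_2$.
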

\subsection{Lie algebras}
The trace pairing identifies $\fg^*$ with $\fg$. Let
$$
\iota:\fm\fg(O)\longrightarrow G(O)(1),\qquad X\longmapsto1+X,
$$
where the right-hand side is interpreted in $\PGL_2(O)$.
\begin{claim}\label{iota1}
\begin{enumerate}
\item The map $\iota$ is a bijection of sets. It induces bijections
$$
\iota_m(n):\fg(F)\otimes_F\fm^n/\fm^{m+1}\longrightarrow G(O)_m(n).
$$
\item If $m<2n$, then $\iota_m(n)$ is an isomorphism from the additive group on the left to the group on the right.
\end{enumerate}
\end{claim}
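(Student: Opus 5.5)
The plan is to verify the statement directly. Note first that $\iota$ is really defined on $\fm\fg(O)$ with $\fg=\mathfrak{sl}_2$ embedded as trace-zero matrices, and that $1+X$ is read in $\PGL_2(O)$. For part (1) I would show that every element of $G(O)(1)$ has a unique representative of the form $1+X$ with $X\in\fm\fg(O)$.

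\emph{Existence.} An element of $G(O)(1)$ lifts to some $g\in\operatorname{GL}_2(O)$ with $g\equiv1\pmod{\fm}$. Set $\lambda=1+\frac12\operatorname{tr}(g-1)$. This is a unit in $1+\fm$, since $2$ is invertible in $F$. Then $\lambda^{-1}g$ has the form $1+X$ with $\operatorname{tr}X=0$: indeed $\operatorname{tr}(\lambda^{-1}g)=2$ holds exactly when $\operatorname{tr}(g)=2\lambda$, which is our choice of $\lambda$.

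\emph{Uniqueness.} Suppose $1+X=\mu(1+Y)$ with $\mu\in O^*$ and $X,Y$ trace zero in $\fm\fg(O)$. Taking traces gives $2=2\mu$, so $\mu=1$ and $X=Y$.

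This shows that $\iota$ is a bijection onto $G(O)(1)$. It maps $\fm^n\fg(O)$ onto $G(O)(n)$: the $\lambda$-normalization preserves the congruence level, since $\operatorname{tr}(g-1)\in\fm^n$ whenever $g-1\in\fm^n$. Next I would check compatibility with the quotient by $G(O)(m+1)$. Suppose $(1+X)(1+Y)^{-1}\in G(O)(m+1)$ with $X,Y\in\fm^n\fg(O)$. Write $(1+X)=(1+Z)(1+Y)$ with $1+Z$ normalized and $Z\in\fm^{m+1}$, up to a scalar; the trace normalization forces that scalar to be $\equiv1\pmod{\fm^{m+1}}$. Then $X\equiv Y+Z+ZY\equiv Y\pmod{\fm^{m+1}}$. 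The converse runs the same way. Hence $\iota$ descends to a bijection $\fg(F)\otimes_F\fm^n/\fm^{m+1}\to G(O)_m(n)$, using $\fm^n\fg(O)/\fm^{m+1}\fg(O)\simeq\fg(F)\otimes\fm^n/\fm^{m+1}$.

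For part (2) take $X,Y\in\fm^n\fg(O)$. Then $(1+X)(1+Y)=1+X+Y+XY$ with $XY\in\fm^{2n}\subset\fm^{m+1}$ because $m<2n$. The element $1+X+Y$ already has trace $2$, so it is normalized. Therefore $\iota(X)\iota(Y)\equiv\iota(X+Y)$ modulo $G(O)(m+1)$, and the bijection from part (1) is a homomorphism, hence a group isomorphism.

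The only real obstacle is bookkeeping in $\PGL_2$: the nontrivial point is the scalar normalization and its compatibility with the filtration. Everything relies on $2\in O^*$, which holds because $\operatorname{char}F\neq2$.
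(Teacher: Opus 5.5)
Your proposal is correct. The paper states this as a Claim and omits the proof as routine, and your argument is exactly the intended verification: normalize by $g\mapsto(\tfrac12\operatorname{tr}g)^{-1}g$ (using $2\in O^*$), then note that $XY\in\fm^{2n}\subset\fm^{m+1}$ when $m<2n$. One point to tighten in part (2): the product $(1+X)(1+Y)$ has trace $2+\operatorname{tr}(XY)$, so it is not itself normalized. It is, however, congruent modulo $\fm^{m+1}$ to the normalized matrix $1+X+Y$, and congruent matrices in $\operatorname{GL}_2(O)$ define the same coset of $G(O)(m+1)$, as your part (1) argument already shows.
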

The map $\iota_m(m)$ identifies $G(O)_m(m)$ with $\fg(F)$ and, using $\widetilde\psi$, identifies its character group with $\fg(F)$. A character is called elliptic if the corresponding element of $\fg(F)$ is nonzero and elliptic.
Let
$$
H=G(O)_m,\qquad H'=P(O)_m,\qquad U=G(O)_m(m),
$$
and let $\bl\in\widehat U$ be elliptic.
\begin{claim}\label{P=G}
The restriction map from the $H$-orbit of $\bl$ to $\widehat{U\cap H'}$ is injective, and
$$
H'H_\bl=H.
$$
Thus the orbit and double-coset hypotheses in Lemma \ref{3.12} hold.
\end{claim}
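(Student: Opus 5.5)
We work with $m>0$ and the identifications of Claim \ref{iota1}. There $U=G(O)_m(m)\simeq\fg(F)$, $\widehat U\simeq\fg(F)$ via the trace pairing and $\widetilde\psi$, and $H=G(O)_m$ acts on $\widehat U$ through its quotient $G(O)_0=G(F)$ by the adjoint action. The congruence subgroup acts trivially on $U$ because $U$ is central in $G(O)_m(1)$ modulo the relevant terms. The subgroup $U\cap H'=P(O)_m\cap U$ is identified with the Borel subalgebra $\mathfrak b(F)$ (upper triangular matrices in $\mathfrak{sl}_2(F)$). Under the trace pairing, restricting a character $\bl_X$, $X\in\fg(F)$, to $\mathfrak b$ amounts to taking the class of $X$ in $\fg/\mathfrak b^\perp$. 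Since $\mathfrak b^\perp=\mathfrak n=Fe$, the restriction map is $X\mapsto X\bmod Fe$.

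\emph{Injectivity.} Take $X$ and $X'=\Ad(g)X$ in the orbit $\Omega$ with $X'-X=ce\in\mathfrak n$. Both are elliptic with the same determinant; write $X=\begin{pmatrix}a&b\\ c_0&-a\end{pmatrix}$. Ellipticity forces $c_0\neq0$, since otherwise $X$ would stabilize the line $Fe_1$. Then $\det(X+ce)=-a^2-(b+c)c_0$ equals $\det X=-a^2-bc_0$ only if $cc_0=0$, hence $c=0$. So $r$ is injective on $\Omega$. More precisely, elements of $\Omega$ differing by an element of $\mathfrak n$ coincide, because determinant is a polynomial in the entry $b$ of degree one with nonzero coefficient $-c_0$.

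\emph{Transitivity, $H'H_\bl=H$.} This is equivalent to $H'$ acting transitively on $\Omega$. Since the action factors through $G(F)$, and $P(O)_m\to P(F)$ is surjective, it suffices to show that $P(F)$ acts transitively on the $G(F)$-orbit of an elliptic $X$. In $\PGL_2$ the orbit is $\{Y:\det Y=\det X\}$; here we use that for $\PGL_2(F)$ acting on $\mathfrak{sl}_2(F)$, regular semisimple orbits are exactly the level sets of $\det$, since $\GL_2$ conjugacy is determined by the characteristic polynomial. Given $Y$ in this orbit, its lower-left entry $c_Y$ is nonzero by ellipticity. Conjugating by $\widehat a$ scales $c_Y$ by $a^{-1}$, so we may normalize $c_Y=1$. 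Conjugating by $\widetilde b$ then changes the diagonal entry $a_Y$ by $-b$, so we may make $a_Y=0$. The remaining entry $b_Y$ is then determined by $\det Y=\det X$. Hence every $Y$ in the orbit is $P(F)$-conjugate to the normal form $\begin{pmatrix}0&-\det X\\1&0\end{pmatrix}$, and $P(F)$ is transitive. Lifting along $P(O)_m\to P(F)$ gives $H'\bl=H\bl$, i.e.\ $H'H_\bl=H$.

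\emph{Main obstacle.} The delicate point is justifying that $H$ acts on $\widehat U$ only through $G(F)$ via $\Ad$, and that $U\cap H'$ corresponds exactly to $\mathfrak b(F)$ under $\iota_m(m)$. Both should follow from Claim \ref{iota1}: for $g\in G(O)(1)$ and $u=1+t^mX$, the commutator lies in $G(O)(m+1)$, which is trivial in $H$. If the intended orbit is taken under $\PGL_2$ rather than $\mathrm{PSL}$-type subgroups, no issue with disconnected orbits arises.
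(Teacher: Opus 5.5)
Your proof is correct and takes essentially the paper's route: restriction to $\mathfrak b(F)$ records the pair $(a,c)$ (your $X \bmod Fe$), and ellipticity forces $c\neq0$, so $\det$ determines $b$ and restriction is injective on the orbit. Transitivity of the Borel on the orbit then gives $H'H_\bl=H$, and your explicit normal form $\begin{pmatrix}0&-\det X\\1&0\end{pmatrix}$ just spells out the paper's remark that $P(F)$ is transitive on pairs $(a,c)$ with $c\neq0$ (the sign in your $\widetilde b$ step is immaterial).
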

\begin{proof}
Under the trace pairing, write
$$
Y=\begin{pmatrix}a&b\\c&-a\end{pmatrix}\in\fg(F).
$$
Restriction to the upper triangular Lie algebra records $(a,c)$, because
$$
\operatorname{tr}\left(Y\begin{pmatrix}x&y\\0&-x\end{pmatrix}\right)=2ax+cy.
$$
If $Y$ is elliptic, then $c\neq0$, and $b$ is determined by $(a,c)$ and the invariant $-\det(Y)=a^2+bc$. Hence restriction is injective on the orbit. The Borel group acts transitively on the corresponding set of pairs $(a,c)$, so it acts transitively on the elliptic orbit. This is equivalent to $H'H_\bl=H$.
\end{proof}
For a quadratic extension $E/F$, let $\ft^E$ be the Lie algebra of $T^E$, embedded in $\fg(F)$ through Claim \ref{imb}.
\begin{claim}\label{pro}
There is a unique $\Ad(T^E)$-equivariant projection $p:\fg(F)\to\ft^E$.
\end{claim}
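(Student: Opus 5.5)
Since the trace pairing on $\fg(F)$ is nondegenerate and $\Ad(T^E)$-invariant, an $\Ad(T^E)$-equivariant projection onto $\ft^E$ is the same thing as an $\Ad(T^E)$-stable complement to $\ft^E$; the plan is to produce that complement and then check that it is the only one. Concretely, I would take the orthogonal complement $(\ft^E)^\perp$ for the trace form and let $p$ be the projection along it. Uniqueness will come from the fact that $\ft^E$ is precisely the subspace of $\Ad(T^E)$-invariants.

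First I would show that $\ft^E$ is nondegenerate for the trace form. Identify $\fg=\mathfrak{sl}_2$ with the traceless endomorphisms of the $F$-vector space $E$. Then $\ft^E$ is spanned by multiplication by some $\sqrt a$ with $a\in F^*\sm(F^*)^2$. The trace of its square is $2a\neq0$ because the characteristic is not $2$. Hence $\fg(F)=\ft^E\oplus(\ft^E)^\perp$. Both summands are $\Ad(T^E)$-stable, since $T^E$ is commutative and fixes $\ft^E$ pointwise, and the trace form is $\Ad$-invariant. So the projection along $(\ft^E)^\perp$ is equivariant, which gives existence.

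For uniqueness, let $p'$ be another equivariant projection. The difference $p-p'$ is an equivariant map $\fg(F)\to\ft^E$ that vanishes on $\ft^E$. It therefore factors through $\fg(F)/\ft^E\simeq(\ft^E)^\perp$, and its image lies in $\ft^E$, on which $T^E$ acts trivially. The complement $(\ft^E)^\perp$ is two-dimensional. After extending scalars to $E$ it splits into the root lines, on which $T^E=E^*/F^*$ acts by the characters $l\mapsto l/\bar l$ and its inverse. These characters are nontrivial because $E^1\neq\{1\}$; for example $-1\in E^1$, or any non-real norm-one element. So $(\ft^E)^\perp$ has no nonzero $T^E$-invariant vectors. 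Since $p-p'$ lands in invariants, it must kill $(\ft^E)^\perp$, so $p=p'$.

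The only point needing any care is the nontriviality of the action of $T^E$ on the complement, i.e.\ that the character $l\mapsto l/\bar l$ of $T^E$ is not identically $1$. That is immediate from Lemma \ref{j}(2), which identifies $T^E$ with $E^1\neq\{1\}$.
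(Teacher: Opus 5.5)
Your argument is correct. The paper states this as a Claim and gives no proof. Your construction, projecting along the trace-orthogonal complement, is the splitting the paper itself uses later when it writes $\fg=\tori_\bl\oplus W$ with $W=\tori_\bl^\perp$.

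One step in your uniqueness argument should be worded more carefully. Knowing that $(\ft^E)^\perp$ has no nonzero $T^E$-invariant vectors does not, for a general representation, force an equivariant map from it to a trivial module to vanish. What is needed is that $(\ft^E)^\perp$ has no trivial quotient. Here this follows from your own computation. Let $t$ be the class of $\sqrt a$. Then $\sqrt a/\overline{\sqrt a}=-1$, so $\Ad(t)$ acts by $-1$ on $(\ft^E)^\perp$. Since $\Ad(t)$ is trivial on $\ft^E$, equivariance gives
$$
(p-p')(w)=(p-p')(\Ad(t)w)=-(p-p')(w)
$$
for $w\in(\ft^E)^\perp$. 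Hence $(p-p')(w)=0$, because $2$ is invertible in $F$.
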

The projection extends coefficientwise to $p(O):\fg(O)\to\ft^E(O)$. If $L=E((t))$, define
$$
p_m(n)=\iota_{T,m}(n)\circ p(O)\circ\iota_m(n)^{-1}:G(O)_m(n)\longrightarrow T^L_m(n),
$$
where $\iota_{T,m}(n)$ is the analogous coordinate map for the torus.
\begin{claim}\label{iota}
If $m<2n$, then $p_m(n)$ is a group homomorphism.
\end{claim}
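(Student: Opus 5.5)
We reduce the claim to Claim \ref{iota1}(2): for $m<2n$ the map $\iota_m(n)$ is a group isomorphism from the additive group $\fg(F)\otimes_F\fm^n/\fm^{m+1}$ onto $G(O)_m(n)$. The same argument applies to the torus: $T^L(n)$ is the image of $\ft^E(O)\cap\fm^n\fg(O)$ under $X\mapsto 1+X$ (in $\PGL_2$), and for $m<2n$ the map $\iota_{T,m}(n)$ is an additive isomorphism $\ft^E(F)\otimes_F\fm^n/\fm^{m+1}\to T^L_m(n)$. Then
$$
p_m(n)=\iota_{T,m}(n)\circ p(O)\circ\iota_m(n)^{-1}
$$
is a composition of three maps. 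The outer two are group isomorphisms, so it suffices to show that the middle map $p(O)$, reduced modulo $\fm^{m+1}$, is additive and well defined on $\fm^n$-truncations.

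For the middle map, $p:\fg(F)\to\ft^E$ is $F$-linear, being the $\Ad(T^E)$-equivariant projection of Claim \ref{pro}. It is the projection onto the trivial isotypic component, i.e.\ the $\ft^E$-component of the decomposition $\fg=\ft^E\oplus(\ft^E)^\perp$ with respect to the trace form, which is nondegenerate on $\ft^E$ since $E/F$ is a field. Extending coefficientwise, $p(O)$ is $O$-linear, so it maps $\fm^n\fg(O)$ into $\fm^n\ft^E(O)$ and $\fm^{m+1}\fg(O)$ into $\fm^{m+1}\ft^E(O)$. It therefore descends to an additive map
$$
\fg(F)\otimes\fm^n/\fm^{m+1}\to\ft^E(F)\otimes\fm^n/\fm^{m+1}.
$$
Composing the three homomorphisms shows that $p_m(n)$ is a homomorphism.

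The main point to check is the step where $\iota$ is a homomorphism, namely the identity $(1+X)(1+Y)=1+X+Y+XY$ with $XY\in\fm^{2n}\subset\fm^{m+1}$. We must also confirm that the torus coordinate is compatible. There is a subtlety in $\PGL_2$: the representative $1+X$ with $X$ traceless is not closed under multiplication, since $XY$ has a scalar part. But that part lies in $\fm^{2n}$, which vanishes modulo $\fm^{m+1}$. Likewise $T^L\subset\PGL_2$ is $L^*/K^*$, and elements of $T^L(n)$ are represented by $1+X$ with $X\in\ft^E\otimes\fm^n$ traceless, because $\ft^E$ is identified with the Lie algebra of $L^*/K^*$, i.e.\ $\ft^E=E/F$ realized as trace-zero elements. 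The same congruence $XY\equiv0\pmod{\fm^{m+1}}$ then gives additivity of $\iota_{T,m}(n)$.
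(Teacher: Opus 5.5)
Your proof is correct and is essentially the verification the paper has in mind. The paper records this as a Claim without proof, and your factorization of $p_m(n)$ as $\iota_m(n)^{-1}$ (additive by Claim~\ref{iota1}(2)), then the $O$-linear reduction of $p(O)$, then $\iota_{T,m}(n)$, is the intended straightforward argument. The torus step actually needs no separate computation: once $T^L$ is embedded via a constant $F$-basis of $E$, so that its Lie algebra is $\ft^E\otimes_F O$ as the definition of $p_m(n)$ implicitly assumes, $\iota_{T,m}(n)$ is just the restriction of the group isomorphism $\iota_m(n)$ to the direct summand $\ft^E\otimes_F\fm^n/\fm^{m+1}$.
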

Unless stated otherwise, we now consider irreducible representations of $G(O)$ of positive depth.
\subsection{Elliptic representations}
\begin{definition}\label{def}
Let $\pi$ be an irreducible representation of $G(O)$ of positive depth $m=m(\pi)$.
\begin{enumerate}
\item Let $\pi_m$ be the restriction of $\pi$ to $G(O)_m(m)$.
\item Regard $\pi_m$ as a representation of the additive group $\fg(F)$.
\item Let $\bo(\pi)\subset\fg^*(F)\simeq\fg(F)$ be its support.
\end{enumerate}
\end{definition}
\begin{lemma}\label{support}
The support $\bo(\pi)$ is a single coadjoint orbit.
\end{lemma}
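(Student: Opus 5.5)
The plan is to apply Lemma \ref{MM}(1) to the finite-dimensional algebraic $F$-group $H=G(O)_m$ and its closed normal commutative unipotent subgroup $U=G(O)_m(m)$. First I verify the hypotheses. $U$ is normal in $H$ because it is the image of the congruence subgroup $G(O)(m)$, which is normal in $G(O)$. By Claim \ref{iota1}(2), applied with $n=m$ and using $m<2m$ since $m>0$, the map $\iota_m(m)$ is a group isomorphism $\fg(F)\to U$. So $U$ is commutative, and as the additive group of an $F$-vector space it is unipotent. Since $\pi$ has depth $m$, it factors through $H$ and is an irreducible smooth representation of $H$. Therefore $S_\pi(U)$ is a single $H$-orbit in $\widehat U$.

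Next I identify this orbit with a coadjoint orbit. Via $\iota_m(m)$, $\widetilde\psi$ and the trace pairing, $\widehat U\simeq\fg^*(F)\simeq\fg(F)$. The step needing care is that the conjugation action of $H$ on $U$ corresponds to the adjoint action on $\fg(F)$ through reduction $H\to G(O)_0=G(F)$. For $g\in G(O)$ and $X\in\fg(F)$ one has
$$
g(1+t^mX)g^{-1}=1+t^m\Ad(g)X \pmod{t^{m+1}},
$$
and $\Ad(g)X$ modulo $t$ equals $\Ad(\bar g)X$, where $\bar g$ is the reduction of $g$ mod $t$. So $G(O)_m(1)$ acts trivially on $U$, and $H$ acts on $U\simeq\fg(F)$ through $\Ad$ of $G(F)$. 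The trace pairing is $\Ad$-invariant, so the induced action on characters is the coadjoint action, which becomes the adjoint action under $\fg^*\simeq\fg$.

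Finally, $\bo(\pi)=S_\pi(U)$ by Definition \ref{def}, and this set is nonempty because $\pi\neq0$ and coinvariants are exact for the unipotent group $U$. Hence $\bo(\pi)$ is a single $G(F)$-orbit in $\fg(F)$, that is, a single (co)adjoint orbit. Minimality of $m$ also gives $\bo(\pi)\neq\{0\}$, although this is not needed here.

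I expect no serious obstacle. The only point to check carefully is the compatibility of the conjugation action with $\Ad$ through reduction modulo $t$, including working in $\PGL_2$ rather than $\operatorname{GL}_2$. That is routine, since $\iota$ is defined on $\mathfrak{sl}_2$ and the center acts trivially by conjugation.
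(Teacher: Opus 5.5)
Your proof is correct and follows the paper's route: the paper simply invokes Lemma \ref{MM}(1) for $H=G(O)_m$ and $U=G(O)_m(m)$, and your checks fill in what it leaves implicit, namely the hypotheses via Claim \ref{iota1}(2) and the fact that conjugation on $U\simeq\fg(F)$ is $\Ad$ through reduction modulo $t$. One small point: nonemptiness of $S_\pi(U)$ does not follow from exactness of coinvariants alone; it follows from the Bernstein--Zelevinsky localization principle, which says that a smooth representation of a vector group whose $\lambda$-coinvariants all vanish is zero.
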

\begin{proof}
This is Lemma \ref{MM}(1).
\end{proof}
\begin{definition}\label{def1}
\begin{enumerate}
\item The representation $\pi$ is elliptic if $\bo(\pi)$ is an elliptic orbit.
\item For elliptic $\pi$, let $E_\pi/F$ be the quadratic extension containing the eigenvalues of the elements of $\bo(\pi)$, and put $L_\pi=E_\pi((t))$.
\item For a quadratic extension $E/F$, let $\widehat G_m^E$ be the set of isomorphism classes of elliptic representations $\pi$ of depth $m$ with $E_\pi=E$.
\end{enumerate}
\end{definition}
\begin{lemma}\label{orbit-image}
Let $\mcO\subset\mathfrak{sl}_2(F)$ be a nonzero adjoint orbit, and let
$$
r:\mcO\longrightarrow\mathfrak p(F)^*
$$
be restriction to the upper triangular Lie algebra. Then $P(F)$ acts transitively on $r(\mcO)$ if and only if $\mcO$ is elliptic.
\end{lemma}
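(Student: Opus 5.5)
We work with explicit coordinates. Write an element of $\mathfrak{sl}_2(F)$ as
$$
Y=\begin{pmatrix}a&b\\c&-a\end{pmatrix}.
$$
As in the proof of Claim \ref{P=G}, we identify $\mathfrak p(F)^*$ with pairs $(a,c)$ via the trace pairing. Then $r(Y)=(a,c)$ up to harmless scalars (namely $2a$ and $c$). The orbit $\mcO$ is determined by its invariant $D=-\det Y=a^2+bc$, which is nonzero unless $\mcO$ is nilpotent. Hence
$$
r(\mcO)=\{(a,c): \exists\, b \text{ with } a^2+bc=D\}.
$$

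\textbf{Step 1: describe $r(\mcO)$.} We split according to whether $c$ vanishes. For $c\neq0$, every pair $(a,c)$ occurs, since we may solve $b=(D-a^2)/c$. For $c=0$, the pair occurs if and only if $a^2=D$. Hence
$$
r(\mcO)=\{(a,c):c\neq0\}\cup\{(a,0):a^2=D\}.
$$
When $\mcO$ is elliptic, $D$ is not a square in $F$, so the second set is empty. When $\mcO$ is split, $D$ is a nonzero square and we get the two extra points $(\pm\sqrt D,0)$. When $\mcO$ is nonzero nilpotent, $D=0$ and we get the extra point $(0,0)$.

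\textbf{Step 2: compute the coadjoint $P(F)$-action on pairs.} The diagonal element $\operatorname{diag}(x,1)$ acts on $Y$ by $b\mapsto xb$ and $c\mapsto x^{-1}c$, with $a$ fixed. The unipotent element $\ti u$ sends $Y$ to a matrix with new entries
$$
a'=a+uc,\qquad c'=c.
$$
This is the routine conjugation computation. Since $r$ is $P(F)$-equivariant, these formulas describe the action on $r(\mcO)$. The key consequence is that the action preserves the condition $c=0$, and on the locus $c=0$ it fixes $a$.

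\textbf{Step 3: conclude in each case.} If $\mcO$ is elliptic, $r(\mcO)=\{c\neq0\}$. Scaling by the diagonal element brings $c$ to $1$, and then $\ti u$ with $u=-a$ brings $a$ to $0$, so the action is transitive. If $\mcO$ is not elliptic, $r(\mcO)$ contains both a point with $c=0$ and points with $c\neq0$. These lie in different $P(F)$-orbits because $c=0$ is preserved, so the action is not transitive. In the split case one can also observe that $(\pm\sqrt D,0)$ are two distinct fixed points.

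The only delicate point is to get the conjugation formulas in Step 2 right, or equivalently the dual action on $\mathfrak p^*$, including the convention that the trace pairing introduces the factor $2$ in front of $a$. This causes no problem since $2$ is invertible in $F$. Nothing beyond Claim \ref{P=G}'s setup is needed.
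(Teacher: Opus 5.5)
Your proof is correct and takes essentially the same approach as the paper. Both record $r(Y)=(a,c)$, compute the action $(a,c)\mapsto(a+\frac{u}{s}c,\frac{c}{s})$ (so that $\{c\neq0\}$ is a single $P(F)$-orbit while the locus $c=0$ is preserved), and use the square class of $a^2+bc$ to decide whether $r(\mcO)$ also meets $c=0$; the paper relies, as you do, on the fact that a nonzero $\PGL_2(F)$-orbit is the full nonzero level set of $-\det$.
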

\begin{proof}
Write $Y=\begin{pmatrix}a&b\\c&-a\end{pmatrix}$ and put $q(Y)=a^2+bc=-\det(Y)$. Restriction to $\mathfrak p$ records $(a,c)$. If
$$
p=\begin{pmatrix}s&u\\0&1\end{pmatrix},
$$
then the induced action is
$$
(a,c)\longmapsto\left(a+\frac{u}{s}c,\frac{c}{s}\right).
$$
Thus the locus $c\neq0$ is one $P(F)$-orbit. If $q(Y)$ is nonsquare, then $c$ cannot vanish, so the orbit is elliptic and $P(F)$ is transitive on its restricted image. If $q(Y)=d^2\neq0$, the restricted image also contains the two points $(d,0)$ and $(-d,0)$, each a separate orbit. If $q(Y)=0$ and $Y\neq0$, the restricted image contains both the open orbit $c\neq0$ and the point $(0,0)$. Hence the restricted image is not transitive in the nonelliptic cases.
\end{proof}
\begin{lemma}\label{GO-special-elliptic}
Every special representation of $G(O)$ is elliptic.
\end{lemma}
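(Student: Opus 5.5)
The plan is to treat positive depth and depth zero separately. In both cases the starting point is Lemma \ref{restr}: a special $\pi$ of depth $m$ restricts to $P(O)$ as the irreducible representation $\bs(m)$. This representation factors through $P(O)_m$.

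\emph{Depth zero.} Here $\pi$ is an irreducible representation of $G(O)_0=G(F)$, and its restriction to $P(O)_0=P(F)$ is irreducible of dimension greater than $1$. I would argue in three steps.
\begin{enumerate}
\item By Mackey theory for $N(F)\subset P(F)$ (Lemma \ref{MM}), the irreducible smooth representations of $P(F)$ are of two kinds. Either $N(F)$ acts trivially, and the representation is a character of $P(F)/N(F)=F^*$. Or it is the representation $\ind_{N(F)}^{P(F)}(\psi)$, whose $N(F)$-support is $F^*\psi$.
\item In the second case the trivial character of $N(F)$ is not in the support, so the $N(F)$-coinvariants vanish. Hence the Jacquet module $V_{N(F)}$ of $\pi$ is zero.
\item Vanishing of the Jacquet module means $\pi$ is cuspidal, so $\pi$ is elliptic in the depth-zero sense.
\end{enumerate}

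\emph{Positive depth.} Put $U=G(O)_m(m)\simeq\fg(F)$ and $U'=U\cap P(O)_m$. Under $\iota_m(m)$, $U'$ is the upper triangular Lie algebra $\mathfrak p(F)$.
\begin{enumerate}
\item $U'$ is a closed normal commutative unipotent subgroup of $P(O)_m$. The conjugation action of $P(O)_m$ on $U'$ factors through $P(O)_0=P(F)$, because $G(O)(1)$ centralizes the last congruence layer. It is the adjoint action.
\item Since $\pi|_{P(O)_m}$ is irreducible, Lemma \ref{MM}(1) shows that $P(F)$ acts transitively on $S(\pi|_{U'})$.
\item I would then show that $S(\pi|_{U'})=r(\bo(\pi))$, where $r$ is restriction of characters from $\fg(F)$ to $\mathfrak p(F)$ via the trace pairing, as in Lemma \ref{orbit-image}.
\item Finally, $\bo(\pi)\neq\{0\}$. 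Otherwise $U$ acts trivially, by exactness of coinvariants and Lemma \ref{relevant-orbit}, and this contradicts $m(\pi)=m$.
\item By Lemma \ref{support}, $\bo(\pi)$ is a single nonzero adjoint orbit. Lemma \ref{orbit-image} then says that transitivity of $P(F)$ on $r(\bo(\pi))$ forces $\bo(\pi)$ to be elliptic.
\end{enumerate}

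\emph{Main obstacle.} The main point needing care is step 3 of the positive-depth case. I would prove it as follows.
\begin{itemize}
\item Since $U$ is commutative and unipotent, coinvariants are exact, and $V$ is spectrally decomposed over $\widehat U$. Concretely, $V$ is a direct sum, or direct limit, of its $\bl$-isotypic pieces $V^\bl$ for $\bl\in\bo(\pi)$. The orbit may be treated as compact, since it is a finite union of relevant pieces on any $U$-finite vector.
\item For $\bl'\in\widehat{U'}$, the $(U',\bl')$-coinvariants of $V$ are then the sum of those of the $V^\bl$.
\item Each $V^\bl$ has $U'$ acting by $\bl|_{U'}$. So its $(U',\bl')$-coinvariants are nonzero exactly when $\bl|_{U'}=\bl'$.
\end{itemize}
This gives both inclusions between $S(\pi|_{U'})$ and $r(\bo(\pi))$. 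Alternatively, one can phrase it sheaf-theoretically: $V$ is a smooth $\mcS(\widehat U)$-module supported on $\bo(\pi)$, and one pushes forward along $r$.
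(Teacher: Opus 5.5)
Your overall route is the paper's. At depth zero, irreducibility of $\pi|_{P(F)}$ in dimension $>1$ forces the $N(F)$-support to be $F^*\psi$, so the Jacquet module vanishes. At positive depth you use Lemma~\ref{MM}(1) for $U'\subset P(O)_m$, the identification of the $U'$-support with $r(\bo(\pi))$, and Lemma~\ref{orbit-image}. Your step 4, $\bo(\pi)\neq\{0\}$, is a point the paper leaves implicit and is worth keeping. The input it really needs is that a nonzero smooth representation of the vector group $U$ has nonempty support, applied to $\ker(V\to V_U)$. Lemma~\ref{relevant-orbit} is not the right reference, since it concerns Schwartz spaces.

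The concrete justification you give for step 3, however, is false. $V$ has no nonzero $U$-eigenvectors at all. Suppose $v\neq0$ and $uv=\bl(u)v$ for all $u\in U$, with $\bl\in\bo(\pi)$. Then $gv$ is a $g\bl$-eigenvector for every $g\in G(O)_m$. The orbit $\bo(\pi)$ is an uncountable adjoint orbit (a two-dimensional $p$-adic manifold), so these eigenvectors would be linearly independent. That contradicts the countable dimension of an irreducible smooth representation. Since finite-dimensional smooth representations of $U$ are sums of characters, there are no nonzero $U$-finite vectors either. So $V$ is not a sum or limit of pieces $V^\bl$. The spectral decomposition exists only as a sheaf on $\bo(\pi)$ in the Bernstein--Zelevinsky sense, so your alternative sheaf-theoretic phrasing is the version that works.

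In fact you only need one inclusion, and it is elementary. For $\bl\in\widehat U$ put $\bl'=\bl|_{U'}$. The generators $u'v-\bl'(u')v$ of $V(\bl')$ are among those of $V(\bl)$, so $V_\bl$ is a quotient of $V_{\bl'}$, and hence $r(\bo(\pi))\subset S(\pi|_{U'})$. The set $r(\bo(\pi))$ is nonempty and $P(F)$-stable, because $\bo(\pi)$ is $G(F)$-stable and $r$ is equivariant. It lies inside the single $P(F)$-orbit $S(\pi|_{U'})$, so the two sets coincide. Thus $P(F)$ is transitive on $r(\bo(\pi))$, and Lemma~\ref{orbit-image} applies.
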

\begin{proof}
The case $m=0$ is easy: in this case we are just dealing with an irreducible representation of $G(F)$ of dimension $>1$ and the fact that it is special means that its restriction to $P(F)$ is irreducible, which implies that it is cuspidal.

Let $m>0$ and let $\pi$ be an irreducible representation of $G(O)_m$. Put $U=G(O)_m(m)$ and $U'=U\cap P(O)_m$. The $U'$-support of $\pi|_{P(O)_m}$ is the image of $\bo(\pi)$ under restriction to $\mathfrak p(F)^*$. If $\pi|_{P(O)_m}$ were irreducible, Lemma \ref{MM}(1) would imply that $P(F)$ acts transitively on this image. Lemma \ref{orbit-image} then shows that $\bo(\pi)$ is elliptic.
\end{proof}
\subsection{Elliptic representations of odd depth}
Fix $n>0$ and put
$$
U=G(O)_{2n-1}(n),
$$
a commutative normal subgroup of $G(O)_{2n-1}$.
\begin{definition}\label{oddc}
Let $\bl\in\widehat U$.
\begin{enumerate}
\item Let $\bar\bl$ be the restriction of $\bl$ to $G(O)_{2n-1}(2n-1)\simeq\fg(F)$, regarded as an element of $\fg(F)$.
\item The character $\bl$ is elliptic if $\bar\bl$ is nonzero and elliptic.
\item For elliptic $\bl$, let $E_\bl/F$ contain the eigenvalues of $\bar\bl$, and put $L_\bl=E_\bl((t))$.
\item Let $\St_\bl$ be the stabilizer of $\bl$ in $G(O)_{2n-1}$.
\item Let $\bO_\bl$ be the $G(O)_{2n-1}$-orbit of $\bl$.
\item Let $\bO_\bl^{T^{L_\bl}}$ be the set of $T^{L_\bl}$-fixed points in $\bO_\bl$.
\end{enumerate}
\end{definition}
\begin{claim}\label{ellch}
Let $\bl$ be elliptic.
\begin{enumerate}
\item There is, up to conjugacy, a unique embedding $T^{L_\bl}\ho G(O)$ such that $T^{L_\bl}_{2n-1}\subset\St_\bl$.
\item One has $\St_\bl=T^{L_\bl}_{2n-1}U$.
\item One has $\bO_\bl^{T^{L_\bl}}=\{\bl,\bl^{-1}\}$.
\item One has $P(O)_{2n-1}\St_\bl=G(O)_{2n-1}$.
\end{enumerate}
\end{claim}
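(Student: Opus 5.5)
We will work with the Lie-algebra model of $U=G(O)_{2n-1}(n)$. By Claim \ref{iota1}(2), $\iota_{2n-1}(n)$ identifies $U$ with the additive group $\fg(F)\otimes\fm^n/\fm^{2n}$. Using $\psi$ and the trace pairing, the characters of $U$ are then identified with classes $X\in\fg(O)/t^n\fg(O)$, by
$$
\bl_X(1+Y)=\psi\bigl(t^{-2n}\operatorname{tr}(XY)\bigr).
$$
Conjugation by $g\in G(O)$ corresponds to $X\mapsto\Ad(g)X \bmod t^n$. The leading coefficient $X_0=X \bmod t$ is $\bar\bl$, which is elliptic regular semisimple by hypothesis. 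The plan has four steps, in the order below.

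\emph{Step 1: the torus.} Choose any lift $\ti X\in\fg(O)$ of $X$. Its characteristic polynomial is $x^2-d$ with $d\in O$ and $d(0)$ a nonsquare in $F$. By Hensel's lemma, $K(\sqrt d)=E((t))=L_\bl$, and $O[\ti X]\simeq O_{L_\bl}$ acts on $O^2$. This realizes $T^{L_\bl}$ as the centralizer of $\ti X$ in $G(O)$, and the realization is one of the embeddings of Claim \ref{imb}(1). This torus centralizes $X \bmod t^n$, so its image $T^{L_\bl}_{2n-1}$ lies in $\St_\bl$.

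\emph{Step 2: the stabilizer, assertion (2).} Let $g\in G(O)_{2n-1}$ satisfy $\Ad(g)X\equiv X \bmod t^n$. Reducing modulo $t$ shows that $g_0$ centralizes $X_0$, so $g_0\in T^E$. Multiplying $g$ by an element of $T^{L_\bl}$, we may assume $g\in G(O)(1)$. We then argue by induction on $k<n$. If $g\in G(O)(k)$, write $g=1+t^kY+\dots$. Then
$$
\Ad(g)X\equiv X+t^k[Y,X_0] \pmod{t^{k+1}},
$$
so $[Y,X_0]=0$ and hence $Y\in\ft^E$. Multiplying $g$ by the element $1+t^kY$ of the torus pushes $g$ into $G(O)(k+1)$. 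After $n$ steps $g$ lies in $G(O)(n)$, whose image is $U$. Thus $\St_\bl=T^{L_\bl}_{2n-1}U$.

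For the uniqueness part of (1), let $T'$ be another embedded $T^{L_\bl}$ with $T'_{2n-1}\subset\St_\bl$. Its Lie algebra then centralizes $X$ modulo $t^n$. The same successive-approximation argument, together with the fact that all tori attached to $L_\bl$ are $G(O)$-conjugate (Claim \ref{imb}), shows that $T'$ is conjugate to $T^{L_\bl}$. I expect this step to be the main technical point: the corrections must be organized so that noncommutativity in $G(O)(k)$ only affects higher-order terms. The bound $k<n\le 2k$ fails for small $k$, so one has to work with one graded piece $G(O)(k)/G(O)(k+1)\simeq\fg(F)$ at a time rather than with Claim \ref{iota}.

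\emph{Step 3: fixed points, assertion (3).} Suppose $\Ad(g)X$ is $T^{L_\bl}$-fixed modulo $t^n$. Then it lies in the traceless part of $\ft^{L_\bl}(O)$, which is the free $O$-module $O\ti X$, so $\Ad(g)X\equiv c\ti X$. Comparing determinants gives $c^2\equiv1 \bmod t^n$, and since $2$ is invertible this forces $c\equiv\pm1$. Conversely, $-X$ lies in the orbit: the Galois element of the normalizer of $T^{L_\bl}$ in $G(O)$ conjugates $\ti X$ to $-\ti X$. Moreover $\bl_{-X}=\bl^{-1}$.

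\emph{Step 4: transitivity, assertion (4).} Modulo $t$, we have $G(F)=P(F)T^E$, because $E^*/F^*$ acts transitively on $\mP(E)\simeq\mP^1(F)=G(F)/P(F)$. For the congruence layers, $\fg(F)=\mathfrak p(F)\oplus\ft^E$, since an elliptic torus meets no Borel subalgebra. Successive approximation through the graded pieces then gives
$$
G(O)=P(O)\,T^{L_\bl}\,G(O)(2n),
$$
and hence $P(O)_{2n-1}\St_\bl=G(O)_{2n-1}$ by Step 2.
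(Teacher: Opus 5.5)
Your Steps 2--4 use the same successive-approximation technique the paper applies in the even-depth analogue, Claim \ref{q}: Lemma \ref{centralizer-lifting-even} for the stabilizer, and $G(F)=P(F)T^E$ together with $\fg=\mathfrak p\oplus\ft^E$ for transitivity. The paper gives no separate proof of Claim \ref{ellch}. Steps 1--4 are sound, with two small precisions.

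First, in Step 2 the correcting element must actually lie in the torus. A natural choice is $1+t^kc\ti X$ with $Y=cX_0$; the constant matrix $1+t^kY$ need not commute with $\ti X$.

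Second, in Step 3 you need the $T^{L_\bl}$-fixed vectors in $\fg(O/t^n)$, not just in $\fg(O)$. This takes one line. The element $\ti X$ itself lies in $O_{L_\bl}^*$. Since $X_0$ is regular, $\operatorname{ad}(\ti X)$ is invertible on an $O$-complement of $O\ti X$, so its kernel modulo $t^n$ is $(O/t^n)\ti X$.

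The genuine gap is the uniqueness half of (1). You only gesture at it, and the difficulty you anticipate is not the real one. Read literally as $G(O)$-conjugacy of embeddings, uniqueness is just Claim \ref{imb}(1) and needs no approximation. The content worth proving is that the image subgroup is unique up to conjugation inside $\St_\bl$.

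For embeddings themselves that statement fails. Let $\sigma$ be the Galois involution; then $j$ and $j\circ\sigma$ have the same image but are not $\St_\bl$-conjugate. Indeed, conjugation by $T^{L_\bl}G(O)(n)$ induces the identity on $j(T^{L_\bl})$ modulo $G(O)(n)$, whereas $j\circ\sigma\neq j$ already modulo $t$.

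Here is an argument for the subgroup version.
\begin{enumerate}
\item Let $T'$ be the centralizer of $\ti X'=\Ad(g)\ti X$, and suppose $T'_{2n-1}\subset\St_\bl$. Your Step 3 computation applied to $T'$ gives $X\equiv c\ti X'\pmod{t^n}$ with $c\in O^*$; $c$ is a unit because $X_0\neq0$.
\item Hence $T'$ is the centralizer of a second lift $\ti X_2=c\ti X'$ of $X$, and $\ti X_2-\ti X\in t^n\fg(O)$.
\item At each stage $j\geq n$, split the leading discrepancy along $\fg(F)=FX_0\oplus[\fg(F),X_0]$. Absorb the first component into a unit multiple of $\ti X$, and remove the second by conjugating with a suitable $1+t^jW$, exactly as in Lemma \ref{centralizer-lifting-even}.
\item The product converges to $h\in G(O)(n)$ with $\Ad(h)\ti X_2\in O^*\ti X$. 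Hence $hT'h^{-1}=T^{L_\bl}$, and $h$ maps into $U\subset\St_\bl$.
\end{enumerate}
Each step is a single conjugation of the current element, so no noncommutativity issue arises, and the bound $k<n\leq2k$ plays no role.
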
 We denote by $T^L_m$ the image of $T^L$ in $G(O)_m$ and by $T^L_m(r)$
the image of $T^L(r)$.

\begin{theorem}\label{charodd}
Every elliptic representation of $G(O)$ of odd depth is special.
\end{theorem}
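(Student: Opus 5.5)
Let $\pi$ be elliptic of depth $m=2n-1$. The plan is to run the Mackey analysis for the commutative normal subgroup $U=G(O)_{2n-1}(n)$ of $H=G(O)_{2n-1}$ and then use Lemma \ref{3.12} with $H'=P(O)_{2n-1}$ to show that $\pi|_{P(O)}$ is irreducible.

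First I identify $\pi$ as an induced representation. Choose $\bl\in S_\pi(U)$. The restriction $\bar\bl$ of $\bl$ to $G(O)_{2n-1}(2n-1)$ lies in $\bo(\pi)$, so $\bl$ is elliptic in the sense of Definition \ref{oddc}. By Lemma \ref{MM}(2),(3),(4),
$$
\pi\simeq\ind_{\St_\bl}^{H}(\rho),\qquad \rho=\rho(\bl,\pi)\in\widetilde H_\bl .
$$
By Claim \ref{ellch}(2), $\St_\bl=T^{L}_{2n-1}U$, where $L=L_\bl$. This group is commutative modulo the kernel of $\bl$: $T^L$ is abelian, $U$ is abelian, and $T^L$ fixes $\bl$. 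Hence $\rho$ is one-dimensional, namely a character of $T^L_{2n-1}U$ extending $\bl$.

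Next I verify the hypotheses of Lemma \ref{3.12}.

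\emph{Double-coset condition.} The equality $H'\St_\bl=H$ is exactly Claim \ref{ellch}(4).

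\emph{Injectivity.} Put $U'=U\cap H'$, the upper-triangular part of $U$. I need the restriction map $r:\bO_\bl\to\widehat{U'}$ to be injective. Since $n\le m<2n$, Claim \ref{iota1}(2) identifies $U$ additively with $\fg(F)\otimes\fm^n/\fm^{2n}$. Under the trace pairing, $\bl$ corresponds to a matrix $Y=\begin{pmatrix}a&b\\c&-a\end{pmatrix}$ with entries in $t^{-2n}O/t^{-n}O$, and restriction to $U'$ records $(a,c)$. The leading coefficient $c_0$ of $c$ is nonzero, because the leading term $\bar\bl$ is elliptic (Claim \ref{P=G}). So $c$ is a unit multiple of $t^{-2n}$. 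The determinant $-\det Y=a^2+bc$ is invariant along the orbit, modulo the appropriate power of $t$. Therefore $b$ is determined by $(a,c)$ modulo the precision needed, since one can divide by $c$ without losing precision. This is the step I expect to be the main obstacle: the orbit invariants on a truncated Lie algebra are more than just $\det$, and one must check that $\det$ modulo $t^{-n}$ alone pins down $b$ on $\bO_\bl$. I would argue via Claim \ref{ellch}(1): the $H$-orbit consists of conjugates of a $T^L$-regular element, and a $P$-conjugate normalizing $(a,c)$ fixes $b$ by the determinant computation done one truncation level at a time, inductively in the $t$-adic filtration.

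With both hypotheses in hand, Lemma \ref{3.12} reduces irreducibility of $\pi|_{P(O)}$ to irreducibility of $\rho|_K$, where $K=H'\cap\St_\bl$. Since $\rho$ is one-dimensional, this is automatic. Finally, $\dim\pi>1$ because $\bO_\bl$ is infinite (it is not a single point, as $\bar\bl\ne0$ is not central). Hence $\pi$ is special.
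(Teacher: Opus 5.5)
Your proof is correct and follows the paper's own argument: Mackey theory for $U=G(O)_{2n-1}(n)$, one-dimensionality of $\rho$ because $\St_\bl/\ker\bl$ is commutative, Claim \ref{ellch}(4) for the double-coset condition, and then Lemma \ref{3.12}. The step you flag as the main obstacle is not one: you only need $-\det$ to be a conjugation invariant over the truncated ring, not a complete invariant, so two orbit points with the same $(a,c)$ have the same $b=(q-a^2)/c$, exactly as in Lemma \ref{orbit-restriction-injective}, and the detour through Claim \ref{ellch}(1) is unnecessary; this makes your injectivity step more precise than the paper's citation of Claim \ref{P=G}, which only treats the top layer $G(O)_m(m)$.
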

\begin{proof}
Let $\pi$ be elliptic of depth $2n-1$. By Lemma \ref{MM}, there are an elliptic character $\bl_\pi\in\widehat U$, unique up to inversion among the $T^{L_\pi}$-fixed points of the support, and an irreducible representation $\rho_\pi$ of $\St_{\bl_\pi}$ such that
$$
\pi\simeq\ind_{\St_{\bl_\pi}}^{G(O)_{2n-1}}(\rho_\pi).
$$
Since $\St_{\bl_\pi}/\ker(\bl_\pi)$ is commutative, $\rho_\pi$ is a character. Claim \ref{ellch}(4), Claim \ref{P=G}, and Lemma \ref{3.12} show that $\pi|_{P(O)}$ is irreducible.
\end{proof}
Let $\theta_{\pi,\bl_\pi}$ be the restriction of $\rho_\pi$ to $T^{L_\pi}_{2n-1}$.
\begin{claim}\label{inv}
One has
$$
\theta_{\pi,\bl_\pi^{-1}}=\theta_{\pi,\bl_\pi}^{-1}.
$$
\end{claim}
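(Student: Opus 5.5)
The plan is to realize the passage from $\bl_\pi$ to $\bl_\pi^{-1}$ as conjugation by an element of $G(O)_{2n-1}$ normalizing $T^{L_\pi}_{2n-1}$ and acting on it by the Galois involution. Then $\theta_{\pi,\bl_\pi^{-1}}$ equals $\overline{\theta_{\pi,\bl_\pi}}$, and we identify $\overline\theta$ with $\theta^{-1}$ on $T^L\simeq L^1$.

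\textbf{Step 1: a conjugating element.} By Claim~\ref{ellch}(3), $\bl_\pi^{-1}$ lies in the same $G(O)_{2n-1}$-orbit $\bO_{\bl_\pi}$, so there is $w\in G(O)_{2n-1}$ with $\Ad(w)\bl_\pi=\bl_\pi^{-1}$. Then $w\St_{\bl_\pi}w^{-1}=\St_{\bl_\pi^{-1}}=\St_{\bl_\pi}$, the last equality holding because a stabilizer of a character equals the stabilizer of its inverse. By Claim~\ref{ellch}(2), $\St_{\bl_\pi}=T^{L}_{2n-1}U$ with $L=L_\pi$.

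We want $w$ to normalize $T^L_{2n-1}$ itself. For this we use the uniqueness up to conjugacy in Claim~\ref{ellch}(1). Both $T^L_{2n-1}$ and $wT^L_{2n-1}w^{-1}$ are images of embeddings of $T^L$ lying in $\St_{\bl_\pi}$, so they are conjugate. The conjugation can be taken inside $\St_{\bl_\pi}$, in fact inside $U$, since $T^L_{2n-1}U=\St_{\bl_\pi}$ and the splitting is unique up to $U$-conjugacy by a standard argument for the Lie-algebra-type group $U$. Modifying $w$ by such a $u\in U$ still sends $\bl_\pi$ to $\bl_\pi^{-1}$, because $U$ fixes $\bl_\pi$.

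\textbf{Step 2: the action on the torus.} The element $w$ normalizes $T^L_{2n-1}$ and sends $\bar\bl_\pi$, a regular elliptic element of $\ft^E$ by Claim~\ref{pro}, to $-\bar\bl_\pi$. Its action on the torus is therefore the nontrivial Weyl element, i.e.\ the Galois conjugation $x\mapsto\bar x$. Under the identification $T^L\simeq L^1$ of Remark~\ref{jj}, this is inversion.

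\textbf{Step 3: comparison of the representations.} Since $\pi\simeq\ind(\rho_\pi)$ and also $\pi\simeq\ind(\rho')$, where $\rho'$ is the representation of $\St_{\bl_\pi^{-1}}$ attached to $\bl_\pi^{-1}$, Lemma~\ref{MM}(3),(4) gives $\rho'\simeq\rho_\pi\circ\Ad(w)^{-1}$. Indeed, $\pi(w)$ carries the $\bl_\pi$-coinvariants onto the $\bl_\pi^{-1}$-coinvariants and intertwines the two actions twisted by $\Ad(w)$. Restricting to $T^L_{2n-1}$ gives $\theta_{\pi,\bl_\pi^{-1}}(x)=\theta_{\pi,\bl_\pi}(w^{-1}xw)=\theta_{\pi,\bl_\pi}(x^{-1})$.

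The main obstacle is Step 1, namely arranging that $w$ normalizes $T^L_{2n-1}$ exactly and not merely up to $U$. If the $U$-conjugacy argument is awkward, we would instead take $w$ to be an explicit lift to $G(O)$ of a Weyl element of the normalizer of $T^L$ in $G(O)$. Such an element realizes the Galois action, and it visibly sends $\bar\bl_\pi\mapsto-\bar\bl_\pi$. We would then check that it sends $\bl_\pi$ to $\bl_\pi^{-1}$ using Claim~\ref{ellch}(3): its image is a $T^L$-fixed point of the orbit different from $\bl_\pi$.
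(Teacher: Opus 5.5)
Your proposal is correct and is essentially the paper's argument: the nontrivial element of the normalizer of $T^{L_\pi}$ exchanges the two fixed points $\bl_\pi$ and $\bl_\pi^{-1}$ of $\bO_{\bl_\pi}^{T^{L_\pi}}$ and acts on $T^{L_\pi}=L_\pi^*/K^*$ by Galois conjugation, which is inversion, so transporting $\rho_\pi$ by this element via Mackey theory gives the claim. Your fallback in the last paragraph is exactly the paper's route and is the one to use; the $U$-conjugacy detour in Step~1 is unnecessary, and that conjugacy is itself most easily proved from the existence of the same normalizer element.
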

\begin{proof}
The two fixed characters $\bl_\pi$ and $\bl_\pi^{-1}$ are exchanged by the nontrivial element of the normalizer of $T^{L_\pi}$. This element acts on $T^{L_\pi}$ by Galois conjugation, which is inversion under $T^{L_\pi}\simeq(L_\pi)^1$.
\end{proof}
Thus $\pi$ determines a point of $\bar\Xi^{L_\pi}(2n-1)$.
\subsection{Construction at odd depth}
Let $E/F$ be quadratic and put $L=E((t))$.
\begin{definition}
For $\theta\in\Xi^L(2n-1)$, define
$$
\bl_\theta=\theta\circ p_{2n-1}(n)\in\widehat U.
$$
\end{definition}
\begin{claim}\label{oo}
\begin{enumerate}
\item The character $\bl_\theta$ is elliptic.
\item There is a unique character $\widehat\theta$ of $\St_{\bl_\theta}$ such that
$$
\widehat\theta|_U=\bl_\theta,
\qquad
\widehat\theta|_{T^L_{2n-1}}=\theta.
$$
\end{enumerate}
\end{claim}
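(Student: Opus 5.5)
We prove the two parts of Claim \ref{oo} separately. Part (1) comes from reading off the restriction of $\bl_\theta$ to the top congruence layer. Part (2) comes from the decomposition $\St_{\bl_\theta}=T^L_{2n-1}U$ of Claim \ref{ellch}(2), together with a compatibility check on the intersection $T^L_{2n-1}\cap U$.

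\emph{Part (1).} First, $\bl_\theta$ is a character of $U$: by Claim \ref{iota}, $p_{2n-1}(n)$ is a homomorphism $U\to T^L_{2n-1}(n)$ because $2n-1<2n$. Next we compute $\bar\bl_\theta$, the restriction of $\bl_\theta$ to $G(O)_{2n-1}(2n-1)\simeq\fg(F)$. On this layer, $p_{2n-1}(n)$ becomes the projection $p:\fg(F)\to\ft^E$ of Claim \ref{pro}, tensored with $t^{2n-1}$. Hence $\bar\bl_\theta$ equals $\theta|_{T^L(2n-1)}$, viewed as a character of $\ft^E\simeq T^L(2n-1)/T^L(2n)$, pulled back along $p$. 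Under the trace pairing, a character of $\fg(F)$ that factors through the $\Ad(T^E)$-equivariant projection onto $\ft^E$ corresponds to an element $X\in\ft^E$. Since $m(\theta)=2n-1$, the restriction $\theta|_{T^L(2n-1)}$ is nontrivial, so $X\neq0$. A nonzero element of $\ft^E$ is regular semisimple, its centralizer is the anisotropic torus $T^E$, and its eigenvalues lie in $E\setminus F$. Therefore $X$ is elliptic, and $E_{\bl_\theta}=E$.

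\emph{Part (2).} Since $\bl_\theta$ is elliptic with $L_{\bl_\theta}=L$, Claim \ref{ellch}(2) gives $\St_{\bl_\theta}=T^L_{2n-1}U$. Define $\widehat\theta(su)=\theta(s)\bl_\theta(u)$ for $s\in T^L_{2n-1}$ and $u\in U$. Uniqueness is then immediate, because the two prescribed restrictions determine $\widehat\theta$ on a generating set. For existence, three things must be checked.
\begin{enumerate}
\item \emph{Agreement on the overlap.} We have $T^L_{2n-1}\cap U=T^L_{2n-1}(n)$. On this subgroup, $p_{2n-1}(n)$ is the identity, since $p$ restricts to the identity on $\ft^E(O)$. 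Hence $\bl_\theta=\theta$ there.
\item \emph{$T^L$-invariance of $\bl_\theta$.} This holds because $p$ is $\Ad(T^E)$-equivariant, indeed $\Ad(T^L)$-equivariant after extending coefficients, and $T^L$ is commutative.
\item \emph{The commutator condition.} The formula defines a homomorphism only if $\bl_\theta([s,u])=1$ for $s\in T^L_{2n-1}$ and $u\in U$. This follows from the invariance in the previous item: $\bl_\theta(sus^{-1})=\bl_\theta(u)$.
\end{enumerate}
Given these, $\widehat\theta$ is a well-defined character on the semidirect-type product $T^L_{2n-1}\ltimes U$, and it is trivial on the antidiagonal copy of the intersection. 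It therefore descends to $\St_{\bl_\theta}$.

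The main obstacle is the first step of part (1): showing that $\bl_\theta$ restricted to the top layer is exactly the pullback of $\theta|_{T^L(2n-1)}$ along $p$, compatibly with the identifications $\iota$ and $\iota_T$ and the trace pairing. I would first verify this directly in coordinates, using the embedding of $E^1$ by multiplication matrices from Lemma \ref{lif}.
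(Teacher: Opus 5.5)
Your proof is correct. The paper records this statement as a Claim and omits the verification, and your argument is the routine check it leaves to the reader. On the top layer, $\bar\bl_\theta$ is $X\mapsto\widetilde\psi(\operatorname{tr}(YX))$ with $0\neq Y\in\ft^E$; this uses $\ker p=(\ft^E)^\perp$ and the nondegeneracy of the trace form on $\ft^E$, and that is all your ``main obstacle'' amounts to. Then $\widehat\theta$ is glued from $\theta$ and $\bl_\theta$ across $\St_{\bl_\theta}=T^L_{2n-1}U$.

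One point is worth tightening. You apply Claim \ref{ellch}(2) to the specific embedding of $T^L$ used to define $p$ before you check $T^L_{2n-1}\subset\St_{\bl_\theta}$. It is more direct to argue as follows:
\begin{itemize}
\item Under the trace pairing, $\bl_\theta$ corresponds to an element $Y\in\ft^E(F[t]/(t^n))$ with elliptic constant term.
\item By the successive-approximation argument of Lemma \ref{centralizer-lifting-even}, with no conjugation needed since $Y$ is already toral, the centralizer of $Y$ in $G(F[t]/(t^n))$ is exactly $T^E(F[t]/(t^n))$.
\item Hence $\St_{\bl_\theta}=T^L_{2n-1}U$ for this very embedding, which removes the ``up to conjugacy'' ambiguity of Claim \ref{ellch}(1). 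The $T^L$-invariance of $\bl_\theta$ then comes as a byproduct rather than a separate check.
\end{itemize}
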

\begin{lemma}
Let
$$
\alpha_{2n-1}^E(\theta)=\ind_{\St_{\bl_\theta}}^{G(O)_{2n-1}}(\widehat\theta),
$$
and inflate this representation to $G(O)$. Then
\begin{enumerate}
\item $\alpha_{2n-1}^E(\theta)$ is irreducible, elliptic, special, and of depth $2n-1$;
\item $\alpha_{2n-1}^E(\theta)\simeq\alpha_{2n-1}^E(\theta^{-1})$.
\end{enumerate}
\end{lemma}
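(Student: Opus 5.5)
Irreducibility comes from Mackey theory; being special and of the right depth follows from ellipticity; the isomorphism in (2) comes from conjugating by a normalizer element of the torus.

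\emph{Step 1: irreducibility.} By Claim \ref{oo}(2), $\widehat\theta$ is a character of $\St_{\bl_\theta}$ that restricts to $\bl_\theta$ on the commutative normal subgroup $U=G(O)_{2n-1}(n)$. Hence $\widehat\theta\in\widetilde H_{\bl_\theta}$ with $H=G(O)_{2n-1}$, and Lemma \ref{MM}(2) shows that $\ind_{\St_{\bl_\theta}}^{H}(\widehat\theta)$ is irreducible. Lemma \ref{MM}(3),(4) also show that its $U$-support is the $H$-orbit of $\bl_\theta$.

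\emph{Step 2: ellipticity, depth and speciality.} The subgroup $G(O)_{2n-1}(2n-1)$ lies in $U$. Its support in the induced representation is the $H$-orbit of $\bar\bl_\theta$. Since $\bl_\theta$ is elliptic (Claim \ref{oo}(1)), $\bar\bl_\theta$ is a nonzero elliptic element of $\fg(F)$. Therefore $G(O)(2n-1)$ acts nontrivially, while $G(O)(2n)$ acts trivially because everything factors through $G(O)_{2n-1}$. So the depth is exactly $2n-1$. Moreover $\bo(\alpha^E_{2n-1}(\theta))$ is the elliptic orbit of $\bar\bl_\theta$, so the representation is elliptic. Theorem \ref{charodd} then gives speciality. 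One can also see this directly from Claim \ref{ellch}(4), Claim \ref{P=G} and Lemma \ref{3.12}, using that the restriction of a character to $K=P(O)_{2n-1}\cap\St_{\bl_\theta}$ is irreducible.

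The one point to check is that $\bar\bl_\theta\neq0$, i.e.\ that $\theta$ of depth exactly $2n-1$ gives a character nontrivial on the top layer. This holds because $p_{2n-1}(n)$ restricted to $G(O)_{2n-1}(2n-1)\simeq\fg(F)$ is the projection $p$ onto $\ft^E$. Composing with $\theta$ on $T^L_{2n-1}(2n-1)\simeq\ft^E$ gives a nonzero functional precisely because $\theta$ is nontrivial on $T^L(2n-1)$. The functional is supported on $\ft^E$, hence is a regular elliptic element whose eigenvalues lie in $E$. I expect this identification, essentially Claim \ref{oo}(1), to be the only nonformal input.

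\emph{Step 3: the isomorphism with $\theta^{-1}$.} Let $w\in G(O)$ normalize $T^L$ and act on it by Galois conjugation, which is inversion under $T^L\simeq L^1$. Because $p$ is the unique $\Ad(T^E)$-equivariant projection (Claim \ref{pro}), it is also $\Ad(w)$-equivariant, so $p_{2n-1}(n)$ is $w$-equivariant. Hence
$$
\bl_\theta\circ\Ad(w)^{-1}=\theta\circ\Ad(w)^{-1}\circ p_{2n-1}(n)=\bl_{\theta^{-1}},
$$
and $w$ conjugates $\St_{\bl_\theta}$ to $\St_{\bl_{\theta^{-1}}}$. Conjugation by $w$ transports $\widehat\theta$ to a character of $\St_{\bl_{\theta^{-1}}}$ that restricts to $\bl_{\theta^{-1}}$ on $U$ and to $\theta^{-1}$ on $T^L_{2n-1}$. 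By the uniqueness in Claim \ref{oo}(2), this character is $\widehat{\theta^{-1}}$. Induction is invariant under conjugating the inducing datum by an element of the ambient group, so
$$
\alpha^E_{2n-1}(\theta)\simeq\alpha^E_{2n-1}(\theta^{-1}).
$$
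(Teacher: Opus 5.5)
Your proposal is correct and follows the same route as the paper: irreducibility from Lemma~\ref{MM}(2), depth and ellipticity from the definition of $\bl_\theta$ (Claim~\ref{oo}(1)), specialness from Theorem~\ref{charodd}, and part (2) by conjugating with the nontrivial normalizer element of $T^L$; you also correctly supply the details the paper leaves implicit, namely the $w$-equivariance of $p$ and the uniqueness of $\widehat{\theta^{-1}}$. One cosmetic slip: the fact that the $U$-support of the induced representation is the orbit of $\bl_\theta$ comes from Lemma~\ref{MM}(1) together with (4), or directly from the homogeneous-bundle realization, not from part (3).
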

\begin{proof}
Irreducibility follows from Lemma \ref{MM}(2), and specialness follows from Theorem \ref{charodd}. The depth and ellipticity follow from the definition of $\bl_\theta$. The final assertion follows by conjugating with the nontrivial element of the normalizer of $T^L$.
\end{proof}
Hence the construction gives a map
$$
\bar\alpha_{2n-1}^E:\bar\Xi^L(2n-1)\longrightarrow\widehat G_{2n-1}^E.
$$
\begin{theorem}\label{od}
The map $\bar\alpha_{2n-1}^E$ is a bijection.
\end{theorem}
\begin{proof}
Surjectivity follows from Theorem \ref{charodd} and Lemma \ref{MM}(3): an elliptic representation recovers a character of its stabilizer, whose restriction to $T^L$ is the required parameter. If two parameters give isomorphic induced representations, their $U$-supports agree. Claim \ref{ellch}(3) then shows that the parameters differ only by inversion. Thus the map is injective.
\end{proof}
\subsection{Elliptic representations of even depth}\label{GO-even-depth}
We now treat positive even depth. Fix $n>0$. We denote by $T^L_m$ the image of $T^L$ in $G(O)_m$
(or in $G'(O)_m$, as appropriate), and by $T^L_m(r)$
the image of $T^L(r)$.
\begin{definition}\label{even-depth-data}
\begin{enumerate}
\item Put $A=G(O)_{2n}(n)$ and $U=G(O)_{2n}(n+1)$.
\item A character $\bl\in\widehat U$ is elliptic if its restriction to $G(O)_{2n}(2n)\simeq\fg(F)$ is nonzero and elliptic.
\item Let $\St_\bl$ be the stabilizer of $\bl$ in $G(O)_{2n}$.
\end{enumerate}
\end{definition}
\begin{lemma}\label{centralizer-lifting-even}
Let $R_r=F[t]/(t^r)$ and let $Y\in\fg(R_r)$ have regular semisimple constant term $Y_0$. If $T_0=Z_G(Y_0)$, then, after conjugation by an element of $\ker(G(R_r)\to G(F))$, one has
$$
Z_{G(R_r)}(Y)=T_0(R_r).
$$
\end{lemma}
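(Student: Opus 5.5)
We prove the statement by induction on $r$, lifting one power of $t$ at a time in the style of Hensel's lemma. This gives both the conjugation statement and the equality of centralizers. For $r=1$ there is nothing to prove beyond the classical fact that the centralizer of a regular semisimple element $Y_0\in\fg(F)$ in $G=\PGL(2)$ is the torus $T_0$ (its identity component; in $\PGL(2)$ the full centralizer may contain the normalizer element when $\operatorname{tr}Y_0=0$ up to scalars). We handle this by interpreting $Z_G$ as the connected centralizer, or by noting that an element swapping the eigenlines sends $Y_0$ to $-Y_0$ and so does not centralize it. In $\mathfrak{sl}_2$ we have $\Ad(w)Y_0=-Y_0\neq Y_0$ for a regular semisimple $Y_0$, so $Z_{G(F)}(Y_0)=T_0(F)$.

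\textbf{First step: conjugating $Y$ into $\ft_0(R_r)$.} Write $Y=Y_0+tY_1+\dots+t^{r-1}Y_{r-1}$ and suppose inductively that $Y\equiv Y'\pmod{t^k}$ with $Y'\in\ft_0(R_r)$. Use the decomposition $\fg=\ft_0\oplus[Y_0,\fg]$; this holds because $\operatorname{ad}Y_0$ is semisimple with kernel $\ft_0$. Decompose the coefficient of $t^k$ in $Y-Y'$ as $Z_t+[Y_0,X]$. Conjugating by $\exp(-t^kX)$, i.e.\ by $1-t^kX$ modulo higher order terms, which lies in $\ker(G(R_r)\to G(F))$, kills $[Y_0,X]$ at order $t^k$ and changes only higher orders. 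Iterating for $k=1,\dots,r-1$ conjugates $Y$ into $\ft_0(R_r)$ by an element of the congruence kernel.

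\textbf{Second step: computing the centralizer.} We may now assume $Y\in\ft_0(R_r)$ with regular semisimple constant term. Clearly $T_0(R_r)\subset Z_{G(R_r)}(Y)$. Conversely, let $g$ centralize $Y$. Reducing mod $t$ gives $g_0\in Z_{G(F)}(Y_0)=T_0(F)$, so after multiplying by a lift in $T_0(R_r)$ we may take $g\in\ker(G(R_r)\to G(F))$.

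Then write $g\equiv 1+t^kX\pmod{t^{k+1}}$, with $k$ minimal such that $X\neq 0$. The order-$t^k$ term of $\Ad(g)Y-Y$ is $[X,Y_0]$, which must vanish, so $X\in\ft_0$. Multiplying $g$ by the inverse of the element $1+t^kX\in T_0(R_r)$ raises $k$, and induction on $k$ up to $r$ finishes. Note that this multiplication preserves centralizing $Y$, since $T_0(R_r)$ commutes with $Y\in\ft_0(R_r)$ as $T_0$ is commutative.

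The main obstacle is bookkeeping in $\PGL(2)$ rather than $\operatorname{GL}(2)$: one must make sense of $1+t^kX$ and of the centralizer of $Y_0$ in the adjoint group. We would first work in $\operatorname{GL}_2(R_r)$, where the centralizer of $Y$ is $(R_r[Y])^*$, a free rank-two commutative algebra. We would then pass to the quotient by $R_r^*$, and check that no extra components (the Weyl element) appear, using $\Ad(w)Y_0=-Y_0$.
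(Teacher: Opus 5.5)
Your proposal is correct and is essentially the paper's argument. You use successive approximation via $\fg=\ft_0\oplus[Y_0,\fg]$ to conjugate $Y$ into $\ft_0(R_r)$ by congruence-kernel elements, reduce a centralizing $g$ modulo $t$, divide by a lift of $g_0\in T_0(F)$, and show inductively that each successive infinitesimal coefficient lies in $\ft_0$; the only slip is a sign: conjugation by $1+t^kX$ changes the $t^k$-coefficient by $[X,Y_0]=-[Y_0,X]$, so you should conjugate by $1+t^kX$ rather than $1-t^kX$ to remove the component $[Y_0,X]$.
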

\begin{proof}
Suppose the assertion has been obtained modulo $t^j$, and write the coefficient of $t^j$ of $Y$ as $Y_j$. Conjugation by $1+t^jX$ changes this coefficient by $[X,Y_0]$. Since $Y_0$ is regular semisimple and the trace form is nondegenerate,
$$
\fg(F)=\mathfrak z_\fg(Y_0)\oplus[\fg(F),Y_0].
$$
Thus $X$ can be chosen, uniquely modulo $\mathfrak z_\fg(Y_0)$, so that the new coefficient belongs to $\mathfrak z_\fg(Y_0)$. Induction on $j$ conjugates $Y$ into $\mathfrak z_\fg(Y_0)(R_r)$. If $g$ centralizes the resulting element, its reduction $g_0$ belongs to $T_0(F)$. Multiplying by $g_0^{-1}$ and comparing successive coefficients shows that every infinitesimal coefficient of $g$ belongs to $\mathfrak z_\fg(Y_0)$. Hence $g\in T_0(R_r)$.
\end{proof}
\begin{claim}\label{q}
Let $\bl\in\widehat U$ be elliptic.
\begin{enumerate}
\item $Z(A)=U$.
\item $[A,A]=G(O)_{2n}(2n)$.
\item There is an embedding $T^{L_\bl}\ho G(O)$ such that
$$
T^{L_\bl}_{2n}\subset\St_\bl,
\qquad
\St_\bl=T^{L_\bl}_{2n}A.
$$
\item One has
$$
[\St_\bl,\St_\bl]\cap T^{L_\bl}_{2n}=T^{L_\bl}_{2n}(2n).
$$
\item One has
$$
P(O)_{2n}\St_\bl=G(O)_{2n}.
$$
\end{enumerate}
\end{claim}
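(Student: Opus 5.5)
The plan is to make everything explicit through the coordinate maps $\iota_{2n}(r)$ of Claim \ref{iota1}. Elements of $A=G(O)_{2n}(n)$ are written as $1+X$ with $X=\sum_{j=n}^{2n}t^jX_j$, $X_j\in\fg(F)$, taken modulo $t^{2n+1}$. For $X,Y$ with valuations at least $n$, the group commutator satisfies
$$
(1+X)(1+Y)(1+X)^{-1}(1+Y)^{-1}\equiv 1+[X,Y]\pmod{t^{\,v(X)+v(Y)+1}}.
$$
Since $[X,Y]\in t^{2n}\fg$ whenever $v(X),v(Y)\ge n$, all commutators in $A$ land in $G(O)_{2n}(2n)\simeq\fg(F)$. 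Taking $X=t^nX_n$ and $Y=t^nY_n$ gives $1+t^{2n}[X_n,Y_n]$. Because $[\fg(F),\fg(F)]=\fg(F)$, this proves (2).

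For (1), the inclusion $U\subset Z(A)$ holds because commutators of elements of valuation $\ge n+1$ and $\ge n$ vanish modulo $t^{2n+1}$. Conversely, if $1+X$ is central and $X_n\neq0$, choose $Y_n$ with $[X_n,Y_n]\neq0$; the commutator formula then gives a contradiction. Hence $Z(A)=U$.

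For (3), I would first analyse the conjugation action. By Claim \ref{iota1}(2), $U$ is abelian (since $2n<2(n+1)$), and its character group is identified through $\widetilde\psi$ and the residue pairing with $\fg(F)\otimes t^{-2n-1}\fm^{n+1}/\fm^{n}$. So $\bl$ corresponds to an element $Y=\sum_{j=0}^{n-1}t^{j}Y_j$ of $\fg(R_n)$, with leading datum $Y_0=\bar\bl$ regular semisimple elliptic. The group $G(O)_{2n}$ acts on $\bl$ through $\Ad$ of its image in $G(R_n)$. By Lemma \ref{centralizer-lifting-even}, after conjugation the stabilizer image in $G(R_n)$ is $T_0(R_n)$, where $T_0=Z_G(Y_0)\simeq T^{E}$. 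Lifting gives $\St_\bl=T^{L}_{2n}\cdot G(O)_{2n}(n)$, because the kernel of $G(O)_{2n}\to G(R_n)$ is exactly $A$. Here $T_0(R_n)$ lifted by the Teichmüller-type splitting is the image of $T^L=T^{E((t))}$, which gives the embedding of $T^{L_\bl}$.

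For (4), an element of $T^L_{2n}$ lying in $[\St_\bl,\St_\bl]$ projects to $[\St_\bl,\St_\bl]A/A\subset[T_0(R_n),T_0(R_n)]=1$, so it lies in $T^L_{2n}\cap A=T^L_{2n}(n)$. I then compute commutators more carefully. Commutators of $T^L$ with $A$ have the form $1+(\Ad(\tau)-1)X$, whose $\ft$-component vanishes at every order; this uses the projection $p$ of Claim \ref{pro}. Commutators inside $A$ lie in $G(O)_{2n}(2n)$. Applying $p_{2n}(n)$ as in Claim \ref{iota} then shows that the $\ft$-part of any product of commutators is concentrated in degree $2n$. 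That gives the inclusion $\subset T^L_{2n}(2n)$. The reverse inclusion comes from $[t^nX,t^nY]$ with $X,Y$ chosen so that $[X,Y]$ spans $\ft^E$, which is possible because $[\fg,\fg]=\fg$ projects onto $\ft^E$.

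This step is the main obstacle. The difficulty is that $p_{2n}(n)$ is not a homomorphism (here $m=2n\not<2n$), so the degree-$2n$ correction terms $\tfrac12[X,Y]$-type must be controlled. I would handle this by working with the Baker–Campbell–Hausdorff form truncated at degree $2n$, which is valid since $p$ is odd.

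For (5), the argument is the same as in Claim \ref{P=G}. It suffices to show that $P(O)_{2n}$ acts transitively on the $G(O)_{2n}$-orbit of $\bl$, equivalently on the orbit of $Y$ in $\fg(R_n)$. I would prove this by induction on $j$, the degree to which $Y$ is fixed. At $j=0$, Lemma \ref{orbit-image} gives the result because $Y_0$ is elliptic. For the inductive step, given two orbit elements that agree modulo $t^j$, I conjugate by $1+t^jZ$ with $Z\in\mathfrak p(F)$. This is possible because $[\mathfrak p(F),Y_0]+\mathfrak z(Y_0)=\fg(F)$: the $c$-coordinate of $Y_0$ is nonzero, so $\operatorname{ad}Y_0$ maps $\mathfrak p$ onto a $2$-dimensional complement of the line $\mathfrak z(Y_0)$. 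The remaining $\mathfrak z(Y_0)$-component of the discrepancy is zero because both elements lie in the same $G$-orbit; by Lemma \ref{centralizer-lifting-even}, the invariant part is conjugation-invariant.
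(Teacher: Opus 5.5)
Your arguments for (1)--(4) follow the paper's proof. The paper writes the commutator identity in $A$ via $\exp_2$ and $X*Y=X+Y+\frac12[X,Y]$, and you write it in $1+X$ coordinates. Both use Lemma \ref{centralizer-lifting-even} for the stabilizer and the vanishing of toral components of commutators with the torus.

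Your worry in (4) is justified. The paper's phrase ``the toral component of $[s,a]$ vanishes'' is literally true only modulo $G(O)_{2n}(2n)$: one has $[\tau,\exp_2(X)]=\exp_2\bigl((\Ad(\tau)-1)X-\tfrac12[\Ad(\tau)X,X]\bigr)$, and the degree-$2n$ term generally has a nonzero toral part. This is harmless and needs no BCH bookkeeping. After conjugating $\bl$ so that the torus is the standard one, pass to $G(O)_{2n-1}$, where $p_{2n-1}(n)$ is a homomorphism by Claim \ref{iota}. It kills the images of $[A,A]$ and of every $[\tau,a]$, because $p\circ\Ad(\tau)=p$, hence it kills the image of $[\St_\bl,\St_\bl]$. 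On $T^{L_\bl}_{2n}(n)$ it is just reduction, with kernel $T^{L_\bl}_{2n}(2n)\subset[A,A]$.

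For (5) your route differs. The paper proves the stronger group-level factorization $P(O)_{2n}T^{L_\bl}_{2n}=G(O)_{2n}$. Modulo $t$ this is the simple transitivity of $E_\bl^*/F^*$ on $\mathbb P^1(F)$, and each lifting step is unobstructed because $\mathfrak p+\tori_\bl=\fg$. You instead show, by successive approximation, that $P(R_n)$ is transitive on the adjoint orbit of $Y$ in $\fg(R_n)$. This works, with two small points to supply.

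\emph{The tangency condition.} On the orbit side, $\mathfrak p$ supplies only the directions $[\mathfrak p,Y_0]=[\fg,Y_0]=\mathfrak z(Y_0)^\perp$. So you must know that the degree-$j$ discrepancy $D$ is orthogonal to $Y_0$. Lemma \ref{centralizer-lifting-even} does not give this directly, since it would also need uniqueness of the normal form. The clean reason is an invariant: comparing $t^j$-coefficients in $\operatorname{tr}(Y'^2)=\operatorname{tr}(Y^2)$ in $R_n$ gives $2\operatorname{tr}(Y_0D)=0$.

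\emph{The base case.} Lemma \ref{orbit-image} alone only gives transitivity on the restricted image. You also need injectivity of restriction on the elliptic orbit, as in Claim \ref{P=G}.

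The paper's version is shorter and uses only the torus and a dimension count. Yours establishes directly the orbit form of the hypothesis fed into Lemma \ref{3.12}, and runs parallel to Lemma \ref{orbit-restriction-injective}.
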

\begin{proof}
Modulo $t^{2n+1}$, the Lie algebra
$$
t^n\fg[[t]]/t^{2n+1}\fg[[t]]
$$
is two-step nilpotent because
$$
[t^n\fg,[t^n\fg,t^n\fg]]\subset t^{3n}\fg[[t]]\subset t^{2n+1}\fg[[t]].
$$
Since $2$ is invertible in $F$, the truncated exponential
$$
\exp_2(X)=1+X+\frac12X^2
$$
identifies $A$ with this two-step nilpotent Lie algebra, with multiplication
$$
X*Y=X+Y+\frac12[X,Y].
$$
The subspace $t^{n+1}\fg[[t]]/t^{2n+1}\fg[[t]]$ is central. Conversely, if the degree-$n$ coefficient of a central element were nonzero, its commutator with every element of $\fg(F)$ would vanish, contradicting $Z(\fg)=0$. This proves (1). Moreover,
$$
[\exp_2(t^nX),\exp_2(t^nY)]=\exp_2(t^{2n}[X,Y]),
$$
and $[\fg,\fg]=\fg$, proving (2).
Use the trace pairing to identify $\widehat U$ with $\fg(F[t]/(t^n))$, and let $Y$ correspond to $\bl$. An element of $G(O)_{2n}$ stabilizes $\bl$ if and only if its reduction modulo $t^n$ centralizes $Y$. Lemma \ref{centralizer-lifting-even} identifies this centralizer, after conjugating the torus by an element of $G(O)(1)$, with $T^{L_\bl}_n$. Since the kernel of the reduction map to this quotient is $A$, we obtain
$$
\St_\bl=T^{L_\bl}_{2n}A,
$$
which proves (3).
We already know that $[A,A]=G(O)_{2n}(2n)$. If $s\in T^{L_\bl}_{2n}$ and $a\in A$, the toral component of $[s,a]$ vanishes: on the Lie algebra this follows from
$$
(\Ad(s)-1)\fg\subset(\tori_\bl)^\perp.
$$
It follows that
$$
[T^{L_\bl}_{2n}A,T^{L_\bl}_{2n}A]\cap T^{L_\bl}_{2n}
=T^{L_\bl}_{2n}(2n),
$$
proving (4).
For (5), reduction modulo $t$ reduces the assertion to $P(F)T^{E_\bl}=G(F)$. The torus $T^{E_\bl}=E_\bl^*/F^*$ acts simply transitively on $G(F)/P(F)=\mathbb P^1(F)$: after choosing an $F$-basis of $E_\bl$, multiplication by a nonzero element sends the line $F\cdot1$ to an arbitrary $F$-line, and its stabilizer is $F^*$. Hence the equality holds modulo $t$. Suppose it has been lifted modulo $t^j$. The obstruction to lifting it modulo $t^{j+1}$ belongs to
$$
\fg/(\mathfrak p+\tori_\bl),
$$
which is zero because $\mathfrak p\cap\tori_\bl=0$ and the dimensions are $2$ and $1$. Induction gives $P(O)_{2n}T^{L_\bl}_{2n}=G(O)_{2n}$ and hence (5).
\end{proof}
Since $U$ is an $F$-vector group, there is a unique $F$-linear functional
$$
\ell_\bl:U\longrightarrow F
$$
such that
$$
\bl(u)=\widetilde\psi(\ell_\bl(u)).
$$
Put
$$
K_\bl=\ker(\ell_\bl),
\qquad
A_\bl=A/K_\bl.
$$
The quotient $A_\bl$, rather than $A/\ker(\bl)$, has a canonical one-dimensional additive central quotient. Indeed, every element of $A$ has a unique truncated logarithm
$$
t^nX_n+t^{n+1}X_{n+1}+\cdots+t^{2n}X_{2n}.
$$
The coefficient $X_n$ gives the first coordinate in $\fg(F)$. All remaining coefficients lie in the central subgroup $U$, and modulo $K_\bl$ their only invariant is the scalar obtained by applying $\ell_\bl$. Thus, using the section $X\mapsto\exp_2(t^nX)$, we identify $A_\bl$ as a set with $\fg(F)\times F$. Since triple commutators have degree at least $3n\geq2n+1$, the truncated Baker--Campbell--Hausdorff formula gives the multiplication
$$
(X,a)(Y,b)=\left(X+Y,a+b+\frac12\ell_{\bl,2n}([X,Y])\right),
$$
where $\ell_{\bl,2n}$ is the restriction of $\ell_\bl$ to $G(O)_{2n}(2n)\simeq\fg(F)$.
Use the trace pairing to write
$$
\ell_{\bl,2n}(Z)=\operatorname{tr}(y_\bl Z)
$$
for a unique regular elliptic element $y_\bl\in\fg(F)$. Put
$$
\tori_\bl=\mathfrak z_\fg(y_\bl),
\qquad
W_\bl=\fg/\tori_\bl.
$$
\begin{lemma}\label{B-symplectic}
The formula
$$
B_\bl(\overline X,\overline Y)=\operatorname{tr}(y_\bl[X,Y])
$$
defines a nondegenerate alternating form on $W_\bl$.
\end{lemma}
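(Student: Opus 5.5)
We have to check three things for $B_\bl(\overline X,\overline Y)=\operatorname{tr}(y_\bl[X,Y])$: it is well defined on $W_\bl=\fg/\tori_\bl$, it is alternating, and it is nondegenerate there. The tool throughout is the invariance of the trace form, which gives
$$
\operatorname{tr}(y_\bl[X,Y])=\operatorname{tr}([y_\bl,X]Y).
$$

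\emph{Bilinear and alternating.} On $\fg$ the expression is bilinear because the bracket is bilinear and the trace is linear. It is alternating because $[X,X]=0$.

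\emph{Well defined.} If $X\in\tori_\bl=\mathfrak z_\fg(y_\bl)$, then $[y_\bl,X]=0$, so by the identity above $\operatorname{tr}(y_\bl[X,Y])=0$ for every $Y$. By antisymmetry the same holds when $Y\in\tori_\bl$. So the form descends to $W_\bl$.

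\emph{Nondegenerate.} Suppose $\operatorname{tr}([y_\bl,X]Y)=0$ for all $Y\in\fg(F)$. The trace form on $\mathfrak{sl}_2(F)$ is nondegenerate because $\operatorname{char}F\neq2$. Hence $[y_\bl,X]=0$, that is, $X\in\tori_\bl$, so $\overline X=0$. Thus the radical of $B_\bl$ on $\fg$ is exactly $\tori_\bl$.

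\emph{Dimension check.} Since $y_\bl$ is regular elliptic, in particular nonzero regular semisimple, its centralizer $\tori_\bl$ is one-dimensional: it is the Lie algebra of the anisotropic torus $T^{E_\bl}$. So $\dim W_\bl=2$, which is consistent with a nondegenerate alternating form.

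There is no real obstacle here. The only point needing care is that the radical of the form is exactly $\tori_\bl$, and this rests on the nondegeneracy of the trace pairing, which is where odd residue characteristic ($\operatorname{char}F\neq2$) is used.
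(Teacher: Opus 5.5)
Your proof is correct and follows the paper's argument. Both well-definedness and nondegeneracy come from the invariance identity $\operatorname{tr}(y_\bl[X,Y])=\operatorname{tr}([y_\bl,X]Y)$ together with the nondegeneracy of the trace pairing on $\fg$. You also spell out three points the paper leaves implicit: the alternating property, well-definedness in the second variable via antisymmetry, and the count $\dim W_\bl=2$.
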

\begin{proof}
If $T\in\tori_\bl$, invariance of the trace pairing gives
$$
\operatorname{tr}(y_\bl[T,Y])=\operatorname{tr}([y_\bl,T]Y)=0,
$$
so the formula is well defined. If $B_\bl(\overline X,\overline Y)=0$ for every $Y$, then
$$
\operatorname{tr}([y_\bl,X]Y)=0
$$
for every $Y$. The trace pairing is nondegenerate, hence $[y_\bl,X]=0$ and $X\in\tori_\bl$.
\end{proof}
Choose the trace-orthogonal decomposition
$$
\fg=\tori_\bl\oplus W,
\qquad W=\tori_\bl^\perp,
$$
and transport $B_\bl$ to $W$. In coordinates $(T,w,a)\in\tori_\bl\times W\times F$, multiplication becomes
$$
(T,w,a)(T',w',a')=
\left(T+T',w+w',a+a'+\frac12B_\bl(w,w')\right).
$$
Consequently,
$$
A_\bl\simeq(\tori_\bl,+)\times R_W,
$$
where $R_W=W\times F$ is the Heisenberg group associated with $B_\bl$. In particular,
$$
p_\bl:A_\bl\longrightarrow\tori_\bl,
\qquad
p_\bl(T,w,a)=T,
$$
is a group homomorphism and $\ker(p_\bl)=R_W$.
\subsubsection{The stabilizer and its quasi-Heisenberg structure}
Put
$$
H_\bl=\St_\bl/K_\bl.
$$
Then $A_\bl$ is a normal subgroup of $H_\bl$. Since $\St_\bl$ fixes $\ell_\bl$ and $y_\bl$, it preserves $\tori_\bl$, $W$, and $B_\bl$. Thus there is a homomorphism
$$
a_\bl:H_\bl\longrightarrow\Sp(W,B_\bl).
$$
The subgroup $A_\bl$ acts trivially on $W$, as does the positive-loop subgroup of $T^{L_\bl}_{2n}$. The image of $a_\bl$ is therefore the compact anisotropic torus $T^{E_\bl}$. A splitting of the metaplectic cover over this torus gives a lift
$$
\widetilde a_\bl:H_\bl\longrightarrow\widetilde{\Sp(W)}.
$$
Let $C_\bl\subset T^{L_\bl}_{2n}$ be the image of $T^{L_\bl}(1)$.
\begin{lemma}\label{positive-loop-central}
The subgroup $C_\bl$ is contained in $Z(H_\bl)$.
\end{lemma}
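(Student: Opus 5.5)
Write $H_\bl=\St_\bl/K_\bl$ with $\St_\bl=T^{L_\bl}_{2n}A$, as in Claim \ref{q}(3). To show that $C_\bl$ is central, it suffices to check that $C_\bl$ commutes in $H_\bl$ with $T^{L_\bl}_{2n}$ and with $A$. The first is immediate, since $T^{L_\bl}_{2n}$ is commutative. For the second, I take $c\in C_\bl$ and $a\in A$ and must show that $[c,a]\in K_\bl$.

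First I locate $C_\bl$. The group $T^{L_\bl}(1)$ is the kernel of reduction modulo $t$ on the torus, so $T^{L_\bl}(1)\subset G(O)(1)$. There are two ranges to handle.

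For $c\in T^{L_\bl}(n)$, its image lies in $A\cap T^{L_\bl}_{2n}$. The degree-$n$ coefficient of $\log c$ then lies in $\tori_\bl$. By the multiplication formula for $A_\bl$, the commutator of $(T,0,*)$ with $(X,a)$ is $\ell_{\bl,2n}([T,X])=\operatorname{tr}(y_\bl[T,X])=\operatorname{tr}([y_\bl,T]X)=0$, because $T\in\tori_\bl=\mathfrak z_\fg(y_\bl)$. Hence $[c,a]\in K_\bl$.

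For general $c\in T^{L_\bl}(1)$, write $c=\exp(Z)$ with $Z\in t\,\tori(O)$, where $\tori$ denotes the Lie algebra of the torus, i.e. $\mathfrak z_\fg(y_\bl)$ extended coefficientwise. Let $a=\exp_2(t^nX+u)$ with $u\in U$. Then
$$
cac^{-1}a^{-1}\equiv \exp\bigl(t^n(\Ad(c)-1)X\bigr)\cdot(\text{central correction}) \pmod{t^{2n+1}}.
$$
Since $(\Ad(c)-1)X\in t\fg(O)$, the whole commutator lies in $U=G(O)_{2n}(n+1)$, which is central in $A$. So we need to evaluate $\ell_\bl$ on it.

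Because $\ell_\bl$ is $\St_\bl$-invariant, I identify it through the trace pairing with an element $Y\in\fg(F[t]/(t^n))$ centralized by the torus. After the conjugation of Lemma \ref{centralizer-lifting-even}, $Y$ lies in $\tori(F[t]/(t^n))$. Then
$$
\ell_\bl\bigl((\Ad(c)-1)t^nX\bigr)=\operatorname{Res}\operatorname{tr}\bigl(Y\,(\Ad(c)-1)t^nX\bigr)=\operatorname{Res}\operatorname{tr}\bigl((\Ad(c^{-1})-1)Y\cdot t^nX\bigr)=0,
$$
since $c$ commutes with $Y$. The same invariance handles the BCH correction terms. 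Those terms are iterated brackets involving $Z$, so each has the form $(\Ad(c)-1)$ applied to something, and they pair to zero with $Y$. Thus $[c,a]\in K_\bl$, and $c$ is central in $H_\bl$.

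The main obstacle is bookkeeping the truncated BCH corrections. I need to check that every term of degree at least $n+1$ arising in $cac^{-1}a^{-1}$ lies in the image of $\Ad(c)-1$, or more generally in $[\tori(O),\fg(O)]$, modulo $t^{2n+1}$. For this I would use the decomposition $\fg=\tori_\bl\oplus W$, which is $\Ad(T)$-stable, together with $\operatorname{tr}(\tori\cdot[\tori,\fg])=0$. This reduces the claim to showing that the commutator has no $\tori$-component, which is the same toral-component vanishing already used in the proof of Claim \ref{q}(4).
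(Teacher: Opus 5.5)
Your argument is correct, but it follows a different route from the paper's.

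\textbf{The paper's route.} The argument is structural. Conjugation by $c\in C_\bl$ induces on $R_W$ an automorphism that is trivial on $W$ and on $Z(R_W)$. Such an automorphism is inner, given by a unique $w_c\in W$. Since $c$ commutes with $T^{E_\bl}$, the vector $w_c$ is $T^{E_\bl}$-fixed, and $W^{T^{E_\bl}}=0$ forces $w_c=0$. Commutation with $(\tori_\bl,+)$ and with $T^{L_\bl}_{2n}$ is then checked separately.

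\textbf{Your route.} You compute directly. For $a=\exp_2(M)\in A$ you show $[c,a]\in U$ and $\ell_\bl([c,a])=0$, using $\Ad$-invariance of the trace form and the fact that $c$ fixes a representative of $\ell_\bl$ lying in the torus Lie algebra. This treats all of $A$ uniformly: no splitting into $\tori_\bl$ and $R_W$, no lemma on inner automorphisms of Heisenberg groups, and no fixed-vector argument. The price is that you need the explicit normal form $Y\in\tori(F[t]/(t^n))$ from Lemma \ref{centralizer-lifting-even}. The paper uses only that $c$ acts trivially on the graded pieces and commutes with $T^{E_\bl}$.

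\textbf{Two tidy-ups.}
\begin{enumerate}
\item The ``BCH obstacle'' you flag does not arise. Conjugation commutes with the polynomial $\exp_2$, so $cac^{-1}=\exp_2(\Ad(c)M)$. Since $\exp_2$ is additive and central in degrees $\geq n+1$ modulo $t^{2n+1}$, one gets exactly $[c,a]=\exp_2\bigl((\Ad(c)-1)M\bigr)$, with no further corrections. You should also drop the writing $c=\exp(Z)$. Only $\Ad(c)$ is needed, and the full exponential modulo $t^{2n+1}$ need not exist when $F$ has characteristic $p\leq 2n$.
\item Pair against a coefficientwise lift $\widetilde Y\in\tori(O)$ of $Y$, so that $\Ad(c)\widetilde Y=\widetilde Y$ holds exactly. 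With $Y$ known only modulo $t^n$, moving $\Ad(c)-1$ across the pairing is not justified at the $t^{2n}$ coefficient, because $M$ has a degree-$n$ term. Equivalently, note that $(\Ad(c)-1)\fg(O)\subset W(O)$, which is trace-orthogonal to $\tori(O)$.
\end{enumerate}
Your separate treatment of $c\in T^{L_\bl}(n)$ is subsumed by the general case and can be omitted.
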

\begin{proof}
Let $c\in C_\bl$. Its conjugation action on $R_W$ induces the identity on both $W=R_W/Z(R_W)$ and $Z(R_W)$. Every such automorphism of $R_W$ is inner, so it is conjugation by $(w_c,0)$ for a unique $w_c\in W$. Since $c$ commutes with $T^{E_\bl}$, the vector $w_c$ is fixed by $T^{E_\bl}$. The $T^{E_\bl}$-module $W$ has no nonzero fixed vector because the centralizer of $T^{E_\bl}$ in $\fg$ is $\tori_\bl$. Hence $w_c=0$, and $c$ centralizes $R_W$. It also commutes with $T^{L_\bl}_{2n}$ and with $(\tori_\bl,+)$. These subgroups generate $H_\bl$, so $c$ is central.
\end{proof}
The constant-loop torus $T^{E_\bl}$ acts faithfully on $W$, whereas $Z(H_\bl)R_W$ acts trivially on $W$. Claim \ref{q}(3), Lemma \ref{positive-loop-central}, and the decomposition $A_\bl=(\tori_\bl,+)R_W$ give a bijection
$$
T^{E_\bl}\times Z(H_\bl)R_W\stackrel{\sim}{\longrightarrow}H_\bl.
$$
Let $\rho$ be an irreducible representation of $\St_\bl$ on which $U$ acts by $\bl$. It factors through $H_\bl$. Let $\zeta_\rho$ be its central character. Then
$$
(H_\bl,R_W,T^{E_\bl},\widetilde a_\bl,\zeta_\rho)
$$
is a quasi-Heisenberg group.
\begin{proposition}\label{A-prime}
The restriction $\rho|_{R_W}$ is irreducible.
\end{proposition}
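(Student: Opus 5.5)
The plan is to deduce the statement directly from Lemma \ref{thet}(1), after checking that the quintuple $(H_\bl,R_W,T^{E_\bl},\widetilde a_\bl,\zeta_\rho)$ really satisfies the axioms of Definition \ref{qH} and that $\rho$, viewed as a representation of $H_\bl$, lies in $\widetilde{\widehat H}_{\zeta_\rho}$.

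\textbf{Factorization of $\rho$.} First I will show that $\rho$ factors through $H_\bl=\St_\bl/K_\bl$. Since $U$ acts by $\bl=\widetilde\psi\circ\ell_\bl$ and $K_\bl=\ker\ell_\bl\subset U$, the subgroup $K_\bl$ acts trivially.

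\textbf{Central character.} Next I need that $\rho$ has a central character. Since $\rho$ is an irreducible smooth representation of the algebraic $F$-group $H_\bl$, Schur's lemma gives that $Z(H_\bl)$ acts by a character $\zeta_\rho$. This holds for countable-dimensional irreducible representations, because $H_\bl$ is $\sigma$-compact.

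\textbf{Checking the axioms.}
\begin{enumerate}
\item $R_W$ is normal in $H_\bl$, since $\St_\bl$ preserves $W$ and $B_\bl$, and conjugation acts on $A_\bl=(\tori_\bl,+)\times R_W$ compatibly with the projection $p_\bl$, whose kernel is $R_W$.
\item $Z(R_W)=U/K_\bl\simeq F$, and this is central in $H_\bl$ because $U=Z(A)$ and $\St_\bl$ fixes $\ell_\bl$.
\item $\zeta_\rho|_{Z(R_W)}=\widetilde\psi$, again because $U$ acts by $\bl=\widetilde\psi\circ\ell_\bl$.
\item The product decomposition $T^{E_\bl}\times Z(H_\bl)R_W\simeq H_\bl$ was established just before the statement, using Claim \ref{q}(3) and Lemma \ref{positive-loop-central}. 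The only subtle point is that the additive factor $(\tori_\bl,+)$ must lie in $Z(H_\bl)$. It commutes with $R_W$ by the product formula, and it commutes with the torus $T^{L_\bl}_{2n}$, since the torus centralizes $\tori_\bl$ modulo $K_\bl$. I would verify this last fact using that the commutator of the torus with $t^n\tori_\bl$ has vanishing toral component (Claim \ref{q}(4)) and lies in $\ker\ell_\bl$.
\item The lift $\widetilde a_\bl$ exists by Claim \ref{heis}, since the image of $a_\bl$ is the compact torus $T^{E_\bl}$.
\end{enumerate}

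\textbf{Conclusion.} With the axioms verified, Lemma \ref{thet}(1) applies and gives the irreducibility of $\rho|_{R_W}$. Concretely, Claim \ref{hei}(2) gives $\rho\simeq\overline N\otimes V_W$ with $\overline N=\Hom_{R_W}(V_W,\rho)$. The twisted action on $\overline N$ factors through the commutative group $T^{E_\bl}$, and irreducibility of $\rho$ forces $\dim\overline N=1$.

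\textbf{Main obstacle.} I expect the main obstacle to be the centrality of $(\tori_\bl,+)$ and of $C_\bl$ in $H_\bl$, equivalently that $Z(H_\bl)R_W$ has the stated complement $T^{E_\bl}$. If the direct commutator computation is awkward, I would fall back on the argument of Lemma \ref{positive-loop-central}. Any element acting trivially on $W$ and on $Z(R_W)$ acts on $R_W$ as an inner automorphism by some $w\in W^{T^{E_\bl}}$, and $W^{T^{E_\bl}}=0$. After that, the commutator with the torus is handled by Claim \ref{q}(4).
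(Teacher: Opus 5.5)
Your proposal is correct and follows essentially the same route as the paper. The paper's proof is exactly the Stone--von Neumann decomposition $\rho|_{R_W}\simeq\Hom_{R_W}(V_W,\rho)\otimes\tau_W$, in which the chosen metaplectic lift cancels the cocycle and turns the multiplicity space into a genuine representation of the abelian quotient $T^{E_\bl}$. That argument is the content of Lemma~\ref{thet}(1), which you invoke, and the paper asserts the quasi-Heisenberg structure just before the statement.

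The step you flag as the main obstacle is easier than you expect. The subgroup $(\tori_\bl,+)$ is the image of a congruence subgroup of the commutative torus, so it lies in $C_\bl$, which is contained in $Z(H_\bl)$ by Lemma~\ref{positive-loop-central}; no commutator computation via Claim~\ref{q}(4) is needed.
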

\begin{proof}
By Stone--von Neumann and Claim \ref{hei}(2),
$$
\rho|_{R_W}\simeq M\otimes\tau_W,
\qquad
M=\Hom_{R_W}(V_W,\rho).
$$
The action of $H_\bl$ on $M$ is initially projective; its cocycle is the inverse of the metaplectic cocycle occurring in the action on $\tau_W$. The chosen lift $\widetilde a_\bl$ cancels this cocycle. Consequently $M$ is an ordinary smooth representation of
$$
H_\bl/Z(H_\bl)R_W\simeq T^{E_\bl}.
$$
Irreducibility of $\rho$ implies irreducibility of $M$. Since the quotient torus is abelian, $M$ is one-dimensional. Hence $\rho|_{R_W}\simeq\tau_W$ is irreducible.
\end{proof}
\subsubsection{The Borel graph subgroup}
Let $\mathfrak p\subset\fg$ be the Lie algebra of $P$.
\begin{lemma}\label{p-to-W}
The natural map
$$
\mathfrak p\longrightarrow\fg/\tori_\bl
$$
is an isomorphism.
\end{lemma}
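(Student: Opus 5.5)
The natural map is the composition $\mathfrak p\hookrightarrow\fg\to\fg/\tori_\bl$ of $F$-linear maps. Since $\dim_F\mathfrak p=2$ and $\dim_F(\fg/\tori_\bl)=3-1=2$, it suffices to show injectivity, i.e.\ $\mathfrak p\cap\tori_\bl=0$. This is exactly the fact already used at the end of the proof of Claim \ref{q}(5); we give a direct argument.

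Recall that $y_\bl$ is a nonzero regular elliptic element of $\fg(F)=\mathfrak{sl}_2(F)$, and $\tori_\bl=\mathfrak z_\fg(y_\bl)$. Since $y_\bl$ is regular semisimple, its centralizer is one-dimensional, so $\tori_\bl=F\,y_\bl$. Thus $\mathfrak p\cap\tori_\bl\neq0$ would mean $y_\bl\in\mathfrak p$, i.e.\ $y_\bl$ is upper triangular. In that case its eigenvalues would be its diagonal entries $a,-a\in F$. But ellipticity means precisely that the eigenvalues of $y_\bl$ do not lie in $F$: equivalently, $-\det(y_\bl)=a^2+bc$ is a nonsquare in $F$, as in Lemma \ref{orbit-image}. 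For upper triangular $y_\bl$ we have $c=0$, so $-\det(y_\bl)=a^2$ is a square. This is a contradiction, so $y_\bl\notin\mathfrak p$ and $\mathfrak p\cap\tori_\bl=0$.

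Hence the map is injective between two-dimensional spaces and therefore an isomorphism. There is no genuine obstacle here. The only point requiring care is that $\tori_\bl$ is the full centralizer, which is one-dimensional and spanned by $y_\bl$. This holds because $y_\bl\neq0$ is semisimple with distinct eigenvalues $\pm\sqrt{a^2+bc}$, and $a^2+bc\neq0$ since a nonsquare is nonzero.
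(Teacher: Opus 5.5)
Your proof is correct and follows the paper's own argument: both reduce to $\mathfrak p\cap\tori_\bl=0$ plus the dimension count $2=2$, using that nonzero elements of $\tori_\bl$ are elliptic while elements of $\mathfrak p$ have eigenvalues in $F$. Your explicit checks, that $\tori_\bl=F\,y_\bl$ and that $-\det=a^2$ is a square for an upper triangular element, simply spell out the paper's one-line reasoning.
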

\begin{proof}
A nonzero element of the elliptic line $\tori_\bl$ is elliptic semisimple, whereas every element of $\mathfrak p$ has its eigenvalues in $F$. Hence $\mathfrak p\cap\tori_\bl=0$. Both spaces have dimension $2$.
\end{proof}
Let $R_P\subset A_\bl$ be the image of $P(O)_{2n}(n)$. Under
$$
A_\bl\simeq(\tori_\bl,+)\times R_W,
$$
the subgroup $R_P$ has the form
$$
R_P=\{(c(w),w,a):w\in W,\ a\in F\}
$$
for an $F$-linear map $c:W\to\tori_\bl$. Its commutator pairing is $B_\bl$, so $R_P$ is a Heisenberg group with center $F$.
\begin{proposition}\label{RP-irred}
The restriction $\rho|_{R_P}$ is irreducible.
\end{proposition}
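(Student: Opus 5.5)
We prove Proposition \ref{RP-irred} by reducing it to the irreducibility of $\rho|_{R_W}$ (Proposition \ref{A-prime}), together with the fact that $R_P$ and $R_W$ become isomorphic Heisenberg groups once the central torus factor is taken into account.

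The key observation is that in $A_\bl\simeq(\tori_\bl,+)\times R_W$ the factor $(\tori_\bl,+)$ is central in $A_\bl$. We also need it to act on $\rho$ by scalars.

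\emph{First step: $(\tori_\bl,+)$ acts by scalars.} This factor is the image of $\exp_2(t^n\tori_\bl)$, and it is contained in the image of the torus $T^{L_\bl}_{2n}(n)\subset C_\bl$. More precisely, it lies in $C_\bl$ modulo $K_\bl$. This uses Lemma \ref{centralizer-lifting-even}, with the embedding of $T^{L_\bl}$ chosen as in Claim \ref{q}(3), so that $\tori_\bl$ is the Lie algebra of that torus; the other coefficients of an element of the torus differ from $\exp_2(t^nT)$ only by central elements of $U$. By Lemma \ref{positive-loop-central}, $C_\bl\subset Z(H_\bl)$. Since $\rho$ is irreducible, Schur's lemma for smooth irreducible representations of these finite-dimensional algebraic groups (which have countable dimension) shows that $Z(H_\bl)$ acts through the central character $\zeta_\rho$. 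Hence each element $(T,0,0)$ acts by a scalar $\zeta_\rho(T)$. If the inclusion of $(\tori_\bl,+)$ in the image of $C_\bl$ turns out to be awkward to verify, the same conclusion follows directly: $(\tori_\bl,+)$ commutes with $R_W$ and with $T^{L_\bl}_{2n}$, so it lies in $Z(H_\bl)$ by the generation argument used in Lemma \ref{positive-loop-central}.

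\emph{Second step: compare $R_P$ with $R_W$.} Consider the map
$$
\phi:R_W\to R_P,\qquad (w,a)\mapsto(c(w),w,a).
$$
This map is a bijection. Since $(\tori_\bl,+)$ is central and additive, and $c$ is linear, $\phi$ is a group isomorphism: the product of $(c(w),w,a)$ and $(c(w'),w',a')$ is $\bigl(c(w+w'),w+w',a+a'+\tfrac12B_\bl(w,w')\bigr)$. It follows that for $r=(w,a)\in R_W$,
$$
\rho(\phi(r))=\zeta_\rho(c(w))\,\rho(r).
$$
The function $w\mapsto\zeta_\rho(c(w))$ is a character of the additive group $W$.

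\emph{Conclusion.} A subspace invariant under $\rho(R_P)$ is therefore invariant under every $\rho(r)$ with $r\in R_W$, since these operators differ from those of $\rho(R_P)$ only by nonzero scalars. Proposition \ref{A-prime} now gives irreducibility of $\rho|_{R_P}$. Equivalently, $\rho|_{R_P}$ is the twist of $\tau_W\circ\phi^{-1}$ by a character of $W$, which is again a Heisenberg representation with central character $\widetilde\psi$ and is irreducible by Claim \ref{hei}(1).

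The main obstacle is the first step, namely identifying $(\tori_\bl,+)$ with the positive-depth torus modulo $K_\bl$ so that it acts by scalars. The fallback argument via centrality in $H_\bl$ avoids depending on that identification.
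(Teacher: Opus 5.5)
Your proof is correct and is essentially the paper's argument: both write $\rho(c(w),w,a)=\eta(c(w))\,\tau_W(w,a)$, using that $(\tori_\bl,+)$ acts by a character, and then appeal to Proposition~\ref{A-prime}. The only differences are that you justify why $(\tori_\bl,+)$ acts by scalars (via Lemma~\ref{positive-loop-central} or the generation argument), which the paper simply asserts, and that you conclude directly because scalar twists do not change invariant subspaces, whereas the paper removes the twist by conjugating with $\tau_W(w_0,0)$ to identify $\rho|_{R_P}$ with the Heisenberg representation.
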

\begin{proof}
By Proposition \ref{A-prime}, $\rho|_{R_W}=\tau_W$. The central group $(\tori_\bl,+)$ acts through a character $\eta$. Hence
$$
\rho(c(w),w,a)=\eta(c(w))\tau_W(w,a).
$$
There is an $F$-linear functional $d:W\to F$ such that $\eta(c(w))=\widetilde\psi(d(w))$. By nondegeneracy of $B_\bl$, there is $w_0\in W$ with $d(w)=B_\bl(w_0,w)$. Conjugation by $\tau_W(w_0,0)$, with the sign of $w_0$ adjusted if necessary, removes this twist. Thus $\rho|_{R_P}$ is isomorphic to the Heisenberg representation and is irreducible.
\end{proof}
\subsubsection{The finite-level restriction map}
Put $R_n=F[t]/(t^n)$. The trace pairing identifies $\widehat U$ with $\fg(R_n)$. If
$$
Y(t)=\sum_{r=0}^{n-1}Y_rt^r
$$
and
$$
X(t)=\sum_{j=n+1}^{2n}X_jt^j,
$$
then the corresponding character is
$$
\bl_Y(1+X(t))=
\widetilde\psi\left(\operatorname{coeff}_{t^{2n}}\operatorname{tr}(Y(t)X(t))\right).
$$
Under this identification, the coadjoint action is the adjoint action on $\fg(R_n)$, and ellipticity means that $Y_0$ is elliptic.
\begin{lemma}\label{orbit-restriction-injective}
Let $\mcO\subset\fg(R_n)$ be an adjoint orbit whose constant term is elliptic. Then restriction gives an injective map
$$
\mcO\longrightarrow\mathfrak p(R_n)^*.
$$
\end{lemma}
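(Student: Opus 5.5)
We argue directly with the trace pairing over $R_n=F[t]/(t^n)$. Write $Y=\begin{pmatrix}a&b\\c&-a\end{pmatrix}$ with $a,b,c\in R_n$. We identify $\mathfrak p(R_n)^*$ with functionals on upper triangular matrices via the pairing $(Y,Z)\mapsto\operatorname{tr}(YZ)$, as in the proof of Claim \ref{P=G}. Restriction to $\mathfrak p(R_n)$ then records exactly the pair $(a,c)\in R_n^2$, since the $R_n$-bilinear pairing of $Y$ with $\begin{pmatrix}x&y\\0&-x\end{pmatrix}$ is $2ax+cy$. Here $2$ is invertible, and the trace-residue pairing on $R_n$ is perfect. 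The lemma therefore reduces to the following statement: two elements $Y,Y'$ of the same adjoint orbit with the same $(a,c)$ also have the same $b$.

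To prove this, we use the invariant $q(Y)=-\det Y=a^2+bc\in R_n$. This function is constant on adjoint orbits of $G(R_n)$: conjugation preserves the determinant, and in $\PGL_2$ the scalar ambiguity cancels under $\Ad$. So if $Y$ and $Y'$ lie in $\mcO$ and share $(a,c)$, then $bc=b'c$ in $R_n$.

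The key point, and the only real input, is that $c$ is a unit in $R_n$. Its constant term $c_0$ is the lower-left entry of the constant term $Y_0$, which is elliptic by hypothesis. As in Lemma \ref{orbit-image}, if $c_0=0$ then $Y_0$ is upper triangular, so its eigenvalues lie in $F$, contradicting ellipticity. Hence $c_0\neq0$. Since $R_n$ is local with residue field $F$, it follows that $c\in R_n^*$. We may then cancel $c$ in $bc=b'c$ to get $b=b'$, so $Y=Y'$ and restriction is injective on $\mcO$.

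The only thing to check carefully is the bookkeeping of the identification of $\widehat U$ with $\fg(R_n)$ through the coefficient of $t^{2n}$. Specifically, restricting a character of $U$ to $U\cap P(O)_{2n}$ should correspond to restricting $Y$ to $\mathfrak p(R_n)$ under the same residue-type pairing. This holds because that pairing is the $R_n$-bilinear trace pairing composed with a nondegenerate linear functional on $R_n$. Consequently, the restricted functional determines the $R_n$-valued functional $Z\mapsto\operatorname{tr}(YZ)$ on $\mathfrak p(R_n)$, and this in turn determines $(2a,c)$.
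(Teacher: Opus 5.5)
Your proof is correct and follows the paper's own argument. Restriction to $\mathfrak p(R_n)$ records $(a,c)$, and the invariant $q=a^2+bc$ is constant on the orbit. Ellipticity of $Y_0$ forces $c_0\neq0$, so $c\in R_n^*$ and $b=(q-a^2)/c$ is determined. Your extra check that the $t^{2n}$-coefficient pairing is perfect fills in a bookkeeping step the paper leaves implicit.
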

\begin{proof}
Write
$$
Y=\begin{pmatrix}a&b\\c&-a\end{pmatrix},
\qquad
Y'=\begin{pmatrix}a'&b'\\c'&-a'\end{pmatrix}
$$
with entries in $R_n$. Equality of their restrictions to $\mathfrak p(R_n)$ gives $a'=a$ and $c'=c$. Since $Y$ and $Y'$ are conjugate, their invariants
$$
q(Y)=-\det(Y)=a^2+bc
$$
and $q(Y')$ agree. The constant term $c_0$ is nonzero: otherwise $q(Y_0)=a_0^2$ would be a square, contrary to ellipticity. Thus $c$ is a unit in $R_n$, and
$$
b=\frac{q(Y)-a^2}{c}.
$$
The same formula gives $b'=b$.
\end{proof}
\begin{corollary}\label{finite-stabilizer-identity}
For elliptic $\bl\in\widehat U$,
$$
\St_{P(O)_{2n}}\left(\bl|_{U\cap P(O)_{2n}}\right)
=P(O)_{2n}\cap\St_{G(O)_{2n}}(\bl).
$$
\end{corollary}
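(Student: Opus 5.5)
The plan is to deduce the corollary from Lemma \ref{orbit-restriction-injective}, in the same way as the stabilizer identity inside the proof of Lemma \ref{3.12}. Put $H=G(O)_{2n}$, $H'=P(O)_{2n}$ and $U'=U\cap H'$, and write $r:\widehat U\to\widehat{U'}$ for the restriction map. This map is $H'$-equivariant, because $U'$ is normalized by $H'$ and $U$ is normal in $H$. Under the trace-pairing identification of $\widehat U$ with $\fg(R_n)$, the group $U'$ corresponds to the upper triangular part of $t^{n+1}\fg[[t]]/t^{2n+1}\fg[[t]]$. Its character group is therefore $\mathfrak p(R_n)^*$, and $r$ becomes restriction of the functional $\operatorname{tr}(Y\,\cdot\,)$ to $\mathfrak p(R_n)$. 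I will check this compatibility by pairing the coefficients of $t^r$ with those of $t^{2n-r}$. The check is routine: the pairing $Y\mapsto\operatorname{coeff}_{t^{2n}}\operatorname{tr}(Y(t)X(t))$ is perfect between $\fg(R_n)$ and $t^{n+1}\fg/t^{2n+1}\fg$, and under this perfect pairing the annihilator of the upper triangular part is the lower triangular part.

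The inclusion $P(O)_{2n}\cap\St_{G(O)_{2n}}(\bl)\subset\St_{P(O)_{2n}}(\bl|_{U'})$ is immediate from equivariance: if $p\in H'$ fixes $\bl$, then $p\cdot r(\bl)=r(p\cdot\bl)=r(\bl)$.

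For the reverse inclusion, let $p\in H'$ fix $r(\bl)$. Then $p\cdot\bl$ and $\bl$ both lie in the orbit $H\bl$, and $r(p\cdot\bl)=p\cdot r(\bl)=r(\bl)$. Because $\bl$ is elliptic, the constant term $Y_0$ of the corresponding $Y\in\fg(R_n)$ is elliptic. Hence Lemma \ref{orbit-restriction-injective} applies to the adjoint orbit of $Y$, and restriction is injective on that orbit. It follows that $p\cdot\bl=\bl$, that is, $p\in\St_{G(O)_{2n}}(\bl)$.

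The main point to verify is that the coadjoint action of $H$ on $\widehat U$ is the adjoint action on $\fg(R_n)$ through the reduction $G(O)_{2n}\to G(R_n)$. This is stated just before Lemma \ref{orbit-restriction-injective}. One must also check that the restriction map it describes is exactly the map $r$. This amounts to the identification of $\widehat{U'}$ with $\mathfrak p(R_n)^*$ described above. Once both facts are in place, the argument is formal.
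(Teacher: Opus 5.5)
Your proof is correct and follows the paper's argument: the only nontrivial inclusion comes from the injectivity of restriction on the orbit (Lemma \ref{orbit-restriction-injective}), exactly as in the stabilizer step of Lemma \ref{3.12}. One harmless slip in your routine check: under the trace pairing, the annihilator of the upper triangular part is the strictly upper triangular part $\mathfrak n(R_n)$ (the $b$-coordinate), not the lower triangular part — this is exactly why restriction records $(a,c)$ — but it does not affect the argument, since you only use that $r$ is restriction of $\operatorname{tr}(Y\,\cdot\,)$ to $\mathfrak p(R_n)$.
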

\begin{proof}
Only the inclusion from left to right needs proof. If $p\in P(O)_{2n}$ stabilizes the restriction of $\bl$, then $p\bl$ and $\bl$ lie in the same $G(O)_{2n}$-orbit and have the same restriction. Lemma \ref{orbit-restriction-injective} gives $p\bl=\bl$.
\end{proof}
\begin{theorem}\label{charev}
Every elliptic representation of $G(O)$ of positive even depth is special.
\end{theorem}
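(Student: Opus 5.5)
Let $\pi$ be elliptic of depth $2n$, and put $H=G(O)_{2n}$, $H'=P(O)_{2n}$, with $U=G(O)_{2n}(n+1)$ as in Definition \ref{even-depth-data}. The plan is to apply Lemma \ref{3.12} with this $U$. First I check that $U$ is an admissible normal subgroup. By Claim \ref{q}(1) it is the center of $A$, and since $n+1\geq(2n+1)/2$ it is commutative and unipotent (it is a vector group, cf.\ Claim \ref{iota1}(2)). By Lemma \ref{MM}(1),(3) there is a character $\bl$ in the $U$-support of $\pi$, and $\pi\simeq\ind_{\St_\bl}^{H}(\rho)$ with $\rho\in\widetilde H_\bl$ irreducible. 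The restriction of $\bl$ to $G(O)_{2n}(2n)$ lies in $\bo(\pi)$, so $\bl$ is elliptic in the sense of Definition \ref{even-depth-data}.

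Next I verify the two hypotheses of Lemma \ref{3.12} other than the one on $\rho$. Injectivity of $r:\Omega=H\bl\to\widehat{U\cap H'}$ is exactly Lemma \ref{orbit-restriction-injective}, after identifying $\widehat U$ with $\fg(R_n)$ and $\widehat{U\cap H'}$ with $\mathfrak p(R_n)^*$ through the trace pairing. The equality $H'\St_\bl=H$ is Claim \ref{q}(5). Lemma \ref{3.12} then reduces specialness to showing that $\rho|_K$ is irreducible, where $K=H'\cap\St_\bl$. By Corollary \ref{finite-stabilizer-identity}, $K$ is also the stabilizer in $H'$ of the restricted character.

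The key step is the irreducibility of $\rho|_K$. The subgroup $K$ contains $P(O)_{2n}(n)$, which fixes $\bl$ because $A$ does (Claim \ref{q}(3)), and $K_\bl$ acts trivially on $\rho$. Therefore $\rho|_K$ contains the restriction of $\rho$ to the image $R_P\subset A_\bl$ of $P(O)_{2n}(n)$. By Proposition \ref{RP-irred}, $\rho|_{R_P}$ is irreducible, and so $\rho|_K$ is irreducible as well. The main thing to check carefully is that $R_P$ really is the full image of $P(O)_{2n}(n)$ in $H_\bl$, that it is a Heisenberg group for $B_\bl$, and that its center maps onto $A_\bl$'s central $F$. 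The graph description above Proposition \ref{RP-irred} relies on Lemma \ref{p-to-W} for $\mathfrak p\simeq W$ in degree $n$. In the central direction, the central part $t^{n+1}\mathfrak p[[t]]$ of $P(O)_{2n}(n)$ must surject onto $U/K_\bl\simeq F$. This holds because $\ell_\bl$ is nonzero on $t^{2n}\mathfrak p$: by the computation in Claim \ref{P=G}, $y_\bl$ has $c\neq0$, so the functional $(x,y)\mapsto 2ax+cy$ is nonzero.

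With these facts in place, Lemma \ref{3.12} shows that $\pi|_{P(O)_{2n}}$, and hence $\pi|_{P(O)}$, is irreducible. Since $\pi$ has positive depth and elliptic support, $\dim\pi>1$, so $\pi$ is special. The main obstacle is the Heisenberg-irreducibility step. I expect it to need care, because the twist by the character $\eta$ of $(\tori_\bl,+)$ has to be absorbed by conjugating with some $\tau_W(w_0,0)$ as in Proposition \ref{RP-irred}. Everything else is bookkeeping with results already established.
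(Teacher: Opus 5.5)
Your proposal is correct and follows essentially the same route as the paper. Both arguments realize $\pi$ as $\ind_{\St_\bl}^{G(O)_{2n}}(\rho)$ and apply Lemma~\ref{3.12}, taking injectivity from Lemma~\ref{orbit-restriction-injective}, the equality $P(O)_{2n}\St_\bl=G(O)_{2n}$ from Claim~\ref{q}(5), and irreducibility of $\rho|_{P(O)_{2n}\cap\St_\bl}$ from Proposition~\ref{RP-irred} via $R_P$; your extra checks (that $K_\bl$ acts trivially, so that $R_P$ acts on $\rho$, and that the central part of $P(O)_{2n}(n)$ surjects onto $U/K_\bl$) only make explicit what the paper leaves implicit.
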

\begin{proof}
Let $\pi$ be elliptic of depth $2n$. By Lemma \ref{MM}, there are an elliptic $\bl\in\widehat U$ and an irreducible representation $\rho$ of $\St_\bl$ such that $U$ acts on $\rho$ by $\bl$ and
$$
\pi\simeq\ind_{\St_\bl}^{G(O)_{2n}}(\rho),
$$
followed by inflation to $G(O)$. Put $H'=P(O)_{2n}$. By Proposition \ref{RP-irred}, $\rho|_{R_P}$ is irreducible. Since $R_P\subset H'\cap\St_\bl$, it follows that $\rho|_{H'\cap\St_\bl}$ is irreducible. Claim \ref{q}(5) gives $H'\St_\bl=G(O)_{2n}$, and Lemma \ref{orbit-restriction-injective} supplies the injectivity hypothesis of Lemma \ref{3.12}. Therefore $\pi|_{P(O)}$ is irreducible.
\end{proof}
\subsubsection{Parametrization at even depth}
Let $E/F$ be quadratic, put $L=E((t))$, and let $\theta\in\Xi^L(2n)$. Since $2n<2(n+1)$, Claim \ref{iota} applies to $p_{2n}(n+1)$. Define
$$
\bl_\theta=\theta\circ p_{2n}(n+1)\in\widehat U.
$$
Then $\bl_\theta$ is elliptic and has depth $2n$. Let
$$
H_\theta=\St_{\bl_\theta}/K_{\bl_\theta},
$$
and retain the notation $R_W$, $C_{\bl_\theta}$, and $T^E$ from the preceding construction.
\begin{lemma}\label{central-character-even}
One has
$$
Z(H_\theta)=C_{\bl_\theta}Z(R_W).
$$
The character $\theta$ on $C_{\bl_\theta}$ and the central character $\bl_\theta$ on $Z(R_W)$ agree on their intersection and therefore define a unique character
$$
\zeta_\theta:Z(H_\theta)\longrightarrow\CC^*.
$$
\end{lemma}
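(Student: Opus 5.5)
We compute the center of $H_\theta$ from the decomposition already established, and then check the compatibility of the two characters on the overlap.

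\emph{Step 1: the inclusion $C_{\bl_\theta}Z(R_W)\subset Z(H_\theta)$.} By Lemma \ref{positive-loop-central}, $C_{\bl_\theta}\subset Z(H_\theta)$. The center $Z(R_W)$ is the image of $U$ modulo $K_{\bl_\theta}$. Since $U$ is central in $A$ (Claim \ref{q}(1)) and $\St_{\bl_\theta}$ fixes $\ell_{\bl_\theta}$, the group $\St_{\bl_\theta}$ acts on $U/K_{\bl_\theta}\simeq F$ through the stabilized functional, hence trivially. So $Z(R_W)$ is central as well.

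\emph{Step 2: the reverse inclusion.} We use the bijection
$$T^{E}\times Z(H_\theta)R_W\to H_\theta,$$
together with $A_{\bl_\theta}=(\tori_{\bl_\theta},+)\times R_W$ and $\St_{\bl_\theta}=T^L_{2n}A$. Write $H_\theta$ as generated by $T^E$, $C_{\bl_\theta}$, $(\tori_{\bl_\theta},+)$ and $R_W$.

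Let $z=s\,c\,T\,r$ be central. Conjugating $R_W$ by $z$ must act trivially on $W$. Since $T^E$ acts faithfully on $W$ and the other factors act trivially, $s=1$.

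Next, the toral part $(\tori_{\bl_\theta},+)$ should be absorbed into $C_{\bl_\theta}$. We claim that $\exp_2(t^n\tori)$ is the image of $T^L(n)\subset T^L(1)$ modulo $K$. Its degree-$n$ component lies in $\tori$, and the higher corrections lie in $U$, which maps into $Z(R_W)$. So $T\in C_{\bl_\theta}Z(R_W)$.

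Finally, $r\in R_W$ must be central in $R_W$, because the remaining factors are already central. The Heisenberg group has center $Z(R_W)$, so $r\in Z(R_W)$. Hence $Z(H_\theta)=C_{\bl_\theta}Z(R_W)$.

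\emph{Step 3: compatibility on the intersection.} The intersection $C_{\bl_\theta}\cap Z(R_W)$ is the image of $T^L(1)\cap G(O)(n+1)=T^L(n+1)$ modulo $K$. On $T^L_{2n}(n+1)$ we have $\bl_\theta=\theta\circ p_{2n}(n+1)$. The projection $p$ restricted to the torus is the identity, since $p\circ\iota$ on $\ft^E$ is the coordinate map of the torus. Hence $\bl_\theta|_{T^L(n+1)}=\theta|_{T^L(n+1)}$.

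The character on $Z(R_W)\simeq U/K$ is the descent of $\bl_\theta$, and $\theta$ on $C_{\bl_\theta}$ is well defined modulo $K\cap C_{\bl_\theta}$ for the same reason. So the two characters agree on the intersection.

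Since both subgroups are central and the group is abelian, the product formula $\zeta(cz)=\theta(c)\bl_\theta(z)$ is well defined and is a character. It is unique because $C_{\bl_\theta}$ and $Z(R_W)$ generate $Z(H_\theta)$.

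The main obstacle is Step 2. One must verify that the $\tori$-component of $A_{\bl_\theta}$ really lies in the image of positive-loop torus elements, modulo $Z(R_W)$. This requires comparing $\exp_2$ of toral elements with the torus embedding, and that comparison is delicate only up to central corrections.
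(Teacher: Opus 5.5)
Your proof is correct and follows the paper's own argument. The paper likewise puts $(\tori_{\bl_\theta},+)$ inside $C_{\bl_\theta}$ as the image of a congruence subgroup of $T^L(1)$, deduces the center from Lemma \ref{positive-loop-central}, and gets compatibility on $C_{\bl_\theta}\cap Z(R_W)$ because $p_{2n}(n+1)$ is the identity on the torus; you merely spell out the generation of $H_\theta$ by $T^E$, $C_{\bl_\theta}$, $(\tori_{\bl_\theta},+)$ and $R_W$, the use of faithfulness of $T^E$ on $W$, and the descent of $\theta$ modulo $K_{\bl_\theta}$, and you do not even need the correction by $Z(R_W)$ since for $X\in\tori_{\bl_\theta}$ the element $\exp_2(t^nX)$ is itself the image of an element of $T^L(n)$.
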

\begin{proof}
The radical $(\tori_{\bl_\theta},+)$ of the commutator pairing is the image of the appropriate congruence subgroup of $T^L(1)$, hence is contained in $C_{\bl_\theta}$. Lemma \ref{positive-loop-central} then gives the displayed description of the center. On the intersection, the map $p_{2n}(n+1)$ is the identity on the torus, so the two characters agree.
\end{proof}
Choose a splitting $s:T^E\to\widetilde{\Sp(W)}$. It gives a lift $\widetilde a_{\bl_\theta,s}:H_\theta\to\widetilde{\Sp(W)}$. Applying Corollary \ref{alpha} with central character $\zeta_\theta$ and with the character $\theta|_{T^E}$ of the quotient torus gives an irreducible representation $\rho_{\theta,s}$ of $\St_{\bl_\theta}$. Put
$$
\alpha_{2n,s}^E(\theta)
=
\ind_{\St_{\bl_\theta}}^{G(O)_{2n}}(\rho_{\theta,s}),
$$
and inflate to $G(O)$.
\begin{proposition}\label{even-fixed-splitting}
For every splitting $s$, the construction has the following properties.
\begin{enumerate}
\item The representation $\alpha_{2n,s}^E(\theta)$ is irreducible, elliptic, special, and of depth $2n$.
\item One has
$$
\alpha_{2n,s}^E(\theta)\simeq\alpha_{2n,s}^E(\theta^{-1}).
$$
\item The induced map
$$
\bar\alpha_{2n,s}^E:\bar\Xi^L(2n)\longrightarrow\widehat G_{2n}^E
$$
is a bijection.
\end{enumerate}
\end{proposition}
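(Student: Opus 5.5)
Part (1) reduces to the Mackey dictionary of Lemma \ref{MM}. By Corollary \ref{alpha}(1), $\rho_{\theta,s}$ is an irreducible representation of $H_\theta$, hence of $\St_{\bl_\theta}$, and $U$ acts on it through $\bl_\theta$; indeed $Z(R_W)$ acts by $\widetilde\psi\circ\ell_{\bl_\theta}$ and $K_{\bl_\theta}$ acts trivially. Lemma \ref{MM}(2) therefore shows that $\alpha^E_{2n,s}(\theta)$ is irreducible.

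For depth and ellipticity, I compute the restriction to $G(O)_{2n}(2n)\simeq\fg(F)$. Its support is the $G(O)_{2n}$-orbit of the leading coefficient of $\bl_\theta$. Because $\theta$ has depth exactly $2n$ and $p$ is the $T^E$-equivariant projection, this coefficient is a nonzero element of $\ft^E$. Any nonzero element of $\ft^E$ is elliptic, with eigenvalues generating $E$. This gives depth $2n$, ellipticity, and $E_\pi=E$. Specialness then follows from Theorem \ref{charev}.

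For (2), let $w$ be a representative in $G(O)$ of the nontrivial element of the normalizer of $T^L$. Conjugation by $w$ acts on $T^L$ by Galois conjugation, which under Remark \ref{jj} is inversion. It therefore sends $\bl_\theta$ to $\bl_{\theta^{-1}}$ and $\St_{\bl_\theta}$ to $\St_{\bl_{\theta^{-1}}}$. Since $w$ preserves the trace form, it carries the Heisenberg data $(W,B_{\bl_\theta})$ to $(W,B_{\bl_{\theta^{-1}}})$, intertwining $\tau_W$. It also carries $\zeta_\theta$ to $\zeta_{\theta^{-1}}$ and $\theta|_{T^E}$ to $\theta^{-1}|_{T^E}$.

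The delicate point is the splitting. The conjugate of $s$ is again a splitting over $T^E$, and it must be shown to coincide with $s$, because by Claim \ref{Z2} and Corollary \ref{alpha}(2) any discrepancy would twist the parameter by a quadratic character. If $E/F$ is unramified, this follows from Corollary \ref{lift}: the canonical splitting is determined by the unique maximal compact subgroup containing $E^1$, and conjugation by $w$ preserves that subgroup.

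If $E/F$ is ramified, I would argue instead that the splitting obtained from $s$ is transported correctly. Conjugation by $w$ acts on $T^E\simeq E^1$ by inversion. A splitting $s$ satisfies $s(u^{-1})=s(u)^{-1}$, and conjugation by the metaplectic lift of $\Ad(w)$ carries $s$ to $u\mapsto \widetilde{\Ad(w)}(s(u^{-1}))$. Both maps are splittings over $E^1$. Their ratio is a quadratic character, trivial on the pro-$p$ part, so only its value at $-1$ needs checking. Since $-1$ is central, its lift is fixed by conjugation, which makes the ratio trivial.

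This yields $w\cdot\rho_{\theta,s}\simeq\rho_{\theta^{-1},s}$. Inducing gives (2), and hence $\bar\alpha^E_{2n,s}$ is well defined.

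For (3), surjectivity goes as follows. Let $\pi\in\widehat G^E_{2n}$. Lemma \ref{MM} provides an elliptic $\bl\in\widehat U$ and an irreducible $\rho$ of $\St_\bl$ with $\pi=\ind(\rho)$. After conjugation, Lemma \ref{centralizer-lifting-even} and Claim \ref{q}(3) place $\St_\bl$ in the form $T^L_{2n}A$ for the standard $T^L$. The stabilizer of $\bl$ contains $T^L$, so $\bl$ is trivial on $T^L$-commutators, and hence $\bl=\theta_1\circ p_{2n}(n+1)$ for a character $\theta_1$ of $T^L(n+1)$.

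Next, Lemma \ref{thet} applied to the quasi-Heisenberg group $(H_\bl,R_W,T^E,\widetilde a_{\bl,s},\zeta_\rho)$ writes $\rho$ as $\alpha_{\widetilde a}(\theta_\rho)$ for a character $\theta_\rho$ of $T^E$. Together with $\zeta_\rho$, which is compatible with $\theta_1$ on $C_\bl$ by Lemma \ref{central-character-even}, this defines a character $\theta$ of $T^L=T^E\times C_\bl$ of depth $2n$. Then $\rho=\rho_{\theta,s}$.

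The condition $\theta\neq\otheta$ is automatic. Its restriction to $T^L(n+1)$ gives the nonzero elliptic leading term, which is not Galois invariant, because Galois conjugation acts as $-1$ on $\ft^E$.

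For injectivity, suppose $\alpha^E_{2n,s}(\theta)\simeq\alpha^E_{2n,s}(\theta')$. The $U$-supports coincide, so $\bl_{\theta'}$ is $G(O)_{2n}$-conjugate to $\bl_\theta$. The $T^L$-fixed points of this orbit should be exactly $\{\bl_\theta,\bl_{\theta^{-1}}\}$, as in Claim \ref{ellch}(3); this follows since the normalizer of $T^L$ modulo $T^L$ has order two. After possibly replacing $\theta'$ by $\theta'^{-1}$, we may assume $\bl_{\theta'}=\bl_\theta$. Lemma \ref{MM}(4) then gives $\rho_{\theta,s}\simeq\rho_{\theta',s}$. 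The bijectivity in Corollary \ref{alpha}(1), together with the uniqueness of the central character, forces $\theta=\theta'$.

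The step I expect to be the main obstacle is the splitting compatibility in (2) for ramified $E$. Everything else is the Mackey bookkeeping already used at odd depth.
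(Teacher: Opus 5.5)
Your outline follows the paper's proof. Part (1) comes from Lemma~\ref{MM}(2) and Theorem~\ref{charev}. Part (2) comes from conjugating by the nontrivial normalizer element. Part (3) comes from Mackey theory together with the quasi-Heisenberg bookkeeping of Lemma~\ref{thet}. Your surjectivity and injectivity arguments are sound and more detailed than the paper's. The paper's proof of (2) never mentions the splitting, and you are right that transporting $s$ by $w$ has to be checked.

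\textbf{The gap.} Your ramified-case argument for the splitting does not hold as written. Since $\Ad(w)\in\operatorname{SO}(\fg)$ acts by $-1$ on $\ft^E$, its restriction to $W$ has determinant $-1$. Equivalently, it sends $B_{\bl_\theta}$ to $B_{\bl_{\theta^{-1}}}=-B_{\bl_\theta}$. So $\Ad(w)|_W\notin\Sp(W)$, and there is no metaplectic element $\widetilde{\Ad(w)}$ to conjugate by. After the canonical identification of the covers attached to $\pm B$, the transport of $s$ goes through the unique lift $\widetilde\beta$ of an \emph{outer} automorphism $\beta$ of $\SL(W)$, as in Claim~\ref{beta-lift}. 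Centrality of $-\Id_W$ only shows that \emph{inner} automorphisms of $\widetilde{\Sp(W)}$ fix its two preimages. Lifts of outer automorphisms can interchange them: on the metaplectic double cover of $\SL_2(\mathbb R)$, the lift of conjugation by $\operatorname{diag}(1,-1)$ swaps the two preimages of $-1$. So ``since $-1$ is central, its lift is fixed by conjugation'' is not a valid reason.

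\textbf{The repair.} The conclusion is still true, and the argument you used in the unramified case fixes it uniformly.
\begin{itemize}
\item Inside $\mathfrak{sl}_F(E)$, $W$ is the space of maps $x\mapsto b\bar x$. An element $c\in E^*$ acts by $b\mapsto (c/\bar c)b$, so $W\simeq E$ is the standard $E^1$-module.
\item Let $\sigma$ be the Galois involution of $E$. A representative of the nontrivial normalizer class has constant term $c\sigma$, and it acts on $W\simeq E$ by $b\mapsto (c/\bar c)\,\bar b$. Since $c/\bar c\in E^1$ is a unit, this map preserves every $\mathfrak p_E^r$.
\item Hence $\beta$ fixes the vertices of Lemma~\ref{lif} and preserves $C_+$ and $C_-$. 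This contrasts with $\beta_b$ in Claim~\ref{beta-swap}, where $b$ has odd valuation and the two are swapped.
\item The map $\widetilde\beta\circ s_+\circ\beta^{-1}$ is a splitting over $\beta(C_+)=C_+$. By Lemma~\ref{unique-splitting} it equals $s_+$, so $s_+|_{E^1}$ is carried to itself.
\item Every other splitting has the form $\chi\, s_+|_{E^1}$ with $\chi$ quadratic. Since $\chi(u^{-1})=\chi(u)$, it is carried to itself as well.
\end{itemize}

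The last step is also needed in your unramified case. As written, that case only treats the canonical splitting of Corollary~\ref{lift}. But $\Hom(E^1,\{\pm1\})$ is nontrivial there too, because $E^1/E^1(1)$ has even order $q+1$, so a second splitting exists and must be handled the same way.
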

\begin{proof}
Irreducibility follows from Lemma \ref{MM}(2), and specialness from Theorem \ref{charev}. The depth and ellipticity are read from $\bl_\theta$. Galois conjugation is implemented by the nontrivial normalizer element and sends $\theta$ to $\theta^{-1}$, proving (2).
Conversely, let $\pi\in\widehat G_{2n}^E$. Mackey theory gives an elliptic $\bl$ and an irreducible representation $\rho$ of $\St_\bl$ such that
$$
\pi\simeq\ind_{\St_\bl}^{G(O)_{2n}}\rho.
$$
By Claim \ref{q}, after conjugacy we may identify the torus in the stabilizer with $T^L_{2n}$. The action of the positive-loop subgroup is central in $H_\bl$ and hence defines a character $\theta_1$ of $T^L(1)$. Proposition \ref{A-prime} and the fixed splitting $s$ identify the Weil multiplicity space with a character $\theta_0$ of $T^E$. Their restrictions to the intersection agree, since both equal the central character of $\rho$. Thus $\theta_0$ and $\theta_1$ glue to a character $\theta$ of
$$
T^L=T^E T^L(1).
$$
Its restriction to the last nontrivial congruence quotient is $\bl$, so $\theta\in\Xi^L(2n)$. This construction is inverse to $\theta\mapsto\alpha^E_{2n,s}(\theta)$: the central character recovers $\theta_1$, while the Weil multiplicity space recovers $\theta_0$.
Finally, Mackey's isomorphism criterion shows that two induced representations are isomorphic precisely when their inducing data are conjugate. The quotient of the normalizer of $T^L$ by $T^L$ has order two, and its nontrivial element acts on $T^L=L^*/K^*$ by Galois conjugation, which is inversion. Hence the only identification is $\theta\sim\theta^{-1}$. This proves (3).
\end{proof}
\subsubsection{The unramified case}
If $E/F$ is unramified, Corollary \ref{lift} gives a canonical splitting over $T^E\simeq E^1$. Proposition \ref{even-fixed-splitting} therefore gives a canonical bijection
$$
\bar\alpha_{2n}^E:\bar\Xi^L(2n)\stackrel{\sim}{\longrightarrow}\widehat G_{2n}^E.
$$
\subsubsection{The ramified case}
Assume that $E/F$ is ramified. By Lemma \ref{tori}, the set of splittings over $T^E$ is a torsor under
$$
\D_E=\Hom(T^E,\{\pm1\})\simeq\ZZ/2\ZZ.
$$
Let $\delta$ be its nontrivial element. If $s$ is a splitting, the other splitting is $\delta s$, and Corollary \ref{alpha} gives
$$
\alpha_{2n,\delta s}^E(\theta)
\simeq
\alpha_{2n,s}^E(\delta\theta),
$$
where $\delta$ is extended trivially across $T^L(1)$. Thus the class of the parameter in
$$
\ti\Xi^L(2n)=\bar\Xi^L(2n)/\D_L
$$
is independent of the splitting.
The action of $\delta$ on $\bar\Xi^L(2n)$ is free. Indeed, if $[\delta\theta]=[\theta]$, then either $\delta\theta=\theta$, which gives $\delta=1$, or $\delta\theta=\theta^{-1}$, which gives $\theta^2=\delta$. In the second case $\theta|_{T^L(1)}$ has order at most two, because $\delta$ is trivial on $T^L(1)$. But $T^L(1)$ is pro-$p$ and $p\neq2$, so it has no nontrivial character of order two. Hence $\theta$ is trivial on $T^L(1)$, contradicting the positive-depth assumption.
The involution on $\widehat G_{2n}^E$ may be described as follows. Choose $s$, write $\pi=\bar\alpha_{2n,s}^E([\theta])$, and put
$$
\delta\cdot\pi=\bar\alpha_{2n,s}^E([\delta\theta]).
$$
The preceding transformation formula shows that this definition is independent of $s$. It is free, and its orbits are exactly the fibers of the canonical map
$$
\widehat G_{2n}^E\longrightarrow\ti\Xi^L(2n).
$$
Hence $\widehat G_{2n}^E$ is naturally a $\D_L$-torsor over $\ti\Xi^L(2n)$.
\begin{theorem}\label{equnr}
Let $E/F$ be quadratic and put $L=E((t))$.
\begin{enumerate}
\item At depth $0$, the classical classification gives a natural bijection
$$
\bar\Xi^L(0)\stackrel{\sim}{\longrightarrow}\widehat G_0^E.
$$
\item If $m>0$ is odd, there is a natural bijection
$$
\bar\Xi^L(m)\stackrel{\sim}{\longrightarrow}\widehat G_m^E.
$$
\item If $m>0$ is even and $E/F$ is unramified, there is a natural bijection
$$
\bar\Xi^L(m)\stackrel{\sim}{\longrightarrow}\widehat G_m^E.
$$
\item If $m>0$ is even and $E/F$ is ramified, there is a canonical map
$$
\widehat G_m^E\longrightarrow\ti\Xi^L(m)
$$
whose fibers are torsors under $\D_L\simeq\ZZ/2\ZZ$.
\end{enumerate}
\end{theorem}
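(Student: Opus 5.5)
Theorem \ref{equnr} is essentially a summary of what has been established in this section, so the plan is to assemble the four parts from the preceding results, treating depth zero separately.

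\emph{Part (1).} At depth $0$ a special representation of $G(O)$ factors through $G(O)_0=G(F)$. By Lemma \ref{GO-special-elliptic} and the definition of ellipticity at depth zero, it is an irreducible cuspidal representation of $G(F)$ whose restriction to $P(F)$ is irreducible. Classically, the cuspidal representations of $G(F)$ inflated to $G(O)$ are depth-zero representations of $\PGL_2(F)$, i.e.\ Deligne--Lusztig cuspidal representations of the finite reductive quotient when $F$ itself is viewed through its residue field. The plan is to use the parametrization recalled in Theorem \ref{classical-theorem}. A representation $\Pi(E,\theta_0)$ is attached to a character $\theta_0$ of $E^*/F^*=T^E$ with $\theta_0\neq\otheta_0$, modulo $\theta_0\sim\otheta_0$. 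Since $T^L(1)$ acts trivially at depth zero, $\Xi^L(0)$ is exactly the set of such characters of $T^L/T^L(1)\simeq T^E$. Claim \ref{quad-LK} gives $L^1=E^1\times L^1(1)$, and $\otheta=\theta^{-1}$ under Remark \ref{jj}. This yields the natural bijection $\bar\Xi^L(0)\to\widehat G_0^E$, with $E$ recovered as the field attached to the cuspidal representation.

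\emph{Parts (2) and (3).} For odd $m=2n-1$, part (2) is exactly Theorem \ref{od}. Theorem \ref{charodd} shows that every elliptic representation of odd depth is special, and Lemma \ref{GO-special-elliptic} gives the converse, so $\widehat G_m^E$ may be read either way. For even $m=2n$ with $E/F$ unramified, Corollary \ref{lift} supplies a canonical splitting $s$ over $T^E\simeq E^1$. Proposition \ref{even-fixed-splitting}(3) with this $s$ then gives the bijection $\bar\alpha_{2n,s}^E$, and canonicity of $s$ makes it natural.

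\emph{Part (4).} For even $m$ and $E/F$ ramified, the argument of the ramified subsection applies. Lemma \ref{tori} shows that the splittings form a $\D_E\simeq\D_L\simeq\ZZ/2\ZZ$-torsor. Corollary \ref{alpha}(2) gives $\alpha_{2n,\delta s}^E(\theta)\simeq\alpha_{2n,s}^E(\delta\theta)$, so the composite $\widehat G_m^E\to\bar\Xi^L(m)\to\ti\Xi^L(m)$ is independent of $s$. Its fibres are the orbits of the induced $\delta$-action, which is free because $T^L(1)$ is pro-$p$ with $p$ odd and $\theta$ has positive depth.

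The main obstacle I expect is part (1). One must check that the classical depth-zero parametrization (through the finite field when $E/F$ is unramified, and the ramified depth-zero cases for $\PGL_2(F)$) matches the indexing by $E$ and by $\bar\Xi^L(0)$ exactly. In particular, all elements of $\Xi^L(0)$ should give irreducible cuspidal representations, and ramified $E$ should contribute depth-zero cuspidals of $G(F)$ through the identification $G(O)_0=G(F)$. I would cite \cite{BH} for this, taking $\widehat G_0^E$ as defined through the cuspidal parametrization.
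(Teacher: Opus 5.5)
Your treatment of parts (2)--(4) matches how the paper obtains them. The paper gives no separate proof of this theorem; it simply collects Theorem~\ref{od}, Proposition~\ref{even-fixed-splitting}(3) with the canonical splitting of Corollary~\ref{lift}, and the ramified-case discussion. That discussion uses Claim~\ref{Z2} and Lemma~\ref{tori} for the torsor of splittings, Corollary~\ref{alpha}(2) for the change of splitting, and the freeness of the $\delta$-action.

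In the freeness step you repeat the paper's phrase that $T^L(1)$ is pro-$p$. It is not: its finite-level quotients are unipotent $F$-groups whose graded pieces are $\simeq(F,+)$. The correct reason it has no nontrivial quadratic character is that it is uniquely $2$-divisible, as in the proof of Proposition~\ref{eq}. The conclusion is unchanged.

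The genuine problem is in part (1), where you conflate two different depths. Here ``depth $0$'' is $t$-adic: $G(O)_0=G(F)=\PGL_2(F)$ with $F$ the $p$-adic field itself. So $\widehat G^E_0$ is the set of \emph{all} irreducible cuspidal representations of $\PGL_2(F)$ attached to $E$, of every $\varpi$-adic depth. Likewise $\bar\Xi^L(0)$ is the set of all characters $\theta$ of $T^L/T^L(1)\simeq T^E=E^*/F^*$, of arbitrary conductor, with $\theta\neq\otheta$, modulo inversion.

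Your identification of these with Deligne--Lusztig cuspidal representations of a finite reductive quotient would fail if carried out. So would your ``main obstacle'' of matching a depth-zero parametrization ``through the finite field'' with $\bar\Xi^L(0)$. Level-zero (Deligne--Lusztig) cuspidals of $\PGL_2(F)$ only account for unramified $E$ and level-zero $\theta$. A ramified $E$ contributes none at all, because a character of $E^*/F^*$ trivial on $1+\mathfrak p_E$ is Galois invariant.

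No finite-field input is needed. Part (1) is just Theorem~\ref{classical-theorem}(3),(4) applied to the full set of cuspidal representations of $G(F)$, transported along $T^L/T^L(1)\simeq T^E$ (with $\otheta=\theta^{-1}$ via Remark~\ref{jj}). This is what the paper means by ``the classical classification.'' Since you already cite Theorem~\ref{classical-theorem}, the fix is simply to delete the Deligne--Lusztig detour, and the obstacle you anticipate then disappears.
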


\begin{corollary}\label{cunr}
Let $\pi$ be a special representation of $G(O)$ of positive depth $m$. Then there are a closed almost unipotent subgroup $R\subset G(O)_m$ and an irreducible representation $\rho$ of $R$ such that $\pi$ is the inflation of
$$
\ind_R^{G(O)_m}(\rho).
$$
At odd depth, $\rho$ is a character. At even depth, $\rho$ is the quasi-Heisenberg representation described above.
\end{corollary}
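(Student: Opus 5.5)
The corollary is a repackaging of the Mackey-theoretic descriptions already used in the proofs of Theorems \ref{charodd}, \ref{od}, \ref{charev} and Proposition \ref{even-fixed-splitting}. The plan is to take $R$ to be the stabilizer of the relevant elliptic character and $\rho$ the corresponding little-group representation. The only genuinely new point is that this stabilizer is almost unipotent.

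First, since $\pi$ is special, Lemma \ref{GO-special-elliptic} shows that $\pi$ is elliptic of depth $m>0$, with $E=E_\pi$ and $L=E((t))$.

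\emph{Odd depth} $m=2n-1$. Take $U=G(O)_{2n-1}(n)$. The proof of Theorem \ref{charodd} gives an elliptic $\bl\in\widehat U$ and an irreducible $\rho$ of $R=\St_\bl$ with $\pi\simeq\ind_R^{G(O)_m}(\rho)$. By Claim \ref{ellch}(2), $R=T^L_{2n-1}U$; since $R/\ker(\bl)$ is commutative, $\rho$ is a character.

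\emph{Even depth} $m=2n$. Take $U=G(O)_{2n}(n+1)$. Lemma \ref{MM} again gives $\pi\simeq\ind_{\St_\bl}^{G(O)_{2n}}(\rho)$ with $R=\St_\bl=T^L_{2n}A$ by Claim \ref{q}(3). Here $\rho$ factors through $H_\bl=\St_\bl/K_\bl$, which is quasi-Heisenberg with datum $(H_\bl,R_W,T^E,\widetilde a_\bl,\zeta_\rho)$. Lemma \ref{thet} then writes $\rho=(\theta_\rho\circ\operatorname{pr}_{T^E})\otimes\phi$, which is exactly the quasi-Heisenberg representation of Claim \ref{qH-phi} twisted by a character. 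This is the description used in Proposition \ref{even-fixed-splitting}.

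\emph{Almost unipotence of $R$.} In both parities $R$ is an extension of $T^L_m$ by a subgroup of the unipotent group $G(O)_m(1)$, namely $U$ in the odd case and $A$ in the even case. The image of $T^L_m(1)$ is also unipotent, being the $F$-points of a unipotent group $\iota_{T}(\ft^E\otimes\fm/\fm^{m+1})$. Consequently the unipotent radical $R_u(R)$ contains $R\cap G(O)_m(1)$, and $R/R_u(R)$ is a quotient of $T^E=E^*/F^*$. That torus is anisotropic because $E/F$ is a field extension, so its group of $F$-points is compact. Hence $R$ is almost unipotent.

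\emph{Main obstacle.} The delicate point is the claim that $R\cap G(O)_m(1)$ is a connected unipotent algebraic subgroup, so that it really lies in $R_u(R)$. I would check this by writing $R\cap G(O)_m(1)$ as $T^L_m(1)\cdot U$ (respectively $T^L_m(1)\cdot A$) through the truncated exponential/logarithm coordinates of Claim \ref{iota1} and the proof of Claim \ref{q}. These coordinates exhibit the intersection as an affine space with a unipotent group law. Closedness of $R$ is immediate, since stabilizers of characters are closed.
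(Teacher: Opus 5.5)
Your proposal is correct and follows the paper's proof. The paper likewise takes $R=\St_\bl$ with $\rho$ the little-group representation (a character at odd depth, the quasi-Heisenberg representation at even depth). It gets almost unipotence from $R$ being an extension of the anisotropic torus $T^E$ by a unipotent group, and your identification of $R\cap G(O)_m(1)=T^L_m(1)U$ (respectively $T^L_m(1)A$) as that unipotent part simply spells out the paper's one-line justification.
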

\begin{proof}
At odd depth take $R=\St_{\bl_\pi}$ and $\rho=\rho_\pi$, which is a character. At even depth take the stabilizer and its irreducible quasi-Heisenberg representation. These stabilizers are extensions of anisotropic tori by unipotent groups and are therefore almost unipotent.
\end{proof}
\section{Special representations of $G'(O)$}\label{G'O}
We now carry out the analogous analysis for $G'(O)$. Let
$$
\kk:G'(O)\longrightarrow\ZZ/2\ZZ\ltimes F^*
$$
be the homomorphism for which the nontrivial element of $\ZZ/2\ZZ$ acts on $F^*$ by inversion, and which is defined as follows. If $g\in I$ is represented by $\begin{pmatrix}a&b\\c&d\end{pmatrix}$ and $a_0,d_0$ are the constant terms of $a,d$, put
$$
\kk(g)=(1,a_0/d_0).
$$
For $s=\begin{pmatrix}0&1\\t&0\end{pmatrix}$, put $\kk(s)=(-1,1)$.
\begin{claim}\label{act}
\begin{enumerate}
\item The subgroups $G'(O)(m)$ are normal in $G'(O)$.
\item The subquotients $G'(O)_m(m)$ are naturally $F$-vector spaces.
\item If $m\in\ZZ$, then $G'(O)_m(m)$ is one-dimensional. The element $s$ acts by $-1$, and $P(O)$ acts trivially.
\item If $m\in\frac12\ZZ\sm\ZZ$, then
$$
G'(O)_m(m)=M_m^+\oplus M_m^-,
$$
where
$$
M_m^+\simeq\fm^{m-1/2}/\fm^{m+1/2},
\qquad
M_m^-\simeq\fm^{m+1/2}/\fm^{m+3/2}.
$$
Each summand is one-dimensional. The choice of $t$ gives generators $x_m^+$ and $x_m^-$ and identifies the subquotient with $F\oplus F$.
\item The adjoint action on $M_m^+\oplus M_m^-$ factors through $\kk$: an element $a\in F^*$ acts by $\operatorname{diag}(a,a^{-1})$, and $s$ interchanges the two summands.
\end{enumerate}
\end{claim}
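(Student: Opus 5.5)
The statement to prove is Claim \ref{act}. I would verify it by direct matrix computation, using the explicit description of $G'(O)(r)$ in Definition \ref{G-prime-filtration}. First I check that each $G'(O)(r)$ is a subgroup of $I$.

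For normality in $G'(O)=\ZZ/2\ZZ\ltimes I$, it suffices to check stability under conjugation by $I$ and by $s$.

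\emph{Conjugation by $s$.} Modulo scalars,
$$
s\begin{pmatrix}a&b\\c&d\end{pmatrix}s^{-1}=\begin{pmatrix}d&c/t\\tb&a\end{pmatrix}.
$$
The conditions $b\in\fm^{[r]}$ and $c\in\fm^{[r]+1}$ are interchanged exactly: $c/t\in\fm^{[r]}$ and $tb\in\fm^{[r]+1}$. The diagonal conditions are symmetric in $a$ and $d$.

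\emph{Conjugation by $I$.} This is cleanest in lattice language. The group $G'(O)(r)$ is the set of elements acting trivially on the graded pieces of the lattice chain $\cdots\supset\Lambda_0=O^2\supset\Lambda_1=Oe_1\oplus tOe_2\supset\Lambda_2=t\Lambda_0\supset\cdots$ up to shift $2r$. Equivalently, it is $1+\mathfrak P^{2r}$, where $\mathfrak P$ is the radical of the hereditary order, for which $\mathfrak P^j$ consists of matrices $\begin{pmatrix}\fm^{\lceil j/2\rceil}&\fm^{\lfloor j/2\rfloor}\\ \fm^{\lfloor j/2\rfloor+1}&\fm^{\lceil j/2\rceil}\end{pmatrix}$. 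I check that this matches the stated conditions for $j=2r$. Since $\mathfrak P$ is a two-sided ideal normalized by $I$ and by $s$, normality follows. This proves (1).

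For (2), the commutator $[1+x,1+y]\in 1+\mathfrak P^{4r}\subset 1+\mathfrak P^{2r+1}$, so $x\mapsto 1+x$ induces a group isomorphism $\mathfrak P^{2r}/\mathfrak P^{2r+1}\to G'(O)_r(r)$. This quotient is an $F$-vector space. In $\PGL_2$ one should also quotient by scalars, and this is the step where care is needed.

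For (3) and (4), I compute $\mathfrak P^{j}/\mathfrak P^{j+1}$ modulo scalars.
\begin{itemize}
\item[$\bullet$] If $j=2m$ is even, the quotient is spanned by the diagonal entries of $\fm^m/\fm^{m+1}$. Modulo scalars $\fm^m(1\oplus 1)$, one coordinate remains, namely $a-d$. Conjugation by $s$ swaps $a$ and $d$, so it acts by $-1$. Conjugation by $P(O)$ fixes the diagonal part modulo higher terms, because off-diagonal contributions land in $\mathfrak P^{2m+1}$.
\item[$\bullet$] If $j=2m$ with $m$ a half-integer, the quotient is spanned by the off-diagonal entries. These are $b\in\fm^{m-1/2}/\fm^{m+1/2}$ and $c\in\fm^{m+1/2}/\fm^{m+3/2}$, which gives $M_m^{\pm}$. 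Scalars do not interfere here.
\end{itemize}
For (5), conjugation by $g\in I$ with constant terms $a_0,d_0$ scales $b$ by $a_0/d_0$ and $c$ by $d_0/a_0$ modulo higher order. The action therefore factors through $\kk$ as $\operatorname{diag}(a,a^{-1})$, and $s$ swaps the two summands by the formula above.

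The main obstacle is bookkeeping the half-integer indexing and the passage to $\PGL_2$ (scalars) so that the stated one-dimensionality in the integer case comes out correctly. Everything else is routine.
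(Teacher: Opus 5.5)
Your proposal is correct, and it is exactly the routine verification the paper omits: this is stated as a Claim without proof, and identifying $G'(O)(r)$ with the image of $1+\mathfrak P^{2r}$, where $\mathfrak P$ is the radical of the Iwahori order, and then computing $\mathfrak P^{2m}/\mathfrak P^{2m+1}$ modulo scalars together with the conjugation action of $I$ and $s$, is the natural way to carry it out. Two small points to tidy: the homomorphism property of $x\mapsto 1+x$ comes from $xy\in\mathfrak P^{4r}\subset\mathfrak P^{2r+1}$ rather than from the commutator estimate, and you should say explicitly that your formula for conjugation by $g\in I$ shows $\ker\kk$ acts trivially, which is what ``factors through $\kk$'' requires.
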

\begin{definition}\label{ac}
Let $m=n-1/2$ with $n\in\ZZ_{>0}$.
\begin{enumerate}
\item If $\mu$ is a character of $G'(O)_m(m)=M_m^+\oplus M_m^-$, define $[\mu^\pm]\in F$ by
$$
\mu(x_m^\pm z)=\widetilde\psi([\mu^\pm]z),\qquad z\in F.
$$
\item A character $\mu$ is elliptic if it restriction on $M_m^+$ and on $M_m^-$ does not vanish.
\item Put
$$
\det(\mu)=t[\mu^+][\mu^-]\in K.
$$
For an elliptic character, this is nonzero; its square class is independent of the chosen uniformizer and generators.
\item For an irreducible representation $\pi$ of $G'(O)$, let $m(\pi)$ be the least $m\in\frac12\ZZ_{\geq0}$ such that $G'(O)(m+1/2)$ acts trivially.
\item The representation $\pi$ is special if it has dimension greater than $1$ and $\pi|_{P(O)}$ is irreducible.
\item A non-one-dimensional irreducible representation $\pi$ is elliptic if $m(\pi)\in\frac12\ZZ\sm\ZZ$ and the support of its restriction to each of $M_m^+$ and $M_m^-$ is nontrivial.
\end{enumerate}
\end{definition}

\begin{remark}
\begin{enumerate}
\item Replacing $t$ by another uniformizer and changing the generators of $M_m^\pm$ changes $t[\mu^+][\mu^-]$ by a square. Thus the quadratic extension $K(\sqrt{t[\mu^+][\mu^-]})$ is intrinsic.
\item Elliptic representations of $G'(O)$ necessarily have positive half-integral depth.
\end{enumerate}
\end{remark}
\begin{lemma}\label{Gprime-nonelliptic}
Let $\pi$ be an irreducible representation of $G'(O)$.
\begin{enumerate}
\item If $m(\pi)\in\ZZ$, then $\pi|_{P(O)}$ is reducible.
\item If $m(\pi)\notin\ZZ$ and $\pi$ is not elliptic, then $\pi|_{P(O)}$ is reducible.
\end{enumerate}
\end{lemma}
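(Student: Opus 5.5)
Both parts will be proved the same way as Lemma \ref{GO-special-elliptic}. We restrict $\pi$ to the top nontrivial congruence subquotient $U=G'(O)_m(m)$ and intersect with $P(O)$. If $\pi|_{P(O)}$ is irreducible, then by Lemma \ref{MM}(1), applied to the pair $(P(O)_m, U\cap P(O)_m)$, the group $P(O)$ must act transitively on the restricted support. Since $\pi$ is irreducible of dimension $>1$, Lemma \ref{restr}(2) also forces its restriction, if irreducible, to be $\bs(r)$ with $O^*(r+1)$ acting trivially. This conductor constraint will be played against the structure of $U$.

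\emph{Part (1).} Let $m=m(\pi)\in\ZZ$. If $m=0$, the group $G'(O)(1/2)$ acts trivially. Then $\pi$ factors through $G'(O)/G'(O)(1/2)$, which is an extension of $\ZZ/2\ZZ$ by the torus $F^*$ through $\kk$, and $P(O)$ maps onto $F^*$. An irreducible representation of this group has dimension at most $2$. Its restriction to $P(O)$ factors through the abelian group $F^*$, so it is either one-dimensional or reducible. For $m>0$, Claim \ref{act}(3) says that $U=G'(O)_m(m)$ is one-dimensional, that $P(O)$ acts trivially on it, and that $s$ acts by $-1$. The support $S_\pi(U)$ is therefore an $s$-stable orbit of the form $\{\mu,\mu^{-1}\}$. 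If this orbit has two points, then already $U\subset P(O)$ sees two distinct characters fixed by $P(O)$, so the restriction decomposes by $U$-isotypic components. Indeed, $U$ is central in the image of $P(O)$, since its action on $U$ is trivial, so each $U$-isotypic component is $P(O)$-stable. If instead $\mu=\mu^{-1}$, then $\mu=1$ because $U$ is uniquely $2$-divisible, and this contradicts the minimality of $m$. I still have to check that $U$ is really contained in the image of $P(O)$. For integer $m$ the generator is the diagonal torus direction, $a-1,d-1\in\fm^{m}$, which lies in $P(O)$. This is where I would verify the filtration carefully.

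\emph{Part (2).} Let $m=n-1/2$. By Claim \ref{act}(4), $U=M^+_m\oplus M^-_m$, and $U\cap P(O)=M^+_m$, the $b$-coordinate; here $M^-_m$ is the $c$-coordinate. By Claim \ref{act}(5), $P(O)$ acts on $U$ through $a_0/d_0\in F^*$ by $\operatorname{diag}(a,a^{-1})$. The image of the support $\Omega=S_\pi(U)$ under restriction to $M^+_m$ is $\{[\mu^+]\}$. Suppose $\pi$ is not elliptic, so that some $\mu\in\Omega$ has $[\mu^+]=0$ or $[\mu^-]=0$. Since $\Omega$ is $s$-stable and $s$ swaps the two coordinates, $\Omega$ contains a character $\mu'$ with $[\mu'^+]=0$. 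By minimality of $m$, $\Omega$ also contains a nonzero character. If $[\mu^+]$ takes a nonzero value on $\Omega$, the restricted support contains both the $P(O)$-fixed point $0$ and a nonzero point in a different $F^*$-orbit. Then $P(O)$ is not transitive, and Lemma \ref{MM}(1) gives reducibility. If instead $[\mu^+]=0$ on all of $\Omega$, then by $s$-stability $[\mu^-]=0$ as well, so $\Omega=\{0\}$, again contradicting the depth.

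The step I expect to be the main obstacle is the bookkeeping of which part of $U$ lies in $P(O)$ in each case. One also has to justify applying Lemma \ref{MM}(1) to a subquotient of $P(O)$ that acts on the smooth pro-algebraic quotient. I would handle both by passing to the finite-dimensional algebraic quotient $G'(O)_m$, as was done for $G(O)$.
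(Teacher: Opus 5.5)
Your proposal is correct and follows essentially the paper's route: pass to the top subquotient $G'(O)_m(m)$, note that $s$ moves the support while $P(O)$ fixes it pointwise (integral depth) or preserves each axis (half-integral depth), and conclude non-transitivity via Lemma \ref{MM}(1). Your version is slightly more careful than the paper's in three places: you apply Lemma \ref{MM}(1) to $U\cap P(O)_m=M_m^+$, where the restricted support contains both $0$ and nonzero points; you check that $G'(O)_m(m)$ lies in the image of $P(O)$ in the integral case; and you make explicit that at $m=0$ one must exclude one-dimensional $\pi$. The aside invoking Lemma \ref{restr}(2) is never used and can be dropped.
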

\begin{proof}
The case $m=0$ is easy, so we assume that $m>0$.
Suppose first that $m=m(\pi)\in\ZZ$. The support of $\pi$ on the nonzero one-dimensional group $G'(O)_m(m)$ is a nontrivial $G'(O)$-orbit. By Claim \ref{act}, $P(O)$ acts trivially on this character group, whereas $s$ acts by sign. Thus the support contains at least two $P(O)$-orbits, and Lemma \ref{MM}(1) implies reducibility.
Now let $m=n-1/2$. If $\pi$ is not elliptic, its nonzero support on $M_m^+\oplus M_m^-$ is the union
$$
((M_m^+)^*\sm0)\sqcup((M_m^-)^*\sm0).
$$
The group $G'(O)$ interchanges the two axes, but $P(O)$ preserves each. Hence the support splits into two $P(O)$-orbits, and the restriction is reducible by Lemma \ref{MM}(1).
\end{proof}
\begin{corollary}
Every special representation of $G'(O)$ is elliptic.
\end{corollary}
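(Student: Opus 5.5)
The statement is the contrapositive of Lemma \ref{Gprime-nonelliptic}, so the plan is to read it off from that lemma, after dealing with the cases the lemma does not explicitly cover.

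Let $\pi$ be a special representation of $G'(O)$. By definition $\pi$ has dimension greater than $1$ and $\pi|_{P(O)}$ is irreducible. We split according to the depth $m=m(\pi)\in\frac12\ZZ_{\geq0}$.

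If $m\in\ZZ$, Lemma \ref{Gprime-nonelliptic}(1) says that $\pi|_{P(O)}$ is reducible, which contradicts specialness. The case $m=0$ is dismissed as ``easy'' in that proof, so I would make it explicit. In depth $0$, $\pi$ factors through $G'(O)_0=G'(O)/G'(O)(1/2)$. This quotient is an extension of $\ZZ/2\ZZ$ by the image of $\kk|_I$, essentially $\ZZ/2\ZZ\ltimes F^*$. The image of $P(O)$ in it is the commutative group $F^*$, and its image of $N(O)$ is trivial. The restriction of $\pi$ to $P(O)$ therefore factors through an abelian group, and any irreducible such restriction is one-dimensional. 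That is incompatible with $\dim\pi>1$.

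For positive integral $m$, the lemma's argument applies directly. The support of $\pi$ on the one-dimensional group $G'(O)_m(m)$ is a nontrivial orbit stable under $s$, which acts by $-1$. Since $P(O)$ acts trivially on that character group, the support splits into at least two $P(O)$-orbits, and Lemma \ref{MM}(1) gives reducibility.

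If $m\notin\ZZ$, then $\pi$ has positive half-integral depth and dimension greater than $1$. If it were not elliptic, Lemma \ref{Gprime-nonelliptic}(2) would again make $\pi|_{P(O)}$ reducible. Hence $\pi$ is elliptic in the sense of Definition \ref{ac}(6).

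The only point requiring care is the depth-zero case, which the lemma leaves implicit. There the key input is that $P(O)$ maps into the commutative group $F^*$ inside $G'(O)_0$. All other steps are direct citations.
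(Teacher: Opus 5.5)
Your proposal is correct and takes the same approach as the paper: there the corollary is stated without a separate proof, as the immediate contrapositive of Lemma~\ref{Gprime-nonelliptic}. Your explicit depth-zero argument correctly fills in the case that the proof of that lemma calls easy: since $G'(O)_0=G'(O)/G'(O)(1/2)\simeq\ZZ/2\ZZ\ltimes F^*$, the group $P(O)$ acts through the abelian factor $F^*$, so an irreducible restriction would force $\dim\pi=1$.
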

\subsection{Elliptic representations of $G'(O)$}
Fix $n\in\ZZ_{>0}$, put $m=n-1/2$, and let $U$ be the image of $G'(O)(n/2)$ in $G'(O)_m$.
\begin{claim}\label{Gprime-U}
The group $U$ is a closed normal commutative subgroup of $G'(O)_m$.
\end{claim}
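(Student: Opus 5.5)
The plan is to read off all three properties from the congruence filtration of Definition \ref{G-prime-filtration}, after recasting that filtration as a grading on matrix entries. Recall that $m=n-1/2$, so $G'(O)_m=G'(O)/G'(O)(n)$, and $U$ is the image of $G'(O)(n/2)$ in this quotient. Normality of $U$ is immediate from Claim \ref{act}(1), since the image of a normal subgroup under a surjection is normal. Closedness should also be routine. The subgroup $G'(O)(n/2)$ is cut out by valuation conditions on the matrix entries, and it contains the kernel $G'(O)(n)$. Its image is therefore the subgroup of the algebraic $F$-group $G'(O)_m$ defined by the vanishing of finitely many coefficients of the truncated entries. This is Zariski closed, hence closed on $F$-points.

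The substance is commutativity, which amounts to the inclusion
$$
[G'(O)(n/2),G'(O)(n/2)]\subset G'(O)(n).
$$
To prove it, I will attach a degree in $\frac12\ZZ$ to each matrix entry: an entry of $t$-valuation $k$ gets degree $k$ in a diagonal position, degree $k+\frac12$ in the $(1,2)$ position, and degree $k-\frac12$ in the $(2,1)$ position. Unwinding the conditions $a-1,d-1\in\fm^{[r+1/2]}$, $b\in\fm^{[r]}$, $c\in\fm^{[r]+1}$, one should find that $G'(O)(r)$ consists exactly of the matrices $1+X$ whose entries of $X$ all have degree $\ge r$. Write $\mathfrak g'(r)$ for the $O$-module of all such $X$. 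For example, $b\in\fm^{[r]}$ is equivalent to $k+\frac12\ge r$, and $c\in\fm^{[r]+1}$ is equivalent to $k-\frac12\ge r$, with the integer and half-integer cases of $r$ each checked separately. The shifts $0,+\frac12,-\frac12$ come from a grading of $2\times2$ matrices under which the $(i,j)$-entry of a product of the $(i,l)$- and $(l,j)$-entries has degree equal to the sum of the degrees. This gives
$$
\mathfrak g'(r)\,\mathfrak g'(s)\subset\mathfrak g'(r+s),
$$
and also $\mathfrak g'(0)$ is a ring containing the inverses $g^{-1}$ of elements $g\in G'(O)(r)$ with $r>0$.

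Now take $g=1+X$ and $h=1+Y$ with $X,Y\in\mathfrak g'(n/2)$, working with matrix representatives in $\operatorname{GL}_2$. Then $gh-hg=XY-YX=:Z\in\mathfrak g'(n)$. It follows that
$$
ghg^{-1}h^{-1}=1+Zg^{-1}h^{-1}\in 1+\mathfrak g'(n)\,\mathfrak g'(0)\subset1+\mathfrak g'(n)=G'(O)(n).
$$
Hence the image of $G'(O)(n/2)$ in $G'(O)_m$ is commutative.

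The main, if modest, obstacle is the bookkeeping in the second paragraph. One must check carefully that the floor and ceiling conventions $[r+1/2]$, $[r]$, $[r]+1$ match the degree description for both integral and half-integral $r$. One must also make sure that passing to $\PGL(2)$ causes no trouble. I would handle the latter by noting that for $r>0$ the groups $G'(O)(r)$ are defined by the displayed matrix representatives, so the computation can be done in $\operatorname{GL}_2(O)$ and then projected.
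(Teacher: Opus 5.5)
Your proposal is correct. The paper records this statement as a Claim with no proof (by its stated conventions, the verification is regarded as straightforward), so there is no argument of the authors to compare against. What you wrote is the routine verification they omit.

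Your degree function shows that $G'(O)(r)$ is the Moy--Prasad filtration of the Iwahori subgroup at the barycenter of the alcove. I checked both parity cases of $[r+1/2]$, $[r]$, $[r]+1$ against your shifts $0,+\frac12,-\frac12$, and they agree. The multiplicativity $\mathfrak g'(r)\,\mathfrak g'(s)\subset\mathfrak g'(r+s)$ then gives $[G'(O)(n/2),G'(O)(n/2)]\subset G'(O)(n)$. This is the analogue of the truncated-exponential computation $[t^n\fg,t^n\fg]\subset t^{2n}\fg$ used for $G(O)$ in the proof of Claim \ref{q}.

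Two small remarks.

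First, the same grading also proves normality directly, so you need not lean on the unproved Claim \ref{act}(1). The group $I$ is the image of the unit group of the ring $\mathfrak g'(0)$. The elements $s$ and $s^{-1}$ are homogeneous of degrees $\frac12$ and $-\frac12$. Hence conjugation by units of $\mathfrak g'(0)$ and by $s$ preserves every $\mathfrak g'(r)$.

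Second, for closedness, ``vanishing of coefficients of the truncated entries'' is not literally well defined on $\PGL_2$, because representatives are only determined up to scalars. You can fix this as follows.
\begin{itemize}
\item Normalize representatives of elements of $I$ so that $d=1$; this is possible because $d\in O^*$ on $I$.
\item Right multiplication by $1+X$ with $X\in\mathfrak g'(n)$ changes a normalized $g$ only by an element of $\mathfrak g'(n)$. Indeed, $gX\in\mathfrak g'(n)$, and the renormalizing scalar lies in $1+\fm^n$.
\item Hence $I/G'(O)(n)$ has coordinates given by the truncations of the normalized entries $(a,b,c)$.
\item In these coordinates, $U$ is cut out by the vanishing of finitely many coefficients of $a-1$, $b$, $c$. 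It is therefore Zariski closed, and so closed on $F$-points.
\end{itemize}
This is exactly the point you flagged at the end of your proposal, and it causes no trouble.
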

\begin{definition}\label{c}
Let $\bl\in\widehat U$.
\begin{enumerate}
\item Let $\St_\bl$ be its stabilizer in $G'(O)_m$.
\item Let $\bar\bl$ be its restriction to $G'(O)_m(m)=M_m^+\oplus M_m^-$.
\item The character $\bl$ is elliptic if $[\bar\bl^+]$ and $[\bar\bl^-]$ are both nonzero.
\item For elliptic $\bl$, put
$$
L_\bl=K\left(\sqrt{t[\bar\bl^+][\bar\bl^-]}\right).
$$
\end{enumerate}
\end{definition}
\begin{claim}\label{ch}
Let $\bl$ be elliptic and put $L=L_\bl$.
\begin{enumerate}
\item For a suitable embedding $T^L\ho G'(O)$, one has
$$
\St_\bl=T^L_mU.
$$
\item One has
$$
P(O)_m\St_\bl=G'(O)_m.
$$
\item The restriction map from the $G'(O)_m$-orbit of $\bl$ to $\widehat{U\cap P(O)_m}$ is injective.
\end{enumerate}
\end{claim}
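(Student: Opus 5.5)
The plan is to imitate the treatment of the unramified case (Claims \ref{ellch}, \ref{q} and Lemma \ref{orbit-restriction-injective}), using the Iwahori (half-integral) filtration of Definition \ref{G-prime-filtration} in place of the congruence filtration.

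\textbf{Coordinates on $\widehat U$.} First I will identify $\widehat U$ with a quotient of a lattice in $\fg(K)$, using the residue-trace pairing $(X,Y)\mapsto\operatorname{Res}\operatorname{tr}(XY)\,dt$ and the character $\widetilde\psi$. Concretely, a character $\bl$ corresponds to the class of a matrix
$$
Y=\begin{pmatrix}a&b\\c&-a\end{pmatrix}
$$
whose entries have prescribed lower bounds on their valuations, given by the dual filtration. The leading coefficients $[\bar\bl^+]$ and $[\bar\bl^-]$ sit in the $b$- and $c$-entries, and they are the only contributions at the extreme level of the filtration. Ellipticity says that both are nonzero. A valuation count then shows that
$$
-\det Y=a^2+bc
$$
has odd valuation, with leading coefficient $t[\bar\bl^+][\bar\bl^-]$ up to a square. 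Hence the leading part $Y_0$ of $Y$ is a regular element with $Y_0^2$ of odd valuation, and $K[Y]\simeq L_\bl$. The torus $T^L$ is the centralizer of $Y$ in $G(K)$, and it sits in $G'(O)$ as in Claim \ref{imb}(2), because $O[Y_0]$ stabilizes the lattice chain.

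\textbf{Part (1).} I will run the successive-approximation argument of Lemma \ref{centralizer-lifting-even}, but graded by the Iwahori filtration. An element $g\in G'(O)_m$ fixes $\bl$ exactly when $\Ad(g)Y\equiv Y$ modulo the dual of $U$. I will lift step by step along the half-integral levels. At each level the obstruction lies in the graded piece of $\fg/\mathfrak z(Y_0)$. Because $Y_0$ is regular for the graded Iwahori Lie algebra, $[Y_0,\cdot\,]$ is onto that piece, so it can be killed by conjugating with an element of $U$. In this way $g$ is conjugated into $T^L$ modulo $U$, which gives $\St_\bl=T^L_mU$.

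\textbf{Part (2).} The ramified torus $T^L$ contains the image of a uniformizer of $L$, and this element lies in $G'(O)\setminus I$. So it suffices to show
$$
I\subset P(O)_m\,T^L_m\,U.
$$
By the Iwahori decomposition, $I=P(O)\,N^-(tO)$. I will absorb the lower unipotent part level by level. The elements of $T^L\cap I$ of positive depth have lower-left entry with nonzero leading term, proportional to $[\bar\bl^-]$. This is the Lie-algebra statement $\mathfrak p\cap\mathfrak t^L=0$, counted with dimensions at each graded step, exactly as in the proof of Claim \ref{q}(5). Once the lower unipotent part reaches the level of $U$, it is absorbed into $U$ and the induction stops.

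\textbf{Part (3).} I will argue as in Lemma \ref{orbit-restriction-injective}. Restriction to $\mathfrak p$ records the entries $a$ and $c$. Two elements of one orbit have the same $\det$ modulo the relevant level. The entry $c$ has leading term $[\bar\bl^-]\neq0$, so it is invertible up to a fixed power of $t$. Therefore $b$ is recovered from
$$
b=\frac{-\det Y-a^2}{c}
$$
to the required precision. One must check that dividing by $c$, which costs a shift of one valuation step, still determines $b$ modulo the dual of $U$.

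I expect the main obstacle to be this bookkeeping in the half-integral filtration: matching the valuation shifts in (3), and showing in (1) that the graded pieces on which $\operatorname{ad}Y_0$ must be surjective are exactly those not already contained in $U$.
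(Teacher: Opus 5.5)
Your proposal is correct and follows essentially the paper's route: the leading pair $([\bar\bl^+],[\bar\bl^-])$ with both entries nonzero determines the ramified torus, successive approximation through the half-integral filtration gives (1), and the same coordinate calculation gives transitivity of $P(O)_m$ in (2) and injectivity in (3). The determinant bookkeeping you flag in (3) does close for both parities of $n$, since $c$ has exact valuation fixed by $[\bar\bl^+]\neq0$ and $-\det Y$ is well defined modulo a lattice fine enough that dividing by $c$ recovers $b$ exactly modulo the annihilator of $U$; the remaining slips are cosmetic. Under the trace pairing the lower-left entry of $Y$ carries $[\bar\bl^+]$, not $[\bar\bl^-]$, and the order stabilizing the lattice chain is $O$ adjoined a rescaling $t^{j}Y$ that is a uniformizer of $L$, not $O[Y_0]$. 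In (1) the graded fact you actually use is injectivity of $[Y_0,\cdot\,]$ on the one-dimensional graded pieces of $\fg/\mathfrak z(Y_0)$ (equivalent to your ``onto''), and the correction is made by multiplying $g$ by torus elements level by level, not by conjugating with elements of $U$.
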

\begin{proof}
The leading pair $([\bar\bl^+],[\bar\bl^-])$ has both coordinates nonzero. Claim \ref{act}(5) shows that its stabilizer is the anisotropic torus attached to the square class of $t[\bar\bl^+][\bar\bl^-]$. Successive approximation through the filtration gives (1). The same coordinate calculation shows that $P(O)_m$ is transitive on the orbit and that restriction to $U\cap P(O)_m$ determines both coordinates, giving (2) and (3).
\end{proof}
\begin{theorem}\label{char}
Every elliptic representation of $G'(O)$ is special.
\end{theorem}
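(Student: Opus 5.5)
The plan is to copy the proof of Theorem \ref{charodd}, with $G'(O)$ in place of $G(O)$. Let $\pi$ be an elliptic representation of $G'(O)$. By Definition \ref{ac} its depth is $m=n-1/2$ for some $n\in\ZZ_{>0}$, so $\pi$ factors through $G'(O)_m$. Let $U$ be the closed normal commutative subgroup of Claim \ref{Gprime-U}.

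First I would apply Lemma \ref{MM} to the pair $(G'(O)_m,U)$. This gives a character $\bl\in S_\pi(U)$ and an irreducible $\rho=\rho(\bl,\pi)\in\widetilde{(G'(O)_m)}_\bl$ with
$$\pi\simeq\ind_{\St_\bl}^{G'(O)_m}(\rho).$$
I then need to check that $\bl$ is elliptic in the sense of Definition \ref{c}. The restriction $\bar\bl$ to $G'(O)_m(m)=M_m^+\oplus M_m^-$ lies in the support of $\pi$ on that subgroup. Ellipticity of $\pi$ says that on each of $M_m^\pm$ this support is nontrivial. Since the support is a single orbit under the action of Claim \ref{act}(5), which either rescales the two coordinates by $a,a^{-1}$ or swaps them, every point of it has both coordinates nonzero or none does. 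So every point has $[\bar\bl^+]\neq0$ and $[\bar\bl^-]\neq0$, and $\bl$ is elliptic.

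Next, Claim \ref{ch}(1) gives $\St_\bl=T^L_mU$ with $L=L_\bl$. The quotient $\St_\bl/U$ is a quotient of the commutative group $T^L$, and $U$ is central modulo $\ker\bl$: $U$ is commutative and $\St_\bl$ fixes $\bl$. So $\St_\bl/\ker(\bl)$ is a central extension of an abelian group by $U/\ker\bl$, with commutator pairing landing in $U/\ker\bl$. The point to verify is that this pairing is trivial, i.e. $[T^L_m,U]\subset\ker\bl$; I expect this is the main obstacle. One way to check it is to compute, through the coordinates of Claim \ref{act}, that $\bl$ is the composite of a character of $T^L_m$ with a $T^L$-equivariant projection $U\to T^L_m(n/2)$, as in the odd-depth case. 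The other way is to invoke the analogue of Claim \ref{q}(4), namely that the commutator $[T^L_m,U]$ has no toral component. Once this is settled, $\St_\bl/\ker\bl$ is commutative, so $\rho$ is a character. This is also not strictly needed: it would suffice that $\rho|_{P(O)_m\cap\St_\bl}$ is irreducible.

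Finally I apply Lemma \ref{3.12} with $H=G'(O)_m$, $H'=P(O)_m$ and the same $U$. The injectivity hypothesis is Claim \ref{ch}(3), and the double coset condition $H'\St_\bl=H$ is Claim \ref{ch}(2). It remains to check that $\rho|_K$ is irreducible, where $K=P(O)_m\cap\St_\bl$. Because $\rho$ is a character, this is automatic. Lemma \ref{3.12} then shows that $\pi|_{P(O)_m}$, and hence $\pi|_{P(O)}$, is irreducible. Since $\bl$ is elliptic, the orbit of $\bl$ has more than one point, so $\dim\pi>1$ and $\pi$ is special.
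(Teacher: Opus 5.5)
Your proposal follows the paper's proof step by step: Lemma \ref{MM} writes $\pi$ as induced from $\St_\bl$, Claim \ref{ch}(1) shows that $\rho$ is a character, and Claim \ref{ch}(2)--(3) together with Lemma \ref{3.12} give irreducibility of $\pi|_{P(O)}$. The step you flag as the main obstacle is already settled by your own remark that $U/\ker\bl$ is central: for $t\in\St_\bl$ and $u\in U$ one has $\bl(tut^{-1}u^{-1})=\bl(tut^{-1})\bl(u)^{-1}=1$, and since $T^L_m$ is commutative, $\St_\bl/\ker\bl$ is generated by two commuting abelian subgroups and is therefore abelian, so no coordinate computation is needed.
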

\begin{proof}
Let $\pi$ be elliptic of depth $m=n-1/2$. By Lemma \ref{MM}, there are an elliptic character $\bl$ of $U$ and an irreducible representation $\rho$ of $\St_\bl$ such that $U$ acts on $\rho$ by $\bl$ and
$$
\pi\simeq\ind_{\St_\bl}^{G'(O)_m}(\rho),
$$
followed by inflation to $G'(O)$. By Claim \ref{ch}(1), the quotient $\St_\bl/\ker(\bl)$ is commutative, so $\rho$ is a character. Claim \ref{ch}(2)--(3) and Lemma \ref{3.12} imply that $\pi|_{P(O)}$ is irreducible.
\end{proof}
\subsubsection{Classification}
For a ramified quadratic extension $L/K$, let $\widehat G_m^L$ be the set of isomorphism classes of elliptic representations of $G'(O)$ of depth $m$ with associated extension $L$.
An elliptic representation $\pi$ determines, through the preceding proof, a character of $T^L_m$ and hence a character $\theta_\pi$ of $T^L$. Changing the embedding of $T^L$ by the nontrivial normalizer element replaces $\theta_\pi$ by its Galois conjugate, which is its inverse. Thus there is a recovery map
$$
\beta_m^L:\widehat G_m^L\longrightarrow\bar\Xi^L(m).
$$
\begin{claim}\label{Gprime-parameter}
Conversely, for every $\theta\in\Xi^L(m)$ there are an elliptic character $\bl_\theta\in\widehat U$ and a unique character $\widehat\theta$ of $\St_{\bl_\theta}$ such that
$$
L_{\bl_\theta}=L,
\qquad
\widehat\theta|_U=\bl_\theta,
\qquad
\widehat\theta|_{T^L_m}=\theta.
$$
The construction is compatible with inversion.
\end{claim}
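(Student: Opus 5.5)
The plan mirrors the odd-depth construction for $G(O)$ (Claim \ref{oo}), with the ramified torus $T^L\subset G'(O)$ in place of $T^L\subset G(O)$.

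\emph{Step 1: the projection and the character $\bl_\theta$.} Fix $\theta\in\Xi^L(m)$ with $m=n-1/2$, and regard $T^L\subset G'(O)$ via an adapted basis (Claim \ref{imb}(2)). On the Lie algebra level, the trace pairing gives a unique $\Ad(T^L)$-equivariant projection from the $K$-Lie algebra onto the Lie algebra $\ft^L$ of $T^L$, as in Claim \ref{pro}. We check that this projection maps the lattice filtration defining $G'(O)(r)$ onto the filtration $T^L(r)$. This should follow from the fact that in the adapted basis the lattice chain is the standard one. Multiplication by a uniformizer $\varpi_L$ of $L$, which lies in $G'(O)$ up to $K^*$, shifts the chain by one half-step.

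On $U$, the image of $G'(O)(n/2)$, the twice-iterated filtration index exceeds the depth. So the truncated exponential/Cayley map is additive, exactly as in Claim \ref{iota1}(2). Hence the projection induces a homomorphism $p:U\to T^L_m(n/2)$. We then put $\bl_\theta=\theta\circ p$.

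\emph{Step 2: ellipticity and $L_{\bl_\theta}=L$.} The restriction of $\bl_\theta$ to $G'(O)_m(m)=M_m^+\oplus M_m^-$ is $\theta$ restricted to $T^L_m(m)$ composed with the projection. Since $m(\theta)=m$, $\theta$ is nontrivial on $T^L(n)/T^L(n+1)$, a one-dimensional $F$-space. Writing $L=K(\sqrt a)$ with $v(a)=1$, the element $\sqrt a$ acts via a matrix whose entries sit in $M^+$ (the $b$-entry) and $M^-$ (the $c$-entry, divisible by $t$). Dually, the restriction of $\bl_\theta$ therefore has both $[\bar\bl^+]$ and $[\bar\bl^-]$ nonzero, and their product times $t$ lies in the square class of $a$ up to $F^{*2}$-factors. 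Checking this square class identity is a direct $2\times2$ computation: the relevant element is $\begin{pmatrix}0&x\\ tx'&0\end{pmatrix}$ and its determinant gives $t\,xx'$. Hence $L_{\bl_\theta}=L$.

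\emph{Step 3: the extension $\widehat\theta$.} By Claim \ref{ch}(1), $\St_{\bl_\theta}=T^L_mU$, with the torus normalizing $U$ and fixing $\bl_\theta$. By construction $\theta$ and $\bl_\theta$ agree on $T^L_m\cap U$, since $p$ is the identity on the toral part. Define $\widehat\theta(tu)=\theta(t)\bl_\theta(u)$. This is well defined and a homomorphism, because $\bl_\theta$ is $T^L$-invariant and the commutators $[T^L_m,U]$ lie in $\ker p$ by equivariance. The last point is the $(\Ad(s)-1)\fg\subset\ft^\perp$ argument used in Claim \ref{q}(4). Uniqueness is immediate, since $T^L_m$ and $U$ generate $\St_{\bl_\theta}$.

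\emph{Step 4: compatibility with inversion.} The nontrivial normalizer element of $T^L$ in $G'(O)$ (e.g. multiplication by $\varpi_L$, or Galois conjugation realized in $G'(O)$) acts on $T^L$ by $\bar{\ }$, i.e.\ by inversion. It commutes with $p$, so it sends $(\bl_\theta,\widehat\theta)$ to $(\bl_{\theta^{-1}},\widehat{\theta^{-1}})$.

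\emph{Main obstacle.} The hardest step is Step 1: verifying that $p$ is a group homomorphism on $U$ with the half-integral indexing. This needs the filtration bookkeeping of Definition \ref{G-prime-filtration} to confirm that $[U,U]\subset G'(O)(m+1/2)$. I would first try the Iwahori Lie lattice chain $\fm^{[r]}$ / $\fm^{[r]+1}$ with the commutator estimate $[\mathfrak g'(r),\mathfrak g'(r')]\subset\mathfrak g'(r+r')$.
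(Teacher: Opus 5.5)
Your proposal is correct and matches the route the paper intends. The paper records this statement as a Claim without proof, and your construction is the $G'(O)$-analogue of its odd-depth construction in Claim \ref{oo}: $\bl_\theta=\theta\circ p$ for the $\Ad(T^L)$-equivariant trace projection onto $\ft^L$, then $\widehat\theta(gu)=\theta(g)\bl_\theta(u)$ on $\St_{\bl_\theta}=T^L_mU$.

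The Step 1 checks (filtration compatibility and additivity on $U$) become transparent if you write $\operatorname{End}_K(L)=L\oplus L\sigma$ with $\sigma$ the Galois involution. Then:
\begin{itemize}
\item for $k\geq1$, $G'(O)(k/2)$ is the image of $1+(\mathfrak p_L^k\oplus\mathfrak p_L^k\sigma)$;
\item the traceless part is the trace-orthogonal sum $K\sqrt a\oplus L\sigma$;
\item $p$ is simply the projection along $L\sigma$.
\end{itemize}

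Two slips need fixing.
\begin{itemize}
\item In Step 4, multiplication by $\varpi_L$ lies in $T^L$ itself (for $\varpi_L=\sqrt a$ it is the element $-1\in L^1$), so it centralizes $T^L$ and cannot act by inversion. The inversion must come from Galois conjugation, which in the adapted basis $(\sqrt a,1)$ is $\operatorname{diag}(-1,1)\in I$.
\item In Step 2, the invariant that recovers $L$ is the square $t\,xx'$ of your matrix, i.e.\ $-\det$ rather than $\det$. This matters because $-1$ need not be a square in $F$.
\end{itemize}
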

Define
$$
\alpha_m^L(\theta)=\ind_{\St_{\bl_\theta}}^{G'(O)_m}(\widehat\theta),
$$
followed by inflation to $G'(O)$.
\begin{proposition}\label{Oodd}
The construction induces a bijection
$$
\bar\alpha_m^L:\bar\Xi^L(m)\stackrel{\sim}{\longrightarrow}\widehat G_m^L,
$$
whose inverse is $\beta_m^L$.
\end{proposition}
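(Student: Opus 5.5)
The plan mirrors the odd-depth argument for $G(O)$ (Theorem \ref{od}), using Claims \ref{ch} and \ref{Gprime-parameter} in place of Claims \ref{ellch} and \ref{oo}. I would establish four things in turn: that $\bar\alpha_m^L$ is well defined, that it lands in $\widehat G_m^L$, that $\beta_m^L\circ\bar\alpha_m^L=\Id$, and that $\bar\alpha_m^L\circ\beta_m^L=\Id$.

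\emph{Well-definedness and target.} For $\theta\in\Xi^L(m)$, Claim \ref{Gprime-parameter} gives an elliptic $\bl_\theta$ with $L_{\bl_\theta}=L$ and a character $\widehat\theta$ of $\St_{\bl_\theta}$ extending $\bl_\theta$. Lemma \ref{MM}(2) then shows that $\alpha_m^L(\theta)$ is irreducible. Its support on $U$ is the orbit of $\bl_\theta$, and its restriction to $G'(O)_m(m)$ has both coordinates $[\bar\bl^\pm]$ nonzero. Depth $m$ and ellipticity follow because $\theta$ has depth exactly $m$, so $\bl_\theta$ is nontrivial on the last step $G'(O)_m(m)$; this should come from the construction in Claim \ref{Gprime-parameter}. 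The associated extension is $L$. Independence of the choice $\theta$ versus $\theta^{-1}$ follows by conjugating with an element of the normalizer of $T^L$ in $G'(O)_m$ that acts on $T^L$ by Galois conjugation, which is inversion under Remark \ref{jj}; this uses the compatibility with inversion in Claim \ref{Gprime-parameter}. For such a normalizer element I would take an element of $L^*$ of odd valuation acting on $O_L$ and normalizing the lattice chain, as in the ramified case of Claim \ref{imb}.

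\emph{Inverse identities.} For $\beta\circ\bar\alpha=\Id$, note that by Lemma \ref{MM}(4) the little-group datum of $\alpha_m^L(\theta)$ at $\bl_\theta$ is $\widehat\theta$, and its restriction to $T^L_m$ is $\theta$. For $\bar\alpha\circ\beta=\Id$, start from elliptic $\pi$. By the proof of Theorem \ref{char}, $\pi\simeq\ind_{\St_\bl}^{G'(O)_m}(\rho)$ with $\rho$ a character and $\St_\bl=T^L_mU$ by Claim \ref{ch}(1). Put $\theta=\rho|_{T^L_m}$, inflated to $T^L$. Then $\rho$ agrees with $\widehat\theta$, since both are characters of $T^L_mU$ with the same restrictions to $T^L_m$ and to $U$. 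The remaining point is that $\bl$ equals $\bl_\theta$ up to $G'(O)_m$-conjugacy.

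\emph{Main obstacle.} I expect this last point, together with $\theta\in\Xi^L$ (that is, $\theta\neq\otheta$), to be the hardest step. The approach I would try first is to use that $\bl$ is $T^L$-fixed and that $\rho$ restricted to $T^L_m\cap U$ equals $\bl$. The $T^L$-fixed elliptic characters in the orbit should be determined by their restriction to $T^L\cap U$, with the Galois twist as the only ambiguity; this is the analogue of Claim \ref{ellch}(3) and is proved by the coordinate computation of Claim \ref{act}(5). This forces $\bl\in\{\bl_\theta,\bl_{\theta^{-1}}\}$, and the same computation shows that $\theta$ has depth $m$. The property $\theta\neq\otheta=\theta^{-1}$ then holds because $\theta$ is nontrivial on $T^L(m)/T^L(m+1/2)$, which is a $p$-group. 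Since $p$ is odd, $\theta^2\neq1$ there.

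Injectivity of $\bar\alpha_m^L$ then follows from $\beta\circ\bar\alpha=\Id$. Alternatively, it follows directly: isomorphic induced representations have the same $U$-support, and then the fixed-point statement above applies.
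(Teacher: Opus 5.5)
Your route is the paper's: Mackey theory with $\St_{\lambda}=T^L_mU$ and $\rho$ a character, the $U$-support recovering $\lambda_\theta$ up to the normalizer, and restriction to $T^L_m$ recovering $\theta$ up to inversion. You supply more detail than the paper's three-line proof.

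\textbf{The step that fails.} One step does not work as written: the element you propose for identifying $\alpha_m^L(\theta)$ with $\alpha_m^L(\theta^{-1})$. An element of $L^*$ of odd valuation is itself a point of $T^L=L^*/K^*$. For $L=K(\sqrt a)$ with $a=t\epsilon$ and the adapted basis $(\sqrt a,1)$, multiplication by $\sqrt a$ is $\begin{pmatrix}0&1\\ t\epsilon&0\end{pmatrix}$, essentially $s$. It therefore centralizes $T^L$, lies in $\St_{\lambda_\theta}$, and fixes $\widehat\theta$, so conjugating by it identifies nothing.

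\textbf{The fix.} Galois conjugation is implemented by the Galois automorphism $\sigma$ of $L/K$, regarded as a $K$-linear automorphism of $L$.
\begin{enumerate}
\item It preserves $O_L$ and $\mathfrak p_L$, hence lies in $I\subset G'(O)$. In the basis above it is $\operatorname{diag}(-1,1)\in P(O)$.
\item It satisfies $\sigma l\sigma^{-1}=\bar l$, which is inversion on $T^L$.
\item Conjugation by $\sigma$ carries $(\lambda_\theta,\widehat\theta)$ to $(\lambda_{\theta^{-1}},\widehat{\theta^{-1}})$.
\end{enumerate}
The same $\sigma$ is the element exchanging the two $T^L$-fixed points of the orbit in your injectivity argument.

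\textbf{Two smaller corrections.} First, in the paper's integer indexing $T^L(n)=T^L\cap G'(O)(n-\tfrac12)$, the relevant quotient for $m=n-\tfrac12$ is $T^L(n)/T^L(n+1)\simeq(F,+)$. This is not a $p$-group when $\operatorname{char}F=0$. Your conclusion $\theta^2\neq1$ there still holds, but the reason is that $(F,+)$ is uniquely $2$-divisible because $2\in F^*$.

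Second, in the recovery step you need $\lambda=\lambda_\theta$ exactly, not just $\lambda\in\{\lambda_\theta,\lambda_{\theta^{-1}}\}$, to conclude $\rho=\widehat\theta$. This does follow from your own observation: $\lambda$ and $\lambda_\theta$ are both $T^L$-fixed and have the same restriction $\theta|_{T^L\cap U}$.
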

\begin{proof}
Lemma \ref{MM}(2) gives irreducibility, and Theorem \ref{char} gives specialness. The support recovers $\bl_\theta$ up to the normalizer action, and the stabilizer character then recovers $\theta$ up to inversion. Thus the construction and recovery maps are inverse.
\end{proof}
\begin{theorem}\label{cunr-summary}
\begin{enumerate}
\item Let $\pi$ be a special representation of $G(O)$ of depth $m$. If $m>0$ then $\pi$ is the inflation of $\ind_R^{G(O)_m}(\rho)$ for a closed almost unipotent subgroup $R\subset G(O)_m$ and an irreducible representation $\rho$ of $R$. At odd depth $\rho$ may be taken to be a character; at positive even depth it is the quasi-Heisenberg representation of Section \ref{GO-even-depth}.
\item One has $\pi|_{P(O)}\simeq\bs(m)$.
\item Let $\pi$ be a special representation of $G'(O)$ of depth $n-1/2$. Then $\pi$ is the inflation of $\ind_R^{G'(O)_{n-1/2}}(\nu)$ for a closed almost unipotent subgroup $R$ and a character $\nu$ of $R$.
\item One has $\pi|_{P(O)}\simeq\bs(n-1)$.
\end{enumerate}
\end{theorem}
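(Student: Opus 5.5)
Theorem \ref{cunr-summary} is essentially a repackaging of the results already proved, so the plan is to assemble it from them. Parts (1) and (3) come from the Mackey-theoretic structure results; parts (2) and (4) come from Lemma \ref{restr}.

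For (1), let $\pi$ be special of positive depth $m$. By Lemma \ref{GO-special-elliptic}, $\pi$ is elliptic. I then split according to the parity of $m$.
\begin{enumerate}
\item If $m=2n-1$ is odd, the proof of Theorem \ref{charodd} writes $\pi\simeq\ind_{\St_{\bl_\pi}}^{G(O)_{2n-1}}(\rho_\pi)$ with $\rho_\pi$ a character. By Claim \ref{ellch}(2), $\St_{\bl_\pi}=T^{L_\pi}_{2n-1}U$.
\item If $m=2n$ is even, the proof of Theorem \ref{charev} writes $\pi\simeq\ind_{\St_\bl}^{G(O)_{2n}}(\rho)$. By Claim \ref{q}(3), $\St_\bl=T^{L_\bl}_{2n}A$, and $\rho$ is the quasi-Heisenberg representation, by Lemma \ref{thet} and Proposition \ref{A-prime}.
\end{enumerate}
In both cases I take $R$ to be the stabilizer, as in Corollary \ref{cunr}. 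It remains to check that $R$ is almost unipotent. Its unipotent radical contains the image of $T^L(1)$ together with $U$ (respectively $A$), since these are unipotent in the finite-dimensional group $G(O)_m$. The reductive quotient is then the anisotropic torus $T^E=E^*/F^*$, whose group of $F$-points is compact. This matches Definition \ref{Vchi}(1).

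For (3), the argument is the same for $G'(O)$. By the corollary after Lemma \ref{Gprime-nonelliptic}, a special $\pi$ is elliptic. The proof of Theorem \ref{char} gives $\pi\simeq\ind_{\St_\bl}^{G'(O)_m}(\rho)$ with $\rho$ a character, and Claim \ref{ch}(1) gives $\St_\bl=T^L_mU$. Here $L/K$ is ramified, so the reduction of $T^L$ modulo its pro-unipotent part is $\{\pm1\}$ by Claim \ref{quad-LK}(2). Hence $R$ is finite modulo a unipotent group, and therefore almost unipotent.

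Parts (2) and (4) are exactly Lemma \ref{restr}(1) and (2). That lemma in turn rests on Proposition \ref{eq}, which forces $\chi=1$, together with the conductor comparison. The only point I would check is that Lemma \ref{restr}(1) also covers depth $0$; there $\pi|_{P(F)}$ is the unique nontrivial irreducible representation of $P(F)$, which is $\bs(0)$.

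The main obstacle I expect is the almost-unipotence verification in (1) and (3). Specifically, one has to identify the unipotent radical of $T^L_m$ as an algebraic $F$-group, namely the image of $T^L(1)$ (a vector group via the truncated logarithm), so that $T^L_m/T^L_m(1)$ is the compact torus $T^E$ or the group $\{\pm1\}$. I would handle this through the coordinate maps $\iota_{T,m}$ and Claim \ref{iota}.
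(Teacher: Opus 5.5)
Your proposal is correct and follows the paper's route: the paper treats this theorem as a summary, with (1) and (2) restating Corollary \ref{cunr} and Lemma \ref{restr}(1), and with (3) and (4) coming from the proof of Theorem \ref{char} together with Claim \ref{ch}(1) and from Lemma \ref{restr}(2). Your explicit almost-unipotence check (reductive quotient $T^E$ for $G(O)$, and $\{\pm1\}$ for $G'(O)$ because $L/K$ is ramified) just expands the one-line justification in Corollary \ref{cunr}; the only slip is in the depth-$0$ aside, where ``unique nontrivial irreducible representation of $P(F)$'' should read ``unique irreducible representation of dimension greater than $1$'', and that case is in any event already covered by Lemma \ref{restr}(1).
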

\begin{remark}The first two statements restate Corollary 5.33 and Lemma 4.7.\end{remark}
\section{Some results on abelian categories}
In this section we formulate and prove some results from category theory needed later.
\sms
Let $\mcC$ be an abelian category.
\begin{definition}\label{strict}
An object $\mV$ of $\Pro(\mcC)$ is strict if it admits a presentation
$$
\mV\simeq\prl_{n\geq0}V_n,\qquad V_n\in\operatorname{Ob}(\mcC),
$$
whose transition morphisms $V_{n+1}\to V_n$ are epimorphisms.
\end{definition}
\begin{remark}
Since $\mcC$ has finite limits, $\Pro(\mcC)$ has all small limits. In particular, the products used below exist.
\end{remark}
\begin{definition}\label{basic}
Let $(\mcA,\times,*)$ be a category with finite products. A contravariant action of $\mcA$ on a category $\mcC$ consists of the following data:
\begin{enumerate}
\item A functor $a:\mcA^{op}\times\mcC\to\mcC$, together with a functorial identification $a(*,V)\simeq V$.
\item A functorial isomorphism
$$
\alpha_{X,Y,V}:a(X\times Y,V)\xrightarrow{\sim}a(X,a(Y,V)),
$$
for $X,Y\in\mcA$ and $V\in\mcC$, satisfying the usual associativity and unit axioms.
\end{enumerate}
\end{definition}
\begin{definition}
\begin{enumerate}
\item A group object in $\mcA$ is an object $G$ equipped with morphisms
$$
m:G\times G\to G,\qquad e:*\to G,\qquad \iota:G\to G,
$$
satisfying the usual associativity, unit, and inverse axioms.
\item Let $G$ be a group object of $\mcA$ and let $V\in\operatorname{Ob}(\mcC)$. A representation of $G$ on $V$ is a morphism
$$
\pi:V\to a(G,V)
$$
such that
$$
\alpha_{G,G,V}^{-1}\circ a(G,\pi)\circ\pi=a(m,V)\circ\pi
$$
and
$$
a(e,V)\circ\pi=\operatorname{id}_V,
$$
where we use the identification $a(*,V)\simeq V$.
\item A representation of $G$ on $V$ is irreducible if $V$ has no nonzero proper $G$-invariant subobject.
\item Similarly, an action of an abstract group $\GG$ on $V$ is irreducible if $V$ has no nonzero proper $\GG$-invariant subobject.
\end{enumerate}
\end{definition}
\subsection{Filtrations}
Let $\mV$ be a strict object of $\Pro(\mcC)$ and let $\mcF=\{\mW_i\}_{i\in\ZZ}$ be an increasing filtration of $\mV$ by subobjects.
\begin{definition}\label{admissible}
\begin{enumerate}
\item A subobject $\mU\subset\mV$ is dense with respect to $\mcF$ if, for every $i\in\ZZ$, the natural morphism
$$
\beta_i:\frac{\mW_{i+1}\cap\mU}{\mW_i\cap\mU}\longrightarrow\frac{\mW_{i+1}}{\mW_i}
$$
is an isomorphism.
\item The filtration $\mcF$ is admissible if, for every epimorphism $\kk:\mV\to V$ with $V\in\mcC$, there exist $r_-(\kk),r_+(\kk)\in\ZZ$ such that
$$
\kk(\mW_{r_-(\kk)})=0,\qquad \kk(\mW_{r_+(\kk)})=V.
$$
\item We put
$$
\overline{\mV}_{\mcF}:=\prod_{i\in\ZZ}\mW_{i+1}/\mW_i.
$$
\end{enumerate}
\end{definition}
\begin{claim}\label{separated}
If $\mcF$ is admissible, then
$$
\bigcap_{i\in\ZZ}\mW_i=0.
$$
\end{claim}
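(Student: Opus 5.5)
We argue directly from the universal property of the projective limit, using that $\mV$ is strict. Put $\mW_{-\infty}:=\bigcap_{i\in\ZZ}\mW_i$. This is a subobject of $\mV$, and it exists because $\Pro(\mcC)$ has all small limits, so the intersection is the fibre product of the inclusions $\mW_i\ho\mV$. We must show $\mW_{-\infty}=0$. Fix a strict presentation $\mV\simeq\prl_n V_n$ with epimorphic transition maps, and let $\kk_n:\mV\to V_n$ be the canonical projections.

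The first step is to check that every $\kk_n$ is an epimorphism in $\Pro(\mcC)$. Morphisms $\mV\to V$ with $V\in\mcC$ are computed as $\inl_n\Hom_\mcC(V_n,V)$. Since the transition maps are epimorphisms, a morphism $V_n\to V$ that vanishes after composition with $V_m\to V_n$ for some $m\ge n$ already vanishes. Hence $\kk_n$ is epi when tested against objects of $\mcC$. This extends to arbitrary pro-objects, because a morphism into a pro-object is a compatible system of morphisms into objects of $\mcC$.

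Admissibility therefore applies to each $\kk_n$ and gives an integer $r_-(\kk_n)$ with $\kk_n(\mW_{r_-(\kk_n)})=0$. Since $\mW_{-\infty}\subset\mW_{r_-(\kk_n)}$, we get $\kk_n|_{\mW_{-\infty}}=0$ for every $n$.

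Finally, a morphism $X\to\prl V_n$ is the same as a compatible family of morphisms $X\to V_n$. The inclusion $\mW_{-\infty}\ho\mV$ is one such morphism, and all of its components $\kk_n\circ(\text{inclusion})$ vanish. So the inclusion is the zero morphism. Since it is also a monomorphism, $\mW_{-\infty}=0$.

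The only point needing care is the first step, that strictness makes the projections $\kk_n$ epimorphisms so that the admissibility hypothesis applies. Strictness is used exactly there; without it, one would instead replace $V_n$ by the image of $\mV$ in $V_n$ (a Mittag-Leffler type argument) before invoking admissibility.
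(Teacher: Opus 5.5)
Your proof is correct and is essentially the paper's argument: apply admissibility to each canonical projection $\kk_n$ of a strict presentation, observe that $\bigcap_i\mW_i\subset\mW_{r_-(\kk_n)}$ is killed by every $\kk_n$, and conclude because the $\kk_n$ are jointly monomorphic. The only addition is your explicit check that the $\kk_n$ are epimorphisms in $\Pro(\mcC)$, which admissibility needs in order to apply and which the paper takes for granted when it calls them the canonical epimorphisms.
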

\begin{proof}
Choose a strict presentation $\mV\simeq\varprojlim_{n\geq0}V_n$ with epimorphic transition morphisms, and denote the canonical epimorphisms by $\kk_n:\mV\to V_n$. Put $\mI:=\bigcap_{i\in\ZZ}\mW_i$. For every $n$, admissibility gives an integer $r_-(n)$ such that $\kk_n(\mW_{r_-(n)})=0$. Since $\mI\subset\mW_{r_-(n)}$, the composite $\mI\to\mV\xrightarrow{\kk_n}V_n$ is zero. The morphisms $\kk_n$ are jointly monomorphic, and therefore $\mI=0$.
\end{proof}
\begin{lemma}\label{dense}
Let $\mcF=\{\mW_i\}_{i\in\ZZ}$ be an admissible filtration of a strict object $\mV$ of $\Pro(\mcC)$, and let $\mU\subset\mV$ be dense with respect to $\mcF$. Then $\mU=\mV$.
\end{lemma}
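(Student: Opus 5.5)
Fix an arbitrary epimorphism $\kk:\mV\to V$ with $V\in\mcC$; the goal is to show that the restriction $\kk|_{\mU}:\mU\to V$ is still an epimorphism, and then to pass to the limit. By admissibility there are integers $r_-=r_-(\kk)$ and $r_+=r_+(\kk)$ with $\kk(\mW_{r_-})=0$ and $\kk(\mW_{r_+})=V$. Put $\mU_i=\mW_i\cap\mU$. I would prove by induction on $i\ge r_-$ that
$$
\kk(\mU_i)=\kk(\mW_i)\qquad\text{as subobjects of } V.
$$
The base case $i=r_-$ holds because both sides vanish. For the inductive step, density says that $\mU_{i+1}/\mU_i\to\mW_{i+1}/\mW_i$ is an isomorphism; in particular it is an epimorphism, so $\mW_{i+1}=\mU_{i+1}+\mW_i$ as subobjects of $\mV$. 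Applying $\kk$ and using the induction hypothesis gives
$$
\kk(\mW_{i+1})=\kk(\mU_{i+1})+\kk(\mW_i)=\kk(\mU_{i+1})+\kk(\mU_i)=\kk(\mU_{i+1}).
$$
At $i=r_+$ this yields $\kk(\mU)\supseteq\kk(\mU_{r_+})=V$.

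These sums and images are taken in $\Pro(\mcC)$, which is abelian, so they make sense. The image of $\mW_{i+1}$ under $\kk$ is an object of $\mcC$, because it is a quotient of a pro-object that maps into an object of $\mcC$. The inductive step therefore runs entirely in the abelian category $\Pro(\mcC)$ and needs only the elementary lattice identity $\mW_{i+1}=\mU_{i+1}+\mW_i$.

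Now pass to the limit. Choose a strict presentation $\mV=\prl V_n$ with projections $\kk_n:\mV\to V_n$, which are epimorphisms. By the previous step, each composite $\mU\to\mV\to V_n$ is an epimorphism. Set $\mQ=\mV/\mU$. Then every $\kk_n$ vanishes after composing to $\mQ$ in the following sense: the induced map $\mQ\to V_n/\kk_n(\mU)=0$ is zero. We want to conclude that $\mQ=0$.

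This final step is the main obstacle. The difficulty is that vanishing of $\mQ$ does not follow formally from surjectivity onto every finite stage unless we use strictness. Surjectivity at each stage means that the inclusion $\mU\to\mV$ becomes an epimorphism after composing with every $\kk_n$. Since $\mV=\prl V_n$ has epimorphic transition maps, I would represent $\mU$ as a pro-object $\prl U_n$ with $U_n=\kk_n(\mU)\subset V_n$, using that subobjects of a strict pro-object are computed levelwise via images. The previous step gives $U_n=V_n$ for all $n$, hence $\mU=\mV$. I would verify the levelwise description of subobjects through the standard fact that, in $\Pro(\mcC)$, a monomorphism into $\prl V_n$ can be reindexed to a levelwise monomorphism of $\mathbb N$-indexed systems. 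That fact is the step I expect to require the most care.
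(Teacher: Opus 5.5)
Your proposal is correct and follows the paper's proof: you run the same induction showing $\kk(\mW_i\cap\mU)=\kk(\mW_i)$ for $r_-\le i\le r_+$, and then apply it to the epimorphic projections $\kk_n$ of a strict presentation. The final step is easier than you expect and needs no levelwise reindexing of subobjects: a morphism $\mV\to Y$ with $Y\in\mcC$ that kills $\mU$ factors as $h\circ\kk_n$, and $h\circ\kk_n|_{\mU}=0$ with $\kk_n|_{\mU}$ epi forces $h=0$, hence $\mV/\mU=0$ (this is the paper's Lemma~\ref{admi}(2)); separately, your aside that $\kk(\mW_{i+1})$ lies in $\mcC$ is false in general, but nothing in your argument uses it.
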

\begin{proof}
It is enough to show that, for every epimorphism $\kk:\mV\to V$ with $V\in\mcC$, one has $\kk(\mU)=V$. Put
$$
V_i:=\kk(\mW_i),\qquad U_i:=\kk(\mW_i\cap\mU).
$$
Choose $r_-=r_-(\kk)$ and $r_+=r_+(\kk)$ as in Definition \ref{admissible}. The isomorphisms $\beta_i$ imply that the natural morphisms
$$
U_{i+1}/U_i\longrightarrow V_{i+1}/V_i
$$
are surjective.
 Since $U_{r_-}=V_{r_-}=0$, induction on $i$ gives $U_i=V_i$ for $r_-\leq i\leq r_+$. Hence
$$
V=V_{r_+}=U_{r_+}\subset\kk(\mU),
$$
so $\kk(\mU)=V$. Applying this to the epimorphic projections of a strict presentation of $\mV$, we obtain $\mU=\mV$.
\end{proof}
\begin{lemma}\label{admi}
Let $\mV=\varprojlim_{l\geq0}V_l$, where $V_l\in\operatorname{Ob}(\mcC)$ and the transition morphisms are epimorphisms, and let $\kk_l:\mV\to V_l$ be the canonical projections. Let $\mcF=\{\mW_n\}_{n\in\ZZ}$ be an increasing filtration of $\mV$ such that, for every $l\geq0$, there exist integers $r_-(l)$ and $r_+(l)$ satisfying
$$
\kk_l(\mW_{r_-(l)})=0,\qquad \kk_l(\mW_{r_+(l)})=V_l.
$$
Then
\begin{enumerate}
\item The filtration $\mcF$ is admissible.
\item If $b:\mV'\to\mV$ is a morphism such that $\kk_l\circ b:\mV'\to V_l$ is an epimorphism for every $l\geq0$, then $b$ is an epimorphism.
\end{enumerate}
\end{lemma}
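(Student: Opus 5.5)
Part (1) follows from Definition \ref{admissible}(2) once we reduce an arbitrary epimorphism $\kk:\mV\to V$ with $V\in\mcC$ to one of the canonical projections $\kk_l$. Since $V$ is an object of $\mcC$, hence cofiltered-compact in $\Pro(\mcC)$, we have
$$
\Hom_{\Pro(\mcC)}(\mV,V)=\varinjlim_l\Hom_{\mcC}(V_l,V).
$$
So $\kk$ factors as $\kk=\phi\circ\kk_l$ for some $l$ and some morphism $\phi:V_l\to V$ in $\mcC$. Because $\kk$ is an epimorphism, $\phi$ is an epimorphism as well. Now take $r_\pm(\kk)=r_\pm(l)$. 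Then $\kk(\mW_{r_-(l)})=\phi(\kk_l(\mW_{r_-(l)}))=\phi(0)=0$, and $\kk(\mW_{r_+(l)})=\phi(V_l)=V$. The only point to check is that images commute with composition, i.e. $\kk(\mW)=\phi(\kk_l(\mW))$. This holds because images in the abelian category $\Pro(\mcC)$ are computed compatibly: the image of a composite is the image of the image.

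For part (2), I would show that the cokernel $C$ of $b$ vanishes. Let $q:\mV\to C$ be the cokernel map; then $q\circ b=0$. Write $C=\varprojlim_j C_j$ as a pro-object. It suffices to show that every morphism $\mV\to C_j$ with $C_j\in\mcC$ that kills $b(\mV')$ is zero. The reason is that $C$ is the quotient of $\mV$, and a pro-object with all maps to objects of $\mcC$ being zero is itself zero.

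So take $f:\mV\to X$ with $X\in\mcC$ and $f\circ b=0$. As in part (1), $f=g\circ\kk_l$ for some $l$ and some $g:V_l\to X$. Then $g\circ(\kk_l\circ b)=f\circ b=0$. Since $\kk_l\circ b$ is an epimorphism, this forces $g=0$, hence $f=0$.

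Applying this to the composite $\mV\to C\to C_j$ shows that every structure map $C\to C_j$ is zero. Therefore $\mathrm{id}_C=0$ (identity of a pro-object is the compatible system of these maps), so $C=0$ and $b$ is an epimorphism.

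The main subtlety is the factorization through some $\kk_l$, which rests on the standard description of Hom into an object of $\mcC$. In part (1) one also needs that $\phi$ is epi, which follows from $\kk=\phi\kk_l$ being epi. Everything else is formal.
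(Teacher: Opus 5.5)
Your proof is correct and follows essentially the same route as the paper: in both parts you factor every morphism from $\mV$ to an object of $\mcC$ through some $\kk_l$ and then use that $\kk_l\circ b$ is an epimorphism. The only difference is cosmetic. In (2) you show that the cokernel $C$ of $b$ vanishes by killing its components $C\to C_j$; the paper instead takes $f,g:\mV\to\mY$ with $f\circ b=g\circ b$ and checks that $p_j\circ(f-g)=0$ for every component $p_j$ of $\mY$.
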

\begin{proof}
(1) Let $\kk:\mV\to V$ be an epimorphism with $V\in\operatorname{Ob}(\mcC)$. The morphism $\kk$ factors through some $\kk_l$, say $\kk=\tau\circ\kk_l$ for a morphism $\tau:V_l\to V$. Since $\kk$ is an epimorphism, so is $\tau$. Hence
$$
\kk(\mW_{r_-(l)})=0,\qquad \kk(\mW_{r_+(l)})=V,
$$
and $\mcF$ is admissible.

(2) Suppose that $f,g:\mV\to\mY$ satisfy $f\circ b=g\circ b$. Choose a presentation $\mY=\varprojlim_jY_j$ and let $p_j:\mY\to Y_j$ be the canonical projections. For every $j$, the morphism $p_j\circ(f-g):\mV\to Y_j$ factors through some $\kk_l$, say
$$
p_j\circ(f-g)=h\circ\kk_l.
$$
Since $(p_j\circ(f-g))\circ b=0$ and $\kk_l\circ b$ is an epimorphism, we have $h=0$. Thus $p_j\circ f=p_j\circ g$ for every $j$, and hence $f=g$. Therefore $b$ is an epimorphism.
\end{proof}

\subsection{Graded objects}
\begin{definition}
Let $\mcF=\{\mW_i\}_{i\in\ZZ}$ be a filtration of $\mV$, let $\GG$ be an abstract group, let
$$
\nu:\GG\to\ZZ,\qquad \Gg\mapsto[\Gg],
$$
be a homomorphism, and let $\pi:\GG\to\operatorname{Aut}(\mV)$ be a representation satisfying
$$
\pi(\Gg)(\mW_i)=\mW_{i+[\Gg]}.
$$
Then $\GG$ acts on
$$
\overline{\mV}_{\mcF}=\prod_{i\in\ZZ}\mW_{i+1}/\mW_i.
$$
We denote the resulting representation by $\overline\pi$.
\end{definition}
\begin{lemma}\label{bar}
Assume that $\mcF$ is admissible and that every nonzero $\GG$-invariant subobject $\mU\subset\mV$ satisfies
$$
\mU\cap\mW_i\neq0
$$
for some $i\in\ZZ$. If $\overline\pi$ is irreducible, then $\pi$ is irreducible.
\end{lemma}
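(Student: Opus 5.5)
Let $\mU\subset\mV$ be a nonzero $\GG$-invariant subobject. We will show that $\mU=\mV$ by checking that $\mU$ is dense with respect to $\mcF$ and then applying Lemma \ref{dense}.

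First we build a graded sub-piece of $\overline{\mV}_{\mcF}$. Put $\mU_i=\mU\cap\mW_i$. This is an increasing filtration of $\mU$, and since $\pi(\Gg)(\mW_i)=\mW_{i+[\Gg]}$ and $\pi(\Gg)\mU=\mU$, we get $\pi(\Gg)\mU_i=\mU_{i+[\Gg]}$. Consider the subobject
$$
\overline{\mU}:=\prod_{i\in\ZZ}\beta_i\bigl(\mU_{i+1}/\mU_i\bigr)\subset\overline{\mV}_{\mcF}.
$$
Each $\beta_i$ is a monomorphism, because $\mU_{i+1}\cap\mW_i=\mU_i$. Products of monomorphisms are monomorphisms in $\Pro(\mcC)$, so $\overline{\mU}$ really is a subobject, and it is $\GG$-stable for $\overline\pi$. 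Note that $\Gg$ shifts the grading by $[\Gg]$, and the product is taken over all degrees, so the shifted product is the same object.

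Next we check that $\overline{\mU}\neq0$. By hypothesis $\mU_j\neq0$ for some $j$. By Claim \ref{separated}, $\bigcap_i\mW_i=0$. Hence there is $i<j$ with $\mU_i\neq\mU_j$, for otherwise $\mU_j\subset\bigcap_i\mW_i=0$. We want this to produce a smallest jump, i.e.\ some $i$ with $\mU_{i+1}/\mU_i\neq0$. To guarantee this we argue through a quotient: choose an epimorphism $\kk:\mV\to V$ with $V\in\mcC$ and $\kk(\mU_j)\neq0$. Such a $\kk$ exists because the projections of a strict presentation are jointly monic. Admissibility gives $\kk(\mW_{r_-})=0$. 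So among the finitely many steps between $r_-$ and $j$, some $\mU_{i+1}/\mU_i$ is nonzero.

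Now irreducibility of $\overline\pi$ gives $\overline{\mU}=\overline{\mV}_{\mcF}$. Projecting to the $i$-th factor shows that every $\beta_i$ is an epimorphism, hence an isomorphism. So $\mU$ is dense, and Lemma \ref{dense} yields $\mU=\mV$.

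The main obstacle is the abelian-category bookkeeping in $\Pro(\mcC)$. One has to make sure that ``$\overline{\mU}=\overline{\mV}_{\mcF}$'' really forces each factor map to be an isomorphism; projection onto factors handles this, since products in $\Pro(\mcC)$ are computed levelwise and are exact enough. One also has to make sure that a nonzero subobject gives a nonzero graded piece, which is where admissibility is used through a quotient $\kk$ as above.
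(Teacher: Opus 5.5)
Your proof is correct and follows the paper's own argument: you form the graded subobject $\prod_i(\mU\cap\mW_{i+1})/(\mU\cap\mW_i)$ of $\overline{\mV}_{\mcF}$, use the hypothesis and Claim \ref{separated} to see it is nonzero, use irreducibility of $\overline\pi$ to get density, and conclude with Lemma \ref{dense}. The detour through an epimorphism $\kk$ is unnecessary: once $\mU_i\neq\mU_j$ for some $i<j$, one of the finitely many steps between $i$ and $j$ already has a nonzero quotient.
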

\begin{proof}
Let $\mU\subset\mV$ be a nonzero $\GG$-invariant subobject and put
$$
\overline{\mU}_{\mcF}:=\prod_{i\in\ZZ}\frac{\mW_{i+1}\cap\mU}{\mW_i\cap\mU}.
$$
This is a $\GG$-invariant subobject of $\overline{\mV}_{\mcF}$. It is nonzero: otherwise all the subobjects $\mW_i\cap\mU$ would be equal, and Claim \ref{separated} would imply that they are all zero, contrary to the hypothesis on $\mU$. Since $\overline\pi$ is irreducible, we have
$$
\overline{\mU}_{\mcF}=\overline{\mV}_{\mcF}.
$$
Thus $\mU$ is dense with respect to $\mcF$, and Lemma \ref{dense} gives $\mU=\mV$.
\end{proof}

\sms

\section{A digression on representation theory of algebraic groups over $K$}\label{pro-vect} In this section we remind (slightly reformulated) definitions and constructions from \cite{GK}.
\subsection{Basic definitions}

\subsubsection{Categories}

Besides the large categories $Set$ of sets and $\Vect$ of complex vector spaces, we will only consider the following categories:

\begin{enumerate}

\item monoidal categories $\mcA$ obtained from the monoidal category $(Set_0,\otimes)$ of finite sets, where $\otimes$ is the Cartesian product, by the operations $\Ind_{\aleph_0}$ and $\Pro_{\aleph_0}$;

\item categories $\mcC$ obtained from the category $\Vect_0$ of finite-dimensional complex vector spaces by the operations $\Ind_{\aleph_0}$ and $\Pro_{\aleph_0}$.\footnote{In this paper we restrict ourselves to countable limits and colimits. This is mostly due to our laziness, but it is also sufficient for our purposes; for instance, all vector spaces which appear in this paper have countable dimension, and all pro-vector spaces are countable limits of such spaces.}

\end{enumerate}

More precisely, we consider the following lists of monoidal categories $\mcA$ and categories $\mcC$.

\begin{definition}\label{cat}

Categories $\mcA$:

\begin{enumerate}

\item $Set$ is the large category of sets.

\item $\bSet=\Ind_{\aleph_0}\Pro_{\aleph_0}(Set_0)$.

\item $\Pro\bSet:=\Pro_{\aleph_0}(\bSet)$.

\item $\mSet:=\Ind_{\aleph_0}(\Pro\bSet)$.

\end{enumerate}

Categories $\mcC$:

\begin{enumerate}

\item $\Vect$ is the large category of complex vector spaces.

\item $\Vect_{\aleph_0}=\Ind_{\aleph_0}(\Vect_0)\subset\Vect$ is the category of vector spaces of at most countable dimension.

\item $\mVect=\Pro_{\aleph_0}(\Vect_{\aleph_0})$.

\end{enumerate}

\end{definition}

\begin{remark}

\begin{enumerate}

\item $\bSet$ is a subcategory of the category of locally profinite topological spaces.

\item For an algebraic variety $X$ over $F$ we consider the set $X(F)$ as an object of $\bSet$.

\end{enumerate}

\end{remark}

\begin{proposition}\label{weak-strict-GK}
Let $H\in\Pro\bSet$ be a  strict group object. Then, in the countable setting used here, the natural functor
$$
\Pro_{\aleph_0}\bigl(\Rep(H,\Vect_{\aleph_0})\bigr)\longrightarrow\Rep(H,\mVect)
$$
is an equivalence.
\end{proposition}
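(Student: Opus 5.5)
We have to show that the natural functor $\Phi:\Pro_{\aleph_0}(\Rep(H,\Vect_{\aleph_0}))\to\Rep(H,\mVect)$ is fully faithful and essentially surjective. Write the strict group object as $H=\prl_k H_k$, with $H_k\in\bSet$ and surjective transition maps. We may arrange that the $H_k$ are themselves group objects: in the situations of interest the kernels of $H\to H_k$ are normal, so each $H_k$ is a quotient group. The action of $\mcA$ on $\mcC$ is the one from \cite{GK}. Concretely, for $X=\prl X_k$ and $V=\prl V_j$ one has $a(X,V)=\prl_{j,k}\mcS(X_k)\otimes V_j$, suitably interpreted: for $X_k$ an ind-profinite set, $a(X_k,V_j)$ is a space of locally constant functions. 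The defining property we use is the following. A morphism $V\to a(H,V)$ with $V\in\Vect_{\aleph_0}$ factors through some $a(H_k,V)$, and then it is an ordinary smooth representation of the locally profinite group $H_k$. This is where countability and strictness enter.

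Full faithfulness is formal. A morphism in $\Pro$ between $\prl V_i$ and $\prl W_j$ is computed by $\prl_j\inl_i\Hom(V_i,W_j)$. An $H$-equivariant morphism of pro-objects is compatible with the coactions. After refining the indexing, this compatibility can be checked levelwise, so $H$-equivariance of the pro-morphism amounts to $H$-equivariance of the representing maps $V_i\to W_j$ for large $i$. To see this, use that filtered colimits commute with the finite limits defining equivariance in $\Vect_{\aleph_0}$. Alternatively, test against each $W_j$ using that $a(H,-)$ commutes with countable cofiltered limits.

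Essential surjectivity is the main step. Let $(\mV,\pi)$ be an object of $\Rep(H,\mVect)$, and choose a presentation $\mV=\prl_n V_n$ with $V_n\in\Vect_{\aleph_0}$. Replacing $V_n$ by the image of $\mV\to V_n$, we may assume the projections $\kk_n$ are epimorphisms, so $\mV$ is strict in the sense of Definition \ref{strict}. Consider the composite $\mV\to a(H,\mV)\to a(H,V_n)$. By the definition of morphisms into a $\Vect_{\aleph_0}$-valued object, this factors through some $\kk_{n'}$, giving a map $V_{n'}\to a(H,V_n)$. Its image generates a subspace of $a(H,V_n)$. The key step is to replace $V_n$ by the quotient $V'_n:=\kk_n$-image on which $H$ acts. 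Concretely, take $V'_n$ to be the coinvariant-type quotient of $\mV$ given by the image of $\mV\to a(H,V_n)\to V_n$, where the second map is evaluation at all points of $H$. More cleanly, let $K_n\subset\mV$ be the largest $H$-stable subobject contained in $\ker\kk_n$, namely the kernel of the composite $\mV\to a(H,\mV)\to a(H,V_n)$. Then put $V'_n:=\mV/K_n$. The strictness of $H$ together with the factorization above shows that $V'_n$ is an honest object of $\Vect_{\aleph_0}$. Indeed, it embeds in $a(H_k,V_n)$ restricted to the image of $V_{n'}$, which is a countable-dimensional space. It carries a smooth $H$-action that factors through $H_k$.

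The $V'_n$ form an inverse system of $H$-representations with surjective transition maps, and $\mV\to\prl V'_n$ is $H$-equivariant. This map is an isomorphism: it is monic because $K_n\subset\ker\kk_n$, and it is epic by Lemma \ref{admi}(2), since each $\mV\to V'_n$ is an epimorphism. Hence $\mV\simeq\Phi(\prl V'_n)$.

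The main obstacle is justifying that $V'_n$ lies in $\Vect_{\aleph_0}$ and that its $H$-action is a genuine smooth representation. This needs the precise description of $a(H,-)$ from \cite{GK} for $H\in\Pro\bSet$ strict. The plan is first to prove that a morphism $V\to a(\prl_k H_k,W)$ with $V,W$ ordinary factors through a finite stage $a(H_k,W)$, using countable compactness of the images, and then to read off smoothness from that finite stage.
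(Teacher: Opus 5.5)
\textbf{Verdict: genuine gap in essential surjectivity.} The paper gives no argument here beyond citing the countable version of \cite[Proposition~2.5]{GK}, so your proposal has to stand on its own. Your full-faithfulness sketch is essentially fine.

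\textbf{The reduction to strict objects is false.} You write ``replacing $V_n$ by the image of $\mV\to V_n$, we may assume the $\kk_n$ are epimorphisms.'' In $\Pro(\mcC)$ the image of $\kk_n$ is the pro-system $\prl_k\operatorname{Im}(V_k\to V_n)$. This is an object of $\mcC$ only under a Mittag-Leffler condition. For example, $V_n=\bigoplus_{j\geq n}\CC e_j$, with inclusions as transition maps, is a nonzero object of $\mVect$ that is not strict; its limit in $\Vect$ is even $0$.

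For the same reason $V'_n=\mV/K_n$ need not lie in $\Vect_{\aleph_0}$. It is the image of $c_n:\mV\to a(H,V_n)$, namely the pro-subobject $\prl_k U_{n,k}$ with $U_{n,k}=\phi_n(\operatorname{Im}(V_k\to V_{n'}))$, where $\phi_n:V_{n'}\to a(H,V_n)$ represents $c_n$. Being a subobject of a countable-dimensional space does not make a pro-object constant.

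So your construction already breaks down for trivial $H$, where the statement is a tautology. In the non-strict case Lemma~\ref{admi}(2) is also unavailable, since its target must be a strict system of objects of $\mcC$.

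\textbf{What is missing.} You need an argument that $\operatorname{Im}(c_n)$ is still a countable pro-object of honest representations. This follows from the associativity axiom.
\begin{itemize}
\item Let $\psi_k:V_{k_1}\to a(H,V_k)$ represent $\mV\to a(H,\mV)\to a(H,V_k)$. The two representatives of $\mV\to a(H\times H,V_n)$ then agree on some $V_{k'}$. This gives $R_hU_{n,k'}\subset U_{n,k}$ for every right translation $R_h$.
\item Hence $\{U_{n,k}\}_k$ and $\{H\cdot U_{n,k}\}_k$ are mutually cofinal. So $\operatorname{Im}(c_n)\simeq\prl_k H\cdot U_{n,k}$, and each $H\cdot U_{n,k}$ is a countable-dimensional smooth subrepresentation of $a(H,V_n)$.
\item The map $\mV\to\prl_n\operatorname{Im}(c_n)$ is monic, because $\kk_n=\operatorname{ev}_e\circ c_n$ by the unit axiom and the $\kk_n$ are jointly monic.
\item It is epic because cofiltered limits in $\Pro(\mcC)$ are exact.
\item A diagonal reindexing then gives the required object of $\Pro_{\aleph_0}(\Rep(H,\Vect_{\aleph_0}))$.
\end{itemize}

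\textbf{A false but unnecessary claim.} The ``defining property'' you invoke is that a coaction $V\to a(H,V)$ factors through a single $a(H_k,V)$. This is false: take $H=\ZZ_p$ and $V=\bigoplus_k\CC[\ZZ/p^k]$. So the ``main obstacle'' you plan to prove cannot be proved. It is also unnecessary, since smoothness only requires the coaction to land in $\inl_k a_1(H_k,V)$.
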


\begin{proof}
This is the countable version of \cite[Proposition~2.5]{GK}.
\end{proof}

\begin{claim}

The category $\mVect$ has countable inductive and projective limits.

\end{claim}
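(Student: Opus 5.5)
The statement is that $\mVect=\Pro_{\aleph_0}(\Vect_{\aleph_0})$ has countable inductive and projective limits. I treat the two kinds of limits separately.

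\emph{Projective limits.} These are formal. $\Vect_{\aleph_0}$ has finite limits: kernels and finite products of countable-dimensional spaces are again countable-dimensional. For any category $\mcC$ with finite limits, $\Pro_{\aleph_0}(\mcC)$ has countable projective limits. Concretely, I reduce a countable limit to countable products and equalizers.
\begin{itemize}
\item A countable product $\prod_j \mV_j$ with $\mV_j=\prl_{n}V_{j,n}$ is represented by the countable diagram of finite products $\prod_{j\le N}V_{j,n_j}$, indexed over finite data; this is countable and cofiltered.
\item For equalizers of a pair $f,g:\mV\rightrightarrows\mW$, I first use the standard reindexing lemma to represent $f$ and $g$ by level maps over a common countable cofiltered index category. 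I then take the levelwise kernel of $f_n-g_n$.
\end{itemize}
Finally, a cofiltered countable limit of countable pro-objects is again a countable pro-object, by the usual diagonal argument, since $\aleph_0\cdot\aleph_0=\aleph_0$.

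\emph{Inductive limits.} These are the real content, since $\Pro$ does not formally preserve colimits. I handle three cases.
\begin{itemize}
\item \textbf{Cokernels.} Given $f:\mV\to\mW$, reindex it as a level map $(f_n:V_n\to W_n)$ and take the levelwise cokernel $\prl W_n/f_n(V_n)$. Then I check the universal property. A morphism $\mW\to X$ with $X\in\Vect_{\aleph_0}$ factors through some $W_n$. If it kills $f$, then after increasing $n$ it kills $f_n(V_n)$. This is the standard fact that $\Pro(\mcA)$ of an abelian category is abelian.
\item \textbf{Finite sums.} These are levelwise.
\item \textbf{Countable direct sums.} Here I set $\bigoplus_j\mV_j:=\prl_N\bigoplus_{j\le N}\mV_j$? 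No, that is the product. I instead aim to show that the natural candidate is the colimit in $\Pro(\Vect)$. Take a Mittag-Leffler presentation $\mV_j=\prl_n V_{j,n}$ with epimorphic transitions (strict objects, Definition~\ref{strict}). The candidate is
$$\prl_{(n_j)}\bigoplus_j V_{j,n_j},$$
indexed over the countable cofinal family of functions $j\mapsto n_j$ that are eventually constant, or over a diagonal sequence. To justify this, I would first try the paper's convention following \cite{GK}, where such colimits are computed via Proposition~\ref{weak-strict-GK}-type equivalences.
\end{itemize}

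\emph{Main obstacle.} The hard step is the countable direct sum. Index families $(n_j)$ for arbitrary sequences are uncountable, so I must use a cofinality argument. I also need to verify that $\Hom(\bigoplus\mV_j,X)=\prod_j\Hom(\mV_j,X)$ for $X$ of countable dimension. This uses that a map to a discrete $X$ from the candidate factors through a single level, and that for each $j$ separately there is such a level.

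Assuming this works, the general countable colimit is the cokernel of the usual map between countable sums, which completes the proof.
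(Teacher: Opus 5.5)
Your treatment of projective limits is fine, and so are cokernels and finite direct sums; together these show that $\mVect$ is abelian and has countable projective limits. The gap is the countable direct sum, and it is not just a missing cofinality lemma. The eventually constant functions are not cofinal in $\mathbb{N}^{\mathbb{N}}$ for the pointwise order, since none of them dominates $j\mapsto j$. In fact no countable family $\alpha_0,\alpha_1,\dots$ is cofinal: $\gamma(j)=1+\max_{k\le j}\alpha_k(j)$ is dominated by none of them.

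No other presentation can rescue the step either. Take $\mV_j=\prl_n\CC^n$, with coordinate projections, for every $j\in\mathbb{N}$. Because $\prod_j\inl_n=\inl_{\alpha}\prod_j$, the functor $Y\mapsto\prod_j\Hom(\mV_j,Y)$ on $\Vect_{\aleph_0}$ is represented by $\mathbb{D}=\prl_{\alpha\in\mathbb{N}^{\mathbb{N}}}D_\alpha$. Here $D_\alpha=\bigoplus_j\CC^{\alpha(j)}$, and the transition maps $p_{\beta\alpha}$ are coordinate projections. Now $\Vect_{\aleph_0}\subset\mVect$, and $\mathbb{X}\mapsto\Hom(\mathbb{X},-)|_{\Vect_{\aleph_0}}$ is fully faithful on pro-objects. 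Hence any coproduct in $\mVect$ would be a countably indexed $\prl_kC_k$ isomorphic to $\mathbb{D}$.

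That is impossible. Represent an isomorphism $u\colon\mathbb{D}\to\prl_kC_k$ by maps $u_k\colon D_{\alpha_k}\to C_k$, and its inverse $w$ by maps $w_\gamma\colon C_{k_\gamma}\to D_\gamma$. The equation $w\circ u=\mathrm{id}$ means that for every $\gamma$ there is $\beta\ge\gamma,\alpha_{k_\gamma}$ with $w_\gamma u_{k_\gamma}p_{\beta,\alpha_{k_\gamma}}=p_{\beta\gamma}$. Hence $\ker p_{\beta,\alpha_{k_\gamma}}\subseteq\ker p_{\beta\gamma}$, and testing on basis vectors gives $\gamma\le\alpha_{k_\gamma}$ pointwise. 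So the countable family $\{\alpha_k\}$ would dominate $\mathbb{N}^{\mathbb{N}}$, contradicting the diagonal argument.

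Therefore $\mVect$ has no countable coproduct of copies of $\prl_n\CC^n$. Since finite sums exist, it also has no sequential colimit $\inl_N\bigoplus_{j\le N}\mV_j$. The step you flag with ``assuming this works'' fails, so your reduction of general countable colimits to cokernels of maps between countable sums has nothing to rest on. Only the projective half of the statement, together with finite inductive limits, can be proved. (The paper gives no argument for this Claim, and what it actually uses later are countable products, such as $\Upsilon=\prod_{m\in\ZZ}\mcM$.)
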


\begin{definition}\label{maps}

\begin{enumerate}

\item For $\mV=\varprojlim V_n$, with $V_n\in\Vect_{\aleph_0}$, we write $\ti\mV:=\varprojlim V_n\in\Vect$, where $\varprojlim V_n$ is the projective limit in $\Vect$.

\item For $\mX\in\mSet$ we write $\ti\mX:=\Hom_{\mSet}(*,\mX)$.

\end{enumerate}

\end{definition}

\begin{claim}

\begin{enumerate}

\item The correspondence $\mV\mapsto\ti\mV$ defines a functor from $\mVect$ to $\Vect$, and the correspondence $\mX\mapsto\ti\mX$ defines a monoidal functor from $\mSet$ to $Set$.

\item If $\mT\in\mSet$ is a group object, then the set $\ti\mT$ has a group structure.

\item If $W$ is regarded as a discrete vector space, then there is a canonical isomorphism $\Hom_{\mVect}(\mV,W)\simeq\Hom_{\Vect}^{\mathrm{cont}}(\ti\mV,W)$, where $\ti\mV$ is endowed with its inverse-limit topology.

\end{enumerate}

\end{claim}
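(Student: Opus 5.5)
The claim has three parts, and I would treat them in order, working throughout with explicit strict presentations. Write $\mV=\prl_n V_n$ with $V_n\in\Vect_{\aleph_0}$, and $\mX=\inl_i\mX_i$ with $\mX_i=\prl_j X_{ij}$ in $\Pro\bSet$.

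For (1), I set $\ti\mV=\prl V_n$ in $\Vect$. A morphism in $\mVect=\Pro_{\aleph_0}(\Vect_{\aleph_0})$ is an element of $\prl_m\inl_n\Hom(V_n,W_m)$. Each such element induces a compatible family of maps $\ti\mV\to V_n\to W_m$, and hence a map $\ti\mV\to\ti\mW$. I would check that this does not depend on the chosen representatives and that it respects composition; both follow from the universal property of $\prl$ in $\Vect$. Independence of the presentation follows because two presentations of the same pro-object are related by a pro-isomorphism, and the functor just built sends that to an isomorphism.

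For $\ti\mX=\Hom_{\mSet}(*,\mX)$, functoriality is automatic because this is a representable functor. Monoidality comes from $\Hom(*,\mX\times\mY)=\Hom(*,\mX)\times\Hom(*,\mY)$, since products in any category are compatible with $\Hom$ out of a fixed object. For (2), I would observe that any product-preserving functor sends group objects to group objects, so $\ti\mT$ inherits the multiplication $\ti m$, unit $\ti e$ and inverse $\ti\iota$, and the axioms are transported.

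For (3), I endow $\ti\mV$ with the inverse-limit topology, where each $V_n$ is discrete. Then
$$
\Hom_{\mVect}(\mV,W)=\inl_n\Hom(V_n,W),
$$
because $W$ is a constant pro-object. This gives a natural map $\inl_n\Hom(V_n,W)\to\Hom^{\mathrm{cont}}(\ti\mV,W)$ by composing with the projections $p_n:\ti\mV\to V_n$. The map is well defined and lands in continuous homomorphisms, since $\ker p_n$ is open.

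Surjectivity follows because a continuous $f:\ti\mV\to W$ with $W$ discrete has $\ker f$ open. Hence $\ker f$ contains some $\ker p_n$, and $f$ factors through $p_n(\ti\mV)\subset V_n$; I then extend it to all of $V_n$ by choosing a complement.

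Injectivity requires that if $g\circ p_n=0$, then $g$ vanishes on the image of $V_{n'}$ in $V_n$ for some $n'$. This is where I expect the main obstacle, since the projections $\ti\mV\to V_n$ need not be surjective. I would first try to reduce to a strict presentation, replacing $V_n$ by the stable images $\bigcap_{n'}\mathrm{im}(V_{n'}\to V_n)$. The step to verify is that for countable-dimensional $V_n$ this replacement yields an isomorphic pro-object, i.e.\ that the images stabilize or a Mittag-Leffler-type argument applies. If it does not, I would restrict the statement to strict $\mV$, which are the objects actually used in the paper (compare Proposition \ref{weak-strict-GK}). For strict $\mV$ the transition maps are surjective, so each $p_n$ is surjective, and injectivity is then immediate.
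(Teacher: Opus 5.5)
Your arguments for (1) and (2), and the surjectivity half of (3), are sound. Two small improvements. For (1), presentation-independence is immediate from $\ti\mV=\prl_n\Hom(\CC,V_n)=\Hom_{\mVect}(\CC,\mV)$, which parallels $\ti\mX=\Hom_{\mSet}(*,\mX)$. For monoidality, add the one-line check that the levelwise extension of the Cartesian product through $\Ind_{\aleph_0}$ and $\Pro_{\aleph_0}$ is the categorical product and that $*$ stays terminal; at the Ind step this uses that filtered colimits commute with finite products of sets.

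The gap is at the injectivity step in (3), which you leave undecided. The reduction you hope for does not exist: countable dimension does not force a Mittag-Leffler condition. In fact (3) is false for non-strict $\mV$. Take $V_n=\bigoplus_{k\geq n}\CC e_k$, with the inclusions $V_{n+1}\hookrightarrow V_n$ as transition maps. No transition map is zero, so $\mV\neq0$. But $\ti\mV=\bigcap_nV_n=0$, so $\Hom^{\mathrm{cont}}_{\Vect}(\ti\mV,\CC)=0$, while
$$\Hom_{\mVect}(\mV,\CC)=\inl_n\prod_{k\geq n}\CC\simeq\Bigl(\prod_{k\geq0}\CC\Bigr)\Big/\Bigl(\bigoplus_{k\geq0}\CC\Bigr)\neq0 .$$
Here all stable images are $0$, so replacing $V_n$ by them changes the pro-object.

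More generally, suppose that for some $n$ the images $\operatorname{im}(V_{n'}\to V_n)$ do not stabilize. Put $W=V_n/p_n(\ti\mV)$. The quotient map $V_n\to W$ is then a nonzero element of $\Hom_{\mVect}(\mV,W)$, because every $\operatorname{im}(V_{n'}\to V_n)$ strictly contains $p_n(\ti\mV)$. Its image under the comparison map is zero. So (3) holds exactly for $\mV$ isomorphic to a strict object.

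Your fallback is therefore not optional: (3) has to be stated for strict $\mV$. The paper records this as a Claim without proof, so the hypothesis is simply missing from the statement. With strictness your argument is complete, using that for a countable strict system each projection $p_n:\ti\mV\to V_n$ is surjective.
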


\begin{definition}

We define a functor $\un{\Hom}:\mVect\times\mVect\to\mVect$ by $\un{\Hom}(\mV,\mW)=\prl_m\inl_n\Hom(V_n,W_m)$ for $\mV=\prl_nV_n$ and $\mW=\prl_mW_m$.

\end{definition} \subsubsection{Actions}

\begin{definition}\label{a}

\begin{enumerate}

\item $a_0:Set_0^{op}\times\Vect\to\Vect$ is the functor such that $a_0(X,V)$ is the space of maps from $X$ to $V$. The functor $a_0$ is contravariant in $X$ and covariant in $V$.

\item $a_1:\bSet^{op}\times\Vect\to\Vect$ is the functor such that $a_1(\mathbf X,V)$ is the space of locally constant $V$-valued functions on $\mathbf X$.

\item $a_2:(\Pro\bSet)^{op}\times\Vect\to\Vect$ is the functor such that $a_2(\mX,V):=\inl_n a_1(\mathbf X_n,V)$, where $\mX=\prl_n\mathbf X_n$.

\item $a_3:(\Pro\bSet)^{op}\times\mVect\to\mVect$ is the functor such that $a_3(\mX,\mV):=\prl_n a_2(\mX,V_n)$, where $\mV=\prl_nV_n$.

\item $a:\mSet^{op}\times\mVect\to\mVect$ is the functor such that $a(\mX,\mV):=\prl_n a_3(\mX_n,\mV)$, where $\mX=\inl_n\mX_n$.

\end{enumerate}

\end{definition}

\begin{lemma}\label{action}

\begin{enumerate}

\item The functor $a$ defines an action of the category $\mSet$ on $\mVect$, in the contravariant convention of Definition \ref{basic}.

\item The functors $\mX\mapsto\ti\mX$ and $\mV\mapsto\ti\mV$ are compatible with the action $a$ in the sense that an action of a group object $\mT$ on $\mV$ induces an action of the abstract group $\ti\mT$ on $\ti\mV$.

\end{enumerate}

\end{lemma}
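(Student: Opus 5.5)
We must prove Lemma \ref{action}: (1) $a$ defines a contravariant action of $\mSet$ on $\mVect$; (2) compatibility with $\mX\mapsto\ti\mX$ and $\mV\mapsto\ti\mV$.

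The plan is to build the structure isomorphisms $\alpha_{X,Y,V}$ level by level, following the tower of functors $a_0,a_1,a_2,a_3,a$ of Definition \ref{a}, and to check at each stage that the unit and associativity axioms are inherited from the previous one. Each step passes to a limit or colimit, so the axioms only need to be verified once, at the bottom.

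\emph{Step 1.} For finite sets, $a_0(X\times Y,V)=\mathrm{Map}(X\times Y,V)\simeq\mathrm{Map}(X,\mathrm{Map}(Y,V))$ by currying, and $a_0(*,V)=V$. The axioms are the usual identities for currying.

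\emph{Step 2.} For $\mathbf X=\inl_i\prl_j X_{ij}$ in $\bSet$, a locally constant $V$-valued function on $\mathbf X$ is determined on a compact open piece by a function on a finite quotient. So
$$a_1(\mathbf X,V)=\prl_i\inl_j a_0(X_{ij},V).$$
For a product of two such objects, locally constant functions on $\mathbf X\times\mathbf Y$ with values in $V$ are the same as locally constant functions on $\mathbf X$ with values in $a_1(\mathbf Y,V)$. This holds because on each compact open piece a function factors through a finite quotient of the product, and the finite quotients of a product are cofinal among products of finite quotients. Smoothness in the $\mathbf Y$-variable is uniform on compact pieces for the same reason.

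\emph{Step 3.} For $a_2,a_3,a$, the isomorphism $\alpha$ is obtained by applying $\inl$ and $\prl$ to the isomorphisms already constructed. The point to check is that the double limits can be interchanged. For $a_2$ on $\mX\times\mY=\prl_{n}(\mathbf X_n\times\mathbf Y_n)$ (diagonal cofinality) we need
$$\inl_n a_1(\mathbf X_n\times\mathbf Y_n,V)\simeq\inl_n a_1\Bigl(\mathbf X_n,\inl_m a_1(\mathbf Y_m,V)\Bigr).$$
In $a_3$ the outer $\prl$ over $V_n$ commutes with everything by definition. For $a$, $\mX\times\mY=\inl(\mX_n\times\mY_n)$ in $\mSet$ and $\prl$ commutes with $\prl$.

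\textbf{Main obstacle.} The delicate point is the commutation in Step 3 of $a_1(\mathbf X_n,-)$ with the filtered colimit $\inl_m$. The functor $a_1(\mathbf X_n,-)$ contains a projective limit over the ind-index of $\mathbf X_n$, so it need not commute with $\inl$ in $\Vect$. I would first try to replace equalities in $\Vect$ by the corresponding statements in $\mVect$, viewing $a_2(\mX,V)$ as a pro-object rather than its realization. Alternatively, I would use the countability assumptions together with the fact that each compact piece contributes a finite colimit-compatible factor. If needed, I would restrict to the strict objects of Proposition \ref{weak-strict-GK}, where the transition maps are epimorphisms and the comparison map can be checked to be an isomorphism on each finite level.

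\emph{Step 4 (part (2)).} The functor $\mX\mapsto\ti\mX=\Hom(*,\mX)$ is monoidal because $\Hom(*,-)$ preserves products. A point $x\in\ti\mT$ gives evaluation maps $a(\mT,\mV)\to a(*,\mV)=\mV$, hence an action on $\ti\mV$. Applying the two axioms of a representation at points $(g,h)\in\ti\mT\times\ti\mT$ yields $\pi(gh)=\pi(g)\pi(h)$ and $\pi(e)=\mathrm{id}$.
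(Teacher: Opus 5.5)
Your route is the same as the paper's: currying for finite sets, transporting the unit and associativity isomorphisms through $a_1,a_2,a_3,a$, and, for part (2), evaluating at points of $\ti\mT$. Step 4 is fine. For part (1), however, the proposal is not a proof. Write $C^{lc}(X,V)$ for the locally constant $V$-valued functions on $X$.

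\textbf{Step 2 is false as stated.} Here $a_1(\mathbf Y,V)=\prl_kC^{lc}(Y_k,V)$ is a realized vector space, so a locally constant map from a profinite $X_i$ into it has finite image. Smoothness on each compact $X_i\times Y_k$ is not uniform in $k$, so it does not give this. Take $\mathbf X=O_F$ and $\mathbf Y=F$. The function $f(x,y)=\widetilde\psi(xy)$ lies in $a_1(O_F\times F,\CC)$. But $x\mapsto f(x,\cdot)$ is injective on $O_F$, since for $x\neq x'$ the character $y\mapsto\widetilde\psi((x-x')y)$ is nontrivial. Hence $f\notin a_1(O_F,a_1(F,\CC))$.

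\textbf{The Step 3 obstacle is equally real.} A locally constant function on $F$ can take a value in $W_s\setminus W_{s-1}$ on each shell $\varpi^{-s}O_F\setminus\varpi^{-s+1}O_F$. So $\inl_m a_1(F,W_m)\subsetneq a_1(F,\inl_mW_m)$. Since $O_F$ and $F$ are countable strict objects, neither countability nor strictness can rescue this. Inside $\Vect$ these comparison maps are injective but not surjective (cf.\ the pseudo-action in the remark after the lemma). You list three remedies without carrying one out, so the isomorphisms $\alpha_{X,Y,V}$ are never produced.

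\textbf{What is missing} is to build your first remedy into the definitions rather than merely try it. Two conventions are needed.
\begin{enumerate}
\item The limit over the ind-index of $\mathbf X=\inl_iX_i\in\bSet$ must stay formal. That is, $a_1(\mathbf X,V)$ is the pro-object $\prl_iC^{lc}(X_i,V)$ of $\mVect$, and $a_1(\mathbf X,\prl_kW_k)=\prl_{i,k}C^{lc}(X_i,W_k)$.
\item The colimit over the pro-index of an object of $\Pro\bSet$ must be taken inside these formal limits. Even in $\mVect$ the order matters. Applying $\Hom(-,\CC)$ turns $\inl_m\prl_sA_{m,s}$ into $\prl_m\inl_s\Hom(A_{m,s},\CC)$, and turns $\prl_s\inl_mA_{m,s}$ into $\inl_s\prl_m\Hom(A_{m,s},\CC)$, which differ in general. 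So your Step 3 worry does not vanish just by passing to pro-objects.
\end{enumerate}
With these conventions, each structure map is assembled index by index from two facts for profinite $X,Y$: the currying isomorphisms $C^{lc}(X\times Y,V)\simeq C^{lc}(X,C^{lc}(Y,V))$, and the identities $C^{lc}(X,\inl_mW_m)=\inl_mC^{lc}(X,W_m)$. Both hold because locally constant functions on a profinite set take finitely many values. This compact setting is the only place your Step 2 cofinality argument is valid.

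This is what is needed to make the paper's one-line justification (products and all structure maps are represented levelwise) correct. You have located the crux correctly, but the proposal does not supply it.
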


\begin{proof}

At the finite level this is the standard currying isomorphism $\operatorname{Maps}(X\times Y,V)\simeq\operatorname{Maps}(X,\operatorname{Maps}(Y,V))$, which is compatible with the one-point set and with the usual coherence diagrams. Passing from finite sets to locally profinite objects replaces maps by locally constant  supported functions. The same currying map identifies locally constant compactly supported functions on $\mathbf X\times\mathbf Y$ with locally constant  supported families, parametrized by $\mathbf X$, of locally constant  supported functions on $\mathbf Y$. Thus $a_1(\mathbf X\times\mathbf Y,V)\simeq a_1(\mathbf X,a_1(\mathbf Y,V))$, and the unit object acts as the identity.

\sms

The functors $a_2$, $a_3$, and $a$ are obtained from $a_1$ by the countable ind- and pro-limits appearing in the definitions. Products and all structure maps are represented levelwise, so the preceding associativity and unit isomorphisms pass to these limits. Since the coherence diagrams already commute at the finite level, they commute after passing to the ind/pro limits. This proves that $a$ is an action of the monoidal category $\mSet$ on $\mVect$ in the stated convention.

\sms

For the second assertion, let a group object $\mT$ act on $\mV$, so that we have an action morphism $\mV\to a(\mT,\mV)$. Applying the functors $\ti{\ }$ identifies this morphism with a map $\ti\mV\to\operatorname{Maps}(\ti\mT,\ti\mV)$, or equivalently with a map $\ti\mT\times\ti\mV\to\ti\mV$. The associativity and unit identities follow by applying $\ti{\ }$ to the corresponding diagrams for the action of $\mT$ on $\mV$. This is the required compatibility.

\end{proof}

\begin{remark}

This action induces the pseudo-action described in Section $1.9$ of \cite{GK}.

\end{remark}

\subsubsection{Quasi-pro-unipotent group objects, coinvariants, and inflation}

\begin{definition}\label{quasi-pro-unipotent}
\begin{enumerate}
\item A group object $H\in\bSet$ is quasi-unipotent if it admits a presentation
$$
H\simeq\inl_i H_i,
$$
where every $H_i$ is a group object of $\Pro_{\aleph_0}(Set_0)$ and all transition morphisms are homomorphisms.
\item A group object $H\in\Pro\bSet$ is quasi-pro-unipotent if it admits a presentation
$$
H\simeq\prl_j H_j,
$$
where every $H_j$ is quasi-unipotent and the transition homomorphisms are  surjective.
\end{enumerate}
\end{definition}

Let $H$ be a group object for which the coinvariants functor is defined. We denote it by
$$
\operatorname{Coinv}_H:\Rep(H,\mVect)\longrightarrow\mVect,
$$
and write $\Pi_H:=\operatorname{Coinv}_H(\Pi)$. Thus $\operatorname{Coinv}_H$ is left adjoint to the trivial-representation functor $\operatorname{triv}:\mVect\to\Rep(H,\mVect)$.

\begin{theorem}\label{coinvariants-exact}
If $H$ is quasi-pro-unipotent, then the functor $\operatorname{Coinv}_H$ is exact.
\end{theorem}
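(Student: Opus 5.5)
The plan is to reduce the statement, in three steps, to the classical fact that for an almost unipotent algebraic $F$-group the coinvariants functor on $\Vect_{\aleph_0}$ is exact (the Claim in Section \ref{reptheory}, going back to \cite{BZ}). First I treat a quasi-unipotent $H=\inl_i H_i$. Second I pass to a quasi-pro-unipotent $H=\prl_j H_j$. Third I extend from representations on $\Vect_{\aleph_0}$ to representations on $\mVect$.

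\textbf{Quasi-unipotent case.} Let $H=\inl_i H_i$ with each $H_i$ a profinite group. For a representation $V$ in $\Vect_{\aleph_0}$ I use the identification
$$
V_H=\inl_i V_{H_i}.
$$
A profinite group has a normalized Haar measure, so $V\mapsto V_{H_i}$ is exact: the averaging projector identifies coinvariants with invariants, and that splitting is functorial. Filtered colimits in $\Vect$ are exact, so $\operatorname{Coinv}_H$ is exact on $\Rep(H,\Vect_{\aleph_0})$. This is the analogue of the statement that a union of compact open subgroups has exact coinvariants.

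\textbf{Quasi-pro-unipotent case.} Let $H=\prl_j H_j$ with surjective transition homomorphisms. A smooth representation on $V\in\Vect_{\aleph_0}$ factors through some $H_j$. This follows from the definition of $a_2$ as an inductive limit over $j$, since the action map $V\to a_2(H,V)=\inl_j a_1(H_j,V)$ lands at a finite stage for finite-dimensional pieces. Since $V$ has countable dimension, I pass to the filtered colimit over its finite-dimensional subspaces. Coinvariants commute with filtered colimits, and for $V$ inflated from $H_j$ one has $V_H=V_{H_j}$ because $H\to H_j$ is surjective. So exactness reduces to the previous step. One subtlety is that a short exact sequence may have its terms factoring through different $H_j$; I handle this by choosing a common $j$ for finitely many finite-dimensional pieces and writing the sequence as a filtered colimit of such sequences.

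\textbf{Pro-objects.} By Proposition \ref{weak-strict-GK}, $\Rep(H,\mVect)\simeq\Pro_{\aleph_0}(\Rep(H,\Vect_{\aleph_0}))$. Under this equivalence $\operatorname{Coinv}_H$ is the pro-extension $\prl_n V_n\mapsto\prl_n (V_n)_H$, since the left adjoint commutes with the pro-structure levelwise. Every short exact sequence in $\Pro_{\aleph_0}(\mcC)$ is isomorphic to a countable projective limit of short exact sequences in $\mcC$ (the standard level-representation of morphisms in pro-categories). Applying the exact functor levelwise, and using that $\prl$ is exact in $\Pro_{\aleph_0}$ as a formal limit, gives exactness of $\operatorname{Coinv}_H$.

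I expect the main obstacle to be the second step: justifying that smooth representations on countable-dimensional spaces genuinely factor through a single quotient $H_j$, and that coinvariants along the surjection $H\to H_j$ agree. That argument depends on unpacking the definition of $a_2$. Of the three steps, the identification in the third one, that coinvariants commute with the pro-presentation, is where I would be most careful that the adjunction is computed levelwise.
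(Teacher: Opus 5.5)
Your argument is correct in outline, but it takes a different route from the paper. The paper gives no argument of its own. It cites \cite[Lemma~2.7]{GK} and observes that the hypotheses of that lemma are exactly the defining conditions of quasi-pro-unipotence.

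You instead rebuild the result in four stages:
\begin{enumerate}
\item Haar averaging gives exactness for each profinite piece.
\item Exactness of filtered colimits handles the ind-direction $H_j=\varinjlim_i H_{j,i}$.
\item Factorization through the quotients $H_j$ handles the pro-direction.
\item Proposition \ref{weak-strict-GK} gives the formal pro-extension. There you use two standard facts: the left adjoint of $\Pro(\operatorname{triv})$ is $\Pro(\operatorname{Coinv})$, and short exact sequences in $\Pro_{\aleph_0}$ can be levelized.
\end{enumerate}
Your version shows where each hypothesis enters. Compactness of the $H_{j,i}$ is needed for averaging. Surjectivity of the transitions is used twice: for $V_H=V_{H_j}$, and for the strictness that Proposition \ref{weak-strict-GK} requires. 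The citation buys only brevity.

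One sentence in your second step needs repair. It is false that a smooth representation on $V\in\Vect_{\aleph_0}$ factors through a single $H_j$. For $H=O=\prl O/t^rO$, take the direct sum over $r$ of characters trivial on $t^rO$ but not on $t^{r-1}O$; it factors through no $H_j$.

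Finite-dimensional subspaces are also not subrepresentations in general. Pull the translation action of $F$ on $\mcS(F)$ back along $t^{-n}O\to t^{-n}O/t^{-n+1}O\simeq F$; the resulting representation has infinite-dimensional cyclic subrepresentations. So a ``filtered colimit over finite-dimensional subspaces'' is not a colimit in $\Rep(H,\Vect_{\aleph_0})$. Consequently your proposed levelwise splitting of a short exact sequence does not make sense as stated.

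The fix is to use the subrepresentation generated by finitely many vectors.
\begin{enumerate}
\item If the orbit maps of these vectors factor through $H_j$, the vectors are fixed by the normal subgroup $\ker(\ti H\to\ti H_j)$.
\item By normality, everything they generate is fixed as well.
\item By surjectivity of $\ti H\to\ti H_j$, this subrepresentation is a smooth representation of $H_j$, and its $H$- and $H_j$-coinvariants agree.
\end{enumerate}
Now write the middle term of $0\to V'\to V\to V''\to0$ as the directed union of such subrepresentations $V_\alpha$. Intersecting with $V'$ exhibits the sequence as a filtered colimit of short exact sequences of $H_{j(\alpha)}$-representations, and your first step applies to each of them. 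No separate choice of a common $j$ for the three terms is needed. With this change, the rest of your proof goes through.
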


\begin{proof}
This is \cite[Lemma~2.7]{GK}. Its hypotheses are precisely that $H$ be an inverse limit, with surjective transition homomorphisms, of group objects which are direct limits of compact group objects.
\end{proof}

\begin{proposition}\label{inflation}
If $H$ is quasi-pro-unipotent, then $\operatorname{Coinv}_H$ admits a left adjoint
$$
\operatorname{Inf}_H:\mVect\longrightarrow\Rep(H,\mVect),
$$
called inflation. Thus, functorially in $W\in\mVect$ and $\Pi\in\Rep(H,\mVect)$,
$$
\Hom_{\Rep(H,\mVect)}\bigl(\operatorname{Inf}_H(W),\Pi\bigr)
\simeq
\Hom_{\mVect}\bigl(W,\Pi_H\bigr).
$$
\end{proposition}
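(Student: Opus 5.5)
We will construct $\operatorname{Inf}_H$ directly as a coinduction-type functor, dual to the way compact induction from the trivial group is built. The point is that $\operatorname{Coinv}_H$ is exact (Theorem \ref{coinvariants-exact}) and commutes with the relevant limits, so the adjoint can be built in stages along the presentation of $H$. Write $H\simeq\prl_j H_j$ with surjective transition homomorphisms and each $H_j\simeq\inl_i H_{j,i}$, where the $H_{j,i}$ are profinite (compact) group objects.

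The construction proceeds from the simplest case outward.

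\textbf{Compact case.} For a compact group object $C\in\Pro_{\aleph_0}(Set_0)$, put $\operatorname{Inf}_C(W)=a(C,W)$, the space of locally constant $W$-valued functions on $C$ with the regular action. Given a map $f:W\to\Pi_C$, lift it to $\Pi$ using the averaging section $\Pi_C\simeq\Pi^C\subset\Pi$ furnished by Haar measure. We then get a map $a(C,W)\to\Pi$, $\phi\mapsto\int_C c\cdot\tilde f(\phi(c))\,dc$. Conversely, restricting a map $a(C,W)\to\Pi$ to constant functions and projecting to coinvariants recovers $f$. This gives the adjunction at the level of $\Vect_{\aleph_0}$; it extends to $\mVect$ levelwise, using the definition of $a_3$.

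\textbf{Quasi-unipotent case.} For $H_j=\inl_i H_{j,i}$, take compactly supported locally constant functions, $\operatorname{Inf}_{H_j}(W)=\inl_i\operatorname{ind}_{H_{j,i}}^{H_j}\operatorname{Inf}_{H_{j,i}}(W)$, which is essentially $\mcS(H_j)\otimes W$. The adjunction then follows from Frobenius reciprocity for compact induction, together with the fact that coinvariants of $H_j$ are the colimit of the coinvariants of the $H_{j,i}$ (exactness of each step).

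\textbf{Pro case.} By Proposition \ref{weak-strict-GK}, every $\Pi\in\Rep(H,\mVect)$ is a pro-object of representations factoring, on each term, through some quotient $H_j$ by smoothness. Define
$$
\operatorname{Inf}_H(W)=\prl_j\operatorname{Inf}_{H_j}(W),
$$
with transition maps given by pushforward of measures (integration along the fibres of $H_{j+1}\to H_j$), and compute
$$
\Hom(\operatorname{Inf}_H W,\Pi)=\prl_m\inl_j\Hom_{H_j}(\operatorname{Inf}_{H_j}W,\Pi_m).
$$

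\textbf{Main obstacle.} The hardest step is this last pro-level interchange. We must identify $\inl_j\Hom_{H_j}(\operatorname{Inf}_{H_j}W,\Pi_m)$ with $\Hom(W,(\Pi_m)_H)$, which requires showing that $(\Pi_m)_H=(\Pi_m)_{H_j}$ once $\Pi_m$ factors through $H_j$. This holds because the kernel acts trivially and $H\to H_j$ is surjective. We must also check that $\operatorname{Coinv}_H$ commutes with the countable projective limits defining $\Pi$, and we will use exactness (Theorem \ref{coinvariants-exact}) together with the strict (surjective) presentations to control $\prl^1$-type issues. Naturality in $W$ and $\Pi$ is then formal. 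Alternatively, since $\mVect$ is $\Pro_{\aleph_0}(\Vect_{\aleph_0})$, one could invoke a pro-adjoint functor theorem, using that $\operatorname{Coinv}_H$ is exact and preserves the relevant countable limits. We prefer the explicit construction above because it is needed later.
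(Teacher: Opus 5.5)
\textbf{There is a genuine gap: the explicit construction you prefer produces the wrong functor, starting at the compact case.}

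\emph{The compact case.} For a compact group object $C$, every smooth $\Pi$ splits as $\Pi^C\oplus\Pi(C)$. Hence $\Pi_C\simeq\Pi^C$ and $\Hom(W,\Pi_C)\simeq\Hom_C(\operatorname{triv}(W),\Pi)$. By Yoneda, the left adjoint of $\operatorname{Coinv}_C$ is the \emph{trivial} representation on $W$, not $a(C,W)$ with the regular action. Your two maps are not mutually inverse. The composite $f\mapsto\Phi_f\mapsto f$ is the identity, but take $W=\CC$ and $\Pi=a(C,\CC)$: the identity of $\Pi$ and the projection $\phi\mapsto(\int_C\phi)\cdot 1$ have the same restriction to constants. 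So your map $\Hom_C(a(C,W),\Pi)\to\Hom(W,\Pi_C)$ is only a retraction, not a bijection.

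\emph{The quasi-unipotent step.} It inherits this mistake and adds another: $\mcS(H_j)\otimes W$ is not the left adjoint. Take $H_j=F$, $W=\CC$, $\Pi=\mcS(F)$. Then $\Pi_F\simeq\CC$ by integration, whereas $\Hom_F(\mcS(F),\mcS(F))$ contains convolution by every element of $\mcS(F)$. In fact no ordinary representation $X$ of $F$ can corepresent $\Pi\mapsto\Pi_F$. Testing against nontrivial characters forces $X$ to be trivial, and then $\Hom_F(X,\mcS(F))=0$, whereas $\mcS(F)_F\simeq\CC$. So the adjoint is necessarily a genuine pro-object. The limit over $i$ must therefore be projective, not inductive. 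Under $\Hom_{H_j}(-,\Pi)$ an inductive limit of compact inductions becomes a projective limit of Hom-spaces, while $\Pi_{H_j}=\inl_i\Pi_{H_{j,i}}$ is an inductive limit.

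Correctly: if $H_j=\inl_iH_{j,i}$ with $H_{j,i}$ compact open, then $\Pi_{H_j}=\inl_i\Pi^{H_{j,i}}$. Frobenius reciprocity for open subgroups then gives, for finite-dimensional $W$ with trivial action,
$$
\Hom(W,\Pi_{H_j})\simeq\inl_i\Hom_{H_j}\bigl(\ind_{H_{j,i}}^{H_j}W,\Pi\bigr)=\Hom\Bigl(\prl_i\ind_{H_{j,i}}^{H_j}W,\Pi\Bigr).
$$
Here the transition maps are pullbacks along $H_j/H_{j,i}\to H_j/H_{j,i+1}$, dual to averaging. For $F$ this is $\prl_s\mcS(F/\varpi^{-s}O_F)$. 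This formal pro-limit is the object your argument needs, and it is exactly why the adjoint exists only after passing to pro-objects.

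\emph{The pro step.} It also needs adjusting. A smooth representation on a countable-dimensional space is in general only an increasing union of subrepresentations, each factoring through some $H_j$; it need not itself factor through one $H_j$. You must therefore also use that the terms $\ind_{H_{j,i}}^{H_j}W$ are finitely generated, so that $\Hom$ commutes with that union. With the corrected objects this goes through.

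\emph{The alternative you mention.} Your closing route is sound, modulo keeping the indexing countable so the object lands in $\Pro_{\aleph_0}$. It uses pro-representability of the left exact functor $\Pi\mapsto\Hom(W,\Pi_H)$, via Theorem \ref{coinvariants-exact} together with Proposition \ref{weak-strict-GK}. The paper itself gives no construction and simply cites \cite[Proposition~2.7]{GK1}. As written, however, your explicit construction does not yield the left adjoint.
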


\begin{proof}
This is \cite[Proposition~2.7]{GK1}. Notice that this proposition, rather than the exactness statement in Theorem \ref{coinvariants-exact}, is the source of representability. In particular,
$$
\Hom_{\Rep(H,\mVect)}\bigl(\operatorname{Inf}_H(\CC),\Pi\bigr)
\simeq
\Hom_{\mVect}(\CC,\Pi_H).
$$
Consequently the functor $\Pi\mapsto\Hom_{\mVect}(\CC,\Pi_H)$ is represented by $\operatorname{Inf}_H(\CC)$. Whenever $\Pi_H$ is an ordinary vector space, the right-hand side is canonically identified with $\Pi_H$.
\end{proof}

Assume now that $F$ is a non-Archimedean local field with ring of integers $O_F$ and uniformizer $\varpi$, and put $O=F[[t]]$.

\begin{lemma}\label{tnO-quasi-pro-unipotent}
For every $n\geq0$, the additive group object $t^{-n}O$ is quasi-pro-unipotent. Consequently the functor of $t^{-n}O$-coinvariants is exact, and it admits the left adjoint $\operatorname{Inf}_{t^{-n}O}$.
\end{lemma}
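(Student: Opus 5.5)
We need to show that $t^{-n}O$ satisfies Definition \ref{quasi-pro-unipotent}(2). The consequences then follow at once: exactness of coinvariants from Theorem \ref{coinvariants-exact}, and the left adjoint $\operatorname{Inf}_{t^{-n}O}$ from Proposition \ref{inflation}.

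First I would realize $t^{-n}O$ as a pro-object of $\bSet$. For $j\geq -n$, put $H_j:=t^{-n}O/t^{j+1}O$. As a topological group, $H_j$ is a finite-dimensional $F$-vector space, namely $F^{n+j+1}$, with coefficient coordinates $a_{-n},\dots,a_j$. The projections $H_{j+1}\to H_j$ are surjective additive homomorphisms, and the pro-object $\prl_j H_j$ is the group object $t^{-n}O$. This is how $O=\prl O_m$ is treated throughout the paper. It remains to check that each $H_j$ is quasi-unipotent in the sense of Definition \ref{quasi-pro-unipotent}(1).

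Next I would write $F^N=\inl_k \varpi^{-k}O_F^N$. Each $\varpi^{-k}O_F^N$ is a compact open subgroup. It is profinite, being $\prl_l \varpi^{-k}O_F^N/\varpi^{l}O_F^N$, a countable inverse limit of finite groups. It is therefore a group object of $\Pro_{\aleph_0}(Set_0)$, with group structure given levelwise. The transition maps are the inclusions, which are homomorphisms, and the union is all of $F^N$. The remaining point is that this ind-presentation computes $F^N$ as an object of $\bSet=\Ind_{\aleph_0}\Pro_{\aleph_0}(Set_0)$. This holds because $F^N$ is viewed as the $F$-points of a variety, that is, as the locally profinite space exhausted by the compact opens. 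Addition is defined compatibly: $\varpi^{-k}O_F^N\times\varpi^{-k}O_F^N\to\varpi^{-k}O_F^N$, which is why the lattices are used rather than arbitrary compacts.

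The main, if mild, obstacle is the bookkeeping needed to ensure these presentations are presentations of group objects in the relevant categories. The multiplication morphism on $\prl_j H_j$ must be the levelwise one. For each $j$, the addition on $H_j$ must be represented by a morphism of ind-objects, and the transition maps $H_{j+1}\to H_j$ must be morphisms of ind-pro objects. The transition $H_{j+1}\to H_j$ sends $\varpi^{-k}O_F^{N+1}$ onto $\varpi^{-k}O_F^{N}$, so it is levelwise, and this gives compatibility. Finally, since $t^{-n}O\simeq t^{-n}\cdot O$ as topological groups, one could alternatively reduce to $n=0$ and then apply the same argument to $O$.
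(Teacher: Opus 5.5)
Your proposal is correct and follows the paper's proof: present $t^{-n}O$ as the inverse limit of the vector groups $t^{-n}O/t^{r}O\simeq F^{n+r}$ with surjective transitions, exhaust each of these by the compact open lattices $\varpi^{-s}O_F^{n+r}$ to get quasi-unipotence, and then invoke Theorem \ref{coinvariants-exact} and Proposition \ref{inflation}. The only differences are your indexing ($j\geq -n$ instead of $r\geq 1$) and your explicit group-object bookkeeping, which the paper leaves implicit.
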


\begin{proof}
For $r\geq1$, set
$$
H_r:=t^{-n}O/t^rO.
$$
Then
$$
t^{-n}O\simeq\prl_{r\geq1}H_r,
$$
and every transition homomorphism $H_{r+1}\to H_r$ is surjective. Moreover,
$$
H_r\simeq F^{n+r}
$$
as an additive locally profinite group, and
$$
F^{n+r}=\inl_{s\geq0}\varpi^{-s}O_F^{n+r}.
$$
Each group $\varpi^{-s}O_F^{n+r}$ is compact and profinite, hence is a group object of $\Pro_{\aleph_0}(Set_0)$. Therefore every $H_r$ is quasi-unipotent and $t^{-n}O$ is quasi-pro-unipotent. Exactness follows from Theorem \ref{coinvariants-exact}, and the existence of inflation follows from Proposition \ref{inflation}.
\end{proof}

\subsubsection{Group objects $\mH (K)$}

As is explained in section $2.11$ of \cite{GK} with any  algebraic
$F$-group $ H$ one can associate a group object
 $\mH\in \mSet$ such that $\wt {\mH} $ is equal (as an abstract group) to $H(K)$.

The following statement is Lemma $2.13$ in \cite{GK}.

\begin{claim}

The functor $\text{rest}_{\mH(K)}:\mcR(\mH)\to\Rep(\ti\mH)$ is fully faithful and defines an equivalence between the category $\mcR(\mH)$ and the category $\mcR (\ti \mH)$ of smooth representations of the group $\ti \mH$.
\end{claim}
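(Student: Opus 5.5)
The plan is to deduce full faithfulness and the equivalence from the construction of $\mH$ in \cite[Section~2.11]{GK} as a strict ind-pro object. I would first fix a presentation
$$
\mH\simeq\inl_n\mH_n,
$$
where $\mH_n\in\Pro\bSet$ is the ind-pro object built from the bounded pieces $t^{-n}$-truncated lattices. Its $F$-points then exhaust $H(K)=\bigcup_n\ti\mH_n$. Each $\mH_n$ is itself a projective limit of the locally profinite sets $H_{n,r}(F)$ of $F$-points of finite-dimensional varieties. By Definition \ref{a}, a representation of $\mH$ on $\mV=\prl_m V_m$ is a compatible system of morphisms
$$
V_m\longrightarrow a_2(\mH_n,V_{m'}).
$$
Unwinding this, such a morphism is a family of maps from $V$ into functions on $H(K)$ that are locally constant on each bounded piece and factor through some finite truncation.

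Next I would construct the inverse functor. Given a smooth representation $\sigma$ of $\ti\mH=H(K)$ in the sense of \cite{GK}, the smoothness condition says that each vector has an open stabilizer containing some congruence subgroup $H(O)(r)$. The orbit map $h\mapsto\sigma(h)v$ is then locally constant on each bounded piece $\mH_n$, and it factors through a finite-level quotient. This yields a morphism $\mV\to a(\mH,\mV)$. The associativity and unit identities are exactly the group-action identities, transported through Lemma \ref{action}(2). For full faithfulness, a morphism of pro-vector spaces commuting with the abstract $H(K)$-actions on $\ti\mV$ automatically commutes with the coaction maps, because those maps are determined by their values at points. The map $\ti{\ }$ is injective on the function spaces $a_1(\mathbf X,V)$, since a locally constant function is determined by its values.

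The main obstacle will be showing that every $\mH$-representation, after applying $\ti{\ }$, is smooth, and that the coaction is recovered from the pointwise action. The difficulty is the $\prl/\inl$ interchange in $a=\prl_n a_3(\mH_n,-)$: a morphism $V_m\to\inl_j a_1(\mathbf H_{n,j},V_{m'})$ must factor through a single $j$, which is what produces locality in $H(K)$. I would first handle this step for the quasi-pro-unipotent subgroups, using Lemma \ref{tnO-quasi-pro-unipotent} and Proposition \ref{weak-strict-GK} to reduce to representations on $\Vect_{\aleph_0}$. I would then pass to general bounded pieces via the Iwasawa-type decomposition of Lemma \ref{BT}.
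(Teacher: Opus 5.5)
The paper gives no argument for this statement; it is imported as \cite[Lemma~2.13]{GK}. Your idea for full faithfulness (a coaction is determined by its values at points) is the right one. There is, however, a genuine gap in the other direction, and it starts with what ``smooth'' means.

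The condition you use is that every vector has an open stabilizer containing some $H(O)(r)$ and a locally constant orbit map. That is the condition for representations on ordinary vector spaces, and on pro-vector spaces it excludes the basic examples of the paper. In $\mcM$, an element $\widehat a$ with $a\in O^*(r)$ acts on $\mcM_n$, $n\geq r$, by pushforward along a transitive action on the fibres of $Z_n\to Z_{r-1}$. These fibres are closed and non-compact, so no nonzero element of $\mcM_n$ is invariant. Hence no nonzero element of $\ti\mcM$, or of $\ti\Upsilon=\prod_m\ti\mcM$, is fixed by any congruence subgroup $\GG_r$, although the paper treats these as smooth (Claim \ref{M-smooth-coinvariants}(1)). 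So the step you single out as the main obstacle is false under your definition, not merely hard.

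Moreover, your orbit-map construction produces only a linear map on $\ti\mV$, not a morphism in $\mVect$. Write $p_{m'}:\mV\to V_{m'}$ for the projections. A morphism $\mV\to a_2(\mH_n,V_{m'})$ requires a single index $m$ such that $p_{m'}\circ\sigma(h)$ factors through $V_m$ for all $h$ in the bounded piece $\ti\mH_n$ simultaneously. Each $p_{m'}\circ\sigma(h)$ factors through some $V_{m(h)}$ automatically, but your vector-wise condition does not supply uniformity over $\ti\mH_n$. That uniformity must be built into, or derived from, the smoothness hypothesis; once it is, the inverse functor is an unwinding of Definition \ref{a}. Also, a linear map from an infinite-dimensional $V_m$ into the colimit $\inl_j a_1(\mathbf H_{n,j},V_{m'})$, taken in $\Vect$, need not factor through a single $j$; only elementwise factorization holds.

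Your plan for passing from subgroups to bounded pieces via Lemma \ref{BT} would also fail. The decomposition $G(K)=P(K)G(O)$ is a statement about points. The paper stresses (Remark \ref{not-isom} and the subsection ``A contradiction and its resolution'') that $\Gr_P\to\Gr_G$ is a continuous bijection but not a homeomorphism, because the semi-infinite orbits $S^i$ are not closed. This is exactly why $\Pi(\pi)|_{P(K)}$ differs from $\ind_{P(O)}^{P(K)}(\pi|_{P(O)})$ (Remark \ref{ass-gr}). Concretely, take the indicator function of the preimage $N(K)\widehat t^{\,i}G(O)$ of $S^i$. Its pullback to $P(K)\times G(O)$ is the indicator of a union of components, hence locally constant. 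But it is not locally constant on the bounded pieces $\mX^n$ of $\mG$ for $n$ large. So local constancy of matrix coefficients along $P(K)$ and along $G(O)$ separately cannot be transported through this decomposition to $\mX^n$, which is what a coaction $\mV\to a(\mG,\mV)$ requires.

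Proposition \ref{weak-strict-GK} and Lemma \ref{tnO-quasi-pro-unipotent} can be used for subgroups lying in $\Pro\bSet$, but the passage from such subgroups to the ind-object $\mH$ is precisely the issue. For the arbitrary algebraic $F$-group $H$ of the claim there is no analogue of Lemma \ref{BT} at all.

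Finally, in your full-faithfulness step, evaluation at points detects equality of morphisms $\mV\to a_2(\mH_n,W_{m'})$ when $\mV$ has a strict presentation (surjective transition maps). It does not do so in general, since $\ti{\ }$ is not faithful on arbitrary objects of $\mVect$, so this hypothesis should be made explicit.
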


Thus we shall freely pass between representations of $\mH$ (which is a group object in the category $\mSet$) and smooth representations of the abstract group $\ti\mH$ (let us again stress that in both cases we are talking about representations on objects of $\mVect$).

\begin{definition} Let $\mT $ be a group object in $ \textbf{Set} , (\Pi, \mV)$ a smooth representation of the group $\ti \mT$ and $\kk : \ti \mT \to \mC ^*$ a smooth character. We define $\mcF_\kk:\mVect\to Set$ to be the functor given by $\mW \to \{ \beta \in \Hom _{\mVect}(\mV ,\mW)| \beta \circ \ti t = \kk (\ti t)\beta , \ti t \in \ti \mT  \}$.
\end{definition}
The following representability statement follows from the inflation adjunction; see Proposition \ref{inflation}.
\begin{claim}\label{coin} The functor $ \mcF _\kk $ is representable by an object which we denote by $\mV _{\mT ,\kk}$. We write $\mV _\mT$ instead of $\mV _{\mT ,1}$.
\end{claim}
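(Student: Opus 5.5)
The plan is to reduce representability of $\mcF_\kk$ to the inflation adjunction of Proposition \ref{inflation}, applied to a twisted representation. First I would replace $(\Pi,\mV)$ by its twist $\Pi\otimes\kk^{-1}$, that is, the representation $\Pi_\kk(\ti t)=\kk(\ti t)^{-1}\Pi(\ti t)$ on the same object $\mV$. Since $\kk$ is a smooth character of $\ti\mT$, this twist is again a smooth representation, so by the equivalence of Lemma 2.13 of \cite{GK} (the Claim above) it comes from a representation of the group object $\mT$. A morphism $\beta:\mV\to\mW$ satisfies $\beta\circ\ti t=\kk(\ti t)\beta$ for all $\ti t$ exactly when $\beta$ is $\ti\mT$-invariant for $\Pi_\kk$, with $\mW$ carrying the trivial action. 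Thus
$$
\mcF_\kk(\mW)\simeq\Hom_{\Rep(\mT,\mVect)}\bigl(\Pi_\kk,\operatorname{triv}(\mW)\bigr).
$$

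Next I would use the definition of coinvariants as the left adjoint of $\operatorname{triv}$, recalled just before Theorem \ref{coinvariants-exact}. This gives $\mcF_\kk(\mW)\simeq\Hom_{\mVect}(\operatorname{Coinv}_\mT(\Pi_\kk),\mW)$ functorially in $\mW$, so $\mV_{\mT,\kk}:=\operatorname{Coinv}_\mT(\Pi_\kk)$ represents $\mcF_\kk$. When $\kk=1$ this reduces to $\mV_\mT$.

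The main obstacle is existence: I need $\operatorname{Coinv}_\mT$ to be defined on $\Rep(\mT,\mVect)$, together with the identification of $\ti\mT$-equivariance with equivariance for the group object. For this I would use the hypothesis that $\mT$ lies in $\textbf{Set}$ and, in the cases used later, is quasi-unipotent, such as $t^{-n}O$ in Lemma \ref{tnO-quasi-pro-unipotent}. Then Proposition \ref{inflation} and \cite{GK1} supply the adjoint pair. Alternatively, I could construct the object directly. Write $\mV=\prl V_n$ using Proposition \ref{weak-strict-GK}, form the levelwise twisted coinvariants $(V_n)_{\kk}$ in $\Vect_{\aleph_0}$ (that is, the quotients $V_n/V_n(\kk)$ of Definition \ref{Vchi}), and set $\mV_{\mT,\kk}=\prl_n (V_n)_\kk$. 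The universal property would then follow because any morphism to an object of $\Vect_{\aleph_0}$ factors through some $V_n$.
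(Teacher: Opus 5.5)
Your proposal is correct and is essentially the paper's argument. The paper simply says that the claim follows from the inflation/coinvariants adjunction (Proposition \ref{inflation}), and your twist by $\kk^{-1}$, which identifies $\mcF_\kk(\mW)$ with $\Hom(\Pi\otimes\kk^{-1},\operatorname{triv}(\mW))\simeq\Hom_{\mVect}(\operatorname{Coinv}_\mT(\Pi\otimes\kk^{-1}),\mW)$, makes explicit the reduction to the trivial character that the paper leaves implicit.

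Your levelwise construction $\mV_{\mT,\kk}=\prl_n(V_n)_\kk$ is worth keeping. The claim is stated for an arbitrary $\mT\in\bSet$, and representability, unlike exactness, needs no quasi-unipotence. The one step to spell out there is the following: a $\kk$-equivariant map $\mV\to W$ that factors through $V_n$ factors through a $\kk$-equivariant map $V_n\to W$, because $\mV\to V_n$ is an epimorphism for a strict presentation.
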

The following statement is implied by Proposition $2.5$ in \cite{GK}.
\begin{claim}\label{2.5}
Any representation of a strict group object $\mH\in \Pro\textbf{Set}$ on the category $\mVect$
is a projective limit of representations of $\mH$ on the category $\Vect _ {\aleph _0} $.
\end{claim}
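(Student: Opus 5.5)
The statement is Claim \ref{2.5}: every representation of a strict group object $\mH\in\Pro\bSet$ on an object of $\mVect$ is a projective limit of representations of $\mH$ on objects of $\Vect_{\aleph_0}$. The plan is to deduce this directly from Proposition \ref{weak-strict-GK}. That proposition says the natural functor
$$
\Pro_{\aleph_0}\bigl(\Rep(H,\Vect_{\aleph_0})\bigr)\longrightarrow\Rep(H,\mVect)
$$
is an equivalence. Essential surjectivity then says that every $(\Pi,\mV)\in\Rep(\mH,\mVect)$ is isomorphic to the image of a countable pro-system $\{(\Pi_n,V_n)\}$ of representations on countable-dimensional spaces. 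So the whole argument reduces to checking that this image is the projective limit of the $(\Pi_n,V_n)$ taken inside $\Rep(\mH,\mVect)$.

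I would proceed in three steps.

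\emph{Step 1.} Show that $\Rep(\mH,\mVect)$ has countable projective limits and that they are computed on underlying objects. Given a countable system $(\Pi_n,\mV_n)$, the candidate limit is $\mV=\prl\mV_n$ in $\mVect$; this exists by the Claim that $\mVect$ has countable limits. The action map is
$$
\mV\to\prl_n a(\mH,\mV_n)\simeq a(\mH,\prl_n\mV_n).
$$

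\emph{Step 2.} Verify that this action satisfies the associativity and unit axioms and has the universal property. Both follow by applying the identities levelwise, using functoriality of $a$ in its second variable.

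\emph{Step 3.} Observe that the comparison functor in Proposition \ref{weak-strict-GK} sends a formal pro-object $\text{``}\prl\text{''}V_n$ to exactly this limit. Concretely, it is built by taking the pro-vector space $\prl V_n$ with the levelwise action. Then $\Pi\simeq\prl\Pi_n$ in $\Rep(\mH,\mVect)$, which is the claim.

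The main obstacle is Step 1, specifically the isomorphism
$$
a\Bigl(\mH,\prl_n\mV_n\Bigr)\simeq\prl_n a(\mH,\mV_n).
$$
I expect this commutation of $a$ with limits in its second variable to follow from the definitions. For $\mH\in\Pro\bSet$, the action is $a_3(\mH,\mV)=\prl_m a_2(\mH,V_m)$, which is itself defined as a projective limit in the $\mVect$-variable, so the two limits are interchanged formally. If this turns out to be delicate, I would instead cite \cite[Proposition~2.5]{GK} directly, as the statement indicates.
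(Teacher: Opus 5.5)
Your proposal is correct and follows the paper's route. The paper records Claim \ref{2.5} as a consequence of \cite[Proposition~2.5]{GK}, whose countable version is Proposition \ref{weak-strict-GK}; you deduce it from essential surjectivity of that equivalence, and your Steps 1--3 only make explicit that the comparison functor sends a formal pro-system to the actual limit in $\Rep(\mH,\mVect)$, which holds for sequential limits because $a_3(\mH,-)$ is defined levelwise in the $\mVect$-variable.
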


If $\mH\in \mSet$ is a group object and $\mL\subset\mH$ is a group subobject, we have the restriction functor $r^\mH _\mL$. The following statement is Proposition $3.5$ of \cite{GK}.
\begin{claim}\label{indH}If $\mH /\mL$ is ind-compact, the functor $i_\mH^\mL$ of induction is the right adjoint to the restriction functor $r^\mH_\mL$.
\end{claim}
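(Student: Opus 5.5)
The plan is to prove the adjunction directly, by writing down a unit and a counit and checking the two triangle identities. I will work throughout with the realization of $i_\mH^\mL$ from \cite{GK}. For a representation $(\sigma,\mV)$ of $\mL$, the object $i_\mH^\mL(\mV)$ is the subobject of $\mL$-equivariant elements of $a(\mH,\mV)$. Here $\mL$ acts on $a(\mH,\mV)$ by combining right translation on $\mH$ (through the multiplication morphism and the isomorphism $\alpha$ of Definition \ref{basic}) with the action $\sigma$ on $\mV$. The group $\mH$ acts on this subobject by left translation. This is the categorical version of the condition $f(hl^{-1})=\sigma(l)f(h)$ in Definition \ref{indu}.

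\emph{Counit.} Define $\varepsilon_\mV:r^\mH_\mL i_\mH^\mL(\mV)\to\mV$ as the composite of the inclusion into $a(\mH,\mV)$ with $a(e,\mV):a(\mH,\mV)\to a(*,\mV)\simeq\mV$, i.e.\ evaluation at the identity. Its $\mL$-equivariance should follow from the equivariance condition defining $i_\mH^\mL(\mV)$ and the unit axiom.

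\emph{Unit.} For a representation $(\Pi,\mW)$ of $\mH$, define $\eta_\mW:\mW\to i_\mH^\mL r^\mH_\mL(\mW)$ as the coaction $\pi:\mW\to a(\mH,\mW)$ precomposed with the inversion $\iota$, i.e.\ $w\mapsto(h\mapsto\Pi(h^{-1})w)$. One checks that it lands in the $\mL$-equivariant subobject and intertwines $\Pi$ with left translation. Both checks are formal consequences of the associativity identity in the definition of a representation.

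\emph{Triangle identities.} The identities $\varepsilon\circ r(\eta)=\mathrm{id}$ and $i(\varepsilon)\circ\eta=\mathrm{id}$ then reduce to the unit and associativity axioms, together with the coherence of $\alpha$ from Lemma \ref{action}. Since the functor $\ti{\ }$ is compatible with $a$, I would verify the identities on the abstract groups $\ti\mH\supset\ti\mL$. There they are the usual Frobenius reciprocity formulas, and I would then transport them back along the isomorphisms of Lemma \ref{action}.

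\emph{Main obstacle: where ind-compactness enters.} The induction functor of \cite{GK} is a \emph{compactly supported} induction, a colimit over subobjects supported on compact pieces of $\mH/\mL$. The unit $\eta_\mW$ produces functions with no support restriction, so it need not land in the compactly supported part. I would handle this as follows.
\begin{enumerate}
\item Write $\mH/\mL=\inl_j C_j$ with each $C_j$ compact, and let $\mH_j\subset\mH$ be the preimage of $C_j$.
\item Show that the compactly supported induced object and the full object $a(\mH,\mV)^{\mL}$ agree, by identifying both with $\prl_j a(\mH_j,\mV)^{\mL}$. For compact $C_j$ the support condition is vacuous, so the two agree on each $\mH_j$.
\item Use this identification to get a well-defined unit. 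The definition of $a$ as a projective limit over an ind-presentation of the source (Definition \ref{a}) is exactly what makes the limit in step 2 legitimate.
\end{enumerate}
In step 2 I must also check that taking $\mL$-equivariants commutes with the pro-limit in $\mVect$. This should hold because equivariants are an equalizer, a finite limit, and limits commute with limits.

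If this identification works, naturality of $\eta$ and $\varepsilon$ is immediate. The adjunction $\Hom_\mH(\mW,i_\mH^\mL\mV)\simeq\Hom_\mL(r^\mH_\mL\mW,\mV)$ then follows from the triangle identities.
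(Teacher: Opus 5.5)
Your proposal is correct and follows essentially the route behind the paper's citation of \cite[Proposition~3.5]{GK}, whose mechanism is explained in the remark of Subsection~\ref{ind-gen}: the variant $\widetilde i$ obtained by dropping the support condition (3) of Definition~\ref{indHO} is always right adjoint to restriction, which is what your unit and counit establish, and compactness of the quotients $X_i/H(O)_m$ makes that condition automatic, so $i=\widetilde i$. One correction: the induction of \cite{GK} is not a colimit over subobjects supported on compact pieces of $\mH/\mL$ (your step~2 would fail for such a colimit); it is a projective limit, along restriction maps, of spaces of functions on the pieces whose support is compact modulo the subgroup, and this projective-limit structure is exactly what your identification with $\prl_j a(\mH_j,\mV)^{\mL}$ uses.
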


\subsection{Examples of group objects in $  \mSet$}
\subsubsection{The group object
$\mP$}

\begin{definition}\label{P_0}

 \begin{enumerate}

\item
We denote by $\ti v:P(K) \to \mZ$ the
morphism such that $\ti v \begin{pmatrix}a&b\\0&1 \end{pmatrix}= v(a)$
and write $P(K)_0:=\ker(\ti v)$.
So $P_0:= O^* \ltimes K \subset P(K)$.
\item $  P ^n(K)=\{(a,b)\in K^* \ltimes K | \min(v(a),-v(a),v(b))\geq -n\}$.

\item
$\GG _m \subset P(O) $  is the subgroup  of pairs  $(a,b)\in O^\ast \times O$ such
that $v(a-1),v(b)\geq m$. \end{enumerate} \end{definition}
The subsets $P ^n(K)\subset  P$ are two-sided $P(O)$-invariant and we write  $ P ^n _m := P^n (K)/ \GG_m $.
\begin{claim} $P ^n _m$ are
$F$-points of smooth algebraic $F$-varieties.
\end{claim}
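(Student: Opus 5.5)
The plan is to describe $P^n_m$ explicitly as an orbit space of a unipotent algebraic group acting freely on an explicit finite-dimensional variety.

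First, the subset $P^n(K)$. An element $(a,b)$ with $\min(v(a),-v(a),v(b))\geq -n$ satisfies $|v(a)|\leq n$ and $v(b)\geq -n$. Hence
$$
P^n(K)=\bigsqcup_{|k|\leq n}\{(a,b):v(a)=k,\ b\in t^{-n}O\}.
$$
Each piece is $t^kO^*\times t^{-n}O$.

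Next, the action of $\GG_m$. Right multiplication by $(a',b')\in\GG_m$ gives
$$
(a,b)(a',b')=(aa',\,b+ab').
$$
The first coordinate moves within $a\,O^*(m)$ (with $O^*(m)=1+t^mO$), and $b$ moves by $a\,t^mO$. So on the piece $v(a)=k$ the quotient is
$$
t^kO^*/t^k O^*(m)\ \times\ t^{-n}O/t^{k+m}O.
$$
The fibration over $a$ is trivial because the translation lattice $at^mO=t^{k+m}O$ depends only on $k$. For $m\geq1$ the factor $O^*/O^*(m)$ is the group of $F$-points of $(O_{m-1})^*$, a smooth connected $F$-group: an open subset of affine $m$-space, namely $\mathbb G_m\times\mathbb A^{m-1}$. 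The factor $t^{-n}O/t^{k+m}O$ is an affine space of dimension $n+k+m\geq m\geq0$, using $k\geq -n$. Each piece of the quotient is therefore the set of $F$-points of $\mathbb G_m\times\mathbb A^{N}$, which is smooth.

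Then I would check that the identification is an isomorphism of locally profinite spaces and not merely a bijection. The quotient map from $t^kO^*\times t^{-n}O$ factors through finitely many truncated coordinates and is open. This follows from Lemma \ref{open}, or directly, since $\GG_m$ is an open subgroup and the coordinates are algebraic. Taking the finite disjoint union over $|k|\leq n$ shows that $P^n_m$ is the set of $F$-points of a smooth algebraic variety, a finite disjoint union of varieties of the form $\mathbb G_m\times\mathbb A^N$.

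The only mildly delicate points are the conventions. One is whether the quotient is by right or by left multiplication; the left quotient is handled symmetrically, and two-sided invariance makes either choice fine. The other is the case $m=0$, where $\GG_0=P(O)$ and the $a$-factor collapses to a point. I expect these to be routine.
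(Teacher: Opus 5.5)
Your argument is correct. The paper gives no proof of this Claim; by its own convention, a Claim is a statement whose verification is omitted as straightforward. Your explicit description is the natural verification:
\begin{itemize}
\item split $P^n(K)$ according to $k=v(a)\in[-n,n]$;
\item note that the right $\GG_m$-orbits on $t^kO^*\times t^{-n}O$ are the product sets $aO^*(m)\times(b+t^{k+m}O)$, with $O^*(0):=O^*$;
\item identify each piece of $P^n_m$ with $(O/t^mO)^*\times t^{-n}O/t^{k+m}O$ for $m\geq1$, and with $t^{-n}O/t^kO$ for $m=0$.
\end{itemize}

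One justification should be removed. For $m\geq1$ the subgroup $\GG_m$ is \emph{not} open in $P(O)$ for its pro-algebraic topology. It is the kernel of reduction onto $P(O/t^mO)$, and if it were open this quotient would be discrete rather than a $2m$-dimensional $F$-variety. Lemma \ref{open} is not applicable either, because the groups involved ($P(O)$ and $\GG_m$, with their inverse-limit topologies) contain no compact open subgroups. The openness of the quotient map follows instead from the other reason you give: the map is truncation of Laurent coefficients, i.e.\ a coordinate projection for the product topology on $t^kO^*\times t^{-n}O$.

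Also, the left quotient is not literally symmetric. Its orbits are product sets only in the coordinates $(a,a^{-1}b)$, and the fibre dimension becomes $n+m$ instead of $n+k+m$. This does not matter here, because the notation $P^n(K)/\GG_m$, like $\Gr_{G,m}=G(K)/G(O)(m+1)$, means the right quotient, which is the one you treated.
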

\begin{definition}

 \begin{enumerate}
\item Since $P^n_m$ are
$F$-points of smooth algebraic $F$-varieties they define objects $\mP ^n _m \in  \bSet$.

\item  $\mP ^n := \prl _m \mP ^n_m \in \Pro \textbf{Set}$ where $ \mP ^n _{m+1} \to \mP ^n _m $ are the natural  smooth morphisms.
\item    $\mP := \inl _n \mP ^n\in \mSet$.
\end{enumerate} \end{definition}
\begin{claim}
$ \mP \in \mSet $ has a natural structure of group
object and   $\wt {\mP}= P(K)$.
\end{claim}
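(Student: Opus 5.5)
The claim is that $\mP=\inl_n\prl_m\mP^n_m$ carries a natural group object structure in $\mSet$ and that $\wt\mP=P(K)$. I will build the multiplication, inverse and unit level by level as compatible morphisms in $\bSet$, and then pass to the limits.

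The key step is a uniform bound for multiplication. For $(a,b),(a',b')\in P^n(K)$ we have $(a,b)(a',b')=(aa',b+ab')$. Then $v(aa')\ge-2n$ and $v(b+ab')\ge-2n$. So multiplication sends $P^n(K)\times P^n(K)$ into $P^{2n}(K)$, and likewise inversion $(a,b)\mapsto(a^{-1},-a^{-1}b)$ sends $P^n(K)$ into $P^{2n}(K)$.

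Next I check compatibility with the quotients by $\GG_m$. Right multiplication by $\GG_{m'}$ changes $(aa',b+ab')$ by an element of $\GG_{m'-c(n)}$ (after conjugation), where $c(n)$ depends only on $n$. This uses the two-sided $P(O)$-invariance of $P^n(K)$. Explicitly, $(a,b)(1+x,y)=(a(1+x),b+ay)$, and the $t$-adic losses are bounded by $n$. Hence for each $m$ there is an $m'$ such that multiplication descends to a map $P^n_{m'}\times P^n_{m'}\to P^{2n}_m$. This map is given by polynomial formulas in the coordinates, so it is an algebraic morphism of $F$-varieties, hence a morphism in $\bSet$. These maps are compatible as $m$ and $n$ vary, so they define a morphism $\mP^n\times\mP^n\to\mP^{2n}$ in $\Pro\bSet$ and then $\mP\times\mP\to\mP$ in $\mSet$. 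For the last step I use that products in $\mSet$ commute with the filtered colimit over $n$, since $\inl_n\mP^n\times\inl_n\mP^n=\inl_n(\mP^n\times\mP^n)$ by cofinality of the diagonal. The unit is the point $(1,0)\in\mP^0$.

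The group axioms hold because each required identity, such as associativity, is an equality of morphisms. At every finite level these are morphisms of $F$-varieties, and they agree on $F$-points since $P(K)$ is a group. For the quotients $P^n_m$ I expect equality on $F$-points to suffice, because they are quotients of smooth varieties by unipotent group actions and all the objects involved are determined by their $F$-points.

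Finally, $\wt\mP=\Hom_{\mSet}(*,\mP)=\inl_n\prl_m P^n_m=\inl_n P^n(K)$. The middle equality holds because the $\GG_m$ form a neighbourhood basis, so $\prl_m P^n_m=P^n(K)$ by completeness of $K$. The last union equals $P(K)$, and the induced group law is the usual one.

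The main obstacle I anticipate is keeping track of the index shift $m\mapsto m'$ in the descent step, which involves $\GG_m$ being normalized only up to this shift. I would handle it by noting that $P^n(K)$ is bounded, so conjugating $\GG_{m+2n}$ by an element of $P^n(K)$ lands in $\GG_m$.
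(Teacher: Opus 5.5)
Your verification is correct, and it is the straightforward check the paper leaves implicit: the paper states this as a Claim with no proof, relying on the two-sided $P(O)$-invariance of $P^n(K)$ and on the general construction in Section 2.11 of \cite{GK}. Your key estimate $y^{-1}\GG_{m+2n}\,y\subset\GG_m$ for $y\in P^n(K)$ is right, as is the use of completeness of $K$ to get $\prl_m P^n_m=P^n(K)$. Your hedge about checking the group axioms on $F$-points is unnecessary: $\bSet$ is a category of locally profinite spaces, so its morphisms are already determined by their values on points, and no algebro-geometric input about the quotients $P^n_m$ is needed.
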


\begin{remark} We have $\mP = \mK ^\ast \ltimes \mK$ where 
$\mK ^\ast ,\mK $ are group objects such that  $\wt {\mK^\ast}= K\ast$ and  $\wt {\mK}= K$.
\end{remark}

\subsubsection{The group object
$ \mG$}

Let $ G(K):=  {\PGL}_2(K)$.  \begin{definition}\label{G}
\begin{enumerate}
\item $ \hat  X ^n\subset \text{GL}_2(K),n\geq 1$ is  the set of $2\times 2$-matrices $\{a_{ij}\}$
such that
 $2 \min \limits_{1\leq i,j \leq 2}v(a_{ij})- v(\det (A))\geq -n$.
\item $  X ^n\subset G(K),n\geq 1$ is the image of $  \hat  X^n$ under the projection $ \text{GL}_2(K) \to  G(K)$.

\item $ X ^n  _m:=  X^n /  G(O)(m+1) $ (see Definition \ref{G-prime-filtration}).

\sms

  $X ^n _m$ are
$F$-points of algebraic $F$-varieties and we  denote by  $ \mX ^n _m $ the corresponding objects in $  \textbf{S} et$.
\item
$\mX ^n := \prl _m X^n_m \in \Pro \textbf{Set}$ where $ X ^n _{m+1} \to X ^n _m $ are the natural  smooth morphisms.
\item
 $\mG:= \inl _n \mX ^n\in \mSet$.
\end{enumerate}
\end{definition}
\begin{claim} $ \mG \in \mSet $ has a natural structure of group object and $\wt {\mG}=G(K)$.
\end{claim}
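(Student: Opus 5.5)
We have to prove the last claim: $\mG\in\mSet$ carries a natural group object structure and $\ti\mG=G(K)$. The plan is to build multiplication and inversion levelwise on the filtration $X^n$. Then show the finite-level maps are algebraic and compatible with the transition maps, so they assemble into morphisms in $\mSet=\Ind_{\aleph_0}\Pro\bSet$.

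The first step is to record the elementary properties of the function $\nu(A)=2\min_{i,j}v(a_{ij})-v(\det A)$ on $\mathrm{GL}_2(K)$. It is invariant under scalars $A\mapsto cA$, so $X^n$ is well defined in $\PGL_2(K)$. It is always $\le 0$, and it is invariant under left and right multiplication by $\mathrm{GL}_2(O)$, since such multiplication preserves both $\min v(a_{ij})$ and $v(\det)$. Moreover $\nu(AB)\ge\nu(A)+\nu(B)$, because $\min v((AB)_{ij})\ge\min v(a_{ij})+\min v(b_{ij})$ and the determinant is multiplicative. Finally $\nu(A^{-1})=\nu(A)$, since $A^{-1}=\operatorname{adj}(A)/\det A$ and $\operatorname{adj}$ permutes the entries up to sign. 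Consequently $X^nX^{n'}\subset X^{n+n'}$ and $(X^n)^{-1}=X^n$. Each $X^n$ is a finite union of double cosets $G(O)t^{\lambda}G(O)$ with $|\lambda|\le n$ (Cartan decomposition), $X^n\subset X^{n+1}$, and $\bigcup_n X^n=G(K)$.

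The second step is to produce the maps
$$\mX^n\times\mX^{n'}\to\mX^{n+n'},\qquad \mX^n\to\mX^n .$$
For the product, fix $m$. If $g\in X^n$, then $\Ad(g)$ moves $\fg(O)$ into $t^{-n}\fg(O)$; equivalently, $g^{-1}G(O)(m'+1)g\subset G(O)(m+1)$ for $m'=m+n$. Hence $(g,h)\mapsto gh \bmod G(O)(m+1)$ factors through
$$X^n_{m+n'}\times X^{n'}_{m}\;\to\;X^{n+n'}_m .$$
Indeed, replacing $g$ by $gk$ with $k\in G(O)(m+n'+1)$ gives $gkh=gh\,(h^{-1}kh)$ with $h^{-1}kh\in G(O)(m+1)$. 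Likewise, inversion factors as $X^n_{m+n}\to X^n_m$.

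These finite-level maps are maps of $F$-points of algebraic varieties given by polynomial formulas in the matrix entries. So they are morphisms in $\bSet$, and they are compatible with the smooth transition maps $X^\bullet_{m+1}\to X^\bullet_m$. Taking $\prl_m$ gives morphisms in $\Pro\bSet$, and these are compatible with the inclusions $\mX^n\to\mX^{n+1}$. Since products in $\mSet$ of countable ind-objects are computed as $\inl_{n,n'}\mX^n\times\mX^{n'}$, we obtain $m:\mG\times\mG\to\mG$, $\iota:\mG\to\mG$, and $e:*\to\mX^0\subset\mG$, where $\mX^0=\prl_mG(O)_m$.

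The group axioms hold because they hold on the underlying sets at every finite level, and the morphisms are determined by those levelwise maps of sets.

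The last step is to compute $\ti\mG=\Hom_{\mSet}(*,\mG)$. Since $*$ is compact in the ind-direction (a finite, and thus cofinite, object), a point of $\mG$ factors through some $\mX^n$, so
$$\ti\mG=\inl_n\prl_m X^n_m .$$
Now $\prl_m X^n/G(O)(m+1)=X^n$, because $\bigcap_m G(O)(m+1)=1$ and $X^n$ is complete: it is a finite union of sets $G(O)t^\lambda G(O)$, each compact in the $t$-adic pro-topology. Hence $\ti\mG=\bigcup_nX^n=G(K)$, with the induced group law equal to matrix multiplication.

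The main obstacle is the completeness $\prl_mX^n_m=X^n$. It requires the Cartan decomposition to see that $X^n$ is a finite union of $G(O)\times G(O)$-orbits, each of the form $\prl_m$ of its images.
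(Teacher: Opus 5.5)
The paper does not prove this Claim. As with all its ``Claims'', it omits the argument and relies on the general construction of \cite[Section~2.11]{GK}, which attaches a group object $\mH\in\mSet$ to any algebraic $F$-group $H$. Your levelwise verification is a correct, self-contained substitute in this case, apart from one misjustified step explained below.

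The pieces of your argument work as follows:
\begin{enumerate}
\item The inequalities $\nu(AB)\geq\nu(A)+\nu(B)$ and $\nu(A^{-1})=\nu(A)$ give $X^nX^{n'}\subset X^{n+n'}$ and $(X^n)^{-1}=X^n$.
\item The estimate $h^{-1}G(O)(N)h\subset G(O)(N-n')$ for $h\in X^{n'}$ is exactly what makes multiplication descend to $X^n_{m+n'}\times X^{n'}_m\to X^{n+n'}_m$.
\item The formal points are fine. Finite products in the ind- and pro-categories are computed termwise. Checking the group axioms on points is legitimate, because the projections $X^n\to X^n_m$ are surjective, so morphisms between these objects are determined by their effect on points.
\end{enumerate}
The citation buys generality and naturality in $H$. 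Your route buys explicit structure maps with explicit level shifts $m\mapsto m+n'$.

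The misjustified step is the one you single out as the main obstacle. The double cosets $G(O)t^\lambda G(O)$ are not compact in any relevant sense. With the $F$-topology on the finite levels, $G(O)$ surjects continuously onto the noncompact group $\PGL_2(F)$. With the finite levels discrete, they are infinite, so the limit is not compact either.

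What you need is completeness, and it requires neither compactness nor the Cartan decomposition. Let $(g_mG(O)(m+1))_{m\geq0}$ be a compatible family with $g_m\in X^n$, and put $k_{m+1}:=g_m^{-1}g_{m+1}\in G(O)(m+1)$. The partial products $k_1\cdots k_m$ converge to some $k$ in $G(O)(1)=\prl_mG(O)(1)/G(O)(m+1)$, because $O=\prl_mO_m$. Then $g=g_0k$ lies in $X^n$ by right $G(O)$-invariance of $\nu$, and $g\in g_mG(O)(m+1)$ for every $m$. Injectivity of $X^n\to\prl_mX^n_m$ is just $\bigcap_mG(O)(m+1)=1$.

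You also pass from $\Ad(g)\fg(O)\subset t^{-n}\fg(O)$ to the group-level estimate by calling them equivalent; that step deserves a line. The direct matrix computation gives it:
\begin{enumerate}
\item Scale $A\in\hat X^{n}$ so that $\min v(a_{ij})=0$.
\item Then $v(\det A)\leq n$, so $A^{-1}\in t^{-n}M_2(O)$.
\item Hence $A^{-1}(1+t^NY)A\in 1+t^{N-n}M_2(O)$ for every $Y\in M_2(O)$.
\end{enumerate}
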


\subsection{Smooth irreducible representations of the group $P(O)$}

\begin{lemma}\label{inf}
Every smooth irreducible representation of $P(O)$ on $\mVect$ is the inflation of a representation of $P(O)_m$ for some $m\geq0$.
\end{lemma}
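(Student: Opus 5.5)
The plan is to show that some congruence subgroup $\GG_m\subset P(O)$, as in Definition \ref{P_0}, acts trivially on any smooth irreducible $(\Pi,\mV)$. Then $\Pi$ factors through $P(O)/\GG_m=P(O)_m$. The inflation statement follows, since representations of $P(O)_m$ on $\mVect$ trivial on $\GG_m$ are the same as representations of $P(O)$ on which $\GG_m$ acts trivially.

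First I would reduce to ordinary vector spaces. The group object $P(O)=\prl_m P(O)_m$ is a strict group object of $\Pro\bSet$. So by Proposition \ref{weak-strict-GK} (equivalently Claim \ref{2.5}), $\mV\simeq\prl_n V_n$ with each $V_n$ a smooth representation of $P(O)$ on $\Vect_{\aleph_0}$. The projections $\mV\to V_n$ are $P(O)$-equivariant, and after replacing the system by the images we may assume they are epimorphisms. Since $\mV\neq0$, some $V_n\neq0$.

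Next I would use irreducibility to show $\mV\simeq V_n$. The kernel of $\mV\to V_n$ is a $P(O)$-invariant subobject. It is proper because $V_n\neq 0$, so it is zero. Hence $\mV\to V_n$ is both a monomorphism and an epimorphism, hence an isomorphism in the abelian category $\Rep(P(O),\mVect)$. So $\mV$ is an ordinary smooth irreducible representation of $P(O)$ of countable dimension.

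It remains to see that a smooth irreducible representation $V$ of $P(O)$ on a countable-dimensional space factors through some $P(O)_m$. Pick $0\neq v\in V$. Smoothness gives an open subgroup fixing $v$, and this subgroup contains some $\GG_m$. The subgroups $\GG_m$ are normal in $P(O)$, because they are the kernels of $P(O)\to P(O)_m$. Therefore $V^{\GG_m}$ is a nonzero $P(O)$-invariant subspace. By irreducibility $V^{\GG_m}=V$, and so $V$ is inflated from $P(O)_m$.

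The main obstacle is the second step: making sure that ``irreducible'' in $\mVect$, meaning no proper subobjects, really forces the pro-system to stabilize. This needs the kernel of $\mV\to V_n$ to be computed in $\Rep(P(O),\mVect)$, and needs $V_n$ to be equivariant under the whole group, not just levelwise. I would handle this through the equivalence of Proposition \ref{weak-strict-GK}: under it, the pro-system is a pro-object of $\Rep(P(O),\Vect_{\aleph_0})$ with epimorphic transitions, and kernels are computed levelwise there.
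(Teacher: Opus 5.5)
Your proposal follows the paper's proof essentially step for step. It takes a presentation $\mV\simeq\prl_n V_n$ by ordinary smooth representations with epimorphic projections, uses irreducibility to force some $\mV\to V_n$ to be an isomorphism, and then uses a $\GG_m$ fixing a nonzero vector together with normality of $\GG_m$ to conclude that $\GG_m$ acts trivially (the paper phrases this last step via the subrepresentation generated by $v$).

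The one point to tighten is where the epimorphic transitions come from. The paper takes the strict presentation directly from \cite[Proposition~2.5]{GK}. Your ``replacing the system by the images'' does not work in general: the image of $\mV\to V_n$ in $\Pro(\Vect_{\aleph_0})$ is only a pro-object when the system fails the Mittag-Leffler condition, not an ordinary space.
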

\begin{proof}
By \cite[Proposition~2.5]{GK}, every smooth representation of the strict group object $P(O)$ on $\mVect$ admits a strict presentation
$$
\mV\simeq\varprojlim_r V_r
$$
by smooth representations on ordinary vector spaces, with surjective transition morphisms. Let $\mV$ be nonzero and irreducible. Choose $r$ such that the canonical projection $p_r:\mV\to V_r$ is nonzero. Its kernel is a $P(O)$-stable subobject of $\mV$ and hence is zero. Since $p_r$ is also an epimorphism, it is an isomorphism. Thus $\mV$ is an ordinary smooth irreducible representation of $P(O)$.
Choose $0\neq v\in\mV$. Smoothness in the sense of \cite[Subsection~2.4]{GK} implies that the $P(O)$-subrepresentation generated by $v$ factors through $P(O)/\GG_m$ for some $m\geq0$. This subrepresentation is nonzero and therefore, by irreducibility, is all of $\mV$. Hence $\GG_m$ acts trivially on $\mV$, and $\mV$ is the inflation of an irreducible representation of $P(O)_m=P(O)/\GG_m$.
\end{proof}
\begin{corollary}
The definition of smoothness for irreducible representations of $P(O)$ used here agrees with the definition in \cite[Subsection~2.4]{GK}.
\end{corollary}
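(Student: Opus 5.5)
The plan is to deduce the corollary directly from Lemma \ref{inf}, by comparing the two classes of irreducible objects in both directions. Here ``smooth'' in the sense of Section \ref{reptheory} means an irreducible representation of $P(O)$ on a complex vector space in which every vector has an open stabilizer; equivalently (Section \ref{PO}), one that factors through a finite-dimensional algebraic quotient $P(O)_m$. ``Smooth'' in the sense of \cite[Subsection~2.4]{GK} refers to representations of the strict group object $P(O)\in\Pro\bSet$ on $\mVect$. It therefore suffices to show that both notions single out the same class, namely inflations of smooth irreducible representations of some $P(O)_m=P(O)/\GG_m$.

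First I will treat the direction from \cite{GK} to the present paper. This is exactly Lemma \ref{inf}. Given an irreducible smooth representation in the sense of \cite{GK}, I will use the strict presentation from \cite[Proposition~2.5]{GK} (Claim \ref{2.5}) together with irreducibility to show that it is an ordinary vector space. The vector-wise smoothness of \cite{GK} then shows that $\GG_m$ acts trivially for some $m$. Thus the representation is the inflation of an irreducible representation of $P(O)_m$. Since $P(O)_m$ is an algebraic $F$-group and the action is smooth there, every vector has an open stabilizer in $P(O)$ (the preimage of an open subgroup of $P(O)_m$). So the representation is smooth in the present sense.

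Next I will treat the converse. Let $V$ be irreducible and smooth in the present sense. I will first show that some $\GG_m$ acts trivially: pick $0\neq v$; its stabilizer is open, so it contains some $\GG_m$. Since $\GG_m$ is normal in $P(O)$ (being the kernel of $P(O)\to P(O)_m$), the fixed space $V^{\GG_m}$ is a nonzero $P(O)$-subrepresentation, and irreducibility gives $V=V^{\GG_m}$. Hence $V$ is inflated from a smooth representation of $P(O)_m\in\bSet$ on a vector space of countable dimension. (Countability holds because $V$ is generated by one vector under a group that is countable modulo open subgroups.) Such a representation defines, via the functor $a_1$ of Definition \ref{a} and its extension $a$, an action morphism $V\to a(\mP(O),V)$. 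In the sense of \cite[Subsection~2.4]{GK}, this object of $\Vect_{\aleph_0}\subset\mVect$ is smooth, because the action morphism factors through locally constant functions on $P(O)_m$. Irreducibility in $\mVect$ follows because a subobject of an object of $\Vect_{\aleph_0}$ that is stable under the group object is in particular an ordinary $P(O)$-stable subspace (Lemma \ref{action}(2)).

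The only delicate point is this last step: checking that $\mVect$-subobjects of an ordinary vector space, stable under the group object, correspond to ordinary invariant subspaces. I expect to handle it using the equivalence of Proposition \ref{weak-strict-GK}, which reduces the question to $\Rep(P(O),\Vect_{\aleph_0})$. The rest is bookkeeping.
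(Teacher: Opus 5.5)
Your proposal is correct and follows the paper, which obtains the corollary directly from Lemma \ref{inf} (your first direction) and treats the converse, that an inflation from some $P(O)_m$ is smooth in the sense of \cite{GK}, as evident.

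One small correction concerns your delicate step. A $\mVect$-subobject of an ordinary vector space $V$ need not be a subspace: for example, $\varprojlim_j I_j$ for a strictly decreasing chain $I_j\subset V$ with zero intersection is a nonzero proper subobject. So the correspondence you intend to prove is false in general. What Proposition \ref{weak-strict-GK} does give is that an invariant subobject is a pro-system whose level images $I_j\subset V$ are ordinary subrepresentations. When $V$ is irreducible each $I_j$ is $0$ or $V$, and this forces the subobject to be $0$ or $V$, which is all you need.
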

\subsection{A representation $(\rho_m,\mcM),m\in \mZ$ of $P_0=O^*\ltimes K$}
For $n\geq0$, put
$$
Z_n=O^*/O^*(n+1)
$$

and let $\mcM_n=\operatorname{Meas}_c(Z_n)$ be the space of locally constant compactly supported complex measures on $Z_n$. Pushforward along the natural projection $Z_{n+1}\to Z_n$ gives a surjective map
$$
q_n:\mcM_{n+1}\longrightarrow\mcM_n.
$$
\begin{definition}\label{M-definition}
Put
$$
\mcM=\varprojlim_{n\geq0}\mcM_n.
$$
There is a representation $\rho_0:P_0\to\Aut(\mcM)$ characterized as follows. For $a\in O^*$, the element $\widehat a$ acts by pushforward under $z\mapsto az$. For $b\in K$, the element $\widetilde b$ acts by multiplication by the function
$$
z\longmapsto\psi(bz/t).
$$
More precisely, if $b\in t^{-n}O$, this function is well defined on $Z_n$, and these finite-level actions are compatible with the transition maps. Since every $b\in K$ belongs to $t^{-n}O$ for some $n$, they define an action of $K$ on the inverse system $\mcM$.
\end{definition}
\begin{claim}\label{M-smooth-coinvariants}
\begin{enumerate}
\item The representation $(\rho_0,\mcM)$ is smooth.
\item For every $n\geq0$, the canonical projection $\mcM\to\mcM_n$ induces an isomorphism
$$
\mcM_{O^*(n+1)}\stackrel{\sim}{\longrightarrow}\mcM_n.
$$
\end{enumerate}
\end{claim}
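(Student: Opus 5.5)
The claim has two parts, smoothness of $\rho_0$ and the identification of coinvariants, and I would prove both levelwise, working with the inverse system $\mcM=\varprojlim_n\mcM_n$.

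\textbf{Compatibility with the transition maps.} For $b\in t^{-n}O$ and $z\in O^*$, the value $\psi(bz/t)=\widetilde\psi(\operatorname{Res}(bz\,dt/t))$ depends only on the coefficients of $z$ in degrees at most $n$, since $bz/t$ has valuation at least $-n-1$. So the function $z\mapsto\psi(bz/t)$ descends to $Z_n$, and multiplication by it preserves $\mcM_n$. Pushforward by multiplication by $a\in O^*$ preserves each $\mcM_n$ as well. Both operations commute with the pushforwards $q_n$, so $P_0$ acts on the pro-object.

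\textbf{Part (1): smoothness.} In the sense of \cite{GK}, it suffices to check that each level $\mcM_n$ is smooth for an open subgroup of $P_0$ acting on it. On $\mcM_n$, the subgroup $O^*(n+1)\ltimes t^{n+1}O$ acts trivially:
\begin{itemize}
\item $O^*(n+1)$ acts trivially on $Z_n$;
\item for $b\in t^{n+1}O$ the function $\psi(bz/t)$ is identically $1$.
\end{itemize}
The quotient $O^*/O^*(n+1)=Z_n$ acts on $\mcM_n=\operatorname{Meas}_c(Z_n)$ by translation. This action is smooth, because each locally constant compactly supported measure is invariant under an open subgroup of $Z_n$. The additive group $t^{-r}O/t^{n+1}O$ acts through the continuous characters $z\mapsto\psi(bz/t)$. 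On a compactly supported locally constant measure these factors are locally constant in $b$, which gives smoothness.

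\textbf{Part (2): coinvariants.} Taking coinvariants of the unipotent-type group $O^*(n+1)$, which is quasi-pro-unipotent, is exact by Theorem \ref{coinvariants-exact}, so it commutes with the strict limit $\varprojlim_N\mcM_N$. For $N\ge n$, I would compute $(\mcM_N)_{O^*(n+1)}$ directly. The fibres of $Z_N\to Z_n$ are exactly the $O^*(n+1)/O^*(N+1)$-orbits, and this group acts simply transitively on each fibre. This group is an algebraic unipotent $F$-group, isomorphic as a variety to an affine space. Hence the coinvariants of $\operatorname{Meas}_c(Z_N)$ are computed fibrewise by integration, that is, by pushforward along $Z_N\to Z_n$. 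This is the standard fact that $\mcS(X\times U)_U\simeq\mcS(X)$ via integration for a free action of a unipotent group $U$; one can also derive it from Lemma \ref{relevant-orbit} applied fibrewise with trivial character. So $(\mcM_N)_{O^*(n+1)}\simeq\mcM_n$ via $q$, compatibly in $N$.

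The inverse system of these coinvariants is therefore constant, equal to $\mcM_n$. This gives the isomorphism induced by the canonical projection.

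\textbf{Main obstacle.} The delicate point is justifying that coinvariants commute with the pro-limit. I would handle this with Proposition \ref{weak-strict-GK} together with the exactness in Theorem \ref{coinvariants-exact}: $\mcM$ is a strict pro-object of smooth representations, and the coinvariants are computed levelwise by the representing-object property of Claim \ref{coin}.
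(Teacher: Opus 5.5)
Your proposal is correct and follows essentially the same route as the paper. Smoothness is read off from the finite-level actions on the $\mcM_N$, and at each level $N\geq n$ the $O^*(n+1)$-coinvariants of $\mcM_N$ are identified with $\mcM_n$ by pushforward (integration over the fibres of $Z_N\to Z_n$) before passing to the projective limit. The one slip is ``exact, so it commutes with the strict limit'': exactness alone does not give commutation with $\varprojlim$, and the correct justification is the one you give at the end, that coinvariants of a strict pro-system of representations are computed levelwise (cf.\ Lemma \ref{strict-limit-coinvariants}).
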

\begin{proof}
The first assertion follows from the finite-level description in Definition \ref{M-definition}. For the second, $O^*(n+1)$ acts trivially on $Z_n$. At every higher level, taking $O^*(n+1)$-coinvariants identifies measures in the fibers of the projection to $Z_n$; the resulting map is pushforward. Passing to the projective limit gives the asserted isomorphism.
\end{proof}
For $n\geq0$, put
$$
U^n=\left\{\widetilde b:b\in t^{-n}O\right\},\qquad \overline U^n=U^n/U^{-1},
$$
where $U^{-1}=\{\widetilde b:b\in tO\}$. The action of $O^*\ltimes U^n$ on $\mcM_n$ factors through the algebraic $F$-group
$$
P_0^n=(O^*/O^*(n+1))\ltimes\overline U^n.
$$
Denote this finite-level representation by $\rho_0^n$.
\begin{lemma}\label{M-irred}
The representation $(\rho_0,\mcM)$ is irreducible.
\end{lemma}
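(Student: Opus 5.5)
We prove that $(\rho_0,\mcM)$ is irreducible by combining finite-level irreducibility with the filtration machinery of Lemmas \ref{dense} and \ref{bar}, adapted to the strict inverse system $\mcM=\varprojlim\mcM_n$.

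\emph{Step 1: finite-level irreducibility.} We first show that each $\rho_0^n$ is an irreducible representation of $P_0^n=Z_n\ltimes\overline U^n$ on $\mcM_n=\operatorname{Meas}_c(Z_n)$. The group $\overline U^n\simeq t^{-n}O/tO\simeq F^{n+1}$ acts by multiplication by the characters $z\mapsto\psi(bz/t)$. Pairing $b\in t^{-n}O$ with $z\in O/t^{n+1}O$ via $\psi(bz/t)=\widetilde\psi(\operatorname{Res}(bz\,dt/t))$ is a perfect pairing between $\overline U^n$ and $O/t^{n+1}O$. Hence the characters of $\overline U^n$ indexed by points $z\in Z_n\subset O/t^{n+1}O$ are pairwise distinct. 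This is a Mackey situation for the commutative unipotent group $\overline U^n$. The representation $\mcM_n$ is the compact induction $\ind_{\overline U^n}^{P_0^n}$ of a character, equivalently the space of sections over the single $Z_n$-orbit $Z_n\cdot 1$ in $\widehat{\overline U^n}$. That orbit has trivial stabilizer, since $Z_n$ acts simply transitively on itself. Lemma \ref{MM}(2) therefore gives irreducibility, with $\widetilde H_\bl$ consisting of the character itself.

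\emph{Step 2: passing to the limit.} Let $\mU\subset\mcM$ be a nonzero $P_0$-invariant subobject. Its image $p_n(\mU)$ in $\mcM_n$ is invariant under $O^*\ltimes U^n$. This action factors through $P_0^n$, because the action of $O^*\ltimes U^n$ on $\mcM_n$ does. Since $\mU\neq0$ and the projections $p_n$ are jointly monomorphic, $p_n(\mU)\neq0$ for all large $n$. By Step 1 we get $p_n(\mU)=\mcM_n$ for all large $n$. The smaller $n$ follow from the surjectivity of the transition maps $q_n$.

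Lemma \ref{admi}(2), applied to the inclusion $b:\mU\to\mcM$, now shows that $b$ is an epimorphism. In this abelian setting a subobject whose inclusion is epi equals $\mcM$, so $\mU=\mcM$.

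\emph{Main obstacle.} The delicate point is making Step 2 legitimate inside $\mVect$ rather than on underlying vector spaces. One must check that the image of a subobject of the pro-object $\mcM$ in $\mcM_n$ is invariant under the group object $O^*\ltimes U^n$, and not merely under its abstract points. For this I would invoke Proposition \ref{weak-strict-GK}/Claim \ref{2.5}, which reduces to subrepresentations computed levelwise, and the exactness of coinvariants in Lemma \ref{tnO-quasi-pro-unipotent}. Alternatively, one can use Claim \ref{M-smooth-coinvariants}(2) to identify $p_n$ with the $O^*(n+1)$-coinvariant map; the image of $\mU$ is then $\mU_{O^*(n+1)}\to\mcM_n$, which is a $P_0^n$-subrepresentation by functoriality.
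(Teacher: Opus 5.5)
Your proposal is correct and follows essentially the same route as the paper. You first prove irreducibility of $\rho_0^n$ by Mackey theory (a single $Z_n$-orbit of characters of $\overline U^n$ with trivial stabilizer), then show the image of a nonzero invariant subobject is all of $\mcM_n$ at every level, and conclude with the strictness criterion of Lemma \ref{admi}(2); your explicit identification of $\mcM_n$ with $\ind_{\overline U^n}^{P_0^n}$ of a character, and your remark on equivariance of the images $p_n(\mU)$, are slightly more careful than the paper, which passes over these points.
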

\begin{proof}
The normal commutative subgroup $\overline U^n$ acts on $\mcM_n$ with character support
$$
\left\{b\longmapsto\psi(bx/t):x\in X_n\right\}.
$$
The group $O^*/O^*(n+1)$ acts simply transitively on this set. The little-group theorem therefore shows that $\rho_0^n$ is irreducible.
Let $\mN\subset\mcM$ be a nonzero $P_0$-stable subobject, and let $N_n$ be its image in $\mcM_n$. For some $n$, one has $N_n\neq0$, and irreducibility gives $N_n=\mcM_n$. For $r\geq n$, surjectivity of $q_{r-1}\circ\cdots\circ q_n$ implies $N_r\neq0$, so $N_r=\mcM_r$ by irreducibility. It follows also that $N_r=\mcM_r$ for $r<n$. Hence $\mN\to\mcM_n$ is surjective for every $n$. Since $\mcM$ is a strict pro-object, the strictness criterion for morphisms of pro-vector spaces implies $\mN=\mcM$.
\end{proof}
\begin{definition}
For $m\in\ZZ$, define a representation $(\rho_m,\mcM)$ of $P_0$ by
$$
\rho_m(p):=\rho_0(\widehat t^{-m}p\widehat t^m),\qquad p\in P_0.
$$
Put
$$
\Upsilon:=\prod_{m\in\ZZ}\mcM.
$$
The group $P_0$ acts on the $m$-th factor by $\rho_m$, and $\widehat t$ acts by the shift from the $m$-th factor to the $(m+1)$-st factor. These actions define a representation of $P(K)=\langle\widehat t\rangle\ltimes P_0$ on $\Upsilon$.
\end{definition}

\subsection{Identification with induction from $P(O)$}
Put
$$
P'(O)=O^*(1)\ltimes O
$$
and let $\widehat\psi$ be the character of $P'(O)$ which is trivial on $O^*(1)$ and whose restriction to $O$ is
$$
\psi_0(c)=\psi(c/t).
$$
As follows from Lemma \ref{sec}, we have
$$
\bs(0)\simeq\ind_{P'(O)}^{P(O)}\widehat\psi.
$$
Let
$$
(\tau_m,\Upsilon_m)=\ind_{P(O)}^{P_0}(\bs(m),\mcS(m)).
$$

\begin{proposition}\label{Upsilonm-M}
There is a canonical isomorphism of $P_0$-representations
$$
(\tau_m,\Upsilon_m)\simeq(\rho_m,\mcM),\qquad m\geq0.
$$
\end{proposition}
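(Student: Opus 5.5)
We first reduce to $m=0$. Conjugation by $\widehat t^m$ normalizes $P_0=O^*\ltimes K$, and by definition $\rho_m(p)=\rho_0(\widehat t^{-m}p\widehat t^m)$. We will check that $\Ad(\widehat t^{-m})$ carries $P(O)$ onto $O^*\ltimes t^{-m}O$, and carries the stabilizer data defining $\bs(m)$ onto that defining $\bs(0)$ pulled back. Concretely, $\psi_m(x)=\psi(t^{-m-1}x)$, and under $x\mapsto t^mx$ this becomes $\psi_0$. We will therefore realize $\tau_m$ as the twist of an induction for the conjugated subgroup. We then prove the case $m=0$ in the form needed: for every $m\geq0$, $\ind_{P(O)}^{P_0}\bs(m)$ is identified with $\rho_0\circ\Ad(\widehat t^{-m})$. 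If this reduction becomes notationally awkward, we will instead run the argument below directly for general $m$, replacing $\psi_0$ by $\psi_m$.

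For $m=0$, use Lemma \ref{sec} and transitivity of induction:
$$
\tau_0\simeq\ind_{P'(O)}^{P_0}\widehat\psi .
$$
We will make this explicit. A vector is a function $f$ on $P_0$ satisfying
$$
f(p\,(a,c))=\widehat\psi(a,c)^{-1}f(p)\qquad\text{for }(a,c)\in P'(O).
$$
Since $P_0/P'(O)\simeq (O^*/O^*(1))\times K/O$ as spaces, such $f$ is determined by a function on $O^*\times K$. It is equivariant under $O^*(1)$ and under translation of $K$ by $O$ twisted by $\psi_0$. Compact support modulo $P'(O)$ means support in finitely many cosets of $K/O$.

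Now take a partial Fourier transform in the $K$-variable, using the pairing $(b,z)\mapsto\psi(bz/t)$. The twisted $O$-equivariance in $b$ localizes the dual variable $z$ to $O^*$ modulo $tO$-translates. Compact support in $K/O$ dualizes to smoothness in $z$, namely invariance under $1+t^{n+1}O$ for varying $n$. Together with the $O^*$-variable, this produces functions (or measures) on $Z_n=O^*/O^*(n+1)$.

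To land in the pro-object $\mcM=\varprojlim\mcM_n$ rather than the ind-object $\varinjlim$, we will use that induction from $P(O)$ to $P_0$ is not compactly supported in the pro-sense. Instead it is the inverse limit over $n$ of the inductions to $O^*\ltimes t^{-n}O$. We will verify this via Claim \ref{indH} (right adjointness, $P_0/P(O)=K/O$ ind-compact) together with Proposition \ref{indHL}. At level $n$, $\ind_{P(O)}^{O^*\ltimes t^{-n}O}\bs(0)$ is a finite-level representation of $P_0^n$. We will identify it with $\rho_0^n$ on $\mcM_n$ by the little-group theorem (Lemma \ref{MM}). The $\overline U^n$-support of both is the $O^*/O^*(n+1)$-torsor of characters $b\mapsto\psi(bx/t)$, as in the proof of Lemma \ref{M-irred}. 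The stabilizer acts trivially on the one-dimensional coinvariants in both cases: by Lemma \ref{p-o}(2) with $\chi=1$, and by Claim \ref{M-smooth-coinvariants}. Hence $\ind\simeq\rho_0^n$, and uniquely so up to scalar, since both are irreducible.

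The main obstacle is canonicity and compatibility. The level-$n$ isomorphisms must commute with the transition maps: restriction of induced functions on one side, pushforward $q_n$ on the other. We will fix the normalization by matching the distinguished coinvariant vector at $\psi_0$, which is the delta measure at $1\in Z_n$ on the $\mcM$ side and the function supported on $P(O)$ on the induced side. Since pushforward of a delta is a delta, the scalars are compatible, and the limit isomorphism follows by Proposition \ref{weak-strict-GK}.
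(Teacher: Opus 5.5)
Your finite-level step is a legitimate alternative to the paper's explicit Fourier model. Both $\ind_{P(O)}^{P_0^{\leq n}}\bs(0)$ and $\mcM_n$ are representations of $P_0^n$ whose $\overline U^n$-support is the simply transitive $O^*/O^*(n+1)$-orbit $\{b\mapsto\psi(bx/t)\}$, with one-dimensional coinvariants. So they are isomorphic, and the isomorphism is unique up to a scalar.

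The gap is exactly at the step you call the main obstacle. Uniqueness of $\phi_n$ up to scalar at each level says nothing about whether $q_n\circ\phi_{n+1}$ and $\phi_n\circ\mathrm{res}$ agree up to a scalar. Both are $P_0^{\leq n}$-maps out of $\Upsilon_0^{n+1}$, which is far from irreducible over $P_0^{\leq n}$: as $\mcM_{n+1}$, it is induced from the regular representation of $O^*(n+1)/O^*(n+2)\simeq F$ on measures on a fiber of $Z_{n+1}\to Z_n$. So Schur's lemma does not apply.

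Your normalization does not fill this gap. The delta measure at $1\in Z_n$ is not an element of $\mcM_n=\operatorname{Meas}_c(Z_n)$. A function supported on the single coset $P(O)$ is not locally constant on $P_0^{\leq n}/P(O)\simeq F^n$ once $n\geq1$. Even for genuine vectors, matching one of them fixes the scalar only after you know the two maps are proportional. Heuristically the matching is also off: under Fourier transform in the $t^{-n}O/O$-variable, a function concentrated on $P(O)$ becomes constant along the fibers of $Z_n\to Z_0$, not a point mass at $1$.

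You need one of two ingredients to close the gap.
\begin{itemize}
\item The paper's explicit isomorphism: Fourier transform via the residue pairing $(t^{-n}O/O)\times(O/t^nO)\to F$ and the coordinates $(a,y)\mapsto a(1+ty)$, with self-dual Haar measures. Under it, restriction from $t^{-n-1}O/O$ to $t^{-n}O/O$ visibly becomes pushforward along $Z_{n+1}\to Z_n$.
\item A multiplicity-one statement $\dim\Hom_{P_0^{\leq n}}(\mcM_{n+1},\mcM_n)=1$. Your own method gives this. Over $P_0^{\leq n}$, the $\overline U^n$-support of $\mcM_{n+1}$ is the same orbit, and the stabilizer of $b\mapsto\psi(b/t)$ is $O^*(n+1)\ltimes t^{-n}O$. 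Its coinvariant space is $\operatorname{Meas}_c(O^*(n+1)/O^*(n+2))$ with the translation action, and the translation-invariant functionals on it form a line. Since $\mathrm{res}$ and $q_n$ are surjective, you can then rescale the $\phi_{n+1}$ inductively.
\end{itemize}

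Separately, your reduction to $m=0$ is not correct as stated. $\Ad(\widehat t^{-m})$ carries $(O^*(m+1)\ltimes O,\psi_m)$ to $(O^*(m+1)\ltimes t^{-m}O,\psi_0)$, not to the data defining $\bs(0)$. What makes the reduction work is Lemma \ref{sec}, which you use only at $m=0$, where it is a tautology. It realizes $\bs(m)$ as induced from $(\widetilde R_m,\bl_m)$, and $\Ad(\widehat t^{-m})$ carries this pair exactly onto $(P'(O),\widehat\psi)$. This is how the paper concludes.
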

\begin{proof}
We first treat $m=0$.

\sms

Induction in stages gives
$$
\Upsilon_0\simeq\ind_{P'(O)}^{P_0}\widehat\psi.
$$
For $n\geq0$, put $P_0^{\leq n}=O^*\ltimes t^{-n}O$ and let
$$
\Upsilon_0^n=\ind_{P'(O)}^{P_0^{\leq n}}\widehat\psi.
$$
Then
$$
\Upsilon_0\simeq\varprojlim_{n\geq0}\Upsilon_0^n.
$$
As an $F$-variety, the quotient $P_0^{\leq n}/P'(O)$ admits coordinates
$$
F^*\times(t^{-n}O/O).
$$
The character $\widehat\psi$ supplies the usual twisted translation action on the second factor. The residue pairing
$$
(t^{-n}O/O)\times(O/t^nO)\longrightarrow F,\qquad
(z,y)\longmapsto\operatorname{Res}_{t=0}(zy\,dt)
$$
is perfect. Fourier transformation in the second variable therefore gives an isomorphism
$$
\mathcal F_n:\Upsilon_0^n\stackrel{\sim}{\longrightarrow}
\operatorname{Meas}_c\bigl(F^*\times(O/t^nO)\bigr).
$$
The map
$$
F^*\times(O/t^nO)\longrightarrow O^*/O^*(n+1),\qquad
(a,y)\longmapsto a(1+ty),
$$
is an isomorphism of $F$-varieties. Under this isomorphism, the action of $\widehat a\in O^*$ is pushforward under $x\mapsto ax$, while the action of $\widetilde b$, $b\in t^{-n}O$, is multiplication by
$$
x\longmapsto\psi(bx/t).
$$
Thus $\mathcal F_n$ identifies $\Upsilon_0^n$ with $\mcM_n$ as a representation of $P_0^{\leq n}$.
Choose the self-dual additive Haar measures in the finite Fourier transforms. Then restriction from $t^{-n-1}O/O$ to $t^{-n}O/O$ is transformed into pushforward along
$$
O^*/O^*(n+2)\longrightarrow O^*/O^*(n+1).
$$
Consequently the isomorphisms $\mathcal F_n$ are compatible with the transition maps. Passing to projective limits gives
$$
\Upsilon_0\simeq\varprojlim_n\mcM_n=\mcM
$$
as $P_0$-representations.

For general $m\geq0$, Lemma \ref{sec} and induction in stages give
$$
\Upsilon_m\simeq\ind_{\widetilde R_m}^{P_0}(\bl_m),
\qquad \widetilde R_m=O^*(1)\ltimes t^mO.
$$
Conjugation by $\widehat t^{-m}$ identifies $\widetilde R_m$ with $P'(O)$. If $c\in O$ and $b=t^mc\in t^mO$, then
$$
\bl_m(1,b)=\psi(t^{-m-1}b)=\psi(c/t)=\widehat\psi(1,c).
$$
Thus this conjugation identifies the inducing pair $(\widetilde R_m,\bl_m)$ with $(P'(O),\widehat\psi)$. Consequently it identifies the resulting $P_0$-action with the twist
$$
p\longmapsto\rho_0(\widehat t^{-m}p\widehat t^m)=\rho_m(p).
$$
Hence $\Upsilon_m\simeq\mcM$ with the action $\rho_m$, as asserted.
\end{proof}

\section{Smooth representations of $P(K)$}\label{P(K)}

\subsection{Smooth representations of the group $K$}
\begin{definition} $\mK$ and $\mK ^*$ are group objects of $\mSet$ which are considered as subgroups of $\mP$.
\end{definition}
So $K$ and $K^*$ are considered as subgroups of $P(K)$.

 \begin{definition}\label{cusp} Let $\mW$ be a smooth representation of $K$.
\begin{enumerate}
\item
$\eta _n (\mW):\mW \to \mW _ {t^nO}$ is the canonical projection (see Claim \ref{coin}).
\item $\mcF _\mW= \{\mF _n\}$ is a decreasing filtration on $\mW$ where
$ \mF _n := \ker (\eta _n(\mW))$.
\item
$\kk ^n_m(\mW):   \mW _ {t^nO} \to  \mW _ {t^mO}, m\leq n\in \mZ $ are the canonical projections.
\item $ \mW _n^m:= \ker ( \kk ^n_m(\mW)) $.
\item $\mu _n(\mW) :=  \mW ^{n-1}_n $.

\item $\mu (\mW):=  \mu _0 (\mW)$.
\end{enumerate} \end{definition}

\begin{lemma}\label{m} \begin{enumerate}

\item The functors $\eta_n$ are exact.

\item We have a natural smooth representation of the group $K$ on $\mu (\mW)$ trivial on $O$.
\end{enumerate}
\end{lemma}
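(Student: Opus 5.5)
Part (1) is a formal consequence of the exactness of coinvariants, and part (2) of the naturality of the coinvariant projections together with conjugation by $P(K)$. The plan is as follows.

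For (1), I will apply Lemma \ref{tnO-quasi-pro-unipotent}. For $n\geq0$ it states directly that $t^{-n}O$ is quasi-pro-unipotent, so its coinvariant functor is exact. For $n>0$ the group $t^nO$ is isomorphic to $O$ via multiplication by $t^n$, so the same lemma (with $n=0$) applies after this reparametrization. Alternatively, one can repeat the proof verbatim with $H_r=t^nO/t^rO\simeq F^{r-n}$. In either case Theorem \ref{coinvariants-exact} gives exactness of $\mW\mapsto\mW_{t^nO}$, and hence of $\eta_n$ viewed as the functor $\mW\mapsto\mW_{t^nO}$ with its natural projection. Thus, for a short exact sequence of $K$-representations, the induced sequence of $t^nO$-coinvariants is exact.

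For (2), I will first describe the $K$-action. Since $K$ is commutative, every $b\in K$ commutes with $t^nO$, so $b$ induces an endomorphism of $\mW_{t^nO}$ through the universal property of Claim \ref{coin}. More precisely, for $\beta:\mW\to\mW'$ a $t^nO$-invariant map, $\beta\circ b$ is again $t^nO$-invariant. This gives a smooth $K$-action on every $\mW_{t^nO}$, on which $t^nO$ acts trivially.

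Next I will check that the projections $\kk^n_m$ for $m\leq n$ are $K$-equivariant. Then the kernel $\mW^{m}_n$ is a $K$-subrepresentation of $\mW_{t^nO}$, so $\mu(\mW)=\mW^{-1}_0=\ker(\mW_O\to\mW_{t^{-1}O})$ carries a $K$-action. That action is trivial on $O$, because $O$ already acts trivially on $\mW_O$.

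Smoothness then follows from Claim \ref{2.5} and the fact that subobjects and quotients of smooth representations are smooth.

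The main obstacle is making the induced action rigorous in the pro-category, namely showing that the action morphism $\mW\to a(\mK,\mW)$ descends to $\mW_{t^nO}\to a(\mK,\mW_{t^nO})$. I would handle this by writing $\mW$ as a strict projective limit of representations on $\Vect_{\aleph_0}$, using Proposition \ref{weak-strict-GK} applied to the bounded pieces $t^{-N}O$, and then doing the descent levelwise, where it is the classical fact that coinvariants of a commutative group inherit the group's action.
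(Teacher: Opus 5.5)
Your proposal is correct and follows the paper's route: part (1) is exactly the exactness of coinvariants for the quasi-pro-unipotent groups $t^nO$ (the paper cites \cite[Lemma~2.7]{GK}, i.e.\ Theorem~\ref{coinvariants-exact}). For part (2) the paper only says it ``follows from part (1)''; your direct argument, via commutativity of $K$ and the universal property of coinvariants, fills in what the paper leaves implicit and in fact does not need the exactness from (1) at all.
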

\begin{proof} Part (1) follows from Lemma 2.7 in \cite{GK}, and part (2) follows from part (1).
\end{proof}

 \subsection{The exactness of the functor $\mu$}

\begin{proposition}\label{mmu} The functor $\mu$ is exact.
\end{proposition}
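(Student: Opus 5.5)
The plan is to write $\mu$ as a kernel of a natural transformation between two exact functors and then apply the snake lemma. By Definition \ref{cusp}, for a smooth representation $\mW$ of $K$ we have
$$
\mu(\mW)=\mW^{-1}_0=\ker\bigl(\kk^0_{-1}(\mW):\mW_{O}\to\mW_{t^{-1}O}\bigr).
$$
Since $O\subset t^{-1}O$, the map $\kk^0_{-1}(\mW)$ is the canonical projection from $O$-coinvariants to $t^{-1}O$-coinvariants. It is therefore an epimorphism, being induced by the identity of $\mW$ composed with two quotient maps. Both functors $\mW\mapsto\mW_O=\mW_{t^0O}$ and $\mW\mapsto\mW_{t^{-1}O}$ are exact by Lemma \ref{m}(1), or equivalently by Lemma \ref{tnO-quasi-pro-unipotent}. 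Moreover $\kk^0_{-1}$ is a natural transformation between them, since it is defined by universal properties (Claim \ref{coin}).

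Now take a short exact sequence $0\to\mW'\to\mW\to\mW''\to0$ of smooth $K$-representations. We obtain a commutative diagram in $\mVect$ with exact rows
$$
0\to\mW'_O\to\mW_O\to\mW''_O\to0,\qquad 0\to\mW'_{t^{-1}O}\to\mW_{t^{-1}O}\to\mW''_{t^{-1}O}\to0,
$$
and vertical maps $\kk^0_{-1}$. The category $\mVect=\Pro_{\aleph_0}(\Vect_{\aleph_0})$ is abelian, as the pro-category of an abelian category, so the snake lemma applies. It gives an exact sequence
$$
0\to\mu(\mW')\to\mu(\mW)\to\mu(\mW'')\to\operatorname{coker}\kk^0_{-1}(\mW')\to\cdots.
$$
The cokernel term vanishes because $\kk^0_{-1}(\mW')$ is an epimorphism. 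This yields the exactness of $0\to\mu(\mW')\to\mu(\mW)\to\mu(\mW'')\to0$, and the $K$-action from Lemma \ref{m}(2) is clearly respected.

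The main point needing care is that $\kk^0_{-1}(\mW)$ really is an epimorphism in $\Pro$ and that kernels are computed correctly there. I would justify this by writing $\mW_{t^{-1}O}$ as the $t^{-1}O/O$-coinvariants of $\mW_O$ (coinvariants in stages, using the adjunction of Proposition \ref{inflation}). The quotient map to coinvariants is an epimorphism by construction as a cokernel. Everything else is formal.
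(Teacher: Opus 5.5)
Your proposal is correct and follows the paper's own route. The paper isolates the same snake-lemma argument as Lemma \ref{exact} (the kernel of a pointwise-epimorphic natural transformation between exact functors is exact) and applies it to $\kk^0_{-1}$ between the exact functors $\mW\mapsto\mW_{O}$ and $\mW\mapsto\mW_{t^{-1}O}$ supplied by Lemma \ref{m}(1), leaving implicit why $\kk^0_{-1}(\mW)$ is an epimorphism. For that point you do not need coinvariants in stages: the factorization $\mW\to\mW_O\to\mW_{t^{-1}O}$ of the universal map, which is epic by the representability in Claim \ref{coin}, already suffices.
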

\begin{proof} The proof of Proposition \ref{mmu} is based on the following result.

\sms

Let $\mcC$ be an abelian category, let $F,G:\mcC\to\mcC$ be exact functors, and let $a:F\to G$ be a morphism such that the maps $a(X):F(X)\to G(X)$ are surjective for all $X\in Ob(\mcC)$. We define $R : = \ker (a)$.
\begin{lemma}\label{exact}The functor $R$ is exact.
\end{lemma}
\begin{proof}
Let
$0\to A'\xrightarrow{\,i\,} A\xrightarrow{\,p\,} A''\to0$
be a short exact sequence in $\mcC$. Since the functors $F$ and $G$ are exact we get two short exact rows and a commutative diagram

$\begin{array}{ccccccccc}
0&\to&F(A')&\xrightarrow{F(i)}&F(A)&\xrightarrow{F(p)}&F(A'')&\to&0\\
& &\downarrow^{a_{A'}}&&\downarrow^{a_A}&&\downarrow^{a_{A''}}&&\\
0&\to&G(A')&\xrightarrow{G(i)}&G(A)&\xrightarrow{G(p)}&G(A'')&\to&0.
\end{array}$

The snake lemma provides  the exact sequence
$$0\to\ker(a_{A'})\to\ker(a_A)\to\ker(a_{A''})\to\operatorname{coker}(a_{A'})\to\cdots$$
We have $\operatorname{coker}(a_X)=0$ since, by hypothesis, each $a_X:F(X)\to G(X)$ is surjective.  Therefore the snake lemma yields the exactness of the sequence

$0\to\ker(a_{A'})\to\ker(a_A)\to\ker(a_{A''})\to0$. Thus $R$ is exact.
\end{proof}
Proposition \ref{mmu} now follows from Lemma \ref{m}.

\end{proof}

\subsection{Representations of  $P(K)$} 
We now introduce some basic constructions related to representations of the group $P(K)$ that we are going to use in the future.

\begin{definition}\label{grad}
\begin{enumerate}
\item A smooth representation of $P(K)$ is complete if its filtration $\mcF_{\mW}$ is admissible in the sense of
Definition \ref{admissible}.
\item 
For  a smooth representation $(\Pi ,\mW)$ of $P(K)$ we write  $\bar \mW := \prod _{n\in \mZ}
\mu _n(\mW)$. Since the group $P(K)$ preserves (up to a shift) the filtration $\mcF$ on $\mW$ we obtain a representation $\bar\Pi$ of the group $P(K)$ on $\bar \mW$. 
\end{enumerate}
\end{definition}
\begin{claim}
Let $\Pi$ be a smooth  representation of $P(K)$. Then the representation $\bar\Pi$ is also smooth.
\end{claim}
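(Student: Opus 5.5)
The plan is to reduce smoothness of $\bar\Pi$ to smoothness of the restriction to the subgroup $P_0=O^*\ltimes K$, using the decomposition $P(K)=\langle\widehat t\rangle\ltimes P_0$. Since $P(K)/P_0\simeq\ZZ$ is discrete, a smooth representation of $P(K)$ on an object of $\mVect$ amounts to two pieces of data: a smooth representation of the group object $\mK^*_0\ltimes\mK$ underlying $P_0$, and an automorphism (the action of $\widehat t$) that intertwines the $P_0$-action with its $\Ad(\widehat t)$-twist. It therefore suffices to show three things: each $\mu_n(\mW)$ carries a smooth $P_0$-action, the product of these is again smooth, and $\widehat t$ acts by a compatible isomorphism.

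\emph{Step 1.} The subgroups $t^nO\subset K$ are normalized by $P_0$, since $O^*$ preserves $t^nO$ and $K$ is commutative. By the functoriality of coinvariants (Claim \ref{coin}), the action of $P_0$ on $\mW$ descends to $\mW_{t^nO}$. Smoothness is inherited, because the coinvariant object is a quotient of $\mW$ in $\Rep(P_0,\mVect)$: it represents the functor $\mcF_1$ for $\mT=t^nO$, and this functor is $P_0$-equivariant.

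\emph{Step 2.} The projections $\kk^n_{n-1}(\mW)$ are $P_0$-equivariant. Hence their kernels $\mu_n(\mW)=\mW^{n-1}_n$ are $P_0$-subrepresentations. They are smooth because $\Rep(P_0,\mVect)$ is abelian, with kernels computed in $\mVect$; this is the same mechanism that made $\mu$ exact in Proposition \ref{mmu}.

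\emph{Step 3.} Conjugation by $\widehat t$ sends $t^nO$ to $t^{n+1}O$. Consequently $\Pi(\widehat t)$ induces isomorphisms $\mW_{t^nO}\to\mW_{t^{n+1}O}$ compatible with the maps $\kk$, and therefore isomorphisms $\mu_n(\mW)\to\mu_{n+1}(\mW)$. These intertwine the $P_0$-action with its $\Ad(\widehat t)$-twist.

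\emph{Step 4.} Form $\bar\mW=\prod_n\mu_n(\mW)$. The action functor $a(\mX,-)$ of Definition \ref{a} is defined through projective limits in the $\mVect$-variable and commutes with countable products. Hence the product of the action morphisms $\mu_n\to a(\mP_0,\mu_n)$ gives an action morphism $\bar\mW\to a(\mP_0,\bar\mW)$, and the associativity and unit identities hold factorwise. Adding the shift operator from Step 3 gives the $P(K)$-action.

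I expect the main obstacle to be Step 4. One must check that an infinite product of smooth representations is smooth in the sense of \cite{GK}. Smoothness is a condition relative to the ind-pro structure of $\mP_0$, and a priori the level of smoothness could vary with $n$. I would handle this by working with the definition through the action morphism rather than with stabilizers of vectors. In that formulation the required statement is exactly that $a(\mX,\prod_n\mV_n)\simeq\prod_n a(\mX,\mV_n)$, and this follows from the definitions of $a_3$ and $a$ as projective limits.
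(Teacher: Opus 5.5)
Your argument is correct and matches the verification the paper intends: the statement appears there only as a Claim with no proof, supported just by the remark in Definition~\ref{grad} that $P_0$ preserves each $\mu_n(\mW)$ while $\widehat t$ carries $\mu_n(\mW)$ isomorphically to $\mu_{n+1}(\mW)$, which is exactly your Steps 1--3. Your Step 4 also settles the one point where smoothness could fail: in $\mVect$ the product $\prod_n\mu_n(\mW)$ is the pro-object of its finite partial products, and $a(\mX,-)$ commutes with countable products, so no common level of smoothness across all $n$ is needed.
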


\begin{proposition}\label{isbar}

Let $\mW$ be a smooth complete representation of $P(K)$ such that $\bar \mW$ is isomorphic to $\Upsilon$ and
 $\mW ^{N(K)}=\{0\}$. Then $\mW$ is irreducible.

\end{proposition}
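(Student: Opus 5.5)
The plan is to apply Lemma \ref{bar} to the group $\GG=P(K)$ with the homomorphism $\nu=\ti v$, using the filtration $\mcF_\mW$ of Definition \ref{cusp}. First, the indexing has to match. The filtration $\mF_n=\ker(\eta_n)$ is decreasing, and $\widehat t$ conjugates $t^nO$ to $t^{n+1}O$, so $\Pi(\widehat t)$ carries $\mF_n$ to $\mF_{n\pm1}$. After reindexing, for instance $\mW_i:=\mF_{-i}$, we get an increasing filtration satisfying $\Pi(\Gg)\mW_i=\mW_{i+[\Gg]}$. The successive quotients $\mW_{i+1}/\mW_i=\mF_{-i-1}/\mF_{-i}$ are identified with $\ker(\mW_{t^{-i}O}\to\mW_{t^{-i-1}O})=\mu_{-i}(\mW)$, up to the sign convention. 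This identification uses the exactness of the coinvariant functors (Lemma \ref{m}(1)) and the surjectivity of the projections. With it, $\overline{\mW}_\mcF=\bar\mW$ as $P(K)$-representations.

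By completeness, the filtration is admissible. By hypothesis, $\bar\mW\simeq\Upsilon$, so irreducibility of $\bar\Pi$ reduces to irreducibility of $\Upsilon$. Take a nonzero $P(K)$-stable subobject $\mU\subset\Upsilon=\prod_m\mcM$. Its projection to some factor, say the $m$-th, is nonzero; this is where strictness and the joint monomorphism of projections enter. That projection is $P_0$-stable under $\rho_m$, so by Lemma \ref{M-irred} it is all of $\mcM$. Since $\widehat t$ shifts the factors, every factor projection of $\mU$ is then surjective.

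Surjectivity on each factor is not yet enough to conclude $\mU=\Upsilon$. To finish, I will use that the $\rho_m$ are pairwise non-isomorphic: the central characters and supports of $t^{-n}O$ on the factors differ by the conjugation $\widehat t^{\,m}$. The mechanism is to take $(t^kO,\psi)$-type coinvariants, or to act by elements of $K$, so as to separate the factors. Lemma \ref{m}, Proposition \ref{mmu} and exactness of coinvariants for $t^{-n}O$ (Lemma \ref{tnO-quasi-pro-unipotent}) show that the $\mu$-graded pieces of $\mU$ inherit the corresponding grading. From this I would deduce that $\mU$ contains each factor $\mcM$, and hence $\mU=\Upsilon$ by Lemma \ref{admi}(2).

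It remains to verify the other hypothesis of Lemma \ref{bar}: every nonzero $P(K)$-stable $\mU\subset\mW$ meets some $\mW_i$ nontrivially. Suppose instead that $\mU\cap\mF_n=0$ for all $n$. Then $\mU$ injects into every $\mW_{t^nO}$, that is, each $t^nO$ acts trivially on $\mU$ modulo nothing. Since $K=\bigcup_n t^nO$, it follows that $N(K)$ acts trivially on $\mU$, so $\mU\subset\mW^{N(K)}=0$, a contradiction. The delicate point here is making "$\mU\to\mU_{t^nO}$ is an isomorphism for all $n$" imply $N(K)$-invariance in the pro-category. For this I will use the adjunction of Claim \ref{coin} together with smoothness.

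I expect the main obstacle to be the irreducibility of $\Upsilon$ as a product, not as a sum. The step where the separation of factors is made rigorous via exact coinvariant functors on pro-objects is where the argument is most fragile. Lemma \ref{bar} then yields that $\Pi$ is irreducible.
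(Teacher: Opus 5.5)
Your proof is essentially the paper's. Like the paper, you apply Lemma \ref{bar} to the increasing filtration $\mW_i=\mF_{-i}$ and get the meeting hypothesis from the observation that $(\Id-\Pi(\widetilde b))\mU\subset\mU\cap\ker(\eta_n)=0$ for $b\in t^nO$; this would make $N(K)$ act trivially on $\mU$, contradicting $\mW^{N(K)}=0$.

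The paper takes the irreducibility of $\Upsilon$ as known (it is asserted in the introduction and used in Corollary \ref{no-rest}), so your unfinished detour on that point goes beyond the paper's argument. If you want to close it cheaply, note that the proof of Lemma \ref{bar} only uses irreducibility for the graded subobject $\overline{\mU}_{\mcF}$. Its factors are $P_0$-stable subobjects of the pieces $\mu_n(\mW)\simeq\mcM$ and are permuted by $\widehat t$, so Lemma \ref{M-irred} forces them to be all zero or all full.
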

\begin{proof}Let $\mU\subset\mW$ be a nonzero $P(K)$-invariant subobject. Apply Lemma \ref{bar} to the increasing filtration $\mW_i:=\mF_{-i}$. It is sufficient to show that $\mU\cap\ker(\eta_n)\neq0$ for some $n\in\ZZ$. We show that the assumption
$$
\mU\cap\ker(\eta_n)=0\qquad(n\in\ZZ)
$$
leads to a contradiction.

\sms

It is clear that $\operatorname{Im}(\operatorname{Id}-\pi(\widetilde b))\subset\ker(\eta_n)$ for $n<v(b)$. Therefore the assumption above implies that $N(K)$ acts trivially on $\mU$. Hence $\mU\subset\mW^{N(K)}=0$, a contradiction.
\end{proof}

\section{The affine Grassmannian}\label{Gr}
In what follows we would like to study induction of special representations of $G(O)$ to $G(K)$. In order to do this we need to discuss some preparatory material about the geometry of the affine Grassmannian $\Gr_G=G(K)/G(O)$. We refer the reader to Sections 2 and 3 of \cite{MV} for the proof of all the claims about $\Gr_G$ which are going to appear in this Section (and which are all easy in any case).
Similarly, in order to analyze the induction of special representations of $G'(O)$ to $G(K)$ we need to discuss the twisted affine Grassmannian $\Gr_G'=G(K)/G'(O)$; this is done at the end of this section. We should warn the reader that in this section we abuse notation and usually do not distinguish between an ind-scheme over $F$ and its set of $F$-points, considered as a topological space with respect to the topology induced by that on $F$; we hope that this will not cause confusion.
\subsection{Topology on the affine Grassmannian}
For any algebraic group $H$ over $F$ we set $\Gr_H=H(K)/H(O)$. It is well known that this is the set of $F$-points of an ind-finite-dimensional ind-scheme over $F$; in particular $\Gr_H$ (as a set) can be written as an inductive limit of finite-dimensional $F$-varieties with respect to closed embeddings. This endows the set $\Gr_H$ with a natural topology.

\begin{remark}The topological space $\Gr_H$ is usually not locally compact.
\end{remark}
\subsection{The basic geometry of $\Gr_G$}
The following statement follows from the Iwasawa decomposition proved in \cite{T}.
\begin{claim}\label{trans} The group $P(K)$ acts transitively on $\Gr_G$. Equivalently, the natural map $j:\Gr_P\to \Gr_G$ is a bijection.
\end{claim}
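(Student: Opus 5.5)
The plan is to reduce Claim \ref{trans} to the Iwasawa decomposition $G(K)=P(K)G(O)$, already recorded in Lemma \ref{BT}(2), and then check that the induced map on quotients is a bijection.

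\emph{Transitivity.} Given $g\in G(K)$, write $g=pk$ with $p\in P(K)$ and $k\in G(O)$. Then $gG(O)=pG(O)$, so every point of $\Gr_G$ lies in the $P(K)$-orbit of the base point.

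If one wants this independently of Lemma \ref{BT}, argue on lattices. Represent $g\in\operatorname{GL}_2(K)$ and let $\Lambda=gO^2$. The intersection $\Lambda\cap Ke_1$ is a rank-one $O$-module $Ot^ae_1$, and the image of $\Lambda$ in $K^2/Ke_1\simeq K$ is $Ot^b$. Choose $v\in\Lambda$ lifting $t^b$; then $\{t^ae_1,v\}$ is an $O$-basis of $\Lambda$. The matrix with these columns is upper triangular in $\operatorname{GL}_2(K)$, and it carries $O^2$ onto $\Lambda$. Hence $gG(O)$ lies in the orbit of the upper triangular group. This argument needs only that $O$ is a discrete valuation ring, so $K=F((t))$ poses no difficulty.

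\emph{Bijectivity of $j$.} Transitivity of $P(K)$ on $\Gr_G$ makes $j:P(K)/P(O)\to G(K)/G(O)$ surjective. Since the stabilizer of the base point in $P(K)$ is $P(K)\cap G(O)$, the map is injective precisely when $P(K)\cap G(O)=P(O)$. For $\PGL_2$ one checks that an upper triangular class $\begin{pmatrix}a&b\\0&d\end{pmatrix}$ lies in $G(O)$ exactly when, after rescaling, $a,d\in O^*$ and $b\in O$. Indeed, membership in $G(O)$ means it stabilizes the lattice class of $O^2$, which forces $v(a)=v(d)$ after normalizing, and then forces $b\in O$ after scaling to units. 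This is the stated equivalence.

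The only point needing care is this intersection computation in $\PGL_2$, where the scalars must be tracked. It is routine, so I expect no real obstacle. Topological compatibility, namely that $j$ is a homeomorphism, is not asserted by the claim and I would not address it here.
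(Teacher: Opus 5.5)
Your proposal is correct and takes the paper's route: the paper deduces the claim directly from the Iwasawa decomposition $G(K)=P(K)G(O)$, which is also recorded as Lemma \ref{BT}(2). Your lattice-basis proof of that decomposition and your verification that $P(K)\cap G(O)=P(O)$ are both sound; the latter is exactly the step that the ``equivalently'' in the claim tacitly relies on and that the paper leaves unstated.
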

\begin{remark}\label{not-isom}
Although the map $j$ is a bijection, it is not an isomorphism either on the level of ind-schemes or on the level of topological spaces (the latter just means that the inverse map is not continuous). In fact, it is observed in \cite{BD} (cf. the lemma at the end of page 173) that, for any affine algebraic group $G$ and any closed subgroup $H$, the natural map $\Gr_H\to\Gr_G$ is a locally closed embedding of ind-schemes if the quotient $G/H$ is a quasi-affine variety (in particular, in this case $\Gr_H$ is isomorphic to its image in $\Gr_G$). However, in our case the quotient $G/P$ is the projective line $\mathbb P^1$, which is not quasi-affine, so the above lemma is not applicable.
\end{remark}
For an integer $l\geq 0$, let $\Gr_G^l$ denote the $G(O)$-orbit of the image in $\Gr_G$ of the diagonal matrix
$\hat t\, ^l=\begin{pmatrix} t^l & 0\\ 0 & 1\end{pmatrix}$.  Then $\Gr_G^l$ is an algebraic $F$-variety of dimension $l$. We denote by $\oGr_G^l$ (the set of $F$-points of) its Zariski closure. This is a projective $F$-variety of dimension $l$. Moreover, we have
\begin{equation}
\Gr_G=\bigsqcup\limits_{l=0}^{\infty} \Gr_G^l;\ \ \  \oGr_G^l=\bigsqcup\limits_{0\leq i\leq l, \ \text{$i-l$ is even}} \Gr_G^i;
\end{equation}
In particular, we have closed embeddings $\oGr_G^l\hookrightarrow \oGr_G^{l+2}$, and $\Gr_G$ is equal to the inductive limit of all the $\oGr_G^l$ (note that as an ind-scheme it has two connected components corresponding to the parity of $l$).

\subsection{Semi-infinite orbits}
The Iwasawa decomposition implies that
\begin{equation}\label{semi-inf}
\Gr_G=\bigsqcup\limits_{n\in \mathbb Z} N(K)\cdot \hat t\, ^n,
\end{equation}
where as before we do not distinguish notationally the matrix $\hat t\, ^i$ and its image in $\Gr_G$. We set $S^n=N(K)\cdot \hat t\, ^n$ and call these sets "semi-infinite orbits" (associated with the choice of the maximal unipotent subgroup $N$). Each $S^n$ is actually invariant under $P_0=T(O)N(K)$ and it is a locally closed sub-ind-scheme of $\Gr_G$.
Note that the stabilizer of $\hat t^i$ in $N(K)=K$ is equal to $t^i O$, thus $S^i$ is isomorphic to $K/t^i O$. It is easy to see that this is an isomorphism of ind-schemes over $F$ and thus also of topological spaces (this is in fact a special case of the Lemma at the end of page 173 of \cite{BD}). The closure of $S^i$ (in either Zariski topology or the topology induced by the topology on $F$) is equal to the union of all the $S^j$ with $j\leq i$.
Note also that $\Gr_P$ is isomorphic to the disjoint union of the $S^i$.
\footnote{Warning: this statement is true for topological spaces with respect to the ``classical'' topology (i.e., the topology induced by that on $F$), or for reduced ind-schemes. However, it is not true for ind-schemes themselves: in fact, $\Gr_P$ is highly nonreduced. This subtlety will not be important below.}
The intersection $S^i\cap \Gr_G^l$ is non-empty if and only if $|i|\leq l$  and $l-i$ is even, and in this case this intersection is isomorphic to $\mathbb A^{\frac{l+i}{2}}$ (as an algebraic variety over $F$).

\subsection{The ind-schemes $\Gr_{G,m}$} Let us now consider the quotient $\Gr_{G,m}=G(K)/G(O)(m+1)$. This is the set of $F$-points of an ind-scheme over $F$ which has the structure of a principal $G(O)_m$-bundle over $\Gr_G$. Explicitly, it is equal to the union of finite-dimensional schemes $\overline\Gr_{G,m}^l$ where $\overline\Gr_{G,m}^l$ is the preimage of
$\overline\Gr_G^l$ under the natural projection $\Gr_{G,m}\to \Gr_G$.

The ind-scheme $\Gr_{G,m}$ makes sense for any algebraic group, so in particular, we can consider $\Gr_{P,m}$. We have an obvious morphism $\Gr_{P,m}\to \Gr_{G,m}$.
The natural map
$$
\Gr_{P,m}\to \Gr_P\underset{\Gr_G}\times \Gr_{G,m}
$$
is a closed embedding of ind-schemes (since the RHS is a $G(O)_m$-torsor over $\Gr_P$ and the LHS is a $P(O)_m$-sub-torsor (and $P(O)_m$ is closed in $G(O)_m$)). This implies the following. Since the group ind-scheme $P(K)$ is equal to the union of $\mathbb Z$ connected components (corresponding to the homomorphism $P(K)\to \mathbb Z$ given by the composition of $v$ with the natural map $P(K)\to K^*$), the same is true for each of the ind-schemes $\Gr_{P,m}$. Let $\Gr_{P,m}^i$ denote the corresponding $i$-th connected component (here we use the Zariski topology, not the topology coming from the topology on $F$). Note that $\Gr_{P,0}^i=S^i$ and we shall sometimes also denote it by $\Gr_P^i$. On the other hand, on the level of reduced ind-schemes $\Gr_P\underset{\Gr_G}\times \Gr_{G,m}$ is isomorphic to the disjoint union of $S^i_m:=p_m^{-1}(S^i)$ where $p_m:\Gr_{G,m}\to \Gr_G$ is the natural projection. Thus we see that
the reduced ind-scheme of $\Gr_{P,m}^i$ is closed inside $S^i_m$.
\begin{corollary}
The topological space $\Gr_{P,m}^i$ is closed in $S^i_m$.
\end{corollary}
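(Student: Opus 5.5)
The statement is the Corollary that $\Gr_{P,m}^i$ is closed in $S^i_m$ as a topological space, for the topology induced from $F$. The plan is to deduce it from the scheme-theoretic statement just proved: the reduced ind-scheme of $\Gr_{P,m}^i$ is a closed sub-ind-scheme of $S^i_m$. The remaining work is to pass from Zariski-closed embeddings of (ind-)schemes to closedness of the corresponding sets of $F$-points in the analytic topology.

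The first step is to write both sides as filtered unions of finite-dimensional pieces. Put
$$
S^{i,l}_m:=S^i_m\cap\overline\Gr^l_{G,m}=p_m^{-1}\bigl(S^i\cap\oGr^l_G\bigr).
$$
Each $S^{i,l}_m$ is a locally closed subvariety of the finite-dimensional variety $\overline\Gr^l_{G,m}$, and $S^i_m=\varinjlim_l S^{i,l}_m$ carries the inductive-limit topology. By the closed embedding $\Gr_{P,m}\to\Gr_P\underset{\Gr_G}\times\Gr_{G,m}$ and the identification of the reduced fiber product with $\bigsqcup_i S^i_m$, the reduced ind-scheme of $\Gr_{P,m}^i$ meets each $S^{i,l}_m$ in a Zariski-closed subvariety $Z^l$. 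Moreover, the $F$-points of $\Gr^i_{P,m}$ are $\bigcup_l Z^l(F)$, since taking $F$-points does not see nilpotents.

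The second step is to check closedness piece by piece. For an $F$-variety $Y$ and a Zariski-closed subvariety $Z\subset Y$, the set $Z(F)$ is closed in $Y(F)$ in the $F$-topology, because it is locally the common zero locus of finitely many regular functions, which are continuous. Hence each $Z^l(F)$ is closed in $S^{i,l}_m(F)$. By definition of the inductive-limit topology, a subset of $S^i_m$ is closed iff its intersection with every $S^{i,l}_m$ is closed. The intersection of $\Gr^i_{P,m}$ with $S^{i,l}_m$ is exactly $Z^l(F)$, which gives the claim.

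The only delicate point, which I expect to be the main obstacle, is the compatibility of the exhaustions. One must check that the intersection of the $F$-points of $\Gr^i_{P,m}$ with $S^{i,l}_m$ is exactly $Z^l(F)$, and not larger because of points coming from other levels. This follows because closed sub-ind-schemes are defined levelwise: a closed sub-ind-scheme $Z\subset S^i_m$ means $Z\cap S^{i,l}_m$ is closed in $S^{i,l}_m$ for every $l$. The point-set identity also requires that the ind-structure on $S^i_m$ used here is the one induced from $\Gr_{G,m}$. This is exactly how $S^i_m=p_m^{-1}(S^i)$ was defined, and the topology on $S^i$ agrees with that of $K/t^iO$, as noted above, so no pathology of the kind in Remark \ref{not-isom} intervenes.
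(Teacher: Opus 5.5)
Your proposal is correct and follows the paper's own route. The paper deduces the corollary directly from the preceding observation that the reduced ind-scheme of $\Gr_{P,m}^i$ is Zariski-closed in $S^i_m$; you make explicit the passage from levelwise Zariski-closedness to closedness in the $F$-topology. The one point-set fact you use only implicitly is that the subspace topology on $S^i_m\subset\Gr_{G,m}$ agrees with the inductive-limit topology of the $S^{i,l}_m$; this holds because $S^i_m=p_m^{-1}(S^i)$ is locally closed.
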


\subsection{The twisted affine Grassmannian}We now define
$$
\Gr'_G=G(K)/G'(O).
$$
 Again, this is the set of $F$-points of an ind-finite-dimensional ind-proper ind-scheme over $F$. It is still true that $G(K)=P(K)\cdot G'(O)$ (cf. Lemma \ref{BT}). Hence as before we can set $(S')^i$ to be the (isomorphic) image of $\Gr_P^i$ in $\Gr'_G$.

For $m\in \mathbb Z_{\geq 0}$ we can again set $\Gr'_{G,m}=G(K)/G'(O)(m+1)$; this is a $G'(O)_m$-torsor over $\Gr'_G$ and similarly to the above we define $(S'_m)^i$ to be the preimage of $(S')^i$. It contains $\Gr_{P,m}^i$ as a closed subset.

\section{Special representations of $G(K)$ -- some initial results}\label{G(K)-in}

\subsection{Restriction from $G(K)$ to $P(K)$} We would like to study representations of $G(K)$ whose restriction to $P(K)$ is irreducible. Recall that if we replace $K$ by $F$, then this property is equivalent to irreducibility and cuspidality. Moreover, the group $P(F)$ has a unique (up to isomorphism) irreducible cuspidal representation, naturally realized in the space $\mcS(F^*)$. When $F$ is replaced by $K$, the uniqueness statement is no longer true. The most straightforward analogue of the unique irreducible cuspidal representation of $P(F)$ is the representation $\Upsilon$ of $P(K)$ defined above. It is therefore natural to ask whether there are irreducible representations of $G(K)$ whose restriction to $P(K)$ is isomorphic to $\Upsilon$. In this section we show that the answer is ``no.'' However, we shall see later that, if $\Pi(\pi)$ is induced from a special representation of either $G(O)$ or $G'(O)$ and $\Theta(\pi):=\Pi(\pi)|_{P(K)}$, then $\Theta(\pi)$ is complete, has no $N(K)$-invariants, and satisfies
$$
\bar\Theta(\pi)\simeq\Upsilon.
$$
Proposition \ref{isbar} will then imply that $\Theta(\pi)$ is irreducible.

As before we identify $K$ with $N(K)\subset P(K)$ and therefore identify $O$
 with a subgroup of $G(K)$. We define $G^+(K)\subset G(K)$ to be the image of $\SL_2(K)$ under the natural map $\SL_2(K)\to\PGL_2(K)$.

\begin{lemma}\label{Gplus-invariants}
Let $(\Pi,\mW)$ be a smooth representation of $G(K)$. Then the natural inclusion
$$
\mW^{G^+(K)}\hookrightarrow\mW^O
$$
is an isomorphism.
\end{lemma}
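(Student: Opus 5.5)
Only the inclusion $\mW^O\subset\mW^{G^+(K)}$ needs proof. The plan is to run the argument of Proposition \ref{infty} again, using smoothness of $\Pi$ to pass from fixed vectors under $O=N(O)$ to fixed vectors under a group that generates $G^+(K)$.

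First we say what invariants mean in $\mVect$. The invariants $\mW^H$ are the largest subobject on which $H$ acts trivially; on underlying pro-spaces, $\ti{\mW^H}$ is the set of vectors fixed by $H$. By Claim \ref{2.5} (Proposition \ref{weak-strict-GK}), $\mW$ restricted to a bounded subgroup such as $G(O)$ is a strict projective limit $\varprojlim W_r$ of smooth representations on ordinary vector spaces. By Lemma \ref{action}(2) the abstract group acts on $\ti\mW$. So it suffices to prove the following: every $w\in\ti\mW$ fixed by $O$ is fixed by $G^+(K)$. We then check that the resulting subobjects agree. This is formal, because invariants are defined by equalizers of $a(e,\cdot)$ and the action morphism, and these commute with the strict limit.

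\textbf{Step 1.} Let $w$ be fixed by $N(O)$. We show $w$ is fixed by $G^+(O)$.

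For each quotient $W_r$ the image of $w$ is fixed by $N(O)$. The $G(O)$-action on $W_r$ is smooth, and $W_r$ is a union of representations factoring through $G(O)_m$. So Proposition \ref{infty} applies and gives that the image of $w$ is $G^+(O)$-fixed. Since the $W_r$ jointly detect $\ti\mW$, $w$ itself is $G^+(O)$-fixed.

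\textbf{Step 2.} We spread the invariance by conjugating with the torus.

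The vector $w$ is fixed by $N(O)$ and by $G^+(O)$. The latter contains the lower unipotent group $N^-(O)$ and the torus $\{\operatorname{diag}(a,a^{-1}): a\in O^*\}$.

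Take $h=\widehat t^{\,k}$. Then $\Pi(h)w$ is fixed by $hG^+(O)h^{-1}$. That does not directly give invariance of $w$, so we argue instead by iteration. The vector $w$ is fixed by $N^-(O)$. We claim $w$ is fixed by $N(t^{-1}O)$, and by induction by all of $N(K)$.

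To see this, note that $\SL_2$ of $O$ together with $N(t^{-k}O)$ generate $\SL_2(O[t^{-1}])$-type subgroups. A cleaner route is to use the element
$$
s_k=\begin{pmatrix}0&t^{k}\\-t^{-k}&0\end{pmatrix}\in G^+(K).
$$
By the standard identity $s_k=\widetilde{t^k}\,\bigl(\text{lower }-t^{-k}\bigr)\,\widetilde{t^k}$, the element $s_k$ is a product of elements of $N(O)$ and $N^-(t^{-k}O)$ when $k\geq 0$. The identity of the previous step must be adjusted so that the factors lie in groups already known to fix $w$.

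\textbf{Main obstacle.} This is the main obstacle: bootstrapping from $N(O)\cup G^+(O)$ to $N(K)$.

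What I would try first is to use the stabilizer of $w$ directly. Let $S=\operatorname{Stab}_{G^+(K)}(w)$; it contains $G^+(O)$. For $b\in K$, consider the conjugate $\widehat t^{\,k}$ applied to the Iwahori-type argument of Proposition \ref{infty}. By smoothness of $\Pi$, the stabilizer $S$ contains an open subgroup of $G^+(K)$ relative to the ind-topology; by \cite{GK}, smoothness means precisely this on each $\mX^n$. Once $S$ is known to contain both $G^+(O)$ and the conjugate $hG^+(O)h^{-1}$ for some $h$ outside $G^+(O)N(K)$, we conclude. The reason is that $G^+(O)$ is a maximal subgroup of $G^+(K)$ modulo the Bruhat–Tits amalgam structure $G^+(K)=G^+(O)\ast_{I^+}G'^+$. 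Any strictly larger subgroup therefore contains both vertex stabilizers of an edge, hence all of $G^+(K)$.

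So the key reduction is: show $S\supsetneq G^+(O)$. For this we use that $N(K)$ is not contained in $G^+(O)$, and that $w$ is $N(t^{-1}O)$-fixed. The latter follows by applying Step 1 to the conjugate representation $\Pi\circ\Ad(s)$ with $s=\begin{pmatrix}0&1\\t&0\end{pmatrix}$, since $sN^-(O)s^{-1}=N(t^{-1}O)$ up to the torus. Hence $S$ meets $G^+(K)\setminus G^+(O)$, and by the tree argument $S=G^+(K)$.
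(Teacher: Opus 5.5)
Your final argument is correct, but it takes a different route from the paper. The paper stays inside the Borel. Because $N(K)$ is abelian, $\mW^O$ is $N(K)$-stable. Proposition \ref{infty} applied to $\Pi(\widetilde b)v$ then shows that every $v\in\mW^O$ is fixed by all conjugates $\widetilde b^{-1}G^+(O)\widetilde b$, $b\in K$. These conjugates generate $G^+(K)$ by an elementary matrix computation: for instance, commutators with $\operatorname{diag}(a,a^{-1})\in G^+(O)$ give $\widetilde{b(1-a^{-2})}$, hence all of $N(K)$, and the Weyl element then gives $N^-(K)$.

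You never use that $N(K)$ is abelian. Instead you apply Proposition \ref{infty} a second time, to $\Pi(s)^{-1}w$, using that $s=\begin{pmatrix}0&1\\t&0\end{pmatrix}$ satisfies $sN(O)s^{-1}=N^-(tO)\subset G^+(O)$. So $w$ is fixed by both $G^+(O)$ and $sG^+(O)s^{-1}$, and you conclude on the Bruhat--Tits tree. Your route needs only two subgroups, and it works wherever an element normalizing the Iwahori swaps the two vertices of an edge. The cost is an input from Bruhat--Tits theory (amalgam or maximality), whereas the paper's generation step is purely elementary.

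Points to fix in your write-up:
\begin{itemize}
\item When you apply Step 1 to $\Pi\circ\Ad(s)$, the hypothesis to verify is that $w$ is fixed by $\Pi(sN(O)s^{-1})=\Pi(N^-(tO))$. This holds because $N^-(tO)\subset G^+(O)$, but you only quote the conclusion $sN^-(O)s^{-1}=N(t^{-1}O)$. That identity holds exactly, not just up to the torus.
\item Your justification of the maximality of $G^+(O)$ in $G^+(K)$ does not follow as stated. A subgroup strictly containing $G^+(O)$ moves the vertex $v_0$ fixed by $G(O)$ to a vertex at positive even distance, so it does not visibly contain the stabilizer of a neighbour of $v_0$. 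Proving maximality needs the transitivity of $G^+(O)$ on spheres of the tree.
\item You do not need maximality at all. Step 1 for $\Pi\circ\Ad(s)$ already gives invariance under all of $sG^+(O)s^{-1}=\operatorname{Stab}_{G^+(K)}(sv_0)$. Since $\SL_2(O)$ and $s\SL_2(O)s^{-1}$ generate $\SL_2(K)$ (Ihara's amalgam theorem), this finishes the proof.
\item The exploratory part of Step 2 and the remark about open stabilizers in the ind-topology play no role and should be removed.
\end{itemize}
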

\begin{proof}
The subobject $\mW^O$ is invariant under $N(K)$ because $N(K)$ is abelian. Let $b\in K$. By Proposition \ref{infty}, applied to the restriction to $G^+(O)$, every vector of $\mW^O$ is fixed by $G^+(O)$. Applying this to $\widetilde b\,v\in\mW^O$ shows that $v$ is fixed by
$$
\widetilde b^{-1}G^+(O)\widetilde b.
$$
As $b$ varies, these conjugates generate the image of $\SL_2(K)$ in $\PGL_2(K)$. By definition this image is $G^+(K)$. Thus $\mW^O\subset\mW^{G^+(K)}$. The reverse inclusion is immediate because $O\subset G^+(K)$.
\end{proof}
\begin{corollary}\label{no-rest}There is no representation of $G(K)$ whose restriction to $P(K)$ is isomorphic to $\Upsilon$.
\end{corollary}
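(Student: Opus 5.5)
We argue by contradiction. Suppose $(\Pi,\mW)$ is a smooth representation of $G(K)$ whose restriction to $P(K)$ is isomorphic to $\Upsilon$. The idea is to show that $\Upsilon$ has nonzero $O$-invariants but no $N(K)$-invariants, which is impossible once the action extends to $G(K)$.

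\emph{Step 1: $\Upsilon^O\neq0$.} On the factor $\mcM$ carrying $\rho_m$, the element $\widetilde b$ acts by $\rho_0(\widehat t^{-m}\widetilde b\,\widehat t^m)=\rho_0(\widetilde{t^{-m}b})$. This is multiplication by $z\mapsto\psi(t^{-m-1}bz)$ on $O^*$. For $b\in O$ and $m\leq-1$, the function $t^{-m-1}bz$ lies in $O$, so its residue vanishes and $\psi$ of it is $1$. Hence every factor with $m\leq-1$ is fixed by $O$, and since $\mcM\neq0$ we get $\Upsilon^O\neq0$.

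\emph{Step 2: $\Upsilon^{N(K)}=0$.} On a single factor, an $N(K)$-invariant $\mu\in\mcM$ satisfies
$$
\psi(bz/t)\,\mu=\mu\qquad\text{for all } b\in K.
$$
Work at a finite level $\mcM_n$ with $b\in t^{-n}O$. For each point $x\in Z_n$ one can choose $b$ with $\psi(bx/t)\neq1$, since $x$ is a unit and $b$ is arbitrary in $t^{-n}O$. So $\mu$ vanishes near every point, and the image of $\mu$ in each $\mcM_n$ is $0$. Because $\mcM=\varprojlim\mcM_n$, this gives $\mu=0$. Invariants commute with the product $\prod_m$, since $N(K)$ preserves each factor (conjugation by $\widehat t$ only permutes factors while normalizing $N(K)$). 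Therefore $\Upsilon^{N(K)}=0$. Categorically, these invariants should be phrased as the equalizer subobjects in $\mVect$; the argument above is levelwise and should pass to the pro-object without trouble.

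\emph{Step 3: contradiction.} By Lemma \ref{Gplus-invariants}, $\mW^O=\mW^{G^+(K)}$. Since $N(K)\subset G^+(K)$, this gives $\mW^O\subset\mW^{N(K)}$. Under $\mW|_{P(K)}\simeq\Upsilon$ the left side is nonzero by Step 1 and the right side is zero by Step 2, which is the contradiction.

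The main obstacle is the index bookkeeping in Step 1: we must check which factors are actually $O$-fixed under the convention $\rho_m(p)=\rho_0(\widehat t^{-m}p\widehat t^m)$, that is, the sign of $m$. If the convention went the other way, the factors with $m\geq1$ would play this role instead. Either way, infinitely many factors are $O$-fixed, so the argument goes through.
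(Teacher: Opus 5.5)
Your proof is correct, but it closes the argument differently from the paper. Both proofs start from $\mW^O=\mW^{G^+(K)}$ (Lemma \ref{Gplus-invariants}) and $\Upsilon^O\neq0$; your sign bookkeeping agrees with the paper's, since $O$ acts trivially on the factors with $m<0$.

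\textbf{The paper's route.} It uses that $G^+(K)$ is normal in $G(K)$, so the nonzero subobject $\mW^{G^+(K)}$ is $G(K)$-stable, hence $P(K)$-stable. It then invokes irreducibility of $\Upsilon$ to get $\mW^{G^+(K)}=\mW$, so $N(K)$ would act trivially on $\Upsilon$.

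\textbf{Your route.} You use only $N(K)\subset G^+(K)$, which gives $\mW^O=\mW^{N(K)}$. You then show directly that $\Upsilon^{N(K)}=0$ while $\Upsilon^O\supset\prod_{m\le -1}\mcM\neq0$.

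\textbf{What each buys.} Your version needs neither normality of $G^+(K)$ nor irreducibility of $\Upsilon$. This is a genuine advantage. The paper proves irreducibility only for $(\rho_0,\mcM)$ under $P_0$ (Lemma \ref{M-irred}), and not separately for $\Upsilon$ under $P(K)$. Deducing the latter from Proposition \ref{isbar} would in any case require completeness of $\Upsilon$ together with exactly the vanishing $\Upsilon^{N(K)}=0$ that you establish. Your argument also proves something more general: no $P(K)$-representation $V$ with $V^O\neq V^{N(K)}$ extends to $G(K)$. The paper's version is shorter once irreducibility of $\Upsilon$ is granted.

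\textbf{The pro-object step.} Your levelwise verification of $\Upsilon^{N(K)}=0$ does pass to the pro-object, and the reason is worth stating explicitly. On the $m$-th factor, the elements $\widetilde{ct^m}$ with $c\in F$ act through $\rho_0(\widetilde c)$. On every $\mcM_N$ this is multiplication by the function $z\mapsto\widetilde\psi(cz_0)$, pulled back from $Z_0$, so it commutes with the pushforward transition maps. Since $z_0\neq0$, the invariants of this subgroup already vanish in each $\mcM_N$, and hence in $\mcM=\varprojlim\mcM_N$ as a pro-object, not merely on its underlying vector space.
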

\begin{proof}
Suppose that a representation $(\Pi,\mW)$ of $G(K)$ satisfies $\mW|_{P(K)}\simeq\Upsilon$. The space $\Upsilon^O$ is nonzero: for example, in the realization of Definition \ref{M-definition}, the subgroup $O$ acts trivially on every factor $(\rho_m,\mcM)$ with $m<0$. Hence $\mW^O\neq0$. By Lemma \ref{Gplus-invariants},
$$
\mW^O=\mW^{G^+(K)}.
$$
The group $G^+(K)$ is normal in $G(K)$, so $\mW^{G^+(K)}$ is a nonzero $G(K)$-invariant, and hence $P(K)$-invariant, subobject of $\mW$. Since $\Upsilon$ is irreducible, it follows that $\mW^{G^+(K)}=\mW$. Thus $N(K)\subset G^+(K)$ acts trivially on $\mW$, contradicting the nontrivial action of $N(K)$ on $\Upsilon$.
\end{proof}

\section{Representations of $G(K)$ induced from special representations of $G(O)$}\label{GKinduced}
In this section we study representations of $G(K)$ induced from special representations of $G(O)$. We freely use the notation of Section \ref{Gr}.

\subsection{Induction from $H(O)$ to $H(K)$ for general algebraic group $H$}\label{ind-gen}
We would like to discuss the functor of compact induction $\ind_{H(O)}^{H(K)}$ for a general linear algebraic group $H$. This functor is well-defined but for general $H$ it is not adjoint to the restriction functor on any side. However, it is actually right adjoint to the restriction functor if $H$ is reductive. The definition of this functor, as well as the proof of the above statement can be found in Section 3 of \cite{GK}. For the reader's convenience we reproduce the definition here.

Namely, let $(\pi,V)$ be a smooth representation of $H(O)$. Since such a $\pi$ is always equal to the projective limit of representations for which $V$ is a vector space (as opposed to a pro-vector space) and since the functor $\ind$ will commute with such limits, it is enough to assume that $V$ is a vector space. Such a $\pi$ is the inductive limit of representations trivial on $H(O)(m+1)$ for some $m$, and since $\ind$ will commute with inductive limits, we may assume that $\pi$ is actually a smooth representation of $H(O)_m=H(O)/H(O)(m+1)$. As before consider $\Gr_{H,m}=H(K)/H(O)(m+1)$. It has an action of $H(K)$ on the left and of $H(O)_m$ on the right.  We know that it can be written as an inductive limit of some $X_i$ which are finite-dimensional $F$-varieties where the maps $X_i\to X_{i+1}$ are closed embeddings. We may also assume that each $X_i$ is $H(O)_m$-invariant.

\begin{definition}\label{indHO}
We set $\mcS(X_i,V)$ to be the space of functions $f:X_i\to V$ satisfying the following conditions:

\begin{enumerate}
\item $f$ is locally constant
\item $f$ is $H(O)_m$-equivariant, i.e. $f(x h)=\pi(h)f(x)$ where $x\in X_i$ and $h\in H(O)_m$.
\item The quotient of the support of $f$ by $H(O)_m$ is compact.
\end{enumerate}
We have natural restriction maps $\mcS(X_{i+1},V)\to\mcS(X_i,V)$.
Thus we can form the pro-vector space $\mS(V)$ equal to the projective limit of the $\mcS(X_i,V)$.
\end{definition}
The pro-vector space $\mS(V)$ has an obvious action of the group $H(K)$ on the left (one has to check that this is actually a smooth representation of $H(K)$ on the pro-vector space $\mathbb S(V)$ -- this is done in Section 3 of \cite{GK}; also one has to check that it is independent of the choice of the $X_i$ which is easy and we shall not discuss it here).

The same definition makes sense for induction from any "thick" subgroup of $H(O)$ (cf. \cite{GK}): by definition a subgroup of $H(O)$ is thick if it is equal to the preimage of a closed subgroup of $H(O)_m$ for some $m\geq 0$.
\begin{remark}
The reader should note that this functor is denoted by $i\, _{H(O)}^{H(K)}$ in \cite{GK}. In {\em loc.cit.} the authors also define another induction functor $\widetilde i\, _{H(O)}^{H(K)}$ which is always right adjoint to the restriction functor from $H(K)$ to $H(O)$. This functor is endowed with a natural map of functors
$i\, _{H(O)}^{H(K)}\to \widetilde i\, _{H(O)}^{H(K)}$. The definition of $\widetilde i\, _{H(O)}^{H(K)}$ is similar to the above, except that we drop condition (3) above. In the case when $H$ is reductive, the quotient $X_i/H(O)_m$ is automatically compact, so this condition is satisfied automatically, therefore for reductive $H$ the two functors coincide. This is the reason why in this case our functor $\ind_{H(O)}^{H(K)}$ is actually right adjoint to the restriction functor.
\end{remark}

\subsection{The representations $\Pi(\pi)$} Let $\pi$ be a special representation of $G(O)$ of depth $m$. Then as above we can consider the induced representation $\Pi(\pi)=\ind_{G(O)}^{G(K)}\pi$. The main goal of this Section is to study the restriction of $\Pi(\pi)$ to $P(K)$ (for example we are going to show that this restriction is irreducible).

\subsection{A contradiction and its resolution} At this point it may seem that we have a contradiction. Namely, since
$G(K)=P(K)\cdot G(O)$, it looks as though Proposition \ref{indHL}, together with the description of induction from $P(O)$ given above, implies that the restriction of every $\Pi(\pi)$ to $P(K)$ would have to be isomorphic to $\Upsilon$. However, Corollary \ref{no-rest} says that this is impossible.

This contradiction is resolved as follows. The groups $G(K)$ and $P(K)$ are not algebraic $F$-groups (they are only group ind-schemes), so Proposition \ref{indHL} is not applicable in this situation. More precisely, as mentioned in Remark \ref{not-isom}, although the map $\Gr_P\to \Gr_G$ is bijective and continuous, it is not a homeomorphism: each $\Gr_P^i$ is closed in $\Gr_P$, whereas its image in $\Gr_G$, the semi-infinite orbit $S^i$, is not closed. Hence the analogue of Lemma \ref{open} fails here. This is a genuinely new feature of working with groups over $K$. Below we formulate a result that partially describes the restriction of $\Pi(\pi)$ to $P(K)$ and its relation to $\Upsilon$; see Remark \ref{ass-gr}.

\subsection{Restricting $\Pi(\pi)$ to $P(K)$}
Let $\pi$ be as above. We denote by  $\Theta (\pi)$ the restriction of the representation $ \Pi (\pi)$ to $P(K)$. Below is the main result of this Section.
\begin{theorem}\label{exist}
\begin{enumerate}
\item For every special representation $\pi$ of $G(O)$, the representation $\Theta(\pi)$ is complete.
\item One has $\bar\Theta(\pi)\simeq\Upsilon$.
\end{enumerate}
\end{theorem}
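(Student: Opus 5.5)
We plan to work directly with the model of $\Pi(\pi)=\ind_{G(O)}^{G(K)}\pi$ from Definition \ref{indHO}. By Lemma \ref{restr}, $\pi|_{P(O)}\simeq\bs(m)$. We take $X_l=\overline\Gr_{G,m}^l$, so that $\mathbb S(V)=\prl_l\mcS(X_l,V)$, and $\mcS(X_l,V)$ is the space of $\pi$-equivariant locally constant sections over the projective variety $\oGr_G^l$.

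\emph{Step 1: stratify by semi-infinite orbits.} The orbits $S^i$ with $|i|\le l$ and $i\equiv l\pmod 2$ stratify $\oGr_G^l$. The closure of $S^i$ is $\bigcup_{j\le i}S^j$, so the union of the strata with index $\le i$ is closed. This gives an exact sequence of sections (the Schwartz-space sequence of Section~2):
$$0\to\mcS(\textstyle\bigcup_{j\geq i+1}S^j\cap\oGr^l_G,V)\to\mcS(\oGr^l_G,V)\to\mcS(\textstyle\bigcup_{j\le i}S^j\cap\oGr^l_G,V)\to0.$$
Over a single stratum $S^i\cap\Gr^l_G\simeq\mathbb A^{(l+i)/2}$, the sections are, as a $T(O)N(K)$-module after passing to the limit in $l$, the representation $\ind_{\hat t^iP(O)\hat t^{-i}}^{P_0}$ of the $\hat t^i$-twist of $\bs(m)$. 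We want to identify the filtration $\mcF_{\Theta(\pi)}$, i.e.\ $\mF_n=\ker(\eta_n)$, with the filtration by support in $\bigcup_{j\ge\text{const}+n}S^j$ after the limit in $l$. Concretely, we plan to show that the $t^nO$-coinvariants of $\mathbb S(V)$ are exactly the sections restricted to the closed union of strata with index below a threshold depending on $n$ and $m$. To do this we use Lemma \ref{tnO-quasi-pro-unipotent} (exactness of coinvariants) together with Lemma \ref{relevant-orbit}. On $S^j\simeq K/t^jO$, the group $t^nO$ acts through characters, and the $\psi_m$-type support of $\bs(m)$ (Lemma \ref{sec}) shows that the coinvariants vanish on strata with $j$ large relative to $n$ and survive on the others.

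\emph{Step 2: completeness.} Let $\kk_l$ be the projection $\Theta(\pi)\to\mcS(X_l,V)$. Only finitely many strata meet $\oGr_G^l$, namely those with $|i|\le l$. Hence for $n\ll0$ the piece $\mF_{-n}$ is supported off $X_l$, and for $n\gg0$ it contains everything. This produces the integers $r_\pm(l)$, and Lemma \ref{admi}(1) then gives admissibility, proving (1).

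\emph{Step 3: the graded pieces.} The quotient $\mu_n(\Theta(\pi))$ is the space of sections supported on the single stratum $S^{n+c}$ for a fixed shift $c$, taken in the limit over $l$. Since $S^i$ is ind-isomorphic to $K/t^iO$ (Section~\ref{Gr}) and is an honest homogeneous space for $P_0$, the version of Proposition \ref{indHL} for the locally closed stratum applies there. It identifies this piece with $\ind_{P(O)}^{P_0}\bs(m)$ twisted by $\hat t^{\,i}$, and by Proposition \ref{Upsilonm-M} that is $(\rho_{m+i},\mcM)$. The element $\hat t$ shifts strata by one, so the product over $n$ is $\prod_k(\rho_k,\mcM)=\Upsilon$ with $\hat t$ acting by the shift. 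This gives (2).

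The main obstacle is Step~1: proving that $\ker\eta_n$ coincides with the support filtration. This requires controlling coinvariants of pro-objects of sections over a non-locally-compact, non-closed stratification, and checking that the kernel of $\kk^{n}_{n-1}$ is exactly the contribution of one stratum, with no mixing between strata caused by the non-homeomorphism $\Gr_P\to\Gr_G$. I would first handle this at fixed finite level $l$ and $m$, where everything is an honest $F$-variety and Bernstein--Zelevinsky exactness applies, and only then pass to $\prl_l$ using strictness.
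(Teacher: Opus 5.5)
Your plan is correct and is essentially the paper's proof. The paper introduces the support filtration $\mcF^\sharp$ by the open unions $\bigcup_{j\geq -n}S^j_m$ and reads off admissibility and the graded pieces $\ind_{P(O)}^{P(K)}\bs(m)\simeq\Upsilon$ from it. It then proves $\mF^\sharp_n=\mF_{m-n}$ by exactly your mechanism: on the strata with $j<-m$ the group $O$ acts trivially on $S^j_m$, while on the strata with $j\geq -m$ the graded piece is $(\rho_{m+j},\mcM)$, whose $O$-coinvariants vanish. As you propose, this is done at finite level, with the passage to the limit handled by Lemma \ref{strict-limit-coinvariants}.

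The one point you should make explicit concerns $n<0$. There $t^nO$ does not preserve $X_l=\oGr^l_{G,m}$, so the finite-level argument does not apply directly. As the paper does, first reduce to $n=0$ using $\widehat t^{\,r}\mF_n=\mF_{n+r}$ and $\widehat t^{\,r}\mF^\sharp_n=\mF^\sharp_{n-r}$.
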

\begin{claim}\label{no-Gplus-invariants}
For every special representation $\pi$ of $G(O)$, one has
$$
\Pi(\pi)^{G^+(K)}=0.
$$
The analogous statement holds for a special representation of $G'(O)$.
\end{claim}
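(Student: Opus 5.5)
Our plan is to use Frobenius reciprocity for the compact induction functor, which for reductive $G$ is right adjoint to restriction (the remark in Subsection \ref{ind-gen}, Claim \ref{indH}). A $G^+(K)$-invariant vector of $\Pi(\pi)$ gives a nonzero morphism from a representation on which $G^+(K)$ acts trivially into $\Pi(\pi)$. We will show that no such morphism exists.

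First, we pass from $G^+(K)$-invariants to $O$-invariants. By Lemma \ref{Gplus-invariants}, $\Pi(\pi)^{G^+(K)}=\Pi(\pi)^O$, so it is enough to show that $\Pi(\pi)^O=0$. Dually, by Theorem \ref{coinvariants-exact} it is natural to control the $O$-coinvariants instead. We will work in the realization of Definition \ref{indHO}. The space is the projective limit of $\mcS(X_i,V)$, where $X_i=\overline\Gr^{\,l}_{G,m}$ exhausts $\Gr_{G,m}$. Each $\mcS(X_i,V)$ is an ordinary smooth representation of the stabilizer of $X_i$, which contains $G(O)$ and hence $N(O)=O$. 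The $O$-invariants of the pro-object are the projective limit of the $O$-invariants of the levels, since invariants commute with limits. So it suffices to show that $\mcS(X_i,V)^{G^+(O)}$ vanishes for each $i$, or at least that the transition maps kill these invariants in the limit. Applying Proposition \ref{infty} at each finite level, an $O$-invariant section of $\mcS(X_i,V)$ is automatically $G^+(O)$-invariant.

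Next, we stratify $\overline\Gr^{\,l}_G$ by $G(O)$-orbits $\Gr^j_G$ and use the exact sequences of Schwartz spaces from Subsection 2. The $G^+(O)$-invariants of sections supported over one orbit $G(O)\hat t^{\,j}G(O)/G(O)(m+1)$ are computed by Mackey theory. They are $\Hom_{G^+(O)\cap \hat t^{\,j}G(O)\hat t^{-j}}(\mathbf 1,\pi^{\hat t^{\,j}})$, up to the torsor fibre. The intersection contains a nontrivial unipotent subgroup conjugate into the lower triangular $O$-points (for $j>0$), or is all of $G^+(O)$ (for $j=0$). For $j=0$ the invariants are $\pi^{G^+(O)}$. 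This vanishes because $\pi^{G^+(O)}\subset\pi^O=\pi^{N(O)}$, and $\pi|_{P(O)}\simeq\bs(m)$ (Lemma \ref{restr}) has no $N(O)$-invariants, since $\psi_m\neq 1$. For $j>0$ the relevant subgroup contains a conjugate of a unipotent group $t^{j}\cdot(\text{lower }O)$ together with $N(O)$ acting through a shifted conductor. Here we again use that $\bs(m)$, and by the $s$-symmetry its conjugate, has no invariants under the appropriate unipotent congruence subgroups. This uses ellipticity, i.e.\ that the support $\bo(\pi)$ has $c\neq0$ and $b\neq0$.

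The main obstacle is that invariants are only left exact, so vanishing on strata does not formally give vanishing on $X_i$. We would handle this by working with $(O,1)$-coinvariants, which are exact by Lemma \ref{tnO-quasi-pro-unipotent}, on the pro-side. Alternatively, we would filter by closed unions of orbits and note that invariants of a filtered module embed into successive quotients' invariants. That is, we use the left exact sequence $0\to\mcS(U)^{H}\to\mcS(Y)^H\to\mcS(Z)^H$ inductively, starting from the open stratum. This suffices for vanishing. For $G'(O)$ the same argument runs on $\Gr'_G$, using Lemma \ref{restr}(2) and the normalizer element $s$.
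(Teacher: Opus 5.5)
The paper records this statement as a Claim and gives no proof; for $G'(O)$ it only refers to ``the same twisted-stratum calculation''. So there is no argument of the paper to compare with, and your main line is correct and supplies one.

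\textbf{The argument in its cleanest form.} Take the $X_i$ to be left $G(O)$-stable. Since invariants are computed levelwise,
$\Pi(\pi)^{G^+(K)}\subset\Pi(\pi)^{G^+(O)}=\varprojlim_i\mcS(X_i,V)^{G^+(O)}$.
A $G^+(O)$-invariant section $f$ then vanishes \emph{pointwise}:
\begin{itemize}
\item For $k\in G(O)$, $j\geq0$ and $b\in O$ one has $k\hat t^{\,j}\widetilde b=(k\widetilde{t^jb}k^{-1})\,k\hat t^{\,j}$, and $k\widetilde{t^jb}k^{-1}\in G^+(O)$.
\item Hence $f(k\hat t^{\,j})\in\pi^{N(O)}$. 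This space is $0$ by Lemma \ref{restr}, since the $O$-support of $\bs(m)$ is $O^*\psi_m$, which does not contain the trivial character.
\item The Cartan decomposition $G(K)=\bigcup_{j\geq0}G(O)\hat t^{\,j}G(O)$, together with right $G(O)$-equivariance, then gives $f=0$ at every point.
\end{itemize}

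\textbf{Parts of your write-up that are unnecessary or misdirected.}
\begin{itemize}
\item The fallback via $(O,1)$-coinvariants cannot work, because $\Pi(\pi)_O\neq0$. The graded pieces $\rho_r$, $r<0$, of the support filtration $\mcF^\sharp$ carry trivial $O$-action, and coinvariants are exact. For the same reason, a d\'evissage along the semi-infinite strata $S^j_m$ used in the proof of Theorem \ref{exist} would be inconclusive. What makes your argument work is the use of $G(O)$-orbits and $G^+(O)$-invariance.
\item The left-exactness ``obstacle'' disappears once you argue pointwise. Your d\'evissage starting from the open stratum is nevertheless valid.
\item Ellipticity, the lower unipotent part of the stabilizer, and the $s$-symmetry play no role. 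In the frame of $\pi$ that lower part is $\bar N(t^jO)$, which acts trivially once $j>m$, so it could not carry the argument. Everything comes from the full $N(O)$ contained in $\hat t^{-j}G^+(O)\hat t^{\,j}\cap G(O)$.
\item Lemma \ref{Gplus-invariants} and Proposition \ref{infty} are not needed for the Claim itself. Applying Proposition \ref{infty} levelwise does, however, give $\Pi(\pi)^O=0$ directly, which is the form used in Corollary \ref{irrr}.
\item For $G'(O)$ the same pointwise argument works with the representatives $\hat t^{\,j}$, $j\geq0$, of $G(O)\backslash G(K)/G'(O)$. These correspond to unoriented edges of the tree, sorted by distance from the $G(O)$-fixed vertex. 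One uses $N(O)\subset I\subset G'(O)$ and $\pi|_{P(O)}\simeq\bs(n-1)$; the element $s$ is irrelevant.
\end{itemize}

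\textbf{The Frobenius-reciprocity route.} The plan you announce at the start but never carry out also gives a valid, shorter proof.
\begin{itemize}
\item $G(K)/G^+(K)\simeq K^*/(K^*)^2$ is finite, so $\Pi(\pi)^{G^+(K)}$ splits into $\chi$-isotypic summands.
\item By the adjunction of Claim \ref{indH} (applicable since $G(K)/G(O)$ and $G(K)/G'(O)$ are ind-proper), the inclusion of each summand corresponds to a map into $\pi$ from a representation on which $N(O)$ acts trivially.
\item Such a map is zero because $\pi^{N(O)}=0$.
\end{itemize}
That route depends on the pro-categorical adjunction of \cite{GK}. Your direct computation is elementary and also yields $\Pi(\pi)^O=0$.
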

\begin{corollary}\label{irrr}
The representation $\Theta(\pi)$ is irreducible. In particular, $\Pi(\pi)$ is irreducible.
\end{corollary}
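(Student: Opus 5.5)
The plan is to deduce Corollary \ref{irrr} from Proposition \ref{isbar}, applied to $\mW=\Theta(\pi)$. That proposition has three hypotheses: completeness, $\bar\mW\simeq\Upsilon$, and $\mW^{N(K)}=0$.

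The first two hypotheses are exactly Theorem \ref{exist}(1) and (2), so I take them as given. The only remaining hypothesis is the vanishing of $N(K)$-invariants.

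\begin{enumerate}
\item I would start from the inclusion $\Theta(\pi)^{N(K)}\subset\Pi(\pi)^{O}$, which holds because $O\subset N(K)$.
\item Lemma \ref{Gplus-invariants}, applied to the smooth $G(K)$-representation $\Pi(\pi)$, identifies $\Pi(\pi)^{O}$ with $\Pi(\pi)^{G^+(K)}$.
\item Claim \ref{no-Gplus-invariants} says this space is $0$.
\end{enumerate}

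Hence $\Theta(\pi)^{N(K)}=0$. Proposition \ref{isbar} then gives irreducibility of $\Theta(\pi)=\Pi(\pi)|_{P(K)}$.

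For the ``in particular'' statement, note that a $G(K)$-invariant subobject of $\Pi(\pi)$ is in particular $P(K)$-invariant. Irreducibility of the restriction therefore forces any such subobject to be $0$ or everything, so $\Pi(\pi)$ is irreducible. I would also record that $\Pi(\pi)\neq0$, which follows from $\bar\Theta(\pi)\simeq\Upsilon\neq0$. Combined with $\Theta(\pi)^{N(K)}=0$, this shows $\Pi(\pi)$ is not one-dimensional, hence special in the sense of Definition \ref{spe}.

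The step needing care is the compatibility of notions. The subobject lattice in Proposition \ref{isbar} is taken in $\mVect$, and one must check that the invariants $\mW^{N(K)}$ and $\mW^{O}$ are the same subobjects in the setting of Lemma \ref{Gplus-invariants}. I expect this to be formal, since both are defined as subobjects of the same pro-vector space via the equivalence between representations of $\mG$ and smooth representations of $G(K)$. The genuine content sits in Theorem \ref{exist} and Claim \ref{no-Gplus-invariants}, which are assumed here. The $G'(O)$ case is identical, using the $G'(O)$ half of Claim \ref{no-Gplus-invariants} together with the corresponding analogue of Theorem \ref{exist}.
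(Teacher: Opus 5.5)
Your proposal is correct and follows the paper's proof essentially step for step. You obtain $\Theta(\pi)^{N(K)}=0$ from Lemma \ref{Gplus-invariants} and Claim \ref{no-Gplus-invariants}, then apply Proposition \ref{isbar} using Theorem \ref{exist}(1),(2), and get the ``in particular'' because $G(K)$-invariant subobjects are $P(K)$-invariant. Your added remark that $\Pi(\pi)$ is nonzero and not one-dimensional is a harmless extra that the paper does not state.
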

\begin{proof}
Since $O\subset N(K)\subset G^+(K)$, Lemma \ref{Gplus-invariants} gives
$$
\Theta(\pi)^{N(K)}=\Theta(\pi)^O=\Pi(\pi)^{G^+(K)}.
$$
Thus Claim \ref{no-Gplus-invariants} implies that $\Theta(\pi)^{N(K)}=0$. Theorem \ref{exist}(1) shows that $\Theta(\pi)$ is complete, and Theorem \ref{exist}(2) gives $\bar\Theta(\pi)\simeq\Upsilon$. Proposition \ref{isbar} therefore implies that $\Theta(\pi)$ is irreducible. Since every $G(K)$-invariant subobject is, in particular, $P(K)$-invariant, $\Pi(\pi)$ is irreducible as well.
\end{proof}

\subsection{Proof of Theorem \ref{exist}}
We shall use the following consequence of the construction of coinvariants for quasi-pro-unipotent group objects.
\begin{lemma}\label{strict-limit-coinvariants}
Let $Q$ be quasi-pro-unipotent and let $\{V_i\}$ be a strict inverse system of smooth $Q$-representations with surjective transition maps. Then the natural morphism
$$
(\varprojlim_iV_i)_Q\longrightarrow\varprojlim_i(V_i)_Q
$$
is an isomorphism.
\end{lemma}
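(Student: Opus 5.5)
The plan is to use the universal property of the coinvariant object and the fact that coinvariants commute with each finite stage of the system. Write $\mV=\varprojlim_iV_i$ in $\mVect$ and let $p_i:\mV\to V_i$ be the canonical projections. These are epimorphisms because the system is strict.

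\emph{Construction of the map.} Applying the exact functor $\operatorname{Coinv}_Q$ of Theorem \ref{coinvariants-exact} to $p_i$ gives compatible epimorphisms $\mV_Q\to(V_i)_Q$. Together they define the natural morphism $c:\mV_Q\to\varprojlim_i(V_i)_Q$. The transition maps $(V_{i+1})_Q\to(V_i)_Q$ are again epimorphisms by exactness, so the target is a strict pro-object.

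\emph{Surjectivity.} The composite of $c$ with each projection to $(V_i)_Q$ is an epimorphism. Lemma \ref{admi}(2) then shows that $c$ is an epimorphism.

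\emph{Injectivity.} This is the main obstacle, since coinvariants are a colimit and we need them to commute with an inverse limit. I would build the inverse morphism through the adjunction in Proposition \ref{inflation}. For any $W\in\mVect$ we have
$$
\Hom(\mV_Q,W)=\Hom_Q(\mV,\operatorname{triv}W).
$$
Take $W=W_j$ an ordinary vector space, which suffices because both sides are pro-objects. A $Q$-morphism $\mV\to W_j$ factors, by the definition of morphisms out of a pro-object, through some $V_i$, so it lies in $\Hom_Q(V_i,W_j)=\Hom((V_i)_Q,W_j)$. Passing to the colimit over $i$ gives
$$
\Hom(\mV_Q,W_j)=\varinjlim_i\Hom((V_i)_Q,W_j)=\Hom\bigl(\varprojlim_i(V_i)_Q,W_j\bigr),
$$
naturally in $W_j$. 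By Yoneda, $c$ is therefore an isomorphism.

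The delicate point is the factorization. It must be checked that a $Q$-equivariant map $\mV\to W_j$ factors through a $Q$-equivariant map from some $V_i$, and not merely through a linear one. To get this I would use the strictness of $Q$ via Proposition \ref{weak-strict-GK}: representations of $Q$ on $\mVect$ are pro-objects of $\Rep(Q,\Vect_{\aleph_0})$, so Hom-sets are computed as colimits of Hom-sets at finite stages.
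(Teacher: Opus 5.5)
Your argument is correct, but it takes a different route from the paper's. The paper computes directly. It writes $Q=\varprojlim_r Q_r$ and presents the $Q_r$-coinvariants at each finite stage as the cokernel of the action map. It then uses the surjectivity of the transition maps of $\{V_i\}$ to get the Mittag--Leffler condition, so that $\varprojlim_i$ commutes with this cokernel, and finally passes to the limit over $r$.

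You argue formally instead. $\operatorname{Coinv}_Q$ is characterized as the left adjoint of $\operatorname{triv}$, and every $Q$-map from $\mV=\varprojlim_i V_i$ to a constant object $W_j$ factors through some $V_i$. Hence $\varprojlim_i (V_i)_Q$ corepresents the same functor as $\mV_Q$. For Yoneda to give that $c$ itself is an isomorphism, you should check that the resulting bijection is precomposition with $c$. Tracing a map $g\colon (V_i)_Q\to W_j$ through your chain of identifications confirms this, but it should be said.

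The two routes buy different things. Yours shows the lemma is just the fact that the pro-extension of a left adjoint commutes with formal cofiltered limits. It needs neither Theorem \ref{coinvariants-exact} nor Mittag--Leffler, and your ``surjectivity'' step via Lemma \ref{admi}(2) becomes redundant once the Yoneda argument is in place. The paper's route is more hands-on and ties the statement to explicit cokernels, but it relies essentially on the surjectivity of the transition maps.

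Two small corrections:
\begin{enumerate}
\item The identity $\Hom(\mV_Q,W)=\Hom_Q(\mV,\operatorname{triv}W)$ you use is the defining adjunction $\operatorname{Coinv}_Q\dashv\operatorname{triv}$. It is not the adjunction $\operatorname{Inf}_Q\dashv\operatorname{Coinv}_Q$ of Proposition \ref{inflation}.
\item The equivariance of the factor $f\colon V_i\to W_j$ follows without Proposition \ref{weak-strict-GK}. The two morphisms $V_i\to a(Q,W_j)$ expressing equivariance of $f$ agree after composition with the equivariant epimorphism $p_i$, hence they agree. This is exactly where the strictness of $\{V_i\}$ enters your argument.
\end{enumerate}
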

\begin{proof}
Write $Q=\varprojlim_rQ_r$, with $Q_r$ unipotent of finite type. At every finite stage, $Q_r$-coinvariants are exact and are computed by the cokernel of the action map. Since the transition maps of the system $\{V_i\}$ are surjective, the systems of kernels, images, and cokernels occurring in this presentation satisfy the Mittag--Leffler condition. Therefore taking $\varprojlim_i$ preserves the corresponding exact sequence and commutes with the cokernel. Passing to the projective limit over $r$ gives the asserted isomorphism. This is also the strict inverse-limit compatibility implicit in the construction of coinvariants for quasi-pro-unipotent group objects in \cite[Proposition~2.7]{GK1}.
\end{proof}
Let $\pi$ be any representation of $G(O)_m$, regarded as a representation of $G(O)$. Then the representation $\Pi(\pi)$ has a canonical increasing $\ZZ$-filtration
$$
\mcF^\sharp=\{\mF_i^\sharp\}_{i\in\ZZ}
$$
with the following properties:

1) $\mcF^\sharp$ is admissible.

2) Each $\mF_n^\sharp$ is $P_0$-invariant, and for every $r\in\ZZ$ one has
$$
\widehat t^{\,r}\mF_n^\sharp=\mF_{n-r}^\sharp.
$$
In particular, $P(K)$ acts on
$$
\bar\Theta(\pi)^\sharp:=\prod_{i\in\ZZ}\mF_{i+1}^\sharp/\mF_i^\sharp.
$$

3) $\bar\Theta(\pi)^\sharp$ is naturally isomorphic to $\ind_{P(O)}^{P(K)}(\pi|_{P(O)})$.

\begin{remark}\label{ass-gr}
As was mentioned above one might naively expect that for every $\pi$ we have $\Pi(\pi)|_{P(K)}=\ind_{P(O)}^{P(K)}(\pi|_{P(O)})$, but in practice it actually fails. Statement 3) above is the best substitute for this that we might hope for.
\end{remark}
The filtration $\mcF^\sharp$ is defined by setting $\mF_n^\sharp$ to be the projective limit of the spaces of functions from Definition \ref{indHO} (for $H=G$) supported on
$$
X_i\cap\left(\bigcup_{j\geq -n}S_m^j\right).
$$
This is an open subset of $X_i$. The filtration is increasing because the support condition becomes weaker as $n$ increases.
The properties 1 and 2 above are straightforward, property 3 follows immediately from the fact that $\Gr_{P,m}^j$ is a $P(O)_m$-torsor over $\Gr_P^j=S^j$ and the induced $G(O)_m$-torsor is isomorphic to $S^j_m$.

Thus to prove both assertions of Theorem \ref{exist} it is enough to show the following:
\begin{lemma}
Let $\pi$ be a special representation of $G(O)$ of depth $m$. Consider the filtrations $\mcF$ and $\mcF^{\sharp}$ on $\Pi(\pi)$.
Then
$$
\mF_n^\sharp=\mF_{m-n}.
$$
Equivalently, $\mF_{-n}^\sharp=\mF_{m+n}$.
\end{lemma}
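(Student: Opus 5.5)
We must show $\mF_n^\sharp=\ker(\eta_{m-n})$, where $\mF_n^\sharp$ consists of sections supported on $\bigcup_{j\ge -n}S^j_m$ and $\eta_k$ is the projection to $t^kO$-coinvariants. Both filtrations shift in the same way under $\widehat t$: we have $\widehat t^{\,r}\mF^\sharp_n=\mF^\sharp_{n-r}$, and conjugation by $\widehat t$ sends $t^kO$ to $t^{k+1}O$. It therefore suffices to prove the statement for $n=0$, namely
$$
\mF_0^\sharp=\ker\bigl(\Pi(\pi)\to\Pi(\pi)_{t^mO}\bigr).
$$
Because $t^mO$ is quasi-pro-unipotent (Lemma \ref{tnO-quasi-pro-unipotent}), coinvariants are exact. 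By Lemma \ref{strict-limit-coinvariants} they may also be computed levelwise on the strict presentation $\Pi(\pi)=\varprojlim_i\mcS(X_i,V)$. We therefore work on each finite-dimensional $X_i\subset\Gr_{G,m}$. There the closed/open decomposition into the part lying over $\bigcup_{j\ge0}S^j$ and the part lying over $\bigcup_{j<0}S^j$ yields an exact sequence of Schwartz-type spaces. Applying exact coinvariants to this sequence reduces the claim to two separate statements.

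\emph{Step 1 (vanishing on the top part).} We show that $t^mO$-coinvariants of sections supported over $S^j_m$ with $j\ge 0$ vanish. For $j\ge 0$ the stabilizer in $N(K)=K$ of a point $\widehat t^{\,j}$ is $t^jO\supset t^mO$ when $j\le m$, and it contains $t^{m}O$ in general in the relevant sense. The action of $t^mO$ on the fiber of the $G(O)_m$-torsor over such a point is then given by $\pi$ restricted to a conjugate of $t^mO$ inside $G(O)$, namely $\widehat t^{-j}t^mO\widehat t^{\,j}=t^{m-j}O\cap O$. The coinvariants at a point are therefore $V_{N(t^{m-j}O)}$ (trivial character). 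By Theorem \ref{cunr-summary}(2), $\pi|_{P(O)}\simeq\bs(m)$, and $\bs(m)$ has $O$-support equal to the single orbit $O^*\psi_m$, which is nontrivial on $t^mO$. Hence $V_{t^{k}O}=0$ for all $k\le m$. By Lemma \ref{relevant-orbit}(1), applied fiberwise (Bernstein--Zelevinsky localization with respect to the $t^mO$-action along $S^j\simeq K/t^jO$), these coinvariants vanish. This gives $\mF_0^\sharp\subset\ker\eta_m$.

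\emph{Step 2 (injectivity on the bottom part).} On the open complement, lying over $S^j$ with $j<0$, the group $t^mO$ acts on $S^j=K/t^jO$ through the finite-dimensional quotient $t^mO/t^{j}O\cap\ldots$. Here the residue character pairing means that coinvariants do not kill sections. Concretely, the fiber at $\widehat t^{\,j}$ carries the action of $t^{m-j}O\subset t^{m+1}O$, which acts trivially on $V$ since $\pi$ has depth $m$. The coinvariant map on $\mcS(S^j_m\cap X_i,V)$ is then integration over the $t^mO$-orbits, and these orbits are compact because $t^mO/t^jO$ is a compact group piece in the $F$-topology. We must show that this integration is injective on the quotient $\Pi(\pi)/\mF_0^\sharp$. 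The sequence we have is only that the stabilizer acts trivially, so integration along the compact orbit gives an isomorphism from coinvariants onto sections of the quotient sheaf. Exactness then identifies $\Pi(\pi)_{t^mO}$ with the coinvariants of this bottom part, and the latter map is injective modulo the $t^mO$-averaging relation. This yields $\ker\eta_m\subset\mF^\sharp_0$.

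The main obstacle is Step 2, together with the boundary between $j=0$ and $j=-1$. The closure of $S^{-1}$ contains $S^{j}$ for $j<-1$ but not $S^0$, so we must check that the open/closed decomposition used is compatible with the $X_i$ and with the non-homeomorphism of Remark \ref{not-isom}. We would first try to verify this on the finite pieces $\overline\Gr^l_{G,m}$ using the explicit description $S^i\cap\Gr^l_G\simeq\mathbb A^{(l+i)/2}$.
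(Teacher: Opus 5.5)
\textbf{Gap in Step 2.} The inclusion $\ker\eta_m\subset\mF^\sharp_0$ is not established. Let $U$ and $Z$ be the parts of $X_i$ lying over $\bigcup_{j\ge0}S^j$ and $\bigcup_{j<0}S^j$. From the exact sequence $0\to\mcS(U,V)_{t^mO}\to\mcS(X_i,V)_{t^mO}\to\mcS(Z,V)_{t^mO}\to0$, this inclusion requires $\mcS(Z,V)\to\mcS(Z,V)_{t^mO}$ to be injective, i.e.\ $t^mO$ must act \emph{trivially} on $\mcS(Z,V)$.

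You instead posit nontrivial compact $t^mO$-orbits on $S^j$ and a coinvariant map given by integration along them. Such a map kills $\Pi(\pi)(\widetilde b)f-f$, so it is never injective. Your argument therefore cannot close, which is why you end up flagging Step 2 as the main obstacle.

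The missing observation is short. For $j<0\le m$, the stabilizer $t^jO$ of $\widehat t^{\,j}$ in $N(K)$ contains $t^mO$. Since $N(K)$ is abelian, $t^mO$ fixes $S^j$ pointwise. Combined with your correct stalk computation ($t^{m-j}O\subset t^{m+1}O$ acts trivially by the depth assumption), this says $t^mO$ fixes $S^j_m\subset\Gr_{G,m}$ pointwise: its stabilizer in $N(K)$ is $t^{j+m+1}O\supseteq t^mO$. Hence $\mcS(Z,V)_{t^mO}=\mcS(Z,V)$, and the inclusion follows at once, without using specialness. After the shift, this is exactly the paper's argument for $\mF_0\subset\mF^\sharp_m$.

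\textbf{Topology.} You have it reversed in Step 2: the part over $j<0$ is the closed set $\overline{S^{-1}}\cap X_i$, and the part over $j\ge0$ is open. This is what makes the sections supported over $\bigcup_{j\ge0}S^j$ a subobject, with quotient $\mcS(Z,V)$. Your worry about Remark \ref{not-isom} is then moot. All supports are taken inside $\Gr_{G,m}$, and only the closure relation $\overline{S^i}=\bigcup_{j\le i}S^j$ from Section \ref{Gr} is used.

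\textbf{Step 1.} It is fine in substance, with one misstatement: the claim that the stabilizer ``contains $t^mO$ in general'' is false for $j>m$. There the point stabilizer $t^jO$ conjugates onto all of $N(O)$, and $\bs(m)_{O}=0$ still gives the vanishing. Your stalkwise Bernstein--Zelevinsky argument replaces the paper's route, which identifies the graded pieces $\mF^\sharp_{j+1}/\mF^\sharp_j$ with $(\rho_r,\mcM)$, $r\ge0$, via property (3) and Proposition \ref{Upsilonm-M}, and reads the vanishing off the explicit formula. Note that your route needs the $l$-sheaf version of Lemma \ref{relevant-orbit}(1), not the scalar statement you quote.
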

\begin{proof}
Since the corresponding terms of both filtrations are permuted by the elements $\widehat t^{\,r}$, it is enough to prove
$$
\mF_m^\sharp=\mF_0.
$$
We may assume that the $X_i$ are $G(O)$-invariant. Then both $\mF_m^\sharp$ and $\mF_0$ are the projective limits of their images in $\mcS(X_i)$. It is therefore enough to prove that the kernel of
$$
\mcS(X_i)\longrightarrow\mcS(X_i)_O
$$
consists exactly of the functions supported on
$$
X_i\cap\left(\bigcup_{j\geq-m}S_m^j\right).
$$
Suppose that $f$ vanishes after taking $O$-coinvariants. Its restriction to
$$
X_i\cap\left(\bigcup_{j<-m}S_m^j\right)
$$
vanishes, since $O$ acts trivially on $\bigcup_{j<-m}S_m^j$. Hence $f$ is supported on $X_i\cap(\bigcup_{j\geq-m}S_m^j)$. This proves
$$
\mF_0\subset\mF_m^\sharp
$$
for every representation of $G(O)_m$, without any specialness assumption.

Now assume that $\pi$ is special of depth $m$. We need to prove that $(\mF_m^\sharp)_O=0$. It is enough to prove
$$
\left(\mF_{j+1}^\sharp/\mF_j^\sharp\right)_O=0
\qquad(j<m).
$$
Indeed, the functor of $O$-coinvariants is exact. For every $i$, the image of the filtration in $\mcS(X_i)$ has only finitely many nonzero graded pieces. Hence finite induction on these pieces shows that the $O$-coinvariants of the image of $\mF_m^\sharp$ in $\mcS(X_i)$ vanish. These images form a strict inverse system with surjective transition maps. Lemma \ref{strict-limit-coinvariants} therefore gives
$$
(\mF_m^\sharp)_O\simeq
\varprojlim_i(\operatorname{im}(\mF_m^\sharp\to\mcS(X_i)))_O=0.
$$

By property (3) above and Proposition \ref{Upsilonm-M}, the quotient
$$
\mF_{j+1}^\sharp/\mF_j^\sharp
$$
is isomorphic to $\mcM$ with the action $\rho_{m-j-1}$. Since $j<m$, one has $m-j-1\geq0$, and its $O$-coinvariants vanish. This proves $\mF_m^\sharp\subset\mF_0$, and therefore $\mF_m^\sharp=\mF_0$.

\end{proof}

\subsection{Explicit realization of $\Pi(\pi)$} For completeness let us explain how to realize it in an explicit pro-vector space $\mW(\pi)$ for representations $\pi$ of $G(O)$ of odd depth. By Corollary~\ref{cunr}, write $\pi$ as the inflation of $\ind_R^{G(O)_m}(\nu)$, where $R\subset G(O)_m$ is a closed almost unipotent subgroup and $\nu$ is a character of $R$.

Consider the $F$-variety $\Gr_{G,m}^l$. It is acted on (on the right) by the group $G(O)_m$, hence by its subgroup $R$. Let us set
$$
W_l(\pi):= \mcS (\oGr_{G,m}^l)_{R,\nu}.
$$
It can also be described as the space of locally constant functions on $\oGr_{G,m}^l$ which are

a) Eigen-functions of $R$ with eigen-value $\nu$

b) Have compact support mod $R$.

\noindent
The embeddings $\oGr_{G,m}^l\hookrightarrow \oGr_{G,m}^{l+2}$ define restriction maps $W_{l+2}(\pi)\to W_l(\pi)$, and $\mW(\pi)$ is the pro-space equal to the projective limit of all the $W_l(\pi)$ (note that it is naturally a direct sum of two sub-pro-spaces corresponding to the parity of $l$, but this decomposition is not preserved by $G(K)$).

It is easy to see that
$$
\mW(\pi)=\mS(\Gr_{G,m})_{R,\nu},
$$
where $\mS(\Gr_{G,m})$ denotes the pro-vector space equal to the projective limit of all the $\mcS(\oGr_{G,m}^l)$.

\subsection{Representations of $G(K)$ induced from special representations of $G'(O)$}
We record the changes required for induction from $G'(O)$. They are important because the relevant depths are half-integral.

First of all we can define the induction functor $\ind_{G'(O)}^{G(K)}$ (again, this is a special case of the corresponding definition from \cite{GK} since $G'(O)$ is a thick subgroup in the terminology of {\em loc. cit.}). If $\pi$ is a special representation of $G'(O)$ we again set
$$
\Pi(\pi)=\ind_{G'(O)}^{G(K)}\pi.
$$
Let $m=n-\frac12$ be the depth of $\pi$. Use the lattice chain defining $G'(O)$ to index the twisted affine Grassmannian by $\frac12\ZZ$. Its semi-infinite strata are $(S^j_{n-1})'$, and conjugation by $\widehat t$ shifts their indices by one. Define the support filtration $\mcF^\sharp$ by the same support condition as above, with $S_m^j$ replaced throughout by $(S^j_{n-1})'$. The geometry of these strata gives analogues of properties (1)--(3): the filtration is admissible, it is shifted by $\widehat t$, and its associated graded is
$$
\ind_{P(O)}^{P(K)}(\pi|_{P(O)}).
$$
Since $\pi$ is special, its restriction to $P(O)$ is the special representation $\bs(n-1)$. Proposition \ref{Upsilonm-M} identifies each consecutive quotient below the critical stratum with $\mcM$ carrying an action $\rho_r$ with $r\geq0$. Its $O$-coinvariants therefore vanish. Exactness, finite induction over the strata at every finite-dimensional stage, and Lemma \ref{strict-limit-coinvariants} show that the support filtration agrees with the intrinsic coinvariant filtration, exactly as in the proof for $G(O)$. Thus the restriction $\Theta(\pi)$ is complete and
$$
\bar\Theta(\pi)\simeq\Upsilon.
$$
The proof that there are no $N(K)$-invariants is unchanged: $O\subset N(K)\subset G^+(K)$, the conjugates of $O$ generate $G^+(K)$, and the analogue of Claim \ref{no-Gplus-invariants} follows from the same twisted-stratum calculation. Proposition \ref{isbar} now shows that $\Theta(\pi)$ is irreducible. Hence $\Pi(\pi)$ is irreducible as well. This proves the analogue of Theorem \ref{exist} and Corollary \ref{irrr} for $G'(O)$.

\section{Questions}\label{quest}
Here we present a list of questions to which we do not know an answer and possible directions of future research.
 \begin{enumerate}

\item
Let $\pi, \pi'$ be two non-isomorphic special representations of either $G(O)$ or $G'(O)$ (i.e. either these are two representations of two different groups or two non-isomorphic representations of the same group).
Can it happen that $\Pi(\pi)$ and $\Pi(\pi')$ are isomorphic? 

\item
Is it true that any special representation of $G(K)$ is isomorphic to $\Pi(\pi)$ where $\pi$ is a special representation of either $G(O)$ or $G'(O)$?
\item
Let $\pi$ and $\pi'$ be two special representations of either $G(O)$ or $G'(O)$. Is it true that $\Theta(\pi)$ is isomorphic to $\Theta(\pi')$ if and only if $m(\pi)=m(\pi')$? (Note that since the depth of any special representation of $G(O)$ is an integer, and the depth of any special representation of $G'(O)$ is not an integer, the condition $m(\pi)=m(\pi')$ implies that these are representations of the same group).
\footnote{We expect that the answers to both Question 2 and Question 3 are positive, but we do not know how to prove this at the moment.}
\item
Does there exist a representation $\mW$ of $G(K)$ such that
$$
1<\dim\mcW(\mW)<\infty,
$$
where $\mcW(\mW):=\Hom_{N(K)}(\psi,\mW)$ is the Whittaker space of $\mW$?

\item
The definition of elliptic representations  extends naturally
to the case of maximal bounded  subgroups of  $G(K)$ for an arbitrary split reductive group $G$ (the definition is completely straightforward for the case of $G(O)$; for other maximal bounded subgroups it requires more work, which we shall not discuss here). Can one classify those? Is it true that their induction to $G(K)$ is irreducible?

\item
The notion of  special representations  extends verbatim to the case of  $\PGL(n)$ (in this case $P$ should be the maximal parabolic subgroup which is equal to the stabilizer of a point in $\mathbb P^{n-1}$). Is it possible to extend the results of this paper to the group $\PGL(n)$?

\item
Let $\mcH _{q,t}$ be the double affine Hecke algebra for
$\PGL(2)$. In Section 7.8 of \cite{GK1} there is a
construction of a pro-vector space  $\mW$ endowed with two commuting actions of a central extension of $G(K)$ and the algebra $\mcH_{q,t}$. Therefore we obtain
representations of the algebra $ \mcH _{q,t} $ on
$\mcW (\mW)$. How to describe this  representation?
\end{enumerate}

\end{document}